\newtheorem{thm}{Theorem}[section]
\newtheorem{prop}[thm]{Proposition}
\newtheorem{lemma}[thm]{Lemma}
\newtheorem{claim}[thm]{Claim}
\numberwithin{equation}{subsection}
\numberwithin{thm}{subsection}
\theoremstyle{definition}
\newtheorem{defn}[thm]{Definition}
\newtheorem{notation}[thm]{Notation}
\theoremstyle{remark}
\newtheorem{q}[thm]{Question}
\newtheorem{rmk}[thm]{Remark}
\DeclareMathAlphabet{\mathpzc}{OT1}{pzc}{m}{it}
\renewcommand{\H}{\mathbb{H}}
\newcommand{\C}{\mathbb{C}}
\newcommand{\R}{\mathbb{R}}
\newcommand{\Z}{\mathbb{Z}}
\newcommand{\Q}{\mathbb{Q}}
\newcommand{\bdry}{\partial}
\newcommand{\s}{\vskip.1in}
\newcommand{\n}{\noindent}
\newcommand{\F}{\mathbb{F}}
\newcommand{\Kh}{Kh^\sharp}
\newcommand{\be}{\begin{enumerate}}
\newcommand{\ee}{\end{enumerate}}
\newcommand{\op}{\operatorname}
\newcommand{\bs}{\boldsymbol}
\newcommand{\wt}{\widetilde}
\newcommand{\ai}{A_{\infty}}
\begin{document}
	
\title[Higher-dimensional Heegaard Floer homology]
{Applications of higher-dimensional Heegaard Floer homology to contact topology}

\author{Vincent Colin}
\address{Universit\'e de Nantes, 44322 Nantes, France}
\email{vincent.colin@univ-nantes.fr}

\author{Ko Honda}
\address{University of California, Los Angeles, Los Angeles, CA 90095}
\email{honda@math.ucla.edu} \urladdr{http://www.math.ucla.edu/\char126 honda}

\author{Yin Tian}
\address{Yau Mathematical Sciences Center, Tsinghua University, Beijing 100084, China}
\email{yintian@mail.tsinghua.edu.cn}

\date{\today}

\keywords{Contact structures, Khovanov homology, Higher-dimensional Heegaard Floer homology}

\subjclass[2000]{Primary 57M50; Secondary 53D10,53D40.}

\thanks{VC supported by ERC Geodycon and ANR Quantact.  KH supported by NSF Grants DMS-1406564 and DMS-1549147. YT supported by NSFC 11601256 and 11971256.}

\begin{abstract}
The goal of this paper is to set up the general framework of higher-dimensional Heegaard Floer homology, define the contact class, and use it to give an obstruction to the Liouville fillability of a contact manifold and a sufficient condition for the Weinstein conjecture to hold.  We discuss several classes of examples including those coming from analyzing a close cousin of symplectic Khovanov homology and the analog of the Plamenevskaya invariant of transverse links.
\end{abstract}

\maketitle

\setcounter{tocdepth}{1}
\tableofcontents

\section{Introduction} \label{section: intro}

Let $(W,\beta,\phi)$ be a Weinstein domain of dimension $2n$, where $\beta$ is the Liouville $1$-form and $\phi$ is the compatible Morse function, and let $h\in \op{Symp}(W,\bdry W,d\beta)$ be a symplectomorphism on $W$ that restricts to the identity on $\bdry W$.

The goal of this paper is to define the hat version $\widehat{HF}(W,\beta,\phi;h)$ of the higher-dimensional Heegaard Floer homology groups and give some applications.  

\subsection{Higher-dimensional Heegaard Floer homology groups and the contact class}

The situation can be summarized in the following diagram:
$$\begin{CD}
\wt{W} @>{completion}>> \wt{W}^\wedge @>{cylinder}>> \widehat{X}=\widehat{X}_{\wt{W}}= \R\times[0,1]\times \wt{W}^\wedge\\
@A{capping}AA  \\
W @> {completion}>> \widehat{W} @> {cylinder} >> \widehat{X}_{W}= \R\times[0,1]\times \widehat W.
\end{CD}$$
Here $\wt{W}$ is obtained by capping off the pairwise disjoint Lagrangian disks $a_1,\dots, a_\kappa$ that are the unstable submanifolds of the critical handles of $W$ (i.e., a ``Lagrangian basis") and $\wt{W}^\wedge$ and $\widehat W$ are completions of $\wt{W}$ and $W$. Let $\wt a_1,\dots,\wt a_\kappa$ be Lagrangian spheres obtained by capping off $a_1,\dots,a_\kappa$. The top row can be viewed as working on the Heegaard surface and the bottom row can be viewed as working on the page of an open book decomposition.

The group $\widehat{HF}(W,\beta,\phi;h)$ is defined using $\widehat X$, an auxiliary almost complex structure $J^\Diamond$, and the collection $\{\wt a_1,\dots, \wt a_\kappa\}$, in a manner analogous to Lipshitz' cylindrical reformulation of Heegaard Floer homology \cite{Li}; this will be done in Section~\ref{section: defn of homology groups}.  In Section~\ref{section: reformulation of HF} we present an equivalent description in terms of $\widehat X_W$ which is analogous to the description of Heegaard Floer homology in terms of an open book decomposition due to Honda-Kazez-Mati\'c~\cite{HKM2} and uses $\{a_1,\dots, a_\kappa\}$.

The higher-dimensional Heegaard Floer homology groups satisfy the following:

\begin{thm} \label{thm: invariance or lack thereof}
$\widehat{HF}(W,\beta,\phi;h)$ is invariant under:
\be
\item[(I1)] trivial Weinstein homotopies;
\item[(I2)] changes of almost complex structure $J^\Diamond$; and
\item[(I3)] isotopies of $h$ in $\op{Symp}(W,\bdry W,d\beta)$.
\ee
\end{thm}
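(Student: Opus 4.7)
The plan is to prove all three invariance statements by the standard Floer-theoretic continuation principle: a generic $1$-parameter family of admissible auxiliary data (almost complex structure, Lagrangian boundary condition, or symplectomorphism $h$) produces a chain map by counting rigid solutions of a parametrized Cauchy--Riemann equation; a homotopy between two such paths produces a chain homotopy; and the concatenation of a path with its reverse is homotopic to the constant path, so the continuation map is a quasi-isomorphism. The real work in each case is to establish transversality and compactness for the parametrized moduli spaces in $\widehat X$, not the formal homological algebra.

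For (I2) I would pick a smooth path $\{J_s^\Diamond\}_{s\in\R}$ of admissible almost complex structures that is constant in $s$ outside a compact interval and agrees with $J_0^\Diamond$, respectively $J_1^\Diamond$, at $s=-\infty,+\infty$. Counting rigid $J_s^\Diamond$-holomorphic curves in $\widehat X$ with boundary on $\R\times\{0,1\}\times\wt a_i$ and the usual prescribed asymptotics in the $\R$-direction defines a chain map $\Phi_{01}$; exactness of $\wt W^\wedge$ and of the spheres $\wt a_i$ yields a uniform energy bound, which together with the SFT-style compactness that underlies the very definition of $\widehat{HF}$ makes the count finite. The standard homotopy-of-paths argument then upgrades $\Phi_{01}$ to a quasi-isomorphism, with inverse coming from the reversed path.

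Statements (I1) and (I3) both reduce to a variant of (I2). A trivial Weinstein homotopy does not alter the Morse-theoretic data of $(W,\beta,\phi)$ and produces a $1$-parameter family of Lagrangian bases $\{a_i(t)\}$ related by an ambient Liouville isotopy of $W$; after capping off, the Lagrangian spheres $\wt a_i$ are Hamiltonian isotopic in $\wt W^\wedge$, and invariance of Lagrangian intersection Floer homology under Hamiltonian isotopy of the boundary conditions --- proved by the same moving-boundary continuation machine as in (I2) --- gives (I1). An isotopy $\{h_t\}$ in $\op{Symp}(W,\bdry W,d\beta)$ enters the construction of $\widehat X$ and of $J^\Diamond$ only through the identification on the $\{1\}\times\wt W^\wedge$ side of the mapping-torus-type structure; pulling back the almost complex structure by the ambient extension of $h_t$ to $\wt W^\wedge$ converts the chain complex defined by $h_0$ into the one defined by $h_1$ at the cost of an $s$-dependent change of $J^\Diamond$, thus reducing (I3) to (I2).

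The main obstacle throughout is analytic: establishing compactness of the parametrized moduli spaces in the non-compact total space $\widehat X = \R\times[0,1]\times\wt W^\wedge$. Three sources of non-compactness have to be controlled simultaneously --- the two $\R$-ends of the cylinder factor (handled by SFT-style breaking along the Reeb-type asymptotics in $\wt W^\wedge$, using tameness of $J^\Diamond$ on the ends), the Liouville end of $\wt W^\wedge$ itself (excluded by a maximum principle applied to the $d\beta$-energy), and, in (I1) and (I3), the Hamiltonian isotopies of the Lagrangians, which must be arranged to be compactly supported or otherwise controlled at infinity so that the neck-stretching arguments still apply. Once these estimates have been set up for the definition of $\widehat{HF}$ itself in Section~\ref{section: defn of homology groups}, the parametrized versions needed here follow by repeating the same arguments with an additional $s$-parameter.
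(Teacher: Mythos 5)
Your proposal is correct and takes essentially the same route as the paper: Section~\ref{section: partial invariance} also proves (I1)--(I2) by a continuation/cobordism argument, first using the Moser-type Lemma~\ref{lemma: Liouville homotopy} to fix the ambient completion, then building moving Lagrangian boundary conditions and a Hamiltonian-perturbed symplectic connection (possible because the $\bs\alpha^s$ are exact), counting rigid curves to get chain maps that are shown to be homotopy inverses by degenerating a family of cobordisms, with compactness controlled by the maximum principle on the Liouville end; (I3) is likewise treated as a variant of the same argument.
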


This is proven in Section~\ref{section: partial invariance}.  

\begin{rmk}
The higher-dimensional Heegaard Floer groups are not expected to be invariant under handleslides (I4), although they are in some cases (cf.\ Theorem~\ref{thm: Kh is invariant of link} below).
\end{rmk}

In Section~\ref{section: defn of contact class} we define the {\em $p$-twisted contact class}
$$c^p(W,\beta,\phi;h)\in \widehat{HF} (W,\beta,\phi ;h)$$ for $p\in \Z$ corresponding to the open book decomposition $(W,\beta,\phi;h)$.  Although handleslide invariance does not hold, the contact class can nevertheless shed light on the corresponding contact structure $(M,\xi)$ as follows:

\begin{thm}\label{thm: non-vanishing}
If the contact class 
$$c^p (W,\beta,\phi ;h) \in \widehat{HF} (W,\beta,\phi ;h)$$ 
vanishes for an open book $(W,\beta,\phi ;h)$ supporting $(M,\xi)$, then 
\be
\item $(M,\xi)$ is not Liouville fillable; and
\item $(M,\xi)$ satisfies the Weinstein conjecture.
\ee
\end{thm}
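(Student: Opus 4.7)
The plan is to prove both conclusions by contraposition: we show that if $(M,\xi)$ is Liouville fillable, or if its Reeb vector field has no closed orbit, then a chain-level representative of $c^p(W,\beta,\phi;h)$ cannot be a boundary, and hence $c^p\neq 0$.

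For (1), I would work in the open-book reformulation of Section~\ref{section: reformulation of HF}, where $\widehat{HF}(W,\beta,\phi;h)$ is computed from the target $\widehat{X}_W = \R\times[0,1]\times \widehat W$ with boundary conditions on $\{a_1,\dots,a_\kappa\}$. In direct analogy with the Honda-Kazez-Mati\'c construction \cite{HKM2}, $c^p$ is represented by a canonical tuple $\mathbf{x}^c$ of intersection points $a_i\cap h(a_i)$ concentrated near $\bdry W$, corresponding to the positive region of the open book supporting $(M,\xi)$. Given a Liouville filling $(F,\omega_F)$ of $(M,\xi)$, I would glue $F$ onto an appropriate end of $\widehat{X}_W$ to produce a capped target $\widehat{X}_W^F$ carrying an extension of $J^\Diamond$, and count rigid $J^\Diamond$-holomorphic curves in $\widehat{X}_W^F$ with Lagrangian boundary on the $a_i$ and asymptotics prescribed by $\mathbf{x}^c$; this defines a chain map $\Phi_F$ to $\F$. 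A maximum-principle/positivity argument, combined with the fact that $\mathbf{x}^c$ lies near the binding, should show that the only rigid curve contributing is a constant configuration, so $\Phi_F(\mathbf{x}^c) = \pm 1$, and hence $c^p \neq 0$.

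For (2), I would use SFT-style neck stretching. Suppose $c^p = 0$ and, for contradiction, that the Reeb flow on $(M,\xi)$ has no closed orbit. Pick a chain $b$ with $\partial b = \mathbf{x}^c$; the curves contributing to $\partial b$ are $J^\Diamond$-holomorphic curves in $\widehat{X}_W$. Stretch the neck along a hypersurface of $\widehat{X}_W$ modeled on $(M,\xi)$ near the binding region. SFT compactness then produces broken limit configurations whose intermediate pieces live in the symplectization of $(M,\xi)$ and are asymptotic to closed Reeb orbits. The no-orbit hypothesis rules out any nontrivial breaking, so after the stretch there are no rigid curves contributing to $\partial b$ at $\mathbf{x}^c$; yet $\mathbf{x}^c$ is a nonzero cycle, contradicting $\partial b=\mathbf{x}^c$.

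The main obstacle in both parts is the analytic package for $J^\Diamond$-holomorphic curves in these non-compact, higher-dimensional targets: energy bounds, SFT-type compactness, and transversality with the half-infinite Lagrangian boundary conditions built from the $\wt a_i$ and their $h$-translates. In (1), the subtle point is to check that gluing $F$ produces a genuine chain map and that no nonconstant curve in $\widehat{X}_W^F$ contributes to $\Phi_F(\mathbf{x}^c)$. In (2), the subtlety is to control Reeb-chord breaking at the Lagrangians alongside the symplectization limits, so that the absence of closed Reeb orbits really forces the differentials landing on $\mathbf{x}^c$ to vanish.
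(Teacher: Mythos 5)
Your overall strategy (cap off with the Liouville filling, respectively stretch to produce Reeb orbits, and use that the only rigid object hitting the contact class from above is the trivial/constant one) is the right one, but both halves of your argument presuppose a geometric object that does not exist in the targets you work with, and constructing it is the actual content of the paper's proof. The target $\widehat X_W=\R\times[0,1]\times\widehat W$ (or $\widehat X=\R\times[0,1]\times\wt W^\wedge$) has no end, and no contact-type hypersurface, modeled on $(M,\xi)$: the monodromy $h$ enters only through the Lagrangian boundary conditions $L_{0,\widehat h(\widehat{\bs a})}$, not through any identification of the fibers, so the relative mapping torus $M_{(W,\beta;h)}$ is nowhere present inside $\widehat X_W$. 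Consequently ``glue $F$ onto an appropriate end of $\widehat X_W$'' in your part (1) and ``stretch the neck along a hypersurface of $\widehat X_W$ modeled on $(M,\xi)$'' in your part (2) are not defined operations. What is needed is the cobordism $(X_+,\Omega_+,J_+)$ of Section~\ref{subsection: cobordisms}: a symplectic cobordism, built from the fibrations $\pi_0:X^0_+\to B^0_+$ and $\pi_1:X^1_+\to D^2$ by fiberwise gluing and corner rounding, which interpolates from the Heegaard-type end $\R\times[0,1]\times\wt W^\wedge$ with its Lagrangian boundary condition $L^+_{\wt{\bs a}}$ at the positive end to the (concave) symplectization of the closed manifold $(M,\xi)$ at the negative end. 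This is the higher-dimensional analogue of the cobordism of \cite{CGH3}, and it is precisely the bridge between the page/monodromy data and fillings or Reeb dynamics of $(M,\xi)$; unlike the three-dimensional Heegaard Floer setting you invoke via \cite{HKM2}, here $\widehat{HF}(W,\beta,\phi;h)$ is not a priori an invariant of $M$ at all, so no filling can act on it until such a cobordism is built.

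Granting that cobordism, the rest of your outline matches the paper: Lemma~\ref{lemma: constant} (proved like Lemma~\ref{lemma: contact cycle}, using the product form of $J_+$ over $B^0_+\cup D^2$) shows the only curve asymptotic to ${\bf x}$ at the positive end is the union of $\kappa$ constant sections, and then either one packages this as your augmentation-type map $\Phi_F$ with $\Phi_F({\bf x})=\pm1$, or, as the paper does, one glues the curves realizing $d(\sum_i c_i{\bf y}^i)={\bf x}$ to the constant sections and derives a contradiction from the ends of the resulting index one family. For (2) one further point is worth noting: at the negative end of $(X_+,\Omega_+)$ there is no Lagrangian boundary condition, so the only possible breaking at $-\infty$ is along closed Reeb orbits, and the resulting curves without negative ends exhibit a null-homologous collection of orbits; this sidesteps the Reeb-chord bookkeeping you flag as a difficulty, which would indeed be a problem if one tried to stretch inside $\widehat X_W$ with the Lagrangians present.
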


In particular the theorem provides a convenient method for verifying that a contact structure is {\em not} Liouville fillable. In Section~\ref{subsection: defn of right-veering} we define higher-dimensional analogs of right-veering surface diffeomorphisms (called {\em strongly right-veering} and {\em weakly right-veering} symplectomorphisms, where strongly right-veering implies weakly right-veering) and prove the following:

\begin{thm}\label{thm: not right-veering}
	A contact structure $(M,\xi)$ supported by an open book decomposition whose pages are Liouville and whose monodromy $h$ is not strongly right-veering is not Liouville fillable and satisfies the Weinstein conjecture.
\end{thm}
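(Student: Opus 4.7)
The overall strategy is to combine Theorem~\ref{thm: non-vanishing} with a direct proof that the contact class vanishes under failure of strong right-veering. Thus the main claim to verify is
$$c^p(W,\beta,\phi;h)=0 \in \widehat{HF}(W,\beta,\phi;h)$$
whenever $h$ is not strongly right-veering; non-Liouville-fillability and the Weinstein conjecture for $(M,\xi)$ then follow at once from Theorem~\ref{thm: non-vanishing}. If the page is only Liouville and not Weinstein, I would first Weinstein-approximate to place ourselves in the setting where $\widehat{HF}$ and the contact class are defined, noting that the supported contact manifold is unchanged up to contactomorphism.

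Using the definition from Section~\ref{subsection: defn of right-veering}, failure of strong right-veering should produce, after an ambient isotopy of $h$ in $\op{Symp}(W,\bdry W,d\beta)$ permitted by (I3) of Theorem~\ref{thm: invariance or lack thereof}, a Lagrangian disk $a_1$ lying in some Lagrangian basis $\{a_1,\dots,a_\kappa\}$ whose image $h(a_1)$ fails to lie ``to the right'' of $a_1$ in a collar of $\bdry a_1\subset \bdry W$, mirroring the surface picture of Honda-Kazez-Mati\'c. In the open-book model of $\widehat{HF}$ from Section~\ref{section: reformulation of HF}, the contact cocycle is then represented by the tuple $\mathbf{x}=(x_1,\dots,x_\kappa)$ whose $i$-th coordinate is the Reeb-chord intersection of $a_i$ with $h(a_i)$ nearest $\bdry W$. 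The non-right-veering condition at $\bdry a_1$ provides an extra transverse intersection $y_1 \in a_1\cap h(a_1)$ together with an embedded topological bigon joining $y_1$ to $x_1$ inside a collar of $\bdry W$. Coupling this with constant strips at $x_2,\dots,x_\kappa$ and a choice of $J^\Diamond$ adapted to the collar, I would construct a rigid $J^\Diamond$-holomorphic strip in $\widehat{X}_W$ from $\mathbf{y}=(y_1,x_2,\dots,x_\kappa)$ to $\mathbf{x}$, exhibiting $\mathbf{x}$ as a $\bdry$-boundary after possibly cancelling additional contributions by further generators supported in the collar.

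The main obstacle is establishing the existence and rigidity of this holomorphic bigon in the $2n$-dimensional setting. In dimension two this is immediate from the Riemann mapping theorem, but here one needs an index calculation together with a Riemann-Hilbert boundary value problem for the collared perturbation of the almost complex structure, plus uniform control over curves near $\bdry\widehat{W}$ using the exactness of $\beta$ and the pseudoconvex behavior of $J^\Diamond$ at infinity. A careful arrangement of $\{a_2,\dots,a_\kappa\}$ away from the collar should also be used to ensure that the only differentials landing on $\mathbf{x}$ arise from the local bigon, so that the algebraic cancellation goes through cleanly. Once these analytic inputs are in place, vanishing of $c^p(W,\beta,\phi;h)$ follows and Theorem~\ref{thm: non-vanishing} completes the proof.
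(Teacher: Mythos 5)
Your plan hinges on showing that the full contact class $c^p(W,\beta,\phi;h)\in\widehat{HF}(W,\beta,\phi;h)$ vanishes, but that is not what the hypothesis gives you, and the reduction you propose does not go through. By Definition~\ref{defn: right-veering}, ``not strongly right-veering'' only provides \emph{some} exact Lagrangian $a\subset W$ with Legendrian boundary such that $[x_a]=0\in HF(W,h(a),a)$; this $a$ need not be a disk, need not be regular, and need not sit inside any Lagrangian basis. The step where you promote it to a basis element $a_1$ is precisely the unresolved distinction between strongly and weakly right-veering, which the paper records as an open question; your argument would therefore prove at most the weaker statement with ``weakly right-veering'' in the hypothesis. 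In addition, $[x_a]=0$ is an algebraic statement ($x_a=dz$ in the Floer complex of the pair $(h(a),a)$); it does not produce a single extra intersection point $y_1$ with an embedded bigon to $x_1$ in a collar, so the Riemann--Hilbert/bigon construction you outline is not the right mechanism --- that picture is special to the surface case of Honda--Kazez--Mati\'c and is not how the Floer-theoretic definition of right-veering is used here. A separate gap is the ``Weinstein-approximate'' step: a Liouville page need not admit (or be homotopic to) any Weinstein structure, so under the stated hypotheses $\widehat{HF}$ and its contact class may simply not be defined.

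The paper's proof avoids both issues by never passing through the global invariant. One takes the single exact Lagrangian $a$ furnished by the failure of strong right-veering and reruns the argument of Theorem~\ref{thm: non-vanishing} with the basis $\{a_1,\dots,a_\kappa\}$ replaced by $\{a\}$: the cobordism $(X_+,\Omega_+)$, the constant sections over $\overline{B}_+$ as in Lemma~\ref{lemma: constant}, and the gluing/compactness contradiction all work verbatim for a single exact Lagrangian, with $[x_a]=0$ used directly (the mechanics of exploiting $x_a=dz$ are as in the proof of Theorem~\ref{thm: vanishing}). No Lagrangian basis, no Weinstein structure, and no handleslide-type bookkeeping are needed. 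To salvage your route you would have to show both that not-strongly implies not-weakly right-veering and that Liouville pages can be replaced by Weinstein ones, neither of which is available.
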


Let $(W,\beta,\phi; h)$ be an open book decomposition for $(M,\xi)$.  It is unknown whether $(M,\xi)$ is overtwisted if $h$ is not strongly right-veering.  It is even not known whether $(M,\xi)$ is overtwisted whenever $h$ is a product of negative symplectic Dehn twists. For example, it is currently not known how to use the techniques of Casals-Murphy-Presas~\cite{CMP} or the overtwisted oranges of \cite{HH} to prove such contact structures are overtwisted.

\subsection{Variant of symplectic Khovanov homology}

Another class of non-Liouville fillable examples comes from analyzing a close cousin of symplectic Khovanov homology and the analog of the Plamenevskaya invariant of transverse links~\cite{Pl}. 

Let $W=W_{\kappa-1}$ be the $4$-dimensional Milnor fiber of the $A_{\kappa-1}$ singularity and let $\wt W=W_{2\kappa-1}$ be the Milnor fiber of the $A_{2\kappa-1}$ singularity obtained by capping off the $\kappa$ Lagrangian thimbles $a_1,\dots,a_\kappa$ emanating from the $\kappa$ critical points. (There are $\kappa$ thimbles as opposed to a Lagrangian basis with $\kappa-1$ elements, but technically there is no difference.) Let $h_\sigma$ be the monodromy on $W$ which descends to a braid $\sigma$ on the $2$-disk $D^2$ with $\kappa$ singular points via the standard Lefschetz fibration $\pi: W\to D^2$. Also let $\widehat\sigma$ be the braid closure of $\sigma$.  

The variant of symplectic Khovanov homology of interest is the $\F[\mathcal{A}]\llbracket\hbar,\hbar^{-1}]$-module 
$$\Kh (\widehat\sigma):=\widehat{HF}(W;h_\sigma),$$ 
where $\F$ is a field (e.g., $\F=\Z/2$ or $\Q$), $\F[\mathcal{A}]$ is a group ring over the abelian group $\mathcal{A}$ described in Section~\ref{subsection: symplectic Kh definitions}, and we are using the $\kappa$ thimbles $a_1,\dots,a_\kappa$.

\begin{thm} \label{thm: Kh is invariant of link}
	$\Kh (\widehat \sigma)$ is a link invariant, i.e., is independent of the choice of thimbles $\{a_1,\dots,a_\kappa\}$ and invariant under positive and negative Markov stabilizations.
\end{thm}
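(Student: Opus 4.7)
The plan is to deduce link invariance from Markov's theorem by verifying three separate invariance properties: (i) independence of the Lagrangian thimbles $\{a_1,\dots,a_\kappa\}$, (ii) invariance under braid conjugation $\sigma\mapsto\tau\sigma\tau^{-1}$ in $B_\kappa$, and (iii) invariance under positive and negative Markov stabilizations $\sigma\mapsto \sigma\tau_\kappa^{\pm 1}\in B_{\kappa+1}$. Together with Markov's theorem these imply that $\Kh(\widehat\sigma)$ depends only on the oriented link type of $\widehat\sigma$. Steps (i) and (ii) are relatively formal consequences of Theorem~\ref{thm: invariance or lack thereof} together with the extra symmetry present in this Milnor-fiber setting, whereas step (iii) is the main obstacle.

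For (i), I would argue that any two ordered systems of thimbles $\{a_1,\dots,a_\kappa\}$ for the standard Lefschetz fibration $\pi:W_{\kappa-1}\to D^2$ are related by a sequence of Hurwitz moves on the distinguished paths in $D^2$. A single Hurwitz move replaces $(a_i,a_{i+1})$ by $(\tau_{\wt a_i}(a_{i+1}),a_i)$, where $\tau_{\wt a_i}$ is the symplectic Dehn twist around the sphere $\wt a_i\subset \wt W$. Because the braid group $B_\kappa$ acts on $W_{\kappa-1}$ by compactly supported symplectomorphisms realizing these Dehn twists, one can transport the thimbles by this action and then use Theorem~\ref{thm: invariance or lack thereof}(I3) to absorb the resulting conjugation of $h_\sigma$, producing a quasi-isomorphism of the corresponding Floer complexes. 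I would write this as an explicit continuation-type map and check it is a quasi-isomorphism by degenerating to a product almost complex structure in the relevant handle region.

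For (ii), note that $h_{\tau\sigma\tau^{-1}}=h_\tau h_\sigma h_\tau^{-1}$ in the symplectic mapping class group of $W_{\kappa-1}$. Applying $h_\tau$ to the Lagrangian basis $\{a_1,\dots,a_\kappa\}$ produces a new basis whose monodromy is $h_\sigma$, so step (i) followed by (I3) yields $\Kh(\widehat{\tau\sigma\tau^{-1}})\cong \Kh(\widehat\sigma)$.

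Step (iii) is where the real work lies, and I expect it to be the principal technical obstacle. A stabilization $\sigma\mapsto\sigma\tau_\kappa^{\pm 1}$ enlarges $(W_{\kappa-1},h_\sigma,\{a_1,\dots,a_\kappa\})$ to $(W_\kappa,h_\sigma\cdot\tau_{\wt a_{\kappa+1}}^{\pm 1},\{a_1,\dots,a_{\kappa+1}\})$, where the new thimble $a_{\kappa+1}$ is supported near the added critical point and $\tau_{\wt a_{\kappa+1}}$ is the Dehn twist around the capped Lagrangian sphere. The plan is to isotope the new thimble $a_{\kappa+1}$ and its image under $h_\sigma\cdot\tau_{\wt a_{\kappa+1}}^{\pm 1}$ into a small neighborhood of $\wt a_{\kappa+1}$, apply a neck-stretching argument along the boundary of this neighborhood, and show that the resulting curve count identifies the stabilized complex with the original one, up to an acyclic summand. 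Concretely, for a negative stabilization one expects the extra generators involving $\wt a_{\kappa+1}$ to cancel in pairs via a short exact sequence of complexes, while for a positive stabilization the analogous argument uses the exact triangle for a positive Dehn twist. This mirrors the strategy of Seidel--Smith for symplectic Khovanov homology, but must be implemented in the cylindrical $\widehat X$ model of Section~\ref{section: defn of homology groups}; the main analytic input is a transversality and Gromov-compactness statement for pseudoholomorphic curves in the neck region adjacent to $\wt a_{\kappa+1}$, together with a moduli-count lemma identifying the surviving generators with those of $\widehat{HF}(W_{\kappa-1};h_\sigma)$.
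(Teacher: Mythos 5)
The main gap is in your step (i), which carries the whole argument. You treat independence of the thimbles as a ``relatively formal'' consequence of Theorem~\ref{thm: invariance or lack thereof} plus the braid-group action, but (I3) gives invariance only under isotopies of $h$ inside $\op{Symp}(W,\bdry W,d\beta)$, not under conjugation. Transporting the thimbles by the compactly supported Dehn twist realizing a Hurwitz move and pulling everything back by that symplectomorphism converts the change of basis into the conjugation $h_\sigma\mapsto \tau^{-1}h_\sigma\tau$, which in general lies in a different symplectic isotopy class (it is a conjugate braid), so there is nothing for (I3) to absorb; undoing the conjugation while keeping the same thimbles is precisely the thimble-independence you are trying to prove, so the reduction is circular. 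Indeed, as remarked after Theorem~\ref{thm: invariance or lack thereof}, $\widehat{HF}$ is \emph{not} expected to be handleslide invariant in general; in this Milnor-fiber setting the paper proves arc-slide invariance (Theorem~\ref{thm: quasi-isomorphism}) by Ozsv\'ath--Szab\'o-style triangle maps $\Phi=\mu_2(\bs\Xi\otimes\cdot)$ and $\Psi=\mu_2'(\bs\Theta\otimes\cdot)$, and the composite is only chain homotopic to $\hbar^2\, p(\hbar)\cdot\op{id}$. Showing the constant term of $p(\hbar)$ is $\pm 1$ is the hard annulus count through a generic point constraint (Theorem~\ref{thm: count}), which takes up most of Section~\ref{subsection: arc slides} (Lagrangian surgery \`a la Fukaya--Oh--Ohta--Ono, Clifford-torus model computations, neck stretching, involution constraints). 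This is also why the theorem is stated over $\F[\mathcal{A}]\llbracket\hbar,\hbar^{-1}]$ --- one must invert $\hbar$ --- and why the paper needs the finiteness statement (Lemma~\ref{lemma: finite for Khovanov}) and the identification of the $\mathcal{A}$-coefficients under an arc slide. None of this is visible in your plan, and a generic ``continuation-type map checked by degenerating to a product almost complex structure'' will not produce it.

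On (iii), your outline is plausible in spirit but is not what the paper does and leaves the actual content unspecified: Theorem~\ref{thm: stabilization invariance} establishes a direct isomorphism of cochain complexes, matching generators $\{x_1\}\cup {\bf y}'\mapsto\{x_0,x_1\}\cup{\bf y}'$ and ${\bf y}\mapsto\{\Theta_{01}\}\cup{\bf y}$, and identifies the mixed differentials by gluing onto a model quadrilateral count in $\C\times T^*S^1$ (Lemma~\ref{lemma: model calc}); there is no acyclic summand to cancel and no appeal to an exact triangle for the positive Dehn twist, and the mechanism you propose would itself require curve-count lemmas you have not formulated. Finally, your step (ii) is fine but only because conjugation invariance is subsumed by thimble-independence, i.e.\ by the step where your argument has the gap.
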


We will briefly discuss the relationship between $\Kh(\widehat \sigma)$ and $Kh_{\op{symp}}(\widehat\sigma)$ in Section~\ref{subsection: spectral sequence}, where $Kh_{\op{symp}}(\widehat\sigma)$ is the cylindrical formulation of symplectic Khovanov homology which was shown by Mak-Smith~\cite{MS} to be the same as the original Seidel-Smith definition~\cite{SS}.
In a forthcoming paper, we will investigate a generalization of $\Kh(\widehat \sigma)$ and $Kh_{\op{symp}}(\widehat\sigma)$ to braids $\sigma$ in surface bundles over $S^1$.

Next we consider the contact class $c^0(W;h_\sigma)\in  \Kh (\widehat \sigma)=\widehat{HF}(W;h_\sigma)$.

\begin{thm} \label{thm: invariance of contact class in Kh}
The contact class $c^0(W;h_\sigma)$ is an invariant of $\sigma$ up to positive Markov stabilizations.
\end{thm}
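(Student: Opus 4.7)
The plan is to leverage Theorem~\ref{thm: Kh is invariant of link}, which provides an isomorphism $\Phi_+\colon \Kh(\widehat\sigma)\xrightarrow{\sim}\Kh(\widehat{\sigma\tau_\kappa})$ induced by positive Markov stabilization, and to show that $\Phi_+$ sends $c^0(W_{\kappa-1};h_\sigma)$ to $c^0(W_\kappa;h_{\sigma\tau_\kappa})$. Invariance under Markov conjugation and under the choice of thimbles is already absorbed into that theorem together with the isotopy invariance (I3) of Theorem~\ref{thm: invariance or lack thereof}, so the content lies entirely in the stabilization move. Geometrically, the stabilization attaches a Weinstein critical handle to $W_{\kappa-1}$ to produce $W_\kappa$ with one new thimble $a_{\kappa+1}$, and the new monodromy is $h_{\sigma\tau_\kappa}=\tau_S\circ h_\sigma$, where $\tau_S$ is a positive Dehn twist along the Lagrangian vanishing sphere $S\subset W_\kappa$ that meets $\tilde a_{\kappa+1}$ transversely in a single point $x_{\kappa+1}$.

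By the construction in Section~\ref{section: defn of contact class}, the contact generator before stabilization is a specific tuple $\mathbf{x}=(x_1,\dots,x_\kappa)$ coming from the open book, while after stabilization it is $(\mathbf{x},x_{\kappa+1})$, with the same first $\kappa$ coordinates and the canonical intersection $x_{\kappa+1}$ produced by $\tau_S$ in the last slot. I would then adapt the open-book stabilization argument of Honda-Kazez-Mati\'c~\cite{HKM2}: via a neck-stretching or action estimate localized to a Weinstein neighborhood of the attached handle, force every generator contributing to the stabilized complex to contain $x_{\kappa+1}$ in the last coordinate, and show that the restriction of the differential to such generators matches the differential on the unstabilized complex. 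This exhibits $\Phi_+$ at the chain level as $\mathbf{y}\mapsto (\mathbf{y},x_{\kappa+1})$, under which the old contact generator plainly maps to the new one.

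The main obstacle is the holomorphic curve analysis needed to confine the relevant $J^\Diamond$-holomorphic curves away from the new handle. Concretely, one requires a local model for the pair $(\tilde a_{\kappa+1},\tau_S(\tilde a_{\kappa+1}))$ near $S$ in which a maximum-principle or positivity-of-intersections argument---adapted to the cylindrical setup of Section~\ref{section: defn of homology groups} in the spirit of Lipshitz~\cite{Li}---rules out any curve that leaves the canonical intersection $x_{\kappa+1}$. Once this local rigidity is in place, matching generators and differentials yields the desired equality $\Phi_+(c^0(W_{\kappa-1};h_\sigma))=c^0(W_\kappa;h_{\sigma\tau_\kappa})$. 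It is worth noting that one should not expect the analogous argument to go through for \emph{negative} stabilization: in that case the new intersection pattern at the stabilization handle acquires a cancelable pair, and the canonical generator need no longer be a cycle, in direct analogy with the behavior of Plamenevskaya's invariant~\cite{Pl} in the 3-dimensional setting.
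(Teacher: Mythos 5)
There is a genuine gap, and it sits exactly where you declare the problem solved. You assert that invariance under the choice of thimbles is ``already absorbed'' into Theorem~\ref{thm: Kh is invariant of link} together with (I3) of Theorem~\ref{thm: invariance or lack thereof}, so that only the stabilization move needs work. But Theorem~\ref{thm: Kh is invariant of link} only produces quasi-isomorphisms of the groups $\Kh(\widehat\sigma)$ under arc slides; it says nothing about where those maps send the distinguished class, and (I3) concerns isotopies of $h$, not changes of the thimble basis. Different thimble bases are related by handleslides, and the paper's proof of this theorem spends essentially all of its effort on precisely this point: it takes the handleslide map $\Phi=\mu_2(\bs\Xi\otimes-)$ of Equation~\eqref{eqn: chain map Phi} (from Theorem~\ref{thm: quasi-isomorphism}) and proves $\Phi({\bf x})={\bf x}'$ by positioning the arcs $\wt h_\sigma(\wt a_i)$ ``locally to the right'' of $\wt\gamma_i,\wt\gamma_i'$ near $z_{\kappa+i}$ and running a holomorphic-triangle count in the local model $(z_1,z_2)\mapsto z_1z_2$ to force every contributing triangle to be a union of small triangles $\Xi_i,x_i,x_i'$. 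Without an argument of this kind your class is not yet an invariant of $\sigma$ at all; moreover the stabilization step itself uses handleslide invariance to normalize the stabilizing arc $c$, so this step cannot be bypassed.

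Your treatment of the stabilization move is also structurally off. The chain isomorphism is not ${\bf y}\mapsto({\bf y},x_{\kappa+1})$, and it is false that a neck-stretching or maximum-principle argument confines all generators to contain the new contact point: since $c$ ends at $z_1$, the new thimble $a_0$ meets $h'_\sigma\circ\tau_c(a_1)$ in the point $\Theta_{01}$, and the stabilized complex has two families of generators, $\{x_0,x_1\}\cup{\bf y}'$ and $\{\Theta_{01}\}\cup{\bf y}$, as in the map $\Phi_s$ of Equation~\eqref{eqn: another Phi} (Theorem~\ref{thm: stabilization invariance}); matching the differentials requires gluing against the explicit quadrilateral count of Lemma~\ref{lemma: model calc}, not a confinement argument near the handle. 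What does survive is the conclusion you need for the contact generator: once Theorem~\ref{thm: stabilization invariance} is in place, $\Phi_s$ sends ${\bf x}$ to $\{x_0\}\cup{\bf x}$ by definition, so the positive-stabilization half of the statement is indeed immediate --- in the paper it is the handleslide half, which you skipped, that carries the content. (Your closing remark about negative stabilizations is consistent with the paper: $\Kh$ itself is invariant under both signs, but only positive stabilizations preserve the contact class.)
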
 

The contact class is denoted $\psi^\sharp(\widehat\sigma)$ by analogy with the Plamenevskaya invariant $\psi(\widehat\sigma)$ of a transverse link \cite{Pl}.

There is a large literature devoted to the Plamenevskaya transverse link invariant. In particular, Plamenevskaya~\cite{Pl} showed that:

\begin{thm}[Plamenevskaya~\cite{Pl}]
If the braid closure $\widehat \sigma$, viewed as a transverse link in the standard contact $\R^3$, admits a braid representative which is not right-veering, then $\psi(\widehat \sigma)=0$.
\end{thm}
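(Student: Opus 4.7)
The plan is to reduce the theorem to the Ozsv\'ath-Szab\'o vanishing theorem for overtwisted contact structures via the double branched cover, by identifying $\psi(\widehat\sigma)$ with a contact invariant and showing that non-right-veering of the braid forces overtwistedness of the corresponding contact structure. Concretely, the braid $\sigma\in B_n$ acts as a mapping class on the $n$-punctured disk $D_n$; taking the double cover of $D_n$ branched over the $n$ punctures yields a compact surface $\Sigma$ with boundary, and $\sigma$ lifts canonically to a mapping class $\wt\sigma$ of $\Sigma$. The open book $(\Sigma,\wt\sigma)$ supports a contact structure $\xi_\sigma$ on the double cover $M_\sigma$ of $S^3$ branched along $\widehat\sigma$, and work of Plamenevskaya-Roberts (via the Ozsv\'ath-Szab\'o spectral sequence from Khovanov homology to $\widehat{HF}(-M_\sigma)$) identifies $\psi(\widehat\sigma)$ with the Ozsv\'ath-Szab\'o contact class $c(\xi_\sigma)\in \widehat{HF}(-M_\sigma)$.

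Assuming the chosen representative $\sigma$ is not right-veering on $D_n$, I would pick an essential properly embedded arc $\alpha\subset D_n$ whose image $\sigma(\alpha)$ lies strictly to the left of $\alpha$ at a common endpoint on $\partial D_n$, selecting $\alpha$ so that its preimage $\wt\alpha\subset\Sigma$ is a connected essential arc --- for instance an arc running from $\partial D_n$ to one of the punctures, whose lift is a single properly embedded arc in $\Sigma$. Since the branched cover restricts to an unbranched orientation-preserving double cover on a collar of the boundary, the leftward displacement of $\sigma(\alpha)$ at $\partial D_n$ lifts to a leftward displacement of $\wt\sigma(\wt\alpha)$ at $\partial\Sigma$, so $\wt\sigma$ is not right-veering. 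Honda-Kazez-Mati\'c then forces $\xi_\sigma$ to be overtwisted, and the Ozsv\'ath-Szab\'o vanishing theorem yields $c(\xi_\sigma)=0$, hence $\psi(\widehat\sigma)=0$.

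The main obstacle is the lifting step: one has to arrange that $\alpha$'s preimage is a single connected arc (rather than a pair of arcs exchanged by the deck involution, which would not immediately record leftward behavior at either lifted endpoint) and to keep track of orientations on $\partial\Sigma$ so that ``leftward'' downstairs corresponds to ``leftward'' upstairs at the appropriate boundary component of $\Sigma$. Once this is in place, the remainder of the argument is a straightforward assembly of standard ingredients: the supporting open book / contact structure correspondence, the HKM tightness criterion via right-veering monodromy, and the OS vanishing theorem for overtwisted contact structures.
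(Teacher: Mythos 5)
This statement is not proved in the paper at all: it is quoted as background from Plamenevskaya's work, whose argument is a direct chain-level computation in the Khovanov complex (using the left-veering arc, respectively the negative stabilization, to exhibit the distinguished cycle $\psi$ explicitly as a boundary), not a branched double cover argument. So your proposal must be judged on its own, and it has a structural gap that I do not see how to repair. The Roberts/Baldwin--Plamenevskaya result you invoke says that under the Ozsv\'ath--Szab\'o spectral sequence from (reduced, $\Z/2$) Khovanov homology of the mirror to $\widehat{HF}$ of the branched double cover, the class of $\psi(\widehat\sigma)$ is sent to the contact invariant $c(\xi_\sigma)$. This gives the implication $\psi(\widehat\sigma)=0 \Rightarrow c(\xi_\sigma)=0$ (equivalently, $c(\xi_\sigma)\neq 0 \Rightarrow \psi(\widehat\sigma)\neq 0$), which is the opposite of what you need: from overtwistedness of $\xi_\sigma$ and the Ozsv\'ath--Szab\'o vanishing theorem you only learn that the \emph{image} of $\psi$ in the $E_\infty$ page vanishes, and since the map from Khovanov homology to $\widehat{HF}$ of the cover typically has a large kernel, this says nothing about $\psi$ itself. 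Indeed, the whole point of examples such as (B1)--(B3) discussed in this paper is that $\psi$ carries information not visible in the branched double cover.

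There is a second, independent gap: the lifting step is not routine and, to my knowledge, not established. The arcs relevant to right-veering of a braid are arcs of the marked disk, and their preimages in the double branched cover pass through ramification points; minimal position and essential intersection are preserved under \emph{unbranched} covers, but not automatically under branched ones, so ``$\sigma(\alpha)$ lies to the left of $\alpha$'' downstairs does not immediately yield a left-veering properly embedded arc for the lifted monodromy upstairs. In particular, the assertion that a non-right-veering braid has an overtwisted branched double cover is not a known consequence of Honda--Kazez--Mati\'c and should not be taken for granted. Even if this lifting step were carried out, the argument would still founder on the direction of the spectral sequence comparison above; any correct proof has to work directly in the Khovanov chain complex (as Plamenevskaya does, and as the analogous vanishing results for $\psi^\sharp$ in this paper are proved at the level of holomorphic-curve counts), rather than passing through $c(\xi_\sigma)$.
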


There are also many examples of braids $\sigma$ satisfying the following:
\be
\item[(B1)] $\sigma$ is right-veering;
\item[(B2)] the invariant $\psi(\widehat \sigma)$ vanishes; and
\item[(B3)] the open book corresponding to $\sigma$ lifted to the branched double cover carries a tight contact structure.
\ee
The first examples are due to Baldwin-Plamenevskaya~\cite[Example~7.8]{BP} and there are others e.g., due to Hubbard-Lee~\cite{HL}.

In this paper we analyze the Baldwin-Planenevskaya examples $\sigma_{BP,\ell}$, which are $3$-braids of the form $\sigma_1^{-\ell}\sigma_2\sigma_1^2\sigma_2$, where $\ell\geq 3$ and $\sigma_i$ is a half twist about strands $i$ and $i+1$.  Let $h_{BP,\ell}:=h_{\sigma_{BP,\ell}}\in \op{Symp}(W,\bdry W)$ be the corresponding symplectomorphism.  In Section~\ref{section: BP} we prove the following:

\begin{thm}  \label{thm: BP}
	$\psi^\sharp(\widehat\sigma_{BP,\ell})=0$ for $\ell\ge 3$.  Hence the contact manifolds supported by $(W;h_{BP,\ell})$ are not Liouville fillable and satisfy the Weinstein conjecture. 
\end{thm}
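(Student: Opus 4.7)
My plan is to reduce the theorem to a chain-level calculation: once I show that $\psi^\sharp(\widehat\sigma_{BP,\ell}) = c^0(W; h_{BP,\ell})$ vanishes in $\Kh(\widehat\sigma_{BP,\ell}) = \widehat{HF}(W; h_{BP,\ell})$, the two conclusions about Liouville non-fillability and the Weinstein conjecture follow immediately from Theorem~\ref{thm: non-vanishing}. Thus the entire task is to exhibit $c^0$ as an explicit boundary in the complex defining $\widehat{HF}(W; h_{BP,\ell})$.

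I will work in the page description from Section~\ref{section: reformulation of HF}, using the three thimbles $a_1, a_2, a_3$ of the $A_2$ Milnor fiber $W = W_2$ together with the standard Lefschetz fibration $\pi : W \to D^2$, under which each $a_i$ projects injectively to an arc joining the $i$-th critical value to $\partial D^2$. The first step is to track the curves $h_{BP,\ell}(a_i)$ by applying the factors of $\sigma_{BP,\ell} = \sigma_1^{-\ell}\sigma_2\sigma_1^2\sigma_2$ one at a time as half twists of pairs of critical values downstairs. Because $\ell \geq 3$, the negative twist factor $\sigma_1^{-\ell}$ forces the image arcs to wind many times around the first critical value, creating numerous extra intersection tuples beyond those that would appear for the simpler braid $\sigma_2\sigma_1^2\sigma_2$.

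The second step is to identify the generator representing $c^0$: this is the ``nearby" intersection tuple read off from an identity perturbation in the sense of Honda-Kazez-Mati\'c~\cite{HKM2}, which in the downstairs picture corresponds to the endpoints of the arcs on $\partial D^2$. I then need to produce a chain $\mathbf{y}$ in the Floer complex whose boundary contains $c^0$ with a nonzero coefficient. Since the thimbles project injectively, the holomorphic disk count upstairs reduces essentially to an immersed polygon count in $D^2$, and I expect to find a small bigon or triangle whose corners are at $c^0$ and at an intersection tuple created by the $\sigma_1^{-\ell}$ factor; this polygon provides the desired $\mathbf{y}$.

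The main obstacle will be the combinatorial bookkeeping: one must rule out other contributions to $\langle \partial \mathbf{y}, c^0 \rangle$ that could cancel the coefficient in $\F[\mathcal{A}]\llbracket\hbar,\hbar^{-1}]$, and ensure that $\mathbf{y}$ is itself not the boundary of something that would restore $c^0$. I expect the case $\ell = 3$ to serve as the model computation, with $\ell \geq 4$ following either by induction on the number of negative twists or by transporting parallel polygon configurations into the more tangled picture.
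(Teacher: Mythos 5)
Your overall framing agrees with the paper's: everything reduces to exhibiting the contact class as an explicit chain-level boundary, with $\ell=3$ as the base case (the paper disposes of $\ell\geq 4$ via the Lefschetz cobordism map of Lemma~\ref{lemma: Lefschetz}, which appends negative Dehn twists to the monodromy and carries contact class to contact class), and then Theorem~\ref{thm: non-vanishing} gives both conclusions. The genuine gap is the step where you assert that, because the thimbles project injectively, ``the holomorphic disk count upstairs reduces essentially to an immersed polygon count in $D^2$,'' and that a small bigon or triangle at $c^0$ will furnish the primitive. Projecting to the base only determines the weighted domain (cf.\ Claim~\ref{claim: images under projection same}); it does not determine the count. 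The fibers are $2$-dimensional, most intersection tuples arise from Morse--Bott $S^1$-families (hence the $\check y_i$ versus $\hat y_i$ generators), and the number of index-one curves lying over a fixed base polygon can be $0$, $1$, or $2$ depending on point constraints in the fiber: for instance, two holomorphic disks pass through a generic point of the Clifford torus in the local model $(z_1,z_2)\mapsto z_1z_2$ (Remark~\ref{rmk: signs}). Pinning down these counts is where the real work of Section~\ref{section: BP} lies --- it uses the torus intersection argument of Lemma~\ref{lemma: count}, the Lagrangian surgery theorem of Fukaya--Oh--Ohta--Ono (Theorem~\ref{thm: Lagrangian surgery}), and neck-stretching/gluing model computations --- and several candidate base polygons are ruled out only by Maslov index computations that depend on the hat/check choice, none of which a combinatorial polygon count in $D^2$ can see.

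Moreover the primitive does not have the shape you predict. There is no bigon or triangle from a generator created by $\sigma_1^{-\ell}$ hitting the contact class; the paper's primitive is the two-term chain $\{\hat y_1,\hat y_2,x_3\}+\{\check y_1,z_2,x_3\}$, whose differential yields $\hbar\,\{x_1,x_2,x_3\}$ only after the cross terms $\{\check r_2,x_2\}$ and $\{\check t_2,x_1\}$ cancel mod $2$ between the two summands. The relevant curves are $4$-punctured disks, i.e.\ $\chi=\kappa-1$, which is exactly why the output carries a factor of $\hbar$; one then needs invertibility of $\hbar$ in the coefficient ring $\F[\mathcal{A}]\llbracket\hbar,\hbar^{-1}]$ to conclude $\psi^\sharp(\widehat\sigma_{BP,3})=0$. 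Your proposal never engages with the $\hbar$-grading, and a literal small bigon (which would contribute without $\hbar$) does not exist here --- indeed, by the construction of the contact class no nonconstant curve has an $x_i$ at its positive end, so the vanishing must come from a subtler configuration. So while the reduction scheme is right, the heart of the proof --- the nontrivial curve counts in the Milnor fiber and the specific two-term primitive with its mod $2$ cancellation --- is missing from your plan.
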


Note that by Acu~\cite{Acu}, since $W=W_{\kappa-1}$ is planar, all contact structures supported by an open book decomposition with page $W_{\kappa-1}$ satisfy the Weinstein conjecture, so the latter result is not new. It should be noted that we have not determined whether the contact manifolds supported by $(W;h_{BP,\ell})$ are tight.

\begin{q}
Is $(W;h_\sigma)$ tight if $\sigma$ satisfies (B1) and (B3)?
\end{q}

\s\n
{\em Acknowledgments.} VC thanks Baptiste Chantraine and Paolo Ghiggini for their interest and support. KH is grateful to Yi Ni and the Caltech Mathematics Department for their hospitality during his sabbatical in 2018. KH also thanks John Baldwin and Otto van Koert for helpful correspondence and Tianyu Yuan for pointing out some errors.

\section{Preliminaries} \label{section: preliminaries}

\subsection{Liouville and Weinstein cobordisms and homotopies} \label{subsection: Weinstein cobordisms}

In this subsection we review Liouville and Weinstein cobordisms and homotopies. The reader is referred to \cite{CE} for more details.

\begin{defn}
A {\em Liouville cobordism} $(W,\beta,Y)$ is a triple consisting of:
\begin{itemize}
\item a compact $2n$-dimensional manifold $W$ with boundary
$$\bdry W=\bdry_+W \sqcup \bdry_-W;$$
\item a $1$-form $\beta$ on $W$ such that $d\beta$ is symplectic; and
\item a vector field $Y$ which satisfies $i_Y d\beta=\beta$, is positively transverse to $\bdry_+W$, and is negatively transverse to $\bdry_-W$.
\end{itemize}
The $1$-form $\beta$ is called the {\em Liouville $1$-form} and the vector field $Y$ the {\em Liouville vector field}.
A {\em Liouville domain} $(W,\beta,Y)$ is a Liouville cobordism with $\bdry_- W=\varnothing$.
\end{defn}

We will sometimes omit $Y$ or write $(W,d\beta)$.

\begin{defn} $\mbox{}$
\begin{enumerate}
\item A {\em Weinstein cobordism} $(W,\beta,Y,\phi)$ is quadruple consisting of Liouville cobordism $(W,\beta,Y)$ and a Morse function $\phi:W\to\R$ (called a {\em compatible Morse function}) such that $Y$ is gradient-like for $\phi$ and $\partial W$ is a union of regular level sets of $\phi$.
\item A {\em Weinstein domain} $(W,\beta,Y,\phi)$ is a Weinstein cobordism with $\bdry_-W=\varnothing$.
\item If the critical points of $\phi:W\to\R$ are Morse or birth-death type, then $(W,\beta,Y,\phi)$ is a {\em generalized Weinstein cobordism/domain}, as appropriate.
\end{enumerate}
\end{defn}

{\em In this paper we assume that Weinstein domains are connected, unless stated otherwise.}

\begin{defn}  A {\em trivial Weinstein cobordism} is
$$(W=[a,b]\times Z^{2n-1}, \beta=e^\sigma \beta_0,Y=\bdry_\sigma,\phi(\sigma,x)=\sigma),$$
where $(\sigma,x)$ are coordinates on $[a,b]\times Z$ and $\beta_0$ is a contact form on $Z$.
\end{defn}

Given a Weinstein domain $(W,\beta,Y,\phi)$ with connected boundary, there exists a collar neighborhood $[-\varepsilon+\phi(\bdry W),\phi(\bdry W)]\times \bdry W$ which is a trivial Weinstein cobordism, i.e., $\beta=e^\sigma\beta|_{\bdry W}$, $Y=\bdry_\sigma$, and $\phi(\sigma,x)=\sigma$.

\begin{defn}
Let $W$ be a compact, oriented $2n$-dimensional manifold.
\begin{enumerate}
\item A {\em Liouville homotopy} on $W$ is a $1$-parameter family of Liouville domain structures $(\beta_t,Y_t)$, $t\in[0,1]$, on $W$.
\item A {\em Weinstein homotopy} on $W$ is a $1$-parameter family of generalized Weinstein domain structures $(\beta_t,Y_t,\phi_t)$, $t\in[0,1]$, on $W$.
\end{enumerate}
\end{defn}

\begin{defn} $\mbox{}$
\begin{enumerate}
\item The {\em completion $(\widehat{W} ,\widehat{\beta})$} of a Liouville domain $(W, \beta )$ is obtained by gluing $([0,\infty) \times \partial W, e^\sigma \beta \vert_{\partial W} )$ to $(W,\beta)$ along $\partial W$. Here $\sigma$ denotes the $[0,\infty)$-coordinate on $[0,\infty) \times \partial W$.
\item The {\em completion $(\widehat{W} ,\widehat{\beta},\widehat\phi)$} of a Weinstein domain $(W,\beta,\phi)$ additionally requires that the extension $\widehat{\phi}$ of $\phi$ be proper and have strictly positive derivative with respect to the $\sigma$-coordinate.
\end{enumerate}
\end{defn}

Homotopies of Liouville or Weinstein domain can easily be extended to Liouville or Weinstein homotopies
of their completions.

\begin{defn}
Let $[(W,\beta,\phi)]$ be the equivalence class of all Weinstein structures $(W,\beta',\phi')$ that are Weinstein homotopic to a Weinstein domain $(W,\beta,\phi)$. (In particular we are assuming that $\phi'$ is a Morse function.)
The {\em genus} of $[(W,\beta,\phi)]$, denoted by $g(W,\beta,\phi)$ or $g([(W,\beta,\phi)])$, is $1/2$ of the minimum number of critical (= $n$-dimensional) handles of $\phi'$ over all $(W,\beta',\phi')\in [(W,\beta,\phi)]$.
\end{defn}

\begin{defn} $\mbox{}$
\begin{enumerate}
\item A Weinstein structure $(W,\beta,Y,\phi)$ is {\em generic} if, for all pairs $(p,q)$ of critical points of $\phi$, the ascending submanifold of $p$ and the descending submanifold of $q$, both with respect to $Y$, intersect transversely.
\item A Weinstein homotopy $(W,\beta_t,Y_t,\phi_t)$, $t\in[0,1]$, is {\em trivial} if $\phi_t$ is Morse and $(W,\beta_t,Y_t,\phi_t)$ is generic for all $t\in[0,1]$.
\end{enumerate}
\end{defn}

In particular, if $(W,\beta,\phi)$ is generic, then there are no trajectories of $Y$ between two critical points of index $n$.

Let $(W,\beta,\phi)\in [(W,\beta,\phi)]$ and $g=g([W,\beta,\phi])$. If $(W,\beta,\phi)$ is generic, then $\phi$ has $\kappa\geq 2g$ critical handles and there is a collection $\{a_1,\dots,a_\kappa\}$ of properly embedded, pairwise disjoint Lagrangian disks with Legendrian boundary on $\bdry W$ that represent the cocores of the $\kappa$ critical handles and that are the $n$-dimensional unstable manifolds of the Liouville vector field $Y$. We will refer to $\{a_1,\dots,a_\kappa\}$ as the {\em basis of Lagrangian disks for $(W,\beta,\phi)$.}

We also state the following useful lemma (cf.\  \cite[Proposition 11.8]{CE}), which is an easy consequence of the Moser technique:

\begin{lemma} \label{lemma: Liouville homotopy}
Let $(W,\beta_t)$, $t\in[0,1]$, be a Liouville homotopy.
\begin{enumerate}
\item If $\beta_t=\beta_0$, $t\in[0,1]$, on a neighborhood of $\bdry W$, then there exists a $1$-parameter family of diffeomorphisms $g_t:W\stackrel\sim\to W$, $t\in[0,1]$, with $g_0=\op{id}$ and $g_t|_{\bdry W}=\op{id}$ such that $g_t^* \beta_t-\beta_0$ is exact for all $t\in[0,1]$.
\item If we do not assume that $\beta_t$ is fixed along $\bdry W$, then there exists a $1$-parameter family of diffeomorphisms $\widehat{g}_t$ between the completions $(\widehat{W} , \widehat{\beta_0} )$ and $(\widehat{W},\widehat{\beta_t})$ such that $\widehat{g}_t^* \widehat\beta_t-\widehat\beta_0$ is exact for all $t\in[0,1]$ and is equal to zero outside a compact set.
\end{enumerate}
\end{lemma}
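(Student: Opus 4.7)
The plan is to apply the Moser trick in both parts.

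For part (1), define the time-dependent vector field $X_t$ on $W$ by
\[ \iota_{X_t}\, d\beta_t = -\dot\beta_t, \]
which is uniquely solvable since $d\beta_t$ is non-degenerate. By hypothesis $\dot\beta_t\equiv 0$ near $\bdry W$, so $X_t$ vanishes in that neighborhood and its flow $g_t$ is defined for all $t\in[0,1]$, with $g_0=\op{id}$ and $g_t|_{\bdry W}=\op{id}$. The Cartan-formula computation
\[ \tfrac{d}{dt}(g_t^*\beta_t) = g_t^*\bigl(d(\iota_{X_t}\beta_t) + \iota_{X_t}d\beta_t + \dot\beta_t\bigr) = d\bigl(g_t^*\iota_{X_t}\beta_t\bigr) \]
shows that $\tfrac{d}{dt}(g_t^*\beta_t - \beta_0)$ is exact, and integrating in $t$ starting from $g_0^*\beta_0-\beta_0 = 0$ yields $g_t^*\beta_t - \beta_0 = df_t$ with $f_t := \int_0^t g_s^*\iota_{X_s}\beta_s\,ds$.

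For part (2), I would run the identical construction on the completion $\widehat W$, defining $X_t$ by $\iota_{X_t}d\widehat\beta_t = -\dot{\widehat\beta_t}$; the same Moser identity gives $\widehat g_t^*\widehat\beta_t - \widehat\beta_0 = d\widehat f_t$ once the flow $\widehat g_t$ is known to exist. The key local calculation is on the cylindrical end $[0,\infty)\times\bdry W$, where $\widehat\beta_t = e^\sigma\alpha_t$ with $\alpha_t := \beta_t|_{\bdry W}$. Writing $X_t = a\,\bdry_\sigma + Z$ with $Z$ tangent to $\bdry W$ and expanding the Moser equation, the $d\sigma$-component forces $\alpha_t(Z) = 0$, while the remaining equation $a\,\alpha_t + \iota_Z d\alpha_t = -\dot\alpha_t$ uniquely determines $a$ (from the Reeb component) and $Z\in\ker\alpha_t$ (using non-degeneracy of $d\alpha_t|_{\ker\alpha_t}$). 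In particular $a$ and $Z$ are $\sigma$-independent, so the $\bdry_\sigma$-component of $X_t$ is bounded uniformly in $t\in[0,1]$; this implies completeness of $\widehat g_t$ on $\widehat W$.

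It remains to check that $\widehat f_t$ vanishes outside a compact set. On the cylindrical end one computes
\[ \iota_{X_t}\widehat\beta_t = e^\sigma\alpha_t(X_t) = e^\sigma\bigl(a\,\alpha_t(\bdry_\sigma) + \alpha_t(Z)\bigr) = 0, \]
since $\alpha_t(\bdry_\sigma) = 0$ and $\alpha_t(Z) = 0$ by construction. Hence the integrand defining $\widehat f_t$ vanishes on the portion of the end not swept into the compact region by the (uniformly bounded) flow of $X_s$, so $\widehat f_t$ is compactly supported and the exact form $\widehat g_t^*\widehat\beta_t - \widehat\beta_0 = d\widehat f_t$ vanishes outside a compact set. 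The main obstacle in the proof is confirming completeness of $X_t$ on the noncompact manifold $\widehat W$, which reduces precisely to the $\sigma$-independence computed above.
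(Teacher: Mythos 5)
Your proof is correct and takes exactly the route the paper intends: the paper gives no argument of its own, invoking ``the Moser technique'' with a citation to Cieliebak--Eliashberg, and your implementation — the Moser flow, with the $\sigma$-independence of the Moser field on the cylindrical end giving completeness, and the vanishing of $\iota_{X_t}\widehat\beta_t$ there giving a compactly supported primitive so that $\widehat g_t^*\widehat\beta_t-\widehat\beta_0$ vanishes outside a compact set — is the standard way to carry that out.
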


Lemma~\ref{lemma: Liouville homotopy}(2) says that homotopic Liouville domains have isomorphic completions.

Finally, if $(W,\beta)$ is a Liouville domain, then we denote the group of symplectomorphisms of $(W,d\beta)$ that restrict to the identity on $\bdry W$ by $\op{Symp}(W,\bdry W,d\beta)$.

\subsection{Capping of $(W,\beta,\phi;h)$}\label{subsection: extension}

In this subsection we explain how to extend a Weinstein domain $(W,\beta,\phi)$ to a Weinstein domain $(\wt W=H\cup W,\wt \beta,\wt \phi)$, called the {\em capping of $(W,\beta,\phi)$}, such that $\bdry_-H=\bdry_+ W$.

Let $N'(\bdry_+W)$ be a collar neighborhood $[-\varepsilon,0]\times \bdry_+ W\subset W$ of $\bdry_+ W=\{0\}\times \bdry_+ W$ with coordinates $(\sigma,x)$ such that
$$\phi:[-\varepsilon,0]\times \bdry W\to \R$$
is given by $(\sigma,x)\mapsto \sigma$ and $\beta=e^\sigma\beta|_{\bdry W}$, i.e., the Weinstein cobordism is trivial. The manifold $H$, called a {\em cap for $(W,\beta,\phi)$}, is obtained from the trivial Weinstein cobordism $N(\bdry_+W):=[0,\varepsilon]\times \bdry_+ W$ by attaching $\kappa$ critical handles $H_1,\dots,H_\kappa$ along $\{\varepsilon\}\times\bdry a_1,\dots, \{\varepsilon\}\times \bdry a_\kappa$. The pair $(\wt\beta|_H,\wt\phi|_H)$ is chosen so that the $Y$-descending submanifolds of the critical points of $\wt\phi|_H$ have boundary $\bdry a_1,\dots,\bdry a_\kappa$, where $Y$ is the Liouville vector field of $\wt\beta$. We write $\wt a_1,\dots,\wt a_\kappa\subset \wt W$ (called the {\em capping of $a_1,\dots, a_\kappa$}) for the pairwise disjoint embedded Lagrangian spheres obtained by capping off $a_1,\dots,a_\kappa$ and write $\wt {\bs a}=\wt a_1\cup\dots\cup \wt a_\kappa$.

Let $f:\wt W\to \R$ be a smooth function which satisfies the following:
\begin{enumerate}
\item $f$ has small $C^1$-norm;
\item $f$ is supported on a small neighborhood of $\wt {\bs a}$ which is identified with a neighborhood $U$ of the $0$-section of $\pi:T^*\wt{\bs a}\to \wt{\bs a} $; let $U_i$ be the component of $U$ which is a neighborhood of $\wt a_i$;
\item $f=\pi^* f_i$ on a smaller neighborhood $V_i\subset U_i$ of $\wt a_i$, where $f_i:\wt a_i\to\R$ is a Morse function with two critical points: a maximum on $\op{int}(a_i)$ and a minimum $x_i$ on $\wt a_i -a_i$;
\item $f=\sigma$ when restricted to $V_i\cap([-\varepsilon/2,\varepsilon/2]\times \bdry W)$.
\end{enumerate}
Let $X_f$ be the Hamiltonian vector field of $f$ given by $df=i_{X_f} d\wt \beta$.  We then modify $h\cup \op{id}|_H\in \op{Symp}(\wt W,\bdry \wt W,d\wt \beta)$ by composing with the Hamiltonian diffeomorphism which is the time-$1$ flow along $X_f$.  The result of the composition is written as $\wt h\in \op{Symp}(\wt W,\bdry \wt W,d\wt \beta)$.  Note that (4) implies that $\wt h(a_i)\subset W$ and that $\wt h(\bdry a_i)$ is a positive pushoff of $\bdry a_i$ with respect to the Reeb vector field for $\beta|_{\bdry W}$.  Also view the minimum $x_i$ of $f_i$ as an intersection point of $\wt a_i$ and $\wt h(\wt a_i)$.

See Figure~\ref{fig: handle-new} for the handle $H_i$, the Lagrangian submanifolds $\wt a_i$ and $\wt{h}(\wt a_i)$, and the intersection point $x_i$.

\begin{figure}[ht]
	\begin{overpic}[scale=0.6]{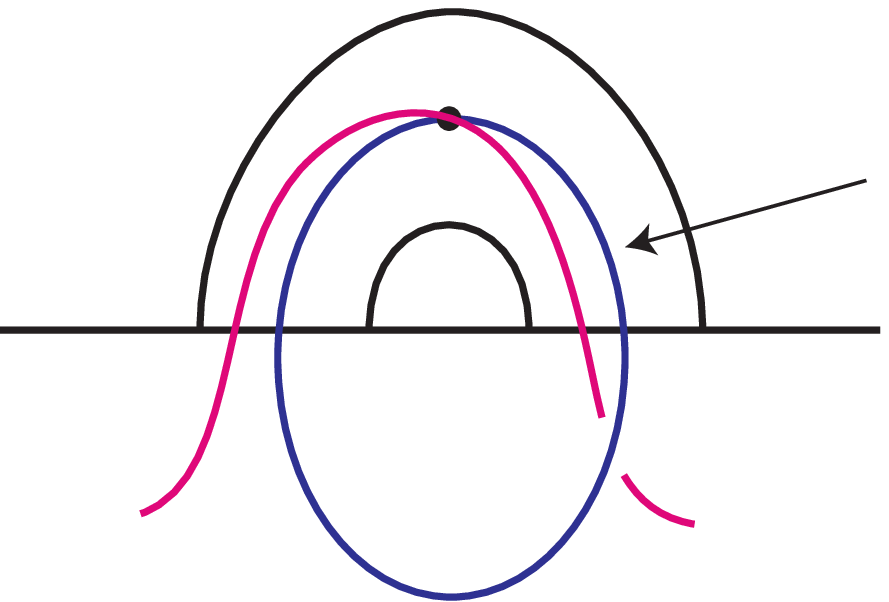}
		\put(49,3){\tiny $a_i$}
		\put(100,48){\tiny $\wt a_i-a_i$}
		\put(8,15){\tiny $\wt{h}(\wt a_i)$}
		\put(49.2,57.3){\tiny $x_i$}
		\put(70,59){\tiny $H_i$}
		\put(84,18){\tiny $W$}
	\end{overpic}
	\caption{}
	\label{fig: handle-new}
\end{figure}

\begin{rmk} \label{rmk: spheres}
The Lagrangian submanifolds $\wt a_i$ and $\wt{h}(\wt a_i)$ are automatically exact, since they are spheres of dimension $\geq 2$.
\end{rmk}

\begin{rmk}
We make a local calculation to clarify a potential sign confusion. Let $q_1,\dots,q_n,p_1,\dots,p_n$ be local coordinates on $\wt W$ near
$$\wt a_i=\{p_1=\dots=p_n=0\}$$
such that $\beta=-\sum_j p_jdq_j$ and let $f_i: \wt a_i\to \R$ be given by $f_i(q)=\tfrac{\varepsilon}{2} \sum_j q_j^2$ near the minimum $x_i=\{q_1=\dots=q_n=0\}\in \wt a_i$. Then $df_i=\varepsilon\sum_j q_j dq_j$. Since $d\wt \beta=\sum_j dq_j dp_j$, we have $X_{f_i}= -\varepsilon\sum_i q_i \bdry_{p_i}$.  This means that the graph of $-df_i$ (not $df_i$) is obtained from $\wt a_i$ by flowing in the direction of $X_{f_i}$. Hence the point $x_i$, viewed as an intersection point of the Lagrangian Floer cohomology group $\widehat{CF}(\wt{h}(\wt a_i),\wt a_i)$, is a ``top generator" that we call a component of the {\em contact class}; see Section~\ref{section: defn of contact class} below.
\end{rmk}

\subsection{Open book decompositions and contact structures} \label{subsection: open books}

In this subsection we review the foundational work of Giroux~\cite{Gi1} and Giroux-Mohsen~\cite{GM,Mo} relating contact structures and open book decompositions.

\subsubsection{Open book decompositions}

An {\it open book decomposition} of a closed manifold $M$ consists of a pair $(K,\theta )$ where:
\begin{itemize}
\item $K$ is a codimension two submanifold of $M$ with trivial normal bundle;
\item $\theta : M\setminus K \to S^1$ is a fibration equal to the normal angular coordinate in a trivialized neighborhood
$N(K) =K\times D^2$ of $K$.
\end{itemize}
The submanifold $K$ is the {\it binding} of the open book decomposition. The compactification of any fiber of $\theta$ by $K$ is called a {\it page}.

An open book decomposition can equivalently be described by a pair $(W,h)$, where $W$ is a page ($\partial W\neq \varnothing$) and $h\in \op{Diff}(W,\bdry W)$, the set of diffeomorphisms of $W$ that restrict to the identity on $\bdry W$. The manifold $M$ is then obtained as the {\it relative mapping torus} of $(W,h)$, i.e.,
$$M\simeq W\times [0,1]/\sim_h ,$$
where $\sim_h$ identifies $(h(x),0) \sim_h (x,1)$ for all $x\in W$ and $(y,t)\sim_h (y,t')$ for all $y\in \partial W$, $t,t' \in [0,1]$.

\subsubsection{General case}

Given $(W,\beta;h)$ with $h\in \op{Symp}(W,\bdry W,d\beta)$, we construct a relative mapping torus $M= M_{(W,\beta;h)}$ (which is slightly different from the one above) and a contact form $\alpha_{(W,\beta;h)}$ on $M$ as follows:
First construct the mapping torus
$$N_{(W,\beta;h)}= ([0,1]\times W)/\sim_0,$$
where $(t,x)$ are coordinates on $[0,1]\times W$ and $(1,x)\sim_0 (0,h(x))$ for all $x\in W$. 
We then let
$$M=M_{(W,\beta;h)}= (\bdry W\times D^2)\cup N_{(W,\beta;h)}/\sim_1,$$
where $\sim_1$ identifies $\bdry W\times S^1$ with $\bdry N_{(W,\beta;h)}$ via $(x,\theta)\sim_1 (\theta/ 2\pi,x)$.  Here $(r,\theta)$ are polar coordinates on $D^2$ and $S^1=\bdry D^2$.  By Lemma~\ref{lemma: Liouville homotopy} there exists a contact form $\alpha_{(W,\beta;h)}=dt+\beta_t$ on $N_{(W,\beta;h)}$ such that $\beta_0=\beta$, $\beta_t$ are Liouville, and $\beta_t=\beta_0$ on $\bdry W$.  Moreover there exists an extension of the form
$$\alpha_{(W,\beta;h)}=\beta_0|_{\bdry W}+ f(r)d\theta$$ to $\bdry W\times D^2$ such that $f(r)={1\over 2}r^2$ near $r=0$ and $f'(r)>0$ for $r>0$.

\begin{notation}
We write $N_{(W,\beta;h)}$ for the mapping torus and $M_{(W,\beta;h)}$ for the relative mapping torus.
\end{notation}

We now make the following slightly ad hoc definition:

\begin{defn}
A contact manifold $(M,\xi)$ {\em admits or is supported by} an open book decomposition $(W,\beta;h)$ if $(M,\xi)$ is contactomorphic to  $(M_{(W,\beta;h)},\ker\alpha_{(W,\beta;h)})$.
\end{defn}

Then $\bdry W\times \{0\}$ is the binding and $(\{t\}\times W)\cup (\bdry W\times \{r=t/ 2\pi\})$ are the pages of the open book.

\begin{thm}[Giroux~\cite{Gi1}]\label{thm: existence of open book}
Let $M$ be closed oriented $(2n+1)$-dimensional manifold and $\xi$ a cooriented contact structure on $M$. Then $(M,\xi)$ admits an open book decomposition $(W,\beta,\phi;h)$, where $(W,\beta,\phi)$ is a connected $2n$-dimensional Weinstein domain and $h\in \op{Symp}(W,\bdry W,d\beta)$. Moreover, $(W,\beta,\phi;h)$ can be taken to be of {\em Donaldson type}.
\end{thm}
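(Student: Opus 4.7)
The plan is to follow the approximately holomorphic program of Donaldson, adapted to the contact setting by Ibort--Martinez--Presas and Giroux--Mohsen, rather than reproving everything from scratch. First, choose a contact form $\alpha$ with $\xi=\ker\alpha$ and a compatible almost CR structure $J$ on $\xi$ so that $d\alpha(\cdot, J\cdot)$ is a metric on $\xi$; extend $J$ to an almost complex structure on the symplectization $(\R\times M, d(e^s\alpha))$ compatible with $d(e^s\alpha)$. Let $L\to M$ be a Hermitian line bundle equipped with a connection whose curvature has horizontal part equal to $-i\,d\alpha|_\xi$ (a contact prequantization). For $k\gg 0$ we will produce a pair of $J$-approximately holomorphic sections $s_0,s_1$ of $L^{\otimes k}$ such that the projectivization $[s_0:s_1]:M\setminus Z(s_0,s_1)\to \mathbb{CP}^1$ gives the desired open book upon composing with the projection to $\R P^1\simeq S^1$.

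The core steps, in order, are: (i) construct a family of peak sections concentrated on Darboux balls of radius $O(k^{-1/2})$ using Ibort--Martinez--Presas; (ii) apply Donaldson's globalization to produce a pair $(s_0,s_1)$ that is uniformly transverse to $0\in \C^2$, in the sense that $|s_0|^2+|s_1|^2\geq \eta$ wherever $|\nabla^{0,1}(s_0,s_1)|$ is comparable to the derivative; this yields a codimension-$2$ submanifold $K=\{s_0=s_1=0\}$ transverse to $\xi$, which will be the binding; (iii) show that $\theta_k:=\arg(s_1/s_0):M\setminus K\to S^1$ is a fibration with $d\theta_k$ positive on the Reeb direction, so that $K$ together with $\theta_k$ forms an open book decomposition; (iv) verify that the fibers are $2n$-dimensional submanifolds on which $d\alpha$ restricts to a symplectic form, and that $\alpha$ restricts to a Liouville $1$-form $\beta$ there; (v) upgrade $\beta$ to a Weinstein structure $(\beta,\phi)$ on each page, by combining Cieliebak--Eliashberg's existence of compatible Morse functions on Liouville domains of finite type (available here because the pages have contact-type boundary $K$ and because $\beta$ comes from an approximately holomorphic geometry, which is Weinstein a priori in the estimates of Giroux--Mohsen). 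The monodromy $h\in\op{Symp}(W,\bdry W, d\beta)$ is read off from the first-return map along a vector field transverse to the pages whose $\alpha$-component is constant, which makes it a symplectomorphism relative to the boundary by construction.

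For the Donaldson-type conclusion, the point is that the pair $(s_0,s_1)$ has quantitative transversality and approximately holomorphic estimates; this is precisely the defining feature of a Donaldson-type open book, so the construction itself produces $(W,\beta,\phi;h)$ of Donaldson type by design. Connectedness of $W$ is ensured for $k$ large enough by the standard argument that the pages, being asymptotically Stein--like pieces of the symplectization, are connected once the linear system is base-point-free and generic; if necessary one stabilizes once, which preserves the Weinstein and Donaldson properties.

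The main obstacle is step (ii), the uniform transversality in the Donaldson sense: one must show that the pair of sections can be perturbed to achieve an $(\eta, k)$-transversality estimate for both the common zero set and the projectivization $[s_0:s_1]$, using Donaldson's local-to-global argument with finitely many Sard--type perturbations on overlapping Darboux balls. The adaptation from symplectic to contact imposes the additional constraint that all estimates be compatible with the Reeb flow (so that $K$ is transverse to $\xi$ and the fibration is by symplectic pages), which is exactly the technical heart of \cite{GM,Mo}; once this is in place, the construction of $\beta$, $\phi$, and $h$ is essentially formal from the Liouville--Weinstein machinery recalled in Section~\ref{subsection: Weinstein cobordisms} together with Lemma~\ref{lemma: Liouville homotopy}.
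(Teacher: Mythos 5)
You have identified the right program---the approximately holomorphic construction of Donaldson, Ibort--Mart\'inez--Presas and Giroux--Mohsen---and it is worth noting that the paper itself offers no proof of Theorem~\ref{thm: existence of open book}: it is quoted from \cite{Gi1,GM,Mo}, so your sketch must be measured against that construction. Against it, your steps (i)--(iii) contain a genuine structural error. The Giroux--Mohsen construction uses a \emph{single} asymptotically holomorphic, uniformly transverse section $s_k$ of $L^{\otimes k}$; since $d\alpha$ is exact the prequantization bundle is trivial, and after trivializing $s_k$ is exactly one of the ``approximate holomorphic functions'' $s_k\colon M\to\C$ with $|ds_k-iks_k\alpha|\le C\sqrt{k}$ in the paper's definition of Donaldson type, with binding $K=s_k^{-1}(0)$ of real codimension $2$ and open book map $\arg\circ s_k$ on $M\setminus K$. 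Your pencil $(s_0,s_1)$ instead produces $\{s_0=s_1=0\}$, which has real codimension $4$ and so cannot be a binding, and the map $\arg(s_1/s_0)$ is undefined on all of $\{s_0=0\}\cup\{s_1=0\}$, not merely on that locus; there is also no projection $\C P^1\to\R P^1$ to compose with. A pair of sections of this kind yields a contact Lefschetz pencil, which is a different structure; for the open book one section suffices, and the quantitative transversality of step (ii) should be formulated for that single section (uniform transversality to $0\in\C$), exactly as in the displayed estimates in the paper's definition.

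Step (v) is also not correct as stated: Liouville does not imply Weinstein, and \cite{CE} does not produce a compatible Morse function from the Liouville structure of the page alone. In the actual construction the Weinstein structure is part of the analytic output: on a page, a function built from $|s_k|$ (roughly $-\log|s_k|$, suitably modified near the binding) is approximately plurisubharmonic, and the transversality estimates of \cite{GM,Mo} show that, after small perturbation, it is gradient-like for the Liouville vector field of $\alpha|_{\mathrm{page}}$, which is what makes the pages Weinstein rather than merely Liouville. With these two corrections---one uniformly transverse section instead of a pencil, and the compatible function $\phi$ extracted from the approximately holomorphic data rather than from \cite{CE}---the remaining points you list (transversality of $K$ to $\xi$, positivity of $d\theta_k$ on the Reeb direction, and a monodromy in $\op{Symp}(W,\bdry W,d\beta)$ obtained after normalizing near the binding) do line up with the standard argument.
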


Let $\alpha$ be a contact form on $M$ and $J$ an almost complex structure on $\xi=\ker\alpha$ which is $d\alpha|_\xi$-compatible. Let $g$ be a Riemannian metric such that the Reeb vector field $R_\alpha$ is of unit length and orthogonal to $\xi$ and such that $g|_\xi$ is compatible with $J$ and $d\alpha|_\xi$. Then $(M,\alpha)$ admits an open book decomposition of {\em Donaldson type} if there exist constants $C,\eta>0$ and approximate holomorphic functions
$$s_k: M\to \C, \quad k\geq 1,\quad k\in \Z,$$
such that:
\begin{itemize}
\item for all $p\in M$, $|s_k(p)|\leq C$, $|ds_k-ik s_k\alpha|\leq C\sqrt{k}$, and $|\overline\bdry_\xi s_k|\leq C$;
\item if $|s_k(p)|\leq \eta$, then $|\bdry_\xi s_k|\geq \eta \sqrt k$;
\end{itemize}
for $k$ sufficiently large.\footnote{One problem with this definition is that we are not specifying how large $k$ needs to be.  We want $k\gg 0$ such that Theorem~\ref{thm:stabilization} holds for any Donaldson open books corresponding to $k$ and $k'\geq k$.} Here $\bdry_\xi s_k$ and $\overline\bdry_\xi s_k$ are the $J$-linear and $J$-antilinear components of $ds_k|_\xi$. The binding is $s_k^{-1}(0)$ and the pages are $(\arg\circ s_k)^{-1}(\theta)$.

Let $(W,\beta,\phi)$ be a generic Weinstein manifold, $h\in \op{Symp}(W,\bdry W,d\beta)$, $\bs a=a_1\cup\dots\cup a_\kappa$ be the basis of Lagrangian disks, and $c$ be a properly embedded Lagrangian disk in $W$ whose Legendrian boundary is disjoint from $\bdry {\bs a}$.  We then define the {\em positive stabilization
$$S^+_{c}(W,\beta,\phi;h):=(W',\beta',\phi';h')$$
of $(W,\beta,\phi;h)$ along $c$} as follows: Let $(W',\beta',\phi')$ be the Weinstein domain obtained from $(W,\beta,\phi)$ by attaching a symplectic $n$-handle $W_0$ to $W$ along the Legendrian sphere $\partial c$. Let $\gamma$ be the Lagrangian sphere obtained by gluing $c$ to the Lagrangian core of $W_0$ and let $\tau_{\gamma}$ be the positive symplectic Dehn twist along $\gamma$. We then set
\begin{equation} \label{defn: def of h prime}
h':= (h\cup \op{id}|_{W_0})\circ \tau_\gamma.
\end{equation}
The {\em negative stabilization $S^-_{c}(W,\beta,\phi;h)$} is defined similarly, where $\tau_\gamma$ is replaced by $\tau_\gamma^{-1}$ in the definition of $h'$.

Let $a_0$ be the cocore of $W_0$. Then we write $\bs a'=a_0\cup \bs a$ and $\bs\alpha'=\alpha_0\cup \bs\alpha$, where $\alpha_0$ is obtained by capping off $a_0$.

\begin{thm}[Giroux-Mohsen~\cite{GM}] \label{thm:stabilization} 
Any two Donaldson open book decompositions $(W_i,\beta_i,\phi_i; h_i)$, $i=0,1$, of $(M,\xi)$ can be taken to one another by a sequence of positive stabilizations, conjugations, and Weinstein homotopies.
\end{thm}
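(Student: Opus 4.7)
The plan is to follow the Giroux--Mohsen strategy, which reduces the comparison of two Donaldson open books $(W_i,\beta_i,\phi_i;h_i)$, $i=0,1$, of $(M,\xi)$ to two separate claims: (i) a \emph{level-raising lemma}, identifying the passage from a Donaldson open book at level $k$ to one at level $k+1$ with a positive stabilization up to Weinstein homotopy and conjugation; and (ii) an \emph{equal-level interpolation} claim, asserting that any two Donaldson open books arising from approximately holomorphic sections $s_k,s_k'$ at the same sufficiently large level $k$ are related by Weinstein homotopies and conjugations. Combined, these reduce any two Donaldson open books, possibly at different levels $k_0\le k_1$, to a common large level by $k_1-k_0$ positive stabilizations of the first and then to each other by interpolation.

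For (i), I would take the section $s_k$ defining the level-$k$ open book and tensor it with an auxiliary approximately holomorphic section $s_1$ of $L$ chosen to vanish transversely along a model contact submanifold contained in a small Darboux ball $B\subset M$ disjoint from the old binding. The product $s_k\otimes s_1$ is approximately holomorphic at level $k+1$; its binding is the old binding together with an extra component inside $B$, and its pages differ from the old pages by the attachment of a Weinstein $n$-handle along the Legendrian sphere $\bdry c$, where $c$ is the Lagrangian cocore of the new handle. A local computation, modelled on the three-dimensional Giroux argument, identifies the monodromy of the level-$(k{+}1)$ open book with $(h\cup \op{id})\circ \tau_\gamma$, where $\gamma$ is the Lagrangian sphere obtained by capping $c$ inside the new handle; this matches exactly the definition of the positive stabilization $S^+_c(W,\beta,\phi;h)$.

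For (ii), given $s_k^0,s_k^1$ at the same large $k$, I would interpolate by a generic smooth path $\{s_k^t\}_{t\in[0,1]}$ of approximately holomorphic sections satisfying the Donaldson estimates uniformly in $t$. The transversality conditions in the Donaldson definition ensure that, after a small perturbation, the zero loci $(s_k^t)^{-1}(0)$ sweep out a smooth family of contact codimension-two submanifolds and the fibration $\arg\circ s_k^t$ persists for all $t$, so the pages undergo a genuine Weinstein homotopy as $t$ varies; the ambiguity in choosing a basepoint on $S^1=\bdry D^2/2\pi$ picks up precisely a conjugation of the monodromy $h^t$ by an element of $\op{Symp}(W,\bdry W,d\beta)$ isotopic to $\op{id}$.

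The main obstacle is step (i): one must verify that tensoring by $s_1$ produces a \emph{positive} stabilization along an explicit Lagrangian cocore $c$, and not merely some modification of the old open book. This hinges on Donaldson's uniform estimates to keep the geometry near the zero locus $Z(s_1)$ close enough to a standard holomorphic Lefschetz model that the local monodromy correction is exactly the positive Dehn twist $\tau_\gamma$, while also controlling all constants as $k\to\infty$ so that the perturbations required to achieve transversality of $s_k\otimes s_1$ are small enough not to disturb the identification with $S^+_c(W,\beta,\phi;h)$. In dimension three this is essentially Giroux's original argument; in higher dimensions, the delicate point is propagating the local Lefschetz identification across the entire Donaldson regime.
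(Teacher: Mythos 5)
First, a point of comparison: the paper does not prove this statement at all --- it is quoted from Giroux--Mohsen~\cite{GM}, which is cited as in preparation --- so there is no internal proof to measure your argument against. Your proposal therefore has to stand on its own, and as written it is a skeleton of the expected strategy rather than a proof: the two reductions you name, (i) level-raising equals positive stabilization and (ii) equal-level interpolation, are precisely where the content of the theorem lives, and both are asserted rather than established.

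Concretely: in (i), tensoring $s_k$ with an auxiliary level-one section $s_1$ supported near a Darboux ball does not automatically yield a Donaldson open book at level $k+1$. The uniform transversality requirement ($|\partial_\xi s|\geq \eta\sqrt{k+1}$ wherever $|s|\leq\eta$) for $s_k\otimes s_1$ is not achieved by a ``small perturbation''; it is the quantitative transversality problem itself, and solving it can move the section away from the product form on which your identification of the new page as the old page with a single Weinstein $n$-handle attached, and of the new monodromy as $(h\cup\op{id})\circ\tau_\gamma$, depends. Indeed it is not true in general that passing from level $k$ to level $k+1$ amounts to one positive stabilization --- the pages can change by many handles --- so the level-raising lemma needs a more careful formulation and a genuine local-to-global Lefschetz-model argument, which you acknowledge but do not supply. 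In (ii), a ``generic smooth path'' $s_k^t$ between two uniformly transverse sections need not satisfy the Donaldson estimates for intermediate $t$: the transversality condition is a quantitative threshold $\eta\sqrt{k}$, not a residual-set condition, so genericity buys you nothing uniform in $k$. Producing a path of sections satisfying the estimates (possibly only after increasing $k$, which is exactly why the theorem allows stabilizations and is stated for $k$ large) is a one-parameter version of the Donaldson--Auroux--Ibort--Mart\'inez-Torres--Presas construction and is the second substantial analytic input missing from your sketch; one also needs the fibration condition for $\arg\circ s_k^t$ away from the binding, not merely transversality of the zero set. So the outline matches what the unpublished Giroux--Mohsen argument is expected to look like, but both pillars supporting it remain to be built.
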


\subsubsection{Dimension three}

When $\op{dim} M=3$, the binding $K$ is a link in $M$, the page $W$ is a (symplectic) surface with boundary, and the Liouville form $\beta$ plays no essential role.

In \cite{Gi1}, Giroux showed that Theorem~\ref{thm:stabilization} can be improved to the following:

\begin{thm}
Any two open book decompositions $(W_0; h_0)$ and $(W_1;h_1)$ of $(M,\xi)$ can be taken to one another by a sequence of positive stabilizations, conjugations, and homotopies.
\end{thm}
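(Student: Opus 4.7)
The plan is to derive this $3$-dimensional statement as a refinement of Theorem~\ref{thm:stabilization}, exploiting the fact that in dimension $2$ the Liouville structure on a page carries no information beyond the underlying surface topology. There are two principal reductions to carry out: first, every open book of $(M,\xi)$ can be turned into a Donaldson-type open book by positive stabilizations alone; second, Weinstein homotopies of dim-$3$ open books are equivalent to smooth isotopies of the monodromy, which is precisely the weaker notion of ``homotopy'' that appears in the statement.

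For the first reduction, recall that in dimension $3$ a page $W$ is a compact connected surface with non-empty boundary. Any exact area form makes it into a Weinstein domain with Morse function of index $0$ and $1$ only, and a positive stabilization attaches a $1$-handle along an arc $c\subset \bdry W$ and composes $h$ with a positive Dehn twist along the curve $\gamma$ formed by $c$ together with the core of the new handle. The Giroux--Mohsen construction produces a family of Donaldson-type open books indexed by $k\gg 0$; standard comparison arguments identify successive levels of this family up to positive stabilization. Starting from an arbitrary $(W_0;h_0)$, one then positively stabilizes it to match the topological type of a sufficiently large $k$ in the Giroux--Mohsen family.

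For the second reduction, given a Weinstein homotopy $(W,\beta_t,\phi_t)$ on a surface, apply Lemma~\ref{lemma: Liouville homotopy}(2) to obtain diffeomorphisms $\widehat g_t$ of the completions with $\widehat g_t^*\widehat\beta_t-\widehat\beta_0$ exact and compactly supported. In dimension $2$, the combinatorial data of $\phi_t$ (number and indices of critical points, the stable/unstable skeleton of $Y_t$) is rigid under small perturbation, so after adjusting $\widehat g_t$ by an ambient isotopy we may assume $g_t := \widehat g_t|_W$ preserves $W$ and equals the identity near $\bdry W$. Conjugation by $g_t$ sends $(W,\beta_0,\phi_0;h_0)$ to $(W,\beta_t,\phi_t;g_t\circ h_0\circ g_t^{-1})$, and since $\op{Symp}(W,\bdry W,d\beta)\to \op{Diff}(W,\bdry W)$ induces a bijection on $\pi_0$ in dimension $2$ (by Moser), the Weinstein-homotopy class of $(W,\beta,\phi;h)$ depends only on the smooth isotopy class of $h$ in $\op{Diff}(W,\bdry W)$.

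Combining the two reductions, given two open books of $(M,\xi)$, positively stabilize each to a Donaldson-type open book, apply Theorem~\ref{thm:stabilization} to connect them through positive stabilizations, conjugations, and Weinstein homotopies, then reinterpret each Weinstein homotopy via the second reduction as a smooth isotopy. The main obstacle is the first reduction: showing that an arbitrary open book becomes Donaldson-type after positive stabilizations. One expects this to follow from an explicit analysis of how positive stabilizations alter the approximately-holomorphic sections $s_k$ (equivalently, from a direct combinatorial comparison of pages after enough $1$-handle attachments), but extracting it cleanly from Theorem~\ref{thm:stabilization} alone requires some care, and may be more efficient to establish by the classical contact-cell-decomposition argument of \cite{Gi1}.
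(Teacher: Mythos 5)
The paper does not actually prove this statement: it is quoted as Giroux's theorem from \cite{Gi1}, where it is established via contact cell decompositions, and the remark immediately afterwards (that the analogous improvement is unknown in higher dimensions) signals that it does not follow formally from Theorem~\ref{thm:stabilization}. Your proposal runs into exactly this difficulty. The step you call the ``first reduction'' --- that an arbitrary open book supporting $(M,\xi)$ becomes of Donaldson type after positive stabilizations --- is not a technical refinement but is essentially the uniqueness half of the Giroux correspondence itself. The approximately holomorphic sections $s_k$ are built from a contact form and bear no a priori relation to a given open book $(W_0;h_0)$, so there is no ``analysis of how positive stabilizations alter $s_k$'' to run; and stabilizing until the page matches ``the topological type of a sufficiently large $k$'' is far from sufficient, since what must be matched is the open book as a supporting open book of $\xi$, up to positive stabilization and isotopy --- which is the whole problem. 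Your closing suggestion to fall back on ``the classical contact-cell-decomposition argument of \cite{Gi1}'' concedes the point: that argument \emph{is} the proof of the theorem, so invoking it makes the proposal circular rather than a derivation from Theorem~\ref{thm:stabilization}.

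The second reduction --- trading Weinstein homotopies for smooth isotopies of the monodromy in dimension $2$ via Lemma~\ref{lemma: Liouville homotopy} and Moser-type arguments --- is essentially fine but peripheral; the paper itself notes that in dimension three the Liouville form plays no essential role. As written, however, the proposal contains no proof of the theorem: its essential content is deferred to the very result being proved.
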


\begin{rmk}
It is still unknown whether in higher dimensions any two open book decompositions $(W_i,\beta_i,\phi_i; h_i)$, $i=0,1$, of $(M,\xi)$ can be taken to one another by a sequence of positive stabilizations, conjugations, and homotopies.  It is plausible in light of the recent advances in convex hypersurface theory \cite{HH} that this is true if $(W_i,\beta_i)$ are Weinstein.
\end{rmk}

If we extend the equivalence classes of open book decomposition with negative stabilizations, they only remember a part of the
homotopy class of the supported contact structure as plane field: the associated $spin^c$-structure.

\begin{thm}[\cite{GG}]
The equivalence classes of open book decomposition in $M$ modulo positive and negative stabilizations, conjugations, and isotopies
are in one-to-one correspondence with $spin^c$-structures on $M$.
\end{thm}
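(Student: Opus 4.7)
The plan is to construct a map
\[
\Psi : \{\text{open book decompositions of }M\}/\sim \;\longrightarrow\; \mathrm{Spin}^c(M),
\]
where $\sim$ is generated by positive and negative stabilizations, conjugations, and isotopies, and to verify that $\Psi$ is well-defined, surjective, and injective. The map is defined by sending an open book $(W;h)$ to the $\mathrm{Spin}^c$ structure underlying the supported contact structure $\xi_{(W;h)}$, using the standard fact that an oriented $2$-plane field on a closed oriented $3$-manifold determines a $\mathrm{Spin}^c$ structure (via its orthogonal complement and the canonical $\mathrm{Spin}^c$ structure on an oriented $3$-manifold).

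First I would establish well-definedness. Positive stabilizations, conjugations, and isotopies all preserve the isotopy class of the supported contact structure by Giroux's theorem in dimension three (the theorem cited in the excerpt just above), hence preserve $\Psi$. The content of well-definedness therefore lies entirely in the claim that a single negative stabilization preserves the $\mathrm{Spin}^c$ structure, even though it may change the contact isotopy class. For this one performs a local model computation: a negative stabilization along a properly embedded arc $c\subset W$ modifies $(M,\xi)$ inside a standard Darboux ball containing a neighborhood of the co-core of the new handle, by replacing $\xi$ with a plane field obtained by inserting a ``half Lutz tube.'' Since this modification is supported inside a ball $B^3\subset M$ and the two plane fields agree on $M\setminus B^3$, the difference between their underlying $\mathrm{Spin}^c$ structures lies in the image of the restriction map from the relative obstruction $H^2(B^3,\partial B^3;\mathbb{Z})=0$, so $\Psi$ is unchanged.

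For surjectivity I would invoke the existence half of Eliashberg's classification of overtwisted contact structures: every homotopy class of oriented $2$-plane field on $M$ contains a contact structure, a fortiori every $\mathrm{Spin}^c$ structure is realized; combining with Giroux's existence theorem gives a supporting open book.

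The main obstacle, and the heart of the argument, is injectivity. Suppose $(W_0;h_0)$ and $(W_1;h_1)$ support contact structures $\xi_0,\xi_1$ with the same $\mathrm{Spin}^c$ structure. My plan is:
\begin{enumerate}
\item Apply one negative stabilization to each open book; both new contact structures $\xi_i'$ are then overtwisted (a negative stabilization inserts an overtwisted disk).
\item The underlying plane fields of $\xi_0'$ and $\xi_1'$ have the same $\mathrm{Spin}^c$ structure but might disagree in the three-dimensional (Hopf/$d_3$) invariant, which lives in $H^3(M;\mathbb{Z})\simeq \mathbb{Z}$ (once a reference $\mathrm{Spin}^c$ class is fixed).
\item Show, again by a local model computation, that each additional negative stabilization changes the three-dimensional invariant by a fixed nonzero amount, and that positive stabilizations leave it unchanged, so that by applying further negative stabilizations to one of the two open books one can arrange the two plane fields to be homotopic.
\item Apply Eliashberg's uniqueness theorem for overtwisted contact structures: two overtwisted contact structures with homotopic underlying plane fields are isotopic.
\item Finally use Giroux's theorem (stated in the excerpt) that two open books supporting the same (now isotopic) contact structure are equivalent through positive stabilizations, conjugations, and isotopies.
\end{enumerate}

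The delicate steps are (1) and (3): verifying that one negative stabilization indeed produces an overtwisted disk, and computing the precise effect of a negative stabilization on the $d_3$ invariant. Both can be reduced to explicit calculations in the standard handle model used to define stabilization, where the supported contact structure is described by an explicit contact form on $W\times[0,1]/{\sim_h}$ and the local change from $\tau_\gamma$ to $\tau_\gamma^{-1}$ alters the plane field inside a controlled neighborhood. Once these local computations are in hand, the bijectivity of $\Psi$ follows by assembling the pieces above.
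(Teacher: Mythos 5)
The paper does not prove this statement at all: it is quoted as a theorem of Giroux--Goodman \cite{GG}, so there is no internal argument to compare against. Your proposal is, in outline, essentially the argument of the cited paper: reduce to contact geometry via Giroux's correspondence, note that a negative stabilization produces an overtwisted structure while only changing the plane field inside a ball (hence preserving the $\mathrm{Spin}^c$ structure), adjust the three-dimensional obstruction by further negative stabilizations, invoke Eliashberg's classification of overtwisted contact structures, and finish with Giroux's positive-stabilization theorem. As a plan it is sound, and the well-definedness and surjectivity steps are correct as you state them.

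Two points deserve more care than your sketch gives them. First, the ``$d_3$'' bookkeeping in steps (2)--(3): for a fixed $\mathrm{Spin}^c$ structure $s$ the homotopy classes of plane fields form a torsor over a quotient of $H^3(M;\Z)\simeq\Z$ (the indeterminacy is governed by the divisibility of $c_1(s)$), not canonically $\Z$ itself; this does not break the argument, since the shift produced by a negative stabilization is the image of a generator and you may stabilize either of the two open books, but the statement ``lives in $H^3(M;\Z)$'' should be replaced by this torsor formulation. Second, the three local claims you lean on --- that a negative stabilization changes $\xi$ only inside a ball, that it makes $\xi$ overtwisted, and that it shifts the three-dimensional invariant by a fixed nonzero universal constant --- are most cleanly established all at once via the Murasugi-sum description: stabilization is plumbing with a Hopf band, and (by Torisu/Giroux) the supported contact structure of a plumbing is the contact connected sum, so the negatively stabilized open book supports $\xi\#\xi_{-1}$ with $\xi_{-1}$ the overtwisted structure on $S^3$ coming from a single negative Hopf band. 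The ``half Lutz tube in a Darboux ball'' picture is vaguer and, as phrased, does not obviously apply to a stabilization along an arbitrary arc $c$; I would replace it by the connected-sum argument, after which your steps (1)--(5) assemble into a complete proof.
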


By the Alexander trick, given an open book decomposition $(K,\theta )$ of $M$, any link $L$ in $M$ can be isotoped to
a link which is disjoint from the binding and transverse to the fibers of $\theta$. We say that such a link is in {\it braid position} with respect to $(K,\theta )$.

\section{Definition of higher-dimensional Heegaard Floer (co-)homology groups} \label{section: defn of homology groups}

Let $(W,\beta,\phi)$ be a generic Weinstein domain, $h\in \op{Symp}(W,\bdry W,d\beta)$, and $(\wt W,\wt \beta,\wt \phi)$ be a capping of $(W,\beta,\phi)$. Also let $\bs a=\sqcup_{i=1}^\kappa a_i$ be the basis of Lagrangian disks for $(W,\beta,\phi)$.

The goal of this section is to define the higher-dimensional Heegaard Floer hat (co-)homology groups $\widehat{HF}(W,\beta,\phi;h; J^\Diamond)$, where $J^\Diamond$ is an auxiliary almost complex structure on $\widehat{X}=\R\times[0,1]\times \wt W^\wedge$ and $\wt W^\wedge$ is the completion of $\wt W$.  This will be done in a manner analogous to the combination of works of Lipshitz~\cite{Li} and Honda-Kazez-Mati\'c~\cite{HKM2} as explained in \cite{CGH1}.

\subsection{Symplectic fibration} \label{subsection: symplectic fibration}

In Sections~\ref{subsection: symplectic fibration}--\ref{subsection: HF for alpha alpha prime} as well as in Section~\ref{section: A infty operations} we work in the following slightly more general setting: Let $(\Sigma,\beta_\Sigma)$ be a Liouville domain and $(\widehat\Sigma,\widehat\beta_\Sigma)$ its completion.
Consider the fibration
$$\pi=\pi_{\R\times[0,1]}: \widehat X= \R\times[0,1]\times \widehat\Sigma\to \R\times[0,1],$$
given by the projection onto the first two factors. 
Also let $\pi_{\widehat\Sigma}$ be the projection to $\widehat\Sigma$ and $\pi_{[0,1]\times \widehat\Sigma}$ be the projection to $[0,1]\times\widehat\Sigma$.

Let $(s,t)$ be coordinates for $\R\times[0,1]$.  Then $\pi$ is a symplectic fibration with respect to the symplectic form $\widehat\Omega=ds\wedge dt+ d\widehat\beta_\Sigma$ on $\widehat X$ and the symplectic form $ds\wedge dt$ on $\R\times[0,1]$.  Let
$$\bs\alpha=\sqcup_{i=1}^\kappa \alpha_i,\quad \bs\alpha'=\sqcup_{i=1}^\kappa \alpha_i'$$
be $\kappa$-component exact Lagrangian submanifolds of $\Sigma$.   Then the submanifolds
$$L_{1i}=\R\times\{1\}\times\alpha_i,\quad L_{0i}=\R\times\{0\}\times \alpha'_i$$
are Lagrangian submanifolds of $(\widehat X,\widehat\Omega)$. We also write
$$L_1= L_{1,\bs\alpha}=\sqcup_i L_{1i},\quad L_0=L_{0,\bs\alpha'}=\sqcup_i L_{0i}.$$

\subsection{Almost complex structures} \label{subsection: almost complex structures}

Consider the symplectization end $\widehat\Sigma-\Sigma$ on which $\widehat\beta_\Sigma= e^\sigma \beta_0$, where $\sigma$ is the $(0,\infty)$-coordinate and $\beta_0:=\beta_\Sigma|_{\bdry\Sigma}$ is a contact form on $\bdry\Sigma$. Let $\xi=\ker \beta_0$ and let $R=R_{\beta_0}$ be the Reeb vector field for $\beta_0$.

Let $J_{\widehat\Sigma}$ be an almost complex structure on $\widehat\Sigma$ which is tamed by $d\widehat\beta_\Sigma$ and is {\em adapted to $\beta_0$} on the symplectization end $\widehat\Sigma-\Sigma$, i.e.,
\begin{itemize}
\item $J_{\widehat\Sigma}(\bdry_\sigma)= R$, $J_{\widehat\Sigma}(R)=-\bdry_\sigma$, and $J_{\widehat\Sigma}(\xi)=\xi$.
\item $d\beta_0 (v,J_{\widehat\Sigma}(v))>0$ for all nonzero $v\in\xi$.
\end{itemize}
The space of such almost complex structures $J_{\widehat\Sigma}$ will be denoted by $\mathcal{J}_{\widehat\Sigma}=\mathcal{J}_{\widehat\Sigma,\widehat\beta_\Sigma}$.

Let $J=J_{\R\times[0,1]}\times J_{\widehat\Sigma}$ be a product almost complex structure on ${\widehat X}$ where $J_{\R\times[0,1]}(\bdry_s)= \bdry_t$ and  $J_{\widehat\Sigma}\in \mathcal{J}_{\widehat\Sigma}$.  Let $J^\Diamond$ be a $C^\infty$-small perturbation of $J$ which satisfies the following:
\begin{itemize}
\item[(J1)] $J^\Diamond$ is $s$-invariant;
\item[(J2)] $J=J^\Diamond$ on a neighborhood of $\R\times\{0,1\}\times{\widehat\Sigma}$ and on $\R\times[0,1]\times (\widehat\Sigma-\Sigma)$; and
\item[(J3)] $\widehat\Omega(v,J^\Diamond v)>0$ for all nonzero tangent vectors $v$, i.e., $\widehat\Omega$ is $J$-positive.
\end{itemize}
We will refer to $J$ as a {\em $(\widehat \Sigma,\widehat\beta_\Sigma)$-compatible almost complex structure on ${\widehat X}$} and to $J^\Diamond$ as a {\em perturbed $(\widehat \Sigma,\widehat\beta_\Sigma)$-compatible almost complex structure on ${\widehat X}$}. The space of $(\widehat \Sigma,\widehat \beta_\Sigma)$-compatible almost complex structures on ${\widehat X}$ of class $C^\infty$ will be denoted by $\mathcal{J}$ and the space of perturbed $(\widehat \Sigma,\widehat\beta_\Sigma)$-compatible almost complex structures on ${\widehat X}$ of class $C^\infty$ by $\mathcal{J}^\Diamond$.

\subsection{Moduli spaces}

\begin{defn}
A {\em $\kappa$-tuple of intersection points of $\bs\alpha$ and $\bs \alpha'$} is a $\kappa$-tuple $\mathbf{y}=\{y_1,\dots,y_\kappa\}$, where $y_i\in \alpha_i\cap \alpha_{\sigma(i)}'$ for some permutation $\sigma$ of $\{1,\dots,\kappa\}$. We denote the set of $\kappa$-tuples of intersection points of $\bs\alpha$ and $\bs \alpha'$ by $\mathcal{S}=\mathcal{S}_{\bs\alpha,\bs\alpha'}$.
\end{defn}

Let $\mathcal{M}_{J^\Diamond}(\mathbf{y},\mathbf{y}')$ be the moduli space of holomorphic maps
$$u: (\dot F,j)\to ({\widehat X},J^\Diamond),$$
where we range over all $(\dot F,j)$ such that  $(F,j)$ is a compact Riemann surface with boundary, $\bf{p}_+$ and $\bf{p}_-$ are disjoint sets of boundary punctures of $F$, and $\dot F=F-\bf{p}_+-\bf{p}_-$, and:
\begin{enumerate}
\item each component of $\bdry F- \bf{p}_+-{\bf p}_-$ maps to some $L_{1i}$ or $L_{0i}$ and each $L_{1i}$ and $L_{0i}$, $i=1,\dots,\kappa$, is used exactly once;
\item $u$ maps the neighborhoods of the punctures of $\bf{p}_+$ (resp. $\bf {p}_-$) asymptotically to strips over the Reeb chords of $\mathbf{y}$ (resp.\ $\mathbf{y}'$) at the positive end (resp.\ negative end).
\end{enumerate}
Here we refer to the $s\to +\infty$ (resp. $s\to -\infty$) end as the {\em positive end} (resp.\ {\em negative end}). An element $u\in \mathcal{M}_{J^\Diamond}(\mathbf{y},\mathbf{y}')$ will be referred to as a {\em curve in ${\widehat X}$} or a {\em multisection of $\pi: \widehat X\to \R\times[0,1]$ from ${\bf y}$ to ${\bf y}'$}.\footnote{Strictly speaking, $u$ is guaranteed to be a multisection of $\pi: \widehat X\to\R\times[0,1]$ only when $J^\Diamond=J$.} 

\begin{rmk} \label{rmk: curve contained in Sigma}
By the definition of $J^\Diamond$, the function $\sigma\circ \pi_{\widehat\Sigma}\circ u$ is subharmonic for all $u\in \mathcal{M}_{J^\Diamond}(\mathbf{y},\mathbf{y}')$, where $\sigma$ is the $(0,\infty)$-coordinate for the end $\widehat\Sigma-\Sigma$.  This means that $\op{Im}(u)\subset X:=\R\times[0,1]\times \Sigma$.
\end{rmk}

\subsection{Fredholm index} \label{subsection: Fredholm index}

We now compute the Fredholm index of $u: \dot F\to {\widehat X}$ in $\mathcal{M}_{J^\Diamond}({\bf y},{\bf y}')$.  What we call the {\em Fredholm index} in this paper takes into consideration the variations of complex structures on the domain $\dot F$. The setup is similar to that of \cite[Section 4.4.2]{CGH1}.

Let $\check X=[-1,1]\times[0,1]\times \Sigma$ be the compactification of $X=\R\times[0,1]\times \Sigma$, obtained by attaching $[0,1]\times \Sigma$ at the positive and negative ends, and let
$$\check L_1=[-1,1]\times\{1\}\times\bs\alpha,\quad \check L_0=[-1,1]\times\{0\}\times \bs \alpha'$$
be the compactifications of $L_1$ and $L_0$.  We then define
$$Z_{\bs\alpha,\bs\alpha'}=\check L_1 \cup\check L_0\cup (\{-1,1\}\times[0,1]\times(\bs\alpha\cap \bs\alpha')).$$
A holomorphic map $u$ from $\bf{y}$ to $\bf{y}'$ can be compactified to a continuous map
$$\check u: (\check F,\bdry \check F)\to (\check X,Z_{\bs\alpha,\bs\alpha'}),$$
where $\check F$ is obtained from $\dot F$ by performing a real blow-up at its boundary punctures.

Given $u$ from $\bf{y}$ to $\bf{y}'$, we define its {\em Maslov index} $\mu(u)$ as follows: We construct a (not necessarily oriented) real rank $n$ subbundle $\mathcal{L}$ of $\check u^* T\Sigma$ on $\bdry \check F$.  The bundle $\mathcal{L}$ is given by $\check u^* T\bs\alpha$ and $\check u^* T\bs\alpha'$ along $\bdry \dot F$ and we extend $\mathcal{L}$ to $\bdry \check F-\bdry \dot F$ by rotating from $T_{y} \bs\alpha'$ to $T_{y}\bs\alpha$ via $e^{J_{\widehat\Sigma} t}$, $t\in[0,{\pi\over 2}]$, where $y\in \bf{y}\cup \bf{y}'$, each component of $\bdry\check F-\bdry \dot F$ is given an oriented parametrization by $[0,\frac{\pi}{2}]$, and we assume without loss of generality that $J_{\widehat\Sigma} ( T_{y}\bs\alpha')= T_{y}\bs\alpha$. Then $\mu(u)$ is the Maslov index of $\mathcal{L}$ with respect to any trivialization of $\check u^* T\Sigma$ on $\check F$.

The following is a generalization of \cite[Equation~(4.4.4)]{CGH1} and its proof will be omitted.  We often write $\chi(u)=\chi(F)$.

\begin{lemma} \label{lemma: Fredholm index}
The Fredholm index of $u$ is
\begin{equation} \label{eqn: Fredholm index formula}
\op{ind}(u)= (n-2) \chi(u) +\mu(u) + (2-n) \kappa,
\end{equation}
where $\dim(W)=2n$ and $\dim (\widehat{X})=2n+2$.
\end{lemma}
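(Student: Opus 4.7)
My plan is to follow the proof of the $n = 1$ case in \cite[Equation~(4.4.4)]{CGH1}, with the obvious modifications for arbitrary $n$. The essential ingredients are the Riemann-Roch theorem for Cauchy-Riemann operators with totally real boundary conditions, the product structure $\widehat{X} = \R \times [0,1] \times \widehat{\Sigma}$ (which splits the linearized operator into base and fiber parts), and standard bookkeeping of variations of complex structure on the domain together with the $\R$-translation on $\widehat{X}$.

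I would begin by setting up the linearized Cauchy-Riemann operator $D_u$ acting on weighted Sobolev sections of $u^* T\widehat{X}$ over $\dot F$, with totally real boundary conditions along $u^*T(L_0 \cup L_1)$ and exponential decay at each of the $2\kappa$ punctures. Since each intersection point $y \in \alpha_i \cap \alpha'_j$ is transverse, the asymptotic operators are non-degenerate, so $D_u$ is Fredholm. Because $J^\Diamond$ is a $C^\infty$-small perturbation of the product $J = J_{\R \times [0,1]} \times J_{\widehat{\Sigma}}$, the Fredholm index of $D_u$ equals that of the split operator $D_u^{\mathrm{base}} \oplus D_u^{\mathrm{fib}}$ obtained from the decomposition $T\widehat{X} = T(\R \times [0,1]) \oplus T\widehat{\Sigma}$.

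Next, I apply Riemann-Roch for totally real boundary-value problems (in the form for surfaces with Lagrangian boundary, after closing up the punctures on $\check F$) to each summand. For the base factor, the boundary data lies entirely in the single complex line $\R \partial_s \subset T(\R \times [0,1])$, and the closing-up arcs at the punctures also remain in this line (since the tangent to both $\R \times \{0\}$ and $\R \times \{1\}$ is $\R \partial_s$), so the base Maslov index vanishes and the base contribution is purely topological. For the fiber factor on $u_\Sigma^* T\widehat{\Sigma}$, the closed-up totally real boundary bundle on $\bdry \check F$ is precisely the rank-$n$ subbundle $\mathcal{L}$ defined just before the lemma (rotating from $T\bs\alpha'$ to $T\bs\alpha$ via $e^{J_{\widehat{\Sigma}} t}$ at each puncture), so the fiber contribution involves the Maslov term $\mu(u)$.

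Finally, I would sum these linear contributions and add the standard dimension-count for variations of the complex structure on $\dot F$ with the $2\kappa$ punctures held fixed, and subtract the dimensions of the domain automorphism group and of the global $\R$-translation on $\widehat{X}$. Following the bookkeeping in \cite[Section~4.4.2]{CGH1} with the only change being the substitution of $n$ for $1$ in the fiber rank, the result is exactly the stated formula $(n - 2)\chi(F) + \mu(u) + (2 - n)\kappa$. The main obstacle is this last step: the precise coefficient $(2 - n)$ of $\kappa$ emerges from a delicate accounting of how each of the $2\kappa$ punctures contributes to (i) the transition between the Fredholm index on $\check F$ and on $\dot F$ with exponential weights, (ii) the Teichm\"uller dimension of $\dot F$ with punctures fixed, and (iii) the dimension of the automorphism group quotient. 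Verifying that these contributions combine to give exactly $(2-n)\kappa$ rather than some other linear expression in $\kappa$ is the sole subtle step, and amounts to repeating the CGH1 argument with the $n$-dimensional Lagrangian rotation $e^{J_{\widehat{\Sigma}}t}$ in place of its $1$-dimensional counterpart.
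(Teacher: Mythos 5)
Your outline is essentially the paper's (omitted) proof: the paper simply declares the formula to be the generalization of \cite[Equation~(4.4.4)]{CGH1}, and your plan --- split the linearized operator over the product $T(\R\times[0,1])\oplus T\widehat\Sigma$, apply Riemann--Roch with totally real boundary conditions on $\check F$, observe that the base contribution is purely topological while the fiber contribution is the Maslov term $\mu(u)$ computed from the rotation bundle $\mathcal{L}$, and then redo the CGH1 domain-moduli bookkeeping with fiber rank $n$ --- is exactly that argument. One convention slip to fix before the constants come out right: you should \emph{not} subtract the dimension of the global $\R$-translation on $\widehat X$. In this paper (as in \cite{CGH1,Li}) $\op{ind}(u)$ is the expected dimension of $\mathcal{M}_{J^\Diamond}(\mathbf{y},\mathbf{y}')$ including variations of the domain complex structure but before quotienting by $\R$; the quotient appears only in the definition of the differential, which counts $\mathcal{M}^{\op{ind}=1}/\R$. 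If you subtract that extra $1$ your bookkeeping would land on $(n-2)\chi(u)+\mu(u)+(2-n)\kappa-1$, contradicting both the stated formula and the sanity check that the $\kappa$-tuple of trivial strips (with $\chi=\kappa$, $\mu=0$) has index $0$. With that adjustment, carrying out the puncture-by-puncture accounting as in \cite[Section 4.4.2]{CGH1} with the $n$-dimensional rotation $e^{J_{\widehat\Sigma}t}$ indeed yields the coefficient $(2-n)\kappa$.
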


\subsection{Splittings of generators and moduli spaces} \label{subsection: splittings}

In this subsection we discuss the splittings of generators and moduli spaces for $\bs\alpha,\bs\alpha'$ in general.

\subsubsection{Generators} \label{subsubsection: generators}

Let
\begin{equation} \label{eqn: h}
h_{\bs\alpha,\bs\alpha'}\subset H_1([0,1]\times\Sigma,(\{1\}\times\bs\alpha)\cup (\{0\}\times \bs\alpha');\Z)
\end{equation}
be the subset of classes $[\bs\delta]$ which admit representatives $\bs\delta$ such that
$$\bdry \bs \delta=\textstyle \sum_{i=1}^\kappa \delta_{i1} -\textstyle\sum_{i=1}^\kappa \delta_{i0},$$
where $\delta_{i1}$ is a point on $\{1\}\times \alpha_i$ and $\delta_{i0}$ is a point on $\{0\}\times \alpha'_i$.  Let
$$\mathcal{H}_{\bs\alpha,\bs\alpha'}:\mathcal{S}_{\bs\alpha,\bs\alpha'}\to h_{\bs\alpha,\bs\alpha'}$$
be the map which sends ${\bf y}=\{y_1,\dots,y_\kappa\}$ to the homology class $[[0,1]\times {\bf y}]$. Then $\mathcal{H}_{\bs\alpha,\bs\alpha'}$ gives a splitting
$$\mathcal{S}_{\bs\alpha,\bs\alpha'}=\textstyle\coprod_{[\bs\delta]\in h_{\bs\alpha,\bs\alpha'}} \mathcal{S}_{\bs\alpha,\bs\alpha',[\bs\delta]}.$$
Note that $\mathcal{M}_{J^\Diamond}({\bf y},{\bf y}')\not=\varnothing$ only if $\mathcal{H}_{\bs\alpha,\bs\alpha'}({\bf y})=\mathcal{H}_{\bs\alpha,\bs\alpha'}({\bf y}')$.

We will suppress ``$\bs\alpha,\bs\alpha'$'' from the notation when it is understood.

\subsubsection{Moduli spaces} \label{subsubsection: moduli spaces}

Suppose ${\bf y},{\bf y}'\in \mathcal{S}_{\bs\alpha,\bs\alpha',[\bs\delta]}$ for some $[\bs\delta]$.  Let
$$\mathcal{A}^{{\bf y},{\bf y}'}_{\bs\alpha,\bs\alpha'}\subset  H_2([0,1]\times \Sigma, (\{1\}\times \bs\alpha) \cup (\{0\}\times \bs\alpha')\cup ([0,1]\times {\bf y})\cup ([0,1]\times {\bf y}');\Z)$$
be the subspace spanned by classes $[T]$ which admit representatives $T$ such that
\begin{equation}\label{eqn: from y to y prime}
\bdry T=\textstyle\sum_{i=1}^\kappa ([0,1]\times y_i)-\textstyle\sum_{i=1}^\kappa([0,1]\times y'_i) -\textstyle\sum_{i=1}^\kappa w_{0i}+\textstyle\sum_{i=1}^\kappa w_{1i},
\end{equation}
where $w_{1i}$ is an arc in $\{1\}\times \alpha_{i}$ from $y_{i}$ to $y'_{i}$ and $w_{0i}$ is an arc in $\{0\}\times \alpha_i'$. A class $[T]$ satisfying Equation~\eqref{eqn: from y to y prime} is said to be {\em from ${\bf y}$ to ${\bf y}'$}.

There is a splitting
$$\mathcal{M}_{J^\Diamond}({\bf y},{\bf y}')=\textstyle\coprod_{B\in \mathcal{A}^{{\bf y},{\bf y}'}_{\bs\alpha,\bs\alpha'}} \mathcal{M}^B_{J^\Diamond}({\bf y},{\bf y}'),$$
where the superscript $B$ in $\mathcal{M}^B_{J^\Diamond}({\bf y},{\bf y}')$ is the modifier ``$u\in \mathcal{M}_{J^\Diamond}({\bf y},{\bf y}')$ is in the class $B\in \mathcal{A}^{{\bf y},{\bf y}'}_{\bs\alpha,\bs\alpha'}$.'' {\em  More generally, if $*$ is a modifier, we write $\mathcal{M}_{J^\Diamond}^*(\mathbf{y},\mathbf{y}')$ to indicate the subset of $\mathcal{M}_{J^\Diamond}(\mathbf{y},\mathbf{y}')$ satisfying property $*$.}

Similarly, if $\chi$ is the modifier ``$\chi(u)=\chi$'', then there is a further decomposition
$$\mathcal{M}_{J^\Diamond}^B({\bf y},{\bf y}')=\textstyle \coprod_{\chi\in \Z,\chi\leq k} \mathcal{M}_{J^\Diamond}^{B,\chi}({\bf y},{\bf y}').$$
Since $\pi\circ u$ is a $\kappa$-fold branched cover of $\R\times[0,1]$ when $J=J^\Diamond$, it follows that $\chi(u)\leq \kappa$ for $J^\Diamond$ arbitrarily close to $J$ by Gromov compactness.

\subsubsection{Complete set of capping surfaces} \label{subsubsection: complete set}

\begin{defn} \label{defn: complete set}
A {\em complete set of capping surfaces} consists of the following data:
\begin{itemize}
\item For each $[\bs\delta]\in h_{\bs\alpha,\bs\alpha'}$, a representative $\bs\delta=\sqcup_{i=1}^\kappa \delta_i$ consisting of $\kappa$ disjoint embedded arcs.
\item For each pair $([\bs\delta],{\bf y})$ consisting of $[\bs\delta]\in h_{\bs\alpha,\bs\alpha'}$ and ${\bf y}\in \mathcal{S}_{\bs\alpha,\bs\alpha',[\bs\delta]}$, a representative $T_{\bs\delta,{\bf y}}$ satisfying Equation~\eqref{eqn: from y to y prime} with $[0,1]\times y_i$ and $[0,1]\times y_i'$ replaced by $\delta_i$ and $[0,1]\times y_i$.
\end{itemize}
We will often denote a complete set of capping surfaces by $\{T_{\bs\delta,{\bf y}}\}$.
\end{defn}

Any two classes in $\mathcal{A}^{{\bf y},{\bf y}'}_{\bs\alpha,\bs\alpha'}$ from ${\bf y}$ to ${\bf y}'$ differ by an element of
\begin{equation} \label{eqn: mathcal A}
\mathcal{A}_{\bs\alpha,\bs\alpha'}:= H_2([0,1]\times \Sigma, (\{1\}\times \bs\alpha) \cup (\{0\}\times \bs\alpha');\Z).
\end{equation}
If we choose a complete set $\{T_{\bs\delta,{\bf y}}\}$ of capping surfaces, then for each pair ${\bf y},{\bf y}'\in  \mathcal{S}_{\bs\alpha,\bs\alpha',[\bs\delta]}$ there is a unique class $A\in  \mathcal{A}^{{\bf y},{\bf y}'}_{\bs\alpha,\bs\alpha'}$ given by $T_{\bs\delta,{\bf y}}+A=T_{\bs\delta,{\bf y}'}$.  We then write $\mathcal{M}^B_{J^\Diamond}({\bf y},{\bf y}')$ instead as $\mathcal{M}^{B-A}_{J^\Diamond}({\bf y},{\bf y}')$ so that $B-A\in \mathcal{A}_{\bs\alpha,\bs\alpha'}$.

\subsection{Regularity}

\begin{lemma} \label{lemma: simply-covered}
Suppose $J\in \mathcal{J}$.
\begin{enumerate}
\item If $u\in \mathcal{M}_{J}^{A,\chi}({\bf y},{\bf y}')$, then no irreducible component of $u$ lies on a fiber of $\pi$.
\item Every irreducible component of $u$ is simply-covered.
\end{enumerate}
\end{lemma}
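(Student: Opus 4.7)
The plan is to exploit the product structure $J = J_{\R\times[0,1]}\times J_{\widehat\Sigma}$ together with the exactness of $(\widehat X,\widehat\Omega)$ and of the Lagrangians $L_{1i}, L_{0i}$. Write $\widehat\Omega = d\lambda$ for $\lambda = s\,dt + \widehat\beta_\Sigma$; the restriction of $\lambda$ to each $L_{1i}$ or $L_{0i}$ is exact since $\bs\alpha$ and $\bs\alpha'$ are exact Lagrangians of $\widehat\Sigma$.

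For (1), suppose for contradiction that an irreducible component $F_0\subset\dot F$ has image contained in a single fiber $\{(s_0,t_0)\}\times\widehat\Sigma$ of $\pi$. Since $J$ is a product, $u|_{F_0}$ is $J_{\widehat\Sigma}$-holomorphic. Because $s$ is constant along $F_0$, no boundary puncture can lie on $F_0$ (the asymptotic strips require $s\to\pm\infty$), so $F_0$ is either closed or carries only Lagrangian boundary; in the latter case the intersection of the fiber with $L_1\cup L_0$ is non-empty only when $t_0\in\{0,1\}$, forcing the boundary to lie on an exact Lagrangian of $\widehat\Sigma$. Either way, Stokes' theorem applied to $u|_{F_0}^{*}(d\widehat\beta_\Sigma)$ forces the symplectic area to vanish, so $u|_{F_0}$ is constant, contradicting the convention that the irreducible components of $u$ contribute non-trivially.

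For (2), suppose $u|_{F_0}=\tilde u\circ\phi$ where $\phi:F_0\to F_0'$ is a branched cover of degree $k\ge 2$ and $\tilde u:F_0'\to\widehat X$ is simply-covered; by (1), $\tilde u$ is non-constant. If $F_0'$ has an asymptotic boundary puncture $p$ corresponding to a strip over some chord $[0,1]\times\{y\}$, then the definition of $\mathcal{M}_{J^\Diamond}(\mathbf{y},\mathbf{y}')$ requires each asymptotic strip to be simply covered, so $\phi$ cannot be ramified over $p$; hence $\phi^{-1}(p)$ consists of $k$ distinct boundary punctures of $F_0$, all asymptotic to the same chord $[0,1]\times\{y\}$. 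This contradicts the requirement that each chord of $\mathbf{y}\cup\mathbf{y}'$ be used by exactly one boundary puncture of $\dot F$. If instead $F_0'$ has no asymptotic punctures, then neither does $F_0$, so $u|_{F_0}$ has compact image with boundary (if any) on exact Lagrangians only; Stokes applied to $u|_{F_0}^{*}\widehat\Omega = u|_{F_0}^{*}d\lambda$ again forces zero symplectic area, so $u|_{F_0}$ is constant, contradicting (1).

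The main technical obstacle is the asymptotic analysis at boundary punctures: one has to argue carefully that the moduli space definition precludes a branch point of $\phi$ at an asymptotic puncture of $F_0'$, so that $\phi^{-1}(p)$ really consists of $k$ geometrically distinct punctures rather than one puncture whose asymptotic strip is covered with multiplicity $k$. Once this analytic step is in place, the remainder of the proof reduces to the ``each chord used once'' bookkeeping for $\mathbf{y},\mathbf{y}'$ and the standard Stokes argument that an exact symplectic manifold admits no non-constant compact $J$-holomorphic curves whose boundary lies on exact Lagrangians.
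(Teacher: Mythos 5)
Your proof is correct and follows essentially the same route as the paper: exactness of $\widehat\Sigma$ and of $\bs\alpha,\bs\alpha'$ (via Stokes) rules out fiber components, and the ``each chord/Lagrangian used exactly once'' bookkeeping rules out multiple covers. The boundary-puncture ramification point you flag is genuine but unproblematic (a boundary-preserving holomorphic cover has no boundary branch points), and the paper's own proof passes over it without comment.
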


\begin{proof}
(1) Arguing by contradiction, suppose $\wt u$ is a component of $u$ such that $\pi\circ \wt u$ maps to a point $x\in \R\times[0,1]$.  If $x$ is an interior point, then $\pi_{\widehat\Sigma}\circ \wt u$ is a closed curve, which is not possible since $({\widehat\Sigma},\widehat\beta)$ is an exact symplectic manifold. Similarly, if $x$ is a boundary point, then $\pi_{\widehat\Sigma}\circ \wt u$ is compact with Lagrangian boundary on either $\bs\alpha$ or $\bs\alpha'$, which contradicts the exactness of $\bs\alpha$ and $\bs\alpha'$.

(2) By (1), for every component $\wt u$ of $u$, $\pi\circ \wt u$ is a $\mbox{deg} \geq 1$ branched cover of $\R\times[0,1]$.  The positive ends are asymptotic to a subset of ${\bf y}$ and the negative ends are asymptotic to a subset of ${\bf y}'$.  Since each element of ${\bf y}$ and ${\bf y}'$ is used exactly once, (2) follows.
\end{proof}

\begin{lemma} \label{lemma: transversality}
The moduli space $\mathcal{M}_{J^\Diamond}^{A,\chi}({\bf y},{\bf y}')$ is transversely cut out if $J^\Diamond\in\mathcal{J}^\Diamond$ is generic and sufficiently close to some $J\in \mathcal{J}$.
\end{lemma}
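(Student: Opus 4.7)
The plan is to follow the standard Sard--Smale approach to transversality for Lagrangian boundary problems, adapted to the symplectic fibration setting as in Lipshitz~\cite{Li} and the cylindrical framework of \cite{CGH1}. I would first fix $J\in\mathcal{J}$ and a large integer $\ell$, and introduce a Banach manifold $\mathcal{J}^\Diamond_\ell$ of perturbed almost complex structures satisfying (J1)--(J3) whose differences from $J$ lie in a Floer-type $C^\epsilon$ space. The point of the $C^\epsilon$ norm is that its unit ball is a separable Banach manifold, while a residual subset has $C^\infty$ regularity, which at the end lets me extract $J^\Diamond\in\mathcal{J}^\Diamond$ of class $C^\infty$. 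Parallel to this I set up the configuration space of maps using Sobolev completions $W^{k,p}$ with exponential weights near the punctures so that the linearized $\bar\partial$-operator is Fredholm with the index computed in Lemma~\ref{lemma: Fredholm index}.

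Next I would form the universal moduli space
\begin{equation*}
\mathcal{M}^{A,\chi}_{\mathrm{univ}}(\mathbf{y},\mathbf{y}') = \{(u,J^\Diamond) : J^\Diamond\in\mathcal{J}^\Diamond_\ell,\ u\in \mathcal{M}^{A,\chi}_{J^\Diamond}(\mathbf{y},\mathbf{y}')\}.
\end{equation*}
The Sard--Smale strategy reduces the lemma to proving that the universal linearization $D\bar\partial \oplus \partial_{J^\Diamond}$ is surjective at every $(u,J^\Diamond)$. Once this surjectivity is in hand, $\mathcal{M}^{A,\chi}_{\mathrm{univ}}$ is a Banach manifold, the projection to $\mathcal{J}^\Diamond_\ell$ is Fredholm, Sard--Smale yields a residual set of regular values, and a standard Taubes-type intersection argument across $\ell\to\infty$ returns a residual set of $C^\infty$ perturbations in any $C^\infty$-neighborhood of $J$.

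The main obstacle, as usual, is the universal surjectivity, and it is delicate here because $J^\Diamond$ is required to be $s$-invariant and equal to $J$ on a neighborhood of $\R\times\{0,1\}\times \widehat\Sigma$ and on the symplectization end. The argument relies on somewhere injectivity: by Lemma~\ref{lemma: simply-covered}, every irreducible component of $u$ is simply-covered and no component lies in a fiber, so $\pi\circ u$ is a nontrivial branched cover of $\R\times[0,1]$ on each component. Branch points of $\pi\circ u$ and intersections between distinct components are both isolated, so I can find an interior point $p\in\dot F$ at which $du(p)$ is injective and such that $u^{-1}(\R\cdot u(p)) = \R\cdot\{p\}$, where $\R$ acts by $s$-translation; moreover, by Remark~\ref{rmk: curve contained in Sigma} the image $u(p)$ can be chosen inside the compact part $\R\times[0,1]\times\Sigma$ and away from $\R\times\{0,1\}\times\widehat\Sigma$, so perturbations supported near the $\R$-orbit of $u(p)$ are compatible with (J1)--(J2). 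The standard argument (as in \cite[Proposition~3.2]{Li}) then shows that any nonzero cokernel element of the linearized $\bar\partial$-operator can be paired nontrivially with an $s$-invariant variation of $J^\Diamond$ supported near that $\R$-orbit, giving a contradiction. The rest is formal and I expect no further difficulty.
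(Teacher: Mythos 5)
Your proposal is correct and takes essentially the same route as the paper: simple-coveredness from Lemma~\ref{lemma: simply-covered} gives a somewhere injective point compatible with the constraints (J1)--(J2), after which the standard Sard--Smale/universal moduli space argument (as in Lipshitz) finishes the proof. The only point to make explicit is that Lemma~\ref{lemma: simply-covered} is stated for $J\in \mathcal{J}$, so one needs Gromov compactness to transfer simple-coveredness to curves for nearby $J^\Diamond$ (and, since $\pi\circ u$ need not be holomorphic when $J^\Diamond\neq J$, the injective point is best located via somewhere injectivity of $\pi_{[0,1]\times\widehat\Sigma}\circ u$), exactly as in the paper's proof.
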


\begin{proof}
By Lemma~\ref{lemma: simply-covered} and Gromov compactness, $u\in \mathcal{M}_{J^\Diamond}^{A,\chi}({\bf y},{\bf y}')$ is simply-covered, provided $J^\Diamond$ is sufficiently close to $J$. This implies that $\pi_{[0,1]\times \widehat\Sigma}\circ u$ is somewhere injective.  We then apply the usual argument; see for example \cite[Proposition 3.8]{Li}.
\end{proof}

Let $(\mathcal{J}^{\Diamond})^{reg}\subset \mathcal{J}^\Diamond$ be the dense subset of regular almost complex structures, i.e., almost complex structures $J^\Diamond$ for which the moduli spaces $\mathcal{M}_{J^\Diamond}^{A,\chi}({\bf y},{\bf y}')$ are transversely cut out for all ${\bf y}$, ${\bf y}'$, $A$, and $\chi$.

\subsection{Orientations} \label{subsection: orientations}

We briefly discuss how to give a coherent (= compatible with gluing) system of orientations for $\mathcal{M}_{J}({\bf y}, {\bf y'})$, following \cite{FOOO}. The key point is to pick (stable) trivializations of $TL_1$ and $TL_0$ and to extend them over the chords $[0,1]\times {\bf y}$ and $[0,1]\times {\bf y'}$.

The orientation data can be decoupled into the base direction $\R\times[0,1]$ and the fiber direction $\widehat\Sigma$.  

We first discuss the fiber direction and orient the curves $\pi_{\widehat\Sigma}\circ u$. The orientation data for $(\widehat\Sigma,\bs\alpha,\bs\alpha')$ consists of:
\be
\item orientations on $\bs\alpha$ and $\bs \alpha'$;
\item a {\em relative spin structure} for the pair $(\bs\alpha,\bs\alpha')$; 
\item and for each intersection point $p\in \bs\alpha\cap \bs\alpha'$ a {\em capping Lagrangian path}, a {\em capping orientation}, and a {\em stable capping trivialization}.
\ee

Since our Lagrangians are (unions of) spheres, a relative spin structure is given by:
\begin{itemize}
\item a trivial $\R$-bundle over $\widehat \Sigma$ such that $(T\bs\alpha \oplus \R)|_{\bs \alpha}$ and $(T\bs\alpha'\oplus \R)|_{\bs \alpha'}$ are trivial; and 
\item trivializations ${\frak t}$ and ${\frak t}'$ for $(T\bs\alpha \oplus \R)|_{\bs \alpha}$ and $(T\bs\alpha'\oplus \R)|_{\bs \alpha'}$.
\end{itemize}
A {\em capping Lagrangian path} is a path $\{\mathcal L_{p,t}\}_{0\leq t \leq 1} $ in the oriented Lagrangian Grassmannian $\op{Lag}(T_p \widehat \Sigma,d\beta_\Sigma(p))$ such that $\mathcal L_{p,0} = T_p \bs\alpha'$ and $\mathcal L_{p,1}= T_p\bs \alpha$ as oriented vector spaces. A {\em stable capping trivialization} is a trivialization $\wt {\frak t}_p$ of $\{\mathcal L_{p,t}\oplus \R\}_{0\leq t\leq 1}$ that agrees with ${\frak t}_p$ and ${\frak t}'_p$ that we have already chosen.

For each $p\in \bs\alpha\cap \bs\alpha'$, let $\pi_p: \H \to \widehat \Sigma$ be the constant map to $p$, where $\H =\{z ~|~ \op{Im} z \geq 0\}$ is the upper half plane, and let $\xi =\pi_p^*(T_pM)$ be a trivial(ized) vector bundle over $\H$. We define a Cauchy-Riemann tuple $(\xi, \eta^{p+}, D^{p+})$ as follows: 
\begin{itemize}
	\item the real subbundle $\eta^{p+}\subset \xi$ is given by $\eta^{p+}_z= {\mathcal L}_{p,0}$ for $z\in (-\infty, 0)$, $\eta^{p+}_z={\mathcal L}_{p,z}$ for $z \in [0,1]$, and $\eta^{p+}_z = {\mathcal L}_{p,1}$ for $z \in (1,+\infty)$; and
	\item $D^{p+}$ is a fixed real linear Cauchy-Riemann operator $W^{k+1,p}(\H,\xi) \to W^{k,p}(\H, \wedge^{0,1}\H \otimes_{\mathbb C}\xi)$ with boundary condition on $\eta^{p+}$; see \cite[Section 3.2]{BH} for a more complete discussion.
\end{itemize}
Then a {\em capping orientation} is a choice of orientation $\mathfrak{o}(D^{p+})$.  We can similarly choose the Cauchy-Riemann tuple $(\xi, \eta^{p-}, D^{p-})$ by swapping the roles of $\bs \alpha$ and $\bs \alpha'$.

The $\R\times[0,1]$-direction is easy: The Lagrangians are $\ell_i=\R\times \{i\}$, $i=0,1$, the ``capping Lagrangian paths'' at the end $s=\pm \infty$ we take to be limits of $\R\langle \bdry_s\rangle$, and the trivializations are given by $\bdry_s$. 

\subsection{Definition of $\widehat{CF}(\Sigma,\bs\alpha',\bs\alpha;J^\Diamond)$}  \label{subsection: HF for alpha alpha prime}

In this subsection we define the cochain groups $\widehat{CF}(\Sigma,\bs\alpha',\bs\alpha;J^\Diamond)$, where $J^\Diamond\in (\mathcal{J}^\Diamond)^{reg}$. We assume that $n>2$.

\begin{rmk}
The astute reader might have noticed that we have switched the orders of $\bs\alpha$ and $\bs\alpha'$ (i.e., we switched to cohomology).  The order was switched for consistency with the $\ai$-conventions from \cite{Se3}.
\end{rmk}

\n
{\em Coefficient ring.} We first describe the coefficient ring $\Lambda \llbracket\hslash\rrbracket$. Since we have already chosen auxiliary orientation data in Section~\ref{subsection: orientations}, the ground ring is $\Z$.   The ``Planck constant'' $\hslash$ is a variable, $\Lambda\llbracket\hslash\rrbracket$ is the formal power series ring in $\hslash$ over $\Lambda$, and $\Lambda:=\Lambda_{\bs\alpha,\bs\alpha'}$ is the Novikov ring with $\Z$-coefficients over
$$\mathcal{A}'_{\bs\alpha,\bs\alpha'}:=\mathcal{A}_{\bs\alpha,\bs\alpha'}/\rm{torsion}$$
with respect to the symplectic form $d\beta$. More explicitly, elements $\lambda\in\Lambda$ are formal sums
$$\textstyle\sum_{A\in \mathcal{A}'_{\bs\alpha,\bs\alpha'}} \lambda_A e^A,$$
where $\lambda_A\in \Z$ and for each $C>0$ there are only finitely many $A\in \mathcal{A}'_{\bs\alpha,\bs\alpha'}$ with $d\beta((\pi_{{\widehat\Sigma}})_*(A))\leq C$.

\s\n
{\em Cochain group.} The cochain group $\widehat{CF}(\Sigma,\bs\alpha',\bs\alpha;J^\Diamond)$ is the free 
$\Lambda\llbracket\hslash\rrbracket$-module generated by $\mathcal{S}=\mathcal{S}_{\bs\alpha,\bs\alpha'}$.  The differential is given by:
\begin{equation} \label{eqn: differential}
d {\bf y}= \textstyle\sum_{{\bf y}'\in\mathcal{S},\chi\leq \kappa,A\in\mathcal{A}'_{\bs\alpha,\bs\alpha'}}\langle d {\bf y}, \hslash^{\kappa-\chi} e^A{\bf y}' \rangle \cdot \hslash^{\kappa-\chi} e^A{\bf y}',
\end{equation}
where $\langle d {\bf y}, \hslash^{\kappa-\chi} e^A{\bf y}'\rangle$ is the count of $\mathcal{M}_{J^\Diamond}^{\op{ind}=1,A,\chi}({\bf y},{\bf y}')/\R$.

\begin{rmk}
When $n=2$, we additionally impose an embeddedness condition on the holomorphic curves (or, alternatively, count ECH index $I=1$ curves).  This, together with the adjunction formula, gives restrictions on $\chi$.  On the other hand, when $n\geq 3$, the embeddedness condition is generically satisfied for $\op{ind}=1$ curves.  Also when $n=3$, the Fredholm index formula (Equation~\eqref{eqn: Fredholm index formula}) gives no restrictions on $\chi$.  These differences account for the slightly different form of the differential for $n>2$.
\end{rmk}

\n
{\em Grading.} In view of Equation~\eqref{eqn: Fredholm index formula} and the fact that the differential is degree-increasing, we set $|\hbar|=n-2$. Also $|e^A|=-2c_1(A)$.  The elements in $\mathcal{S}$ are {\em relatively graded} such that if $u\in \mathcal{M}_{J^\Diamond}^{\op{ind}=\ell,A,\chi}({\bf y},{\bf y}')$, then
$$(|{\bf y'}| + (n-2)(\kappa-\chi) -2c_1(A))  - |{\bf y}| =\ell.$$

\s\n
{\em Splitting.} The splitting $\mathcal{S}=\coprod_{[\bs\delta]\in h_{\bs\alpha,\bs\alpha'}} \mathcal{S}_{[\bs\delta]}$ gives rise to the splitting
$$\widehat{CF}(\Sigma,\bs\alpha',\bs\alpha;J^\Diamond)=\textstyle\oplus_{[\bs\delta]\in h_{\bs\alpha,\bs\alpha'}} \widehat{CF}(\Sigma,\bs\alpha',\bs\alpha;J^\Diamond;[\bs\delta]).$$

It remains to show that $\mathcal{M}_{J^\Diamond}^{\op{ind}=1,A,\chi}({\bf y},{\bf y}')/\R$ is compact and that $d^2=0$.

\begin{lemma} \label{lemma: bdry squared equals zero}
$d^2=0$.
\end{lemma}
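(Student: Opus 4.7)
The plan is to carry out the standard Floer-theoretic boundary-of-a-boundary argument adapted to the cylindrical setup of Section~\ref{subsection: HF for alpha alpha prime}, keeping careful track of the Euler characteristic bookkeeping encoded by $\hbar$. Fix $\mathbf{y},\mathbf{y}''\in \mathcal{S}$, a class $A\in \mathcal{A}'_{\bs\alpha,\bs\alpha'}$, and $\chi\leq \kappa$, and consider the moduli space
$$\mathcal{M}:=\mathcal{M}_{J^\Diamond}^{\operatorname{ind}=2,A,\chi}(\mathbf{y},\mathbf{y}'')/\R,$$
which by Lemma~\ref{lemma: transversality} is a smooth $1$-manifold for generic $J^\Diamond\in (\mathcal{J}^\Diamond)^{reg}$. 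The strategy is to compactify $\mathcal{M}$ and identify its boundary with $2$-story broken configurations, whose signed count equals the coefficient of $\hbar^{\kappa-\chi}e^A\mathbf{y}''$ in $d^2\mathbf{y}$.

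First I would apply an SFT-flavored compactness argument to describe $\overline{\mathcal{M}}$. Since $(\widehat\Sigma,\widehat\beta_\Sigma)$ is exact and the Lagrangians $\bs\alpha,\bs\alpha'$ are exact (cf.\ Remark~\ref{rmk: spheres}), there are no sphere bubbles in the interior and no disk bubbles on the boundary. By Remark~\ref{rmk: curve contained in Sigma}, the subharmonicity of $\sigma\circ \pi_{\widehat\Sigma}\circ u$ forces $\operatorname{Im}(u)\subset X=\R\times[0,1]\times \Sigma$, ruling out escape into the symplectization end $\widehat\Sigma-\Sigma$ as well as any Reeb-chord breaking at the binding. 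The remaining codimension-$1$ degeneration is splitting along the $s$-direction into a $2$-story building $(u_1,u_2)$, with an intermediate generator $\mathbf{y}'\in \mathcal{S}$ matching the negative ends of $u_1$ and the positive ends of $u_2$.

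Next I would verify that the splittings behave correctly with respect to the gradings. Additivity of relative homology classes gives $A_1+A_2=A$, and since the gluing identifies $\kappa$ chord-ends (each an arc with Euler characteristic $1$), the inclusion-exclusion formula yields
$$\chi(u)\;=\;\chi(u_1)+\chi(u_2)-\kappa,$$
so that $\hbar^{\kappa-\chi_1}\cdot \hbar^{\kappa-\chi_2}=\hbar^{\kappa-\chi}$, matching the product structure in Equation~\eqref{eqn: differential}. Combined with index additivity $\operatorname{ind}(u_1)+\operatorname{ind}(u_2)=2$ and $\operatorname{ind}(u_i)\geq 1$ (no holomorphic curve of index $0$ exists after quotienting by $\R$ unless it is constant, which is excluded by exactness), the only possibility is $\operatorname{ind}(u_1)=\operatorname{ind}(u_2)=1$. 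A standard gluing argument, applied in the somewhere-injective setting guaranteed by Lemma~\ref{lemma: simply-covered}, shows that every such broken configuration arises as a unique end of $\mathcal{M}$.

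Summing over all intermediate data $(\mathbf{y}',\chi_1,\chi_2,A_1,A_2)$ compatible with the above constraints, the signed count of $\partial\overline{\mathcal{M}}$ is exactly the coefficient of $\hbar^{\kappa-\chi}e^A\mathbf{y}''$ in $d^2\mathbf{y}$, and this count vanishes because $\overline{\mathcal{M}}$ is a compact oriented $1$-manifold with boundary. Varying $\mathbf{y}'',A,\chi$ gives $d^2=0$. The principal technical obstacle is the SFT-style compactness statement in this cylindrical $\widehat X$ setting — in particular, excluding exotic buildings that split off components with branch points escaping to infinity in the fiber direction, or configurations where $\chi$ drops further in the limit without contributing to $2$-story breaking; these are controlled by the subharmonicity/maximum-principle estimate together with the $s$-invariance and adaptedness of $J^\Diamond$, but a careful execution requires a Gromov-type compactness theorem tailored to this framework (following the outline of \cite{CGH1}).
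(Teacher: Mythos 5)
Your overall skeleton---compactify the $1$-dimensional moduli space $\mathcal{M}_{J^\Diamond}^{\op{ind}=2,A,\chi}(\mathbf{y},\mathbf{y}'')/\R$, identify its boundary with two-story buildings, and check that the bookkeeping is multiplicative under breaking ($\chi=\chi_1+\chi_2-\kappa$, $A=A_1+A_2$, so $\hbar^{\kappa-\chi_1}\hbar^{\kappa-\chi_2}=\hbar^{\kappa-\chi}$)---is the same as the paper's, and that bookkeeping is correct. The gap is precisely in the step you flag as the principal obstacle: ruling out limits in which $\chi$ drops without breaking in the $s$-direction. Since the differential counts curves of every Euler characteristic $\chi\le\kappa$, a sequence in the index-$2$ moduli space can a priori degenerate by pinching a closed curve in the domain, producing a nodal limit with an interior node where two sheets of the multisection meet. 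This is \emph{not} excluded by exactness, the maximum principle, or the $s$-invariance/adaptedness of $J^\Diamond$: the nodal limit still lies in $X=\R\times[0,1]\times \Sigma$ and involves no bubbling and no escape to the ends. The paper excludes it by an index/codimension comparison: by Equation~\eqref{eqn: Fredholm index formula}, pinching one closed curve raises the Fredholm index, $\op{ind}(u_\infty)=\op{ind}(u_i)+2(n-2)$, while the incidence condition at the node (forcing a self-intersection of $\pi_{[0,1]\times\widehat\Sigma}\circ u_\infty$ in $[0,1]\times\Sigma$) requires $\op{ind}(u_\infty)-2\ge \dim([0,1]\times\Sigma)-4=2n-3$; combining the two gives $\op{ind}(u_i)>2$, a contradiction. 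Note that this is exactly where the standing assumption $n>2$ enters (for $n=2$ one must instead impose embeddedness/ECH-index conditions, as the paper remarks). Your proposal offers no substitute for this argument, and without it the identification of $\partial\overline{\mathcal{M}}$ with two-story breakings is unjustified.

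A smaller point: boundary degenerations are also not disposed of by ``exactness rules out disk bubbles.'' One must separately exclude the pinching of a boundary arc with both endpoints on the same $\alpha_q$ (or $\alpha'_q$); the paper does this by noting that such a pinch would force $\pi\circ u_\infty$ to send a boundary component, hence an entire component, to a point of $\bdry(\R\times[0,1])$, contradicting Lemma~\ref{lemma: simply-covered}(1). Arcs joining distinct components of $\bs\alpha$ (or of $\bs\alpha'$) cannot pinch since those Lagrangians are disjoint, and the only surviving degeneration---an arc from $\bs\alpha$ to $\bs\alpha'$---is the $s$-breaking you want. With those two exclusions supplied, the rest of your argument (index additivity, regularity forcing a two-level building with $\op{ind}=1$ levels, gluing, and the passage from $J$ to a nearby generic $J^\Diamond$) goes through as in the paper.
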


\begin{proof}
Let $\mathcal{M}':=\mathcal{M}^{\op{ind}=2,A,\chi}_{J^\Diamond}({\bf y},{\bf y}')/\R$. We claim that
$$\bdry \mathcal{M}'=\textstyle\coprod (\mathcal{M}^{\op{ind}=1,A_1,\chi_1}_{J^\Diamond}({\bf y},{\bf y}'')/\R) \times (\mathcal{M}^{\op{ind}=1,A_2,\chi_2}_{J^\Diamond}({\bf y}'',{\bf y}')/\R),$$
where the union is over all ${\bf y}''\in\mathcal{S}$, $\chi_1+\chi_2-\kappa=\chi$, and $A_1,A_2\in \mathcal{A}'_{\bs\alpha,\bs\alpha'}$ such that $A_1+A_2=A$. Here $\bdry \mathcal{M}'$ is constructed using the usual SFT compactness theorem \cite{BEHWZ}.

First suppose that $J=J^\Diamond$ and $J\in (\mathcal{J}^\Diamond)^{reg}$. By Lemma~\ref{lemma: simply-covered}(1), $\mathcal{M}'$ and $\bdry \mathcal{M}'$ have no components that lie in fibers $\pi^{-1}(pt)$. Let $u_\infty\in\bdry \mathcal{M}'$ be the SFT limit of a sequence $u_i: \dot F_i\to {\widehat X}$, $i=1,2,\dots$, in $\mathcal{M}'$; without loss of generality we may assume that the topological types of all the $\dot F_i$ are the same. Observe that $u_i$ has image in $X=\R\times[0,1]\times\Sigma$, since the maximal principle holds for $\pi_{\widehat\Sigma}\circ u_i$.

We claim that $u_\infty$ cannot have any interior nodes that are obtained in the limit by pinching a closed curve in $\dot F_i$. Suppose for simplicity that the domain of $u_\infty$ is obtained by pinching a single closed curve in $\dot F_i$. Then
$$\op{ind}(u_\infty)=\op{ind}(u_i)+2(n-2)$$
by Equation~\eqref{eqn: Fredholm index formula}.
On the other hand, the matching/incidence condition at the node forces $\op{ind}(u_\infty)> 2n-2$:
Assuming that two components of $u_\infty$ are matched, in order for $\pi_{[0,1]\times\Sigma}\circ u_\infty$ to have an intersection point in $[0,1]\times \Sigma$, we require
\begin{equation}\label{eqn: ineq for ind u infty}
\op{ind}(u_\infty)-2\geq \op{dim}([0,1]\times \Sigma)-4= 2n-3.
\end{equation}
Here the $-2$ on the left-hand side comes from quotienting out the $\R$-translations.
If there is only one component of $u_\infty$, then a slightly different argument also yields Inequality~\eqref{eqn: ineq for ind u infty}. Hence $\op{ind}(u_\infty)> 2n-2$ and $\op{ind}(u_i)>2$, a contradiction.

It remains to consider the limit of pinching an arc $c_i$ in $\dot F_i$. It is not possible for $c_i$ to connect $\alpha_q$ to $\alpha_r$, where $q\not=r$, since $\alpha_q$ and $\alpha_r$ are disjoint (and similarly $\alpha'_q$ to $\alpha'_r$, where $q\not=r$).  On the other hand, if $c_i$ connects $\alpha_q$ to itself, then $\pi\circ u_\infty$ maps a boundary component of the domain $\dot F_\infty$ of $u_\infty$ identically to a point $x$ on $\bdry (\R\times[0,1])$. This implies that $\pi\circ u_\infty$ maps a component of $\dot F_\infty$ to $x$, contradicting Lemma~\ref{lemma: simply-covered}(1). Finally, if an arc $c_i$ connects $\alpha_q$ to $\alpha'_r$, then this pinching corresponds to stretching in the $s$-direction.

It follows that $u_\infty$ is an $l$-level building $v_1\cup\dots \cup v_l$, where each $v_j$ is a degree $\kappa$ multisection of $\pi$.  By Lemma~\ref{lemma: transversality} and the assumption of the regularity of $J$, $l=2$ and $\op{ind}(v_1)=\op{ind}(v_2)=1$.

Next suppose that $J^\Diamond\in (\mathcal{J}^\Diamond)^{reg}$ is sufficiently close to $J\in \mathcal{J}$. Then $\mathcal{M}'$ and $\bdry \mathcal{M}'$ have no components that are close to lying on a fiber $\pi^{-1}(pt)$. The rest of the argument that shows that $u_\infty$ is an $l$-level building $v_1\cup\dots \cup v_l$, where each $v_j$ is close to being a degree $\kappa$ multisection of $\pi$, is identical.
\end{proof}

\begin{lemma} \label{lemma: compactness}
For all $A$, $\chi$, ${\bf y}$, and ${\bf y}'$,  $\mathcal{M}_{J^\Diamond}^{\op{ind}=1,A,\chi}({\bf y},{\bf y}')/\R$ is compact.
\end{lemma}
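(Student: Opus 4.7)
The plan is to mimic the proof of Lemma~\ref{lemma: bdry squared equals zero}, which analyzed the boundary of the $\op{ind}=2$ moduli, but with the tighter index budget given by $\op{ind}=1$. First I would take a sequence $u_i \in \mathcal{M}_{J^\Diamond}^{\op{ind}=1,A,\chi}({\bf y},{\bf y}')$, choose representatives of the $\R$-orbits, and use Remark~\ref{rmk: curve contained in Sigma} (via subharmonicity of $\sigma \circ \pi_{\widehat\Sigma} \circ u_i$ on the symplectization end) to confine all images to $X = \R \times [0,1] \times \Sigma$. Combined with the symplectic-area bound coming from fixing the class $A$ and Euler characteristic $\chi$, SFT compactness \cite{BEHWZ} then yields a limiting holomorphic building $u_\infty$.

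Next I would rule out all the degenerations appearing in the proof of Lemma~\ref{lemma: bdry squared equals zero}, by exactly the same arguments. Interior nodes obtained by pinching a closed curve would force $\op{ind}(u_\infty) > 2n-2 \geq 2$ via the matching-condition inequality~\eqref{eqn: ineq for ind u infty}, contradicting $\op{ind}(u_\infty)=1$. Boundary-arc pinches joining distinct Lagrangian components $\alpha_q, \alpha_r$ or $\alpha'_q, \alpha'_r$ cannot occur because these components are disjoint. An arc joining $\alpha_q$ to itself (or $\alpha'_q$ to itself) would force a component of the limiting domain to map into a fiber of $\pi$, violating Lemma~\ref{lemma: simply-covered}(1). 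The only remaining possibility is that an arc connects some $\alpha_q$ to some $\alpha'_r$, which corresponds to SFT stretching in the $s$-direction and produces a genuine level splitting $u_\infty = v_1 \cup \cdots \cup v_l$, with each $v_j$ (close to being) a degree-$\kappa$ multisection of $\pi$.

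The main (but mild) point is then the index bookkeeping for this building. By Lemma~\ref{lemma: transversality} each level $v_j$ is regular, and since each $v_j$ is a nontrivial multisection (not an $\R$-invariant trivial strip, as it is required to use each $L_{1i}, L_{0i}$ exactly once), we must have $\op{ind}(v_j) \geq 1$ for every $j$. Summing over levels,
\[
1 = \op{ind}(u_\infty) = \sum_{j=1}^l \op{ind}(v_j) \geq l,
\]
forcing $l=1$. Hence no actual breaking occurs, and after passing to a subsequence and appropriate $\R$-translation the $u_i$ converge to a curve $u_\infty \in \mathcal{M}_{J^\Diamond}^{\op{ind}=1,A,\chi}({\bf y},{\bf y}')$, so the quotient by $\R$ is compact.
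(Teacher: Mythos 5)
Your proposal is correct and follows essentially the same route the paper intends: the paper's proof of this lemma is literally ``similar to that of Lemma~\ref{lemma: bdry squared equals zero} and is left to the reader,'' and your argument is exactly that proof run with the index budget $\op{ind}=1$, ruling out nodes and fiber components as before and using regularity to force each level of a would-be building to have index at least $1$, hence no breaking.
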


\begin{proof}
Similar to that of Lemma~\ref{lemma: bdry squared equals zero} and is left to the reader.
\end{proof}

We write $\widehat{HF}(\Sigma,\bs\alpha',\bs\alpha;J^\Diamond)$ for the cohomology of $\widehat{CF}(\Sigma,\bs\alpha',\bs\alpha; J^\Diamond)$.

\begin{rmk}
It is not hard to see that two cochain groups $\widehat{CF}(\Sigma,\bs\alpha',\bs\alpha; J^\Diamond)$ corresponding to distinct complete sets of capping surfaces are isomorphic cochain complexes.
\end{rmk}

\subsection{Definition of $\widehat{CF}(W,\beta,\phi; h; J^\Diamond)$} \label{subsection: specialization}

The {\em higher-dimensional Heegaard Floer cochain group} $\widehat{CF}(W,\beta,\phi;h; J^\Diamond)$ is a special case of $\widehat{CF}(\Sigma,\bs\alpha',\bs\alpha;J^\Diamond)$, where:
\begin{itemize}
\item $(\Sigma,\beta_\Sigma,\phi_\Sigma)$ is the capping $(\wt{W},\wt\beta,\wt\phi)$ of $(W,\beta,\phi)$,
\item $\bs\alpha$ is the capping $\wt {\bs a}$ of the basis $\bs a$ of Lagrangian disks for $(W,\beta,\phi)$,
\item $\bs\alpha'=\wt{h}(\bs\alpha)$, where $\wt h$ is obtained from $h\cup \op{id}_H$ and $H$ is the cap for $W$, by composing with a small Hamiltonian diffeomorphism as in Section~\ref{subsection: extension}, and
\item the coefficient ring has been further specialized.
\end{itemize}
We write $\widehat{HF}(W,\beta,\phi;h; J^\Diamond)$ for the cohomology of $\widehat{CF}(W,\beta,\phi;h; J^\Diamond)$.

We often write $\widehat{CF}(h(\bs a),\bs a)$ for $\widehat{CF}(W,\beta,\phi;h; J^\Diamond)$.

We first discuss the splittings of generators and moduli spaces in the special case when $\bs\alpha=\wt{\bs a}$ and $\bs\alpha'=\wt{h}(\wt{\bs a})$.

\subsubsection{Generators}\label{subsubsection: generators II}

Let $\mathcal{S}=\mathcal{S}_{\wt{\bs a},\wt{h}(\wt{\bs a})}$.
Recall there is a unique point $x_i\in \wt a_i\cap \wt{h}(\wt a_i)$ which is contained in the handle $H_i$.  We define two maps $\mathcal{H}'$ and $\iota$.  The map
$$\mathcal{H}':\mathcal{S}\to H_1([0,1]\times W, (\{1\}\times {\bs a}) \cup (\{0\}\times h({\bs a}));\Z),$$
sends ${\bf y}=\{y_1,\dots,y_\kappa\}$ to the homology class $[[0,1]\times \eta({\bf y})]$, where $\eta$ is defined as follows: $\eta$ maps $y_i\in \wt a_i\cap \wt{h}(\wt a_{\sigma(i)})$ to the nearby point $\eta(y_i)\in a_i\cap h(a_{\sigma(i)})$ if $y_i\not=x_i$ (recall that $\wt h$ is close to but not exactly the same as $h\cup\op{id}|_H$) and to some point on $\bdry a_i$ if $y_i=x_i$. Let $M=M_{(W,\beta;h)}$ be the relative mapping torus of $(W,\beta;h)$.  The map
$$\iota: \op{Im}(\mathcal{H}')\to H_1(M;\Z),$$
is defined as follows: Let $\bs \delta$ be a representative of $[\bs\delta]\in\op{Im}(\mathcal{H}')$ with ``initial points'' $\delta_{i0}$ on $\{0\}\times h(a_i)$ for $i=1,\dots,\kappa$ and ``terminal points'' $\delta_{i1}$ on $\{1\}\times a_i$ for $i=1,\dots,\kappa$. Viewing $\bs\delta$ as a chain in $M$, we define $\iota([\bs\delta])$ to be the homology class of the ``closure'', obtained from $\bs\delta$ by adjoining arcs from $\delta_{i0}$ to $\delta_{i1}$ (which we call {\em augmenting arcs}) along $\{1\}\times a_i$ for each $i$.  Note that the resulting homology class is independent of the choice of augmenting arcs, since the $a_i$ are simply-connected.  We then set $\mathcal{H}''=\iota\circ\mathcal{H}'$.

The map $\mathcal{H}''$ gives a splitting
$$\mathcal{S}=\textstyle\coprod_{\Gamma\in H_1(M;\Z)} \mathcal{S}_\Gamma,$$
which is analogous to the splitting into Spin$^c$-structures in dimension $3$.\footnote{In general, Spin$^c$-structures on $M$ are classified by $2H^2(M;\Z)\oplus H^1(M;\Z/2\Z)$. When $\dim M=2n+1=3$, this can be identified with $H^2(M;\Z)$ and hence Spin$^c$-structures are in one-to-one correspondence with $H_1(M;\Z)$.  For $n>1$, there is no such identification.}

\subsubsection{Moduli spaces} \label{subsubsection: moduli spaces II}

Let
\begin{gather*}
\mathcal{A}^0_{{\bf y},{\bf y}'}\subset  H_2([0,1]\times W, (\{1\}\times {\bs a}) \cup (\{0\}\times h({\bs a})))\cup ([0,1]\times\eta({\bf y}))\cup ([0,1]\times \eta({\bf y}'));\Z)
\end{gather*}
be the subspace spanned by classes $[T]$ that admit representatives $T$ such that
$$\bdry T=\textstyle\sum_{i_0=1}^\kappa ([0,1]\times \eta(y_{i_0}))-\textstyle\sum_{i_1=1}^\kappa([0,1]\times\eta(y'_{i_1})) -\textstyle\sum_{i_0=1}^\kappa w_{0i_0}+\textstyle\sum_{i_1=1}^\kappa w_{1i_1},$$
where $w_{1i_1}$ is an arc in $\{1\}\times a_{i_1}$ from $y_{i_1}$ to $y'_{i_1}$ and $w_{0i_0}$ is similarly defined.

Next let
\begin{align*}
\mathcal{A}_{{\bf y},{\bf y}'}:=& H_2(M,\wt{\mathcal{H}}({\bf y})\cup\wt{\mathcal{H}}({\bf y}');\Z),
\end{align*}
where $\wt{\mathcal{H}}({\bf y})$ refers to the ``closure'' of $[0,1]\times\eta({\bf y})$, as described in Section~\ref{subsubsection: generators II}, representing the homology class $\mathcal{H}''({\bf y})$.

There is a map
$$\kappa=\kappa_{\wt{\mathcal{H}}({\bf y})}: \mathcal{A}^0_{{\bf y},{\bf y}'}\to \mathcal{A}_{{\bf y},{\bf y}'},$$
defined as follows: View the representative $T$ as a surface in $M$.  Then the arcs $w_{0i_0}$ and $w_{1i_1}$, together with the augmenting arcs for $\eta({\bf y})$ and $\eta({\bf y}')$, form closed curves in $\cup_i(\{1\}\times a_i)$ which bound disks $D_i\subset \{1\}\times a_i$ since the $a_i$ are contractible. Then $\kappa([T])$ is represented by $T\cup (\cup_i D_i)$.  By the contractibility of $a_i$, $\kappa([T])$ does not depend on the choice of $D_i$ and $\kappa_{\wt{\mathcal{H}}_1({\bf y})}$ and $\kappa_{\wt{\mathcal{H}}_2({\bf y})}$ coming from different choices of augmenting arcs can be naturally identified.

The map $\kappa$ gives rise to the splitting
$$\mathcal{M}_{J^\Diamond}({\bf y},{\bf y}')=\textstyle\coprod_{B\in \mathcal{A}_{{\bf y},{\bf y}'}} \mathcal{M}^B_{J^\Diamond}({\bf y},{\bf y}'),$$
where the superscript $B$ is the modifier ``$u\in \mathcal{M}_{J^\Diamond}({\bf y},{\bf y}')$ is in the class $B\in \mathcal{A}_{{\bf y},{\bf y}'}$.''

\subsubsection{Definition of $\widehat{CF}(W,\beta,\phi; h; J^\Diamond)$} \label{subsubsection: definition of specialization}

We now choose a complete set of capping surfaces $\{T_{\delta,{\bf y}}\}$ in $[0,1]\times W$, defined in a manner analogous to that of Section~\ref{subsubsection: complete set}, and take their ``closures'' $\{\wt T_{\delta,{\bf y}}\}$ in $M$.

Using this, we can define $\widehat{CF}(W,\beta,\phi; h; J^\Diamond)$ as in Section~\ref{subsection: HF for alpha alpha prime} with $\Lambda\llbracket\hslash\rrbracket$-coefficients, where $\Lambda$ is the Novikov ring with $\Z$-coefficients over $H_2(M;\Z)$.

\section{$\ai$-operations} \label{section: A infty operations}

In this section we explain how to modify the discussion from Section~\ref{section: defn of homology groups} when the base of the symplectic fibration is a disk with $m+1$ boundary punctures instead of $\R\times[0,1]$ and obtain the $\ai$-operations.

Let $D=\{|z|\leq 1\}\subset \C$ and let $D_m=D-\{p_0,\dots,p_m\}$, where $p_0,\dots,p_m\in \bdry D$, arranged in counterclockwise order around $\bdry D$.  Let $\bdry_i D_m$, $i=0,\dots,m$, be the component of $\bdry D_m$ which is the counterclockwise open arc from $p_i$ to $p_{i+1}$, where the subscripts are taken modulo $m+1$. Also let $e_i\subset D_m$ be a ``neighborhood of $p_i$'' which we view as a strip-like end $[0,\infty)\times [0,1]$ with coordinates $(s_i,t_i)$.

\begin{figure}[ht]
	\begin{overpic}[scale=1]{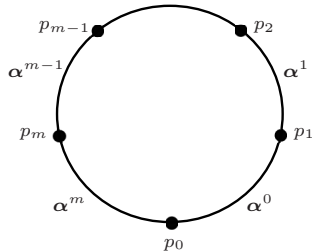}
		\put(47,-7){\tiny $p_0$}
		\put(102,40){\tiny $p_1$}
		\put(85,85){\tiny $p_2$}
		\put(-6,85){\tiny $p_{m-1}$}
		\put(-14,40){\tiny $p_m$}
		\put(82,8){\tiny $\bs\alpha^0$}
		\put(97.5,65){\tiny $\bs \alpha^1$}
		\put(-19,65){\tiny $\bs\alpha^{m-1}$}
		\put(0,8){\tiny $\bs\alpha^m$}
	\end{overpic}
	\caption{The base $D_m$ of the symplectic fibration $\widehat X_m= D_m\times \widehat \Sigma$.  The labels $\bs\alpha^i$ indicate the boundary condition $L_i= \bdry_i D_m\times \bs \alpha^i$.}
	\label{fig: A-infty}
\end{figure}

Consider the symplectic fibration
$$\pi_m: (\widehat{X}_m=D_m\times \widehat\Sigma,\widehat\Omega_m = \omega + d\widehat\beta_\Sigma)\to (D_m,\omega),$$
where $\omega$ is an area form on $D$ which restricts to $ds_i\wedge dt_i$ on each strip-like end $e_i$. For $i=0,\dots, m$, let
$$\mathcal{E}_i= \pi_m^{-1}(e_i)= [0,\infty)\times[0,1]\times \widehat\Sigma$$
be the ``ends in the horizontal direction'' of $\widehat{X}_m$. Also let $\bs\alpha^i=\sqcup_{j=1}^\kappa\alpha^i_j$ be a compact exact $\kappa$-component Lagrangian submanifold of $(\widehat\Sigma,\widehat\beta)$ such that $\bs\alpha^i\pitchfork \bs\alpha^{i+1}$ for all $i=0,\dots,m$. We then set
$$L_i= \bdry_i D_m\times \bs\alpha^i,\quad L_{ij}=\bdry_i D_m\times \alpha^i_j.$$

Next let $j_m$ be the standard complex structure on $D_m$; we may assume that $j_m(\bdry_{s_i})=\bdry_{t_i}$ on each $e_i$.  Also let $J_m=j_m\times J_{\widehat\Sigma}$ be a product almost complex structure on $\widehat{X}_m$, and $J^\Diamond_m$ be a $C^\infty$-small perturbation of $J_m$ such that (J1)--(J3) from Section~\ref{subsection: almost complex structures} hold on each end $\mathcal{E}_i$, with $\R$ replaced by $[0,\infty)$ and $(s,t)$ replaced by $(s_i,t_i)$.

For each $i=0,\dots,m$, let $\mathbf{y}^i=\{y^i_1,\dots,y^i_\kappa \}$ be a $\kappa$-tuple of intersection points $y^i_j\in \alpha^{i-1}_j\cap \alpha^{i}_{\sigma_i(j)}$, $j=1,\dots,\kappa$, for some permutation $\sigma_i$.

Let $\mathcal{M}_{J^\Diamond_m}(\mathbf{y}^0,\dots,\mathbf{y}^m)$ be the moduli space of holomorphic maps
$$u: (\dot F,j)\to (\widehat{X}_m, J^\Diamond_m),$$
where we range over all $(\dot F,j)$ such that $(F,j)$ is a compact Riemann surface with boundary, ${\bf p}_0,\dots,{\bf p}_m$ are disjoint sets of boundary punctures of $F$, and $\dot F=F-\cup_i {\bf p}_i$, and:
\begin{enumerate}
\item each component of $\bdry F- \cup_i {\bf p}_i$ maps to some $L_{ij}$ and each $L_{ij}$ is used exactly once;
\item $u$ maps the neighborhoods of the punctures of ${\bf p}_i$ asymptotically to strips over the Reeb chords of  $\mathbf{y}^i$ on $\mathcal{E}_i$.
\end{enumerate}
Note that $\dot F$ has $(m+1)\kappa$ punctures.

Let $\check X_m$ be the compactification of $D_m\times \Sigma$, obtained by attaching $[0,1]\times \Sigma$ to each end $[0,1]\times[0,\infty)\times \Sigma$, let $\check L_i$ be the compactification of $L_i$, and let $\check u:\check F\to \check X_m$ be the compactification of $u\in \mathcal{M}_{J^\Diamond_m}(\mathbf{y}^0,\dots,\mathbf{y}^m)$, defined as in Section~\ref{subsection: Fredholm index}. The Maslov index $\mu(u)$ is defined as in Section~\ref{subsection: Fredholm index} via the bundle $\mathcal{L}$: for each component of $\bdry \dot F$, $\mathcal{L}$ is given by some $\check u^* T\alpha^i_j$ and, on the components of $\bdry\check F-\bdry\dot F$,  $\mathcal{L}$ is given by rotating from $T_{y^i_j} \alpha^{i-1}_{j}$ to $T_{y^i_j}\alpha^i_{\sigma_i(j)}$ via $e^{J_{\widehat\Sigma}t}$, $t\in[0,\tfrac{\pi}2]$, where $i=1,\dots,m$ and $j=1,\dots,\kappa$, and by rotating from $T_{y^m_j}\alpha^m_j$ to $T_{y^m_j}\alpha^0_{\sigma_m(j)}$ via $e^{-J_{\widehat\Sigma}t}$, $t\in[0,\tfrac{\pi}2]$, where $j=1,\dots,\kappa$. Without loss of generality we are assuming that $J_{\widehat\Sigma}(T_{y^i_j} \alpha^{i-1}_j)=T_{y^i_j}\alpha^i_{\sigma_i(j)}$ for all $i=0,\dots,m$, $j=1,\dots,\kappa$.

The following lemma generalizes Lemma~\ref{lemma: Fredholm index} and its proof is left to the reader.

\begin{lemma} \label{lemma: Fredholm index 2}
The Fredholm index of $u\in \mathcal{M}_{J^\Diamond_m}(\mathbf{y}^0,\dots,\mathbf{y}^m)$ is
\begin{equation}
\op{ind}(u)= (n-2) \chi(u) +\mu(u) + 2\kappa-m \kappa n,
\end{equation}
where $\dim(W)=2n$ and $\dim (\widehat{X})=2n+2$.
\end{lemma}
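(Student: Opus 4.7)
The plan is to follow the strategy used for Lemma~\ref{lemma: Fredholm index} (whose proof is omitted there but follows \cite[Section~4.4.2]{CGH1}): apply Riemann-Roch to the linearized Cauchy-Riemann operator on the punctured Riemann surface $\dot F$ with totally real boundary conditions at the capping paths, and then separately account for the dimension of the conformal moduli of $\dot F$ with its moving boundary punctures. The only changes from the strip setting are that the base of the symplectic fibration is $D_m$ instead of $\R\times[0,1]$, and that the domain carries $(m+1)\kappa$ boundary punctures instead of $2\kappa$. A reassuring sanity check is that the target formula specializes to Lemma~\ref{lemma: Fredholm index} when $m=1$, since then $2\kappa-m\kappa n=(2-n)\kappa$.

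Concretely, since $J^\Diamond_m$ is a small perturbation of the product $J_m=j_m\times J_{\widehat\Sigma}$ and the analytic index is a deformation invariant of Fredholm operators, I may compute as if the almost complex structure were split. The bundle $\check u^*T\widehat X_m\to\check F$ then decomposes as $\check u^*TD_m\oplus \check u^*T\widehat\Sigma$; the extended totally real boundary condition $\check{\mathcal L}$ splits accordingly; and the full Maslov index decomposes as $\mu_{\mathrm{full}}(u)=\mu_{\mathrm{base}}(u)+\mu(u)$, where $\mu(u)$ is the fiber-direction Maslov index defined in the statement. Riemann-Roch for a Cauchy-Riemann operator on a Riemann surface with boundary then yields an analytic index of the form $\op{ind}(D_u)=(n+1)\chi(\check F)+\mu_{\mathrm{base}}(u)+\mu(u)$.

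The base contribution $\mu_{\mathrm{base}}(u)$ is a topological quantity depending only on $m$, $\kappa$, and the degree $\kappa$ of $\pi_m\circ u$: one reads it off from the $(m+1)\kappa$ capping-path rotations, which are counterclockwise $\pi/2$-rotations at the $m\kappa$ punctures over $p_1,\dots,p_m$ and clockwise $\pi/2$-rotations at the $\kappa$ punctures over $p_0$ (reflecting the $e^{-J_{\widehat\Sigma}t}$ convention at $p_0$ in the definition of $\mu(u)$). Finally, adding to $\op{ind}(D_u)$ the dimension of variations of the complex structure on $\dot F$ with its $(m+1)\kappa$ moving boundary punctures, and using $\chi(\check F)=\chi(\dot F)=\chi(u)-(m+1)\kappa$ (the real blow-up is homotopy equivalent to $\dot F$, and $F$ differs from $\dot F$ by the $(m+1)\kappa$ deleted boundary points), the formula drops out after mechanical simplification. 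I expect the main obstacle to be careful sign bookkeeping of the Maslov rotations at the ``special'' puncture $p_0$ and of the automorphism-group contributions to the conformal moduli; there is no new analytic input beyond what underlies Lemma~\ref{lemma: Fredholm index}, and verifying the $m=1$ specialization at each step is a natural check.
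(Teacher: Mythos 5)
Your overall strategy---deform $J^\Diamond_m$ to the product $j_m\times J_{\widehat\Sigma}$, apply Riemann--Roch to the capped-off boundary value problem on $\check F$, split the totally real boundary condition and the Maslov index into base and fiber parts, add the conformal variations of the domain, and check the $m=1$ specialization---is exactly how Lemma~\ref{lemma: Fredholm index} (following \cite[Section 4.4.2]{CGH1}) is meant to be generalized, and indeed the paper leaves this computation to the reader. However, two of the concrete identities you plan to feed into the ``mechanical simplification'' are wrong, and the step that actually produces the constant $2\kappa-m\kappa n$ is deferred rather than carried out. First, the Euler characteristic bookkeeping: the paper's convention is $\chi(u)=\chi(F)$ for the \emph{compact} domain, and since $\check F$ is homeomorphic to $F$ (the real blow-up fills each boundary puncture with an arc), while deleting boundary points does not change the homotopy type, the correct relation is $\chi(\check F)=\chi(\dot F)=\chi(F)=\chi(u)$, not $\chi(u)-(m+1)\kappa$; used as written, your term $(n+1)\chi(\check F)$ is off by $(n+1)(m+1)\kappa$. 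Second, you locate $\mu_{\mathrm{base}}$ in the $e^{\pm J_{\widehat\Sigma}t}$ rotations at the punctures, but by the paper's definition those rotations are the capping paths of the fiber-direction bundle $\mathcal L\subset\check u^*T\Sigma$ and are already counted in $\mu(u)$. The genuine base contribution comes instead from the winding of the tangent lines to $\partial D_m$ (capped by the constant paths $\R\langle\partial_{s_i}\rangle$) around the $m+1$ corners of $\overline D_m$: traversing $\partial\overline D_m$ once gives Maslov index $1-m$, so a degree-$\kappa$ multisection contributes $\kappa(1-m)$, independently of branching.

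What then decides the constant is the per-end comparison between the capped Riemann--Roch index $(n+1)\chi(\check F)+\mu_{\mathrm{base}}+\mu(u)$ and the Fredholm index of the punctured problem, combined with the $-3\chi(F)+(m+1)\kappa$ worth of conformal variations. That comparison is sensitive to the direction of the $\pi/2$-rotations ($e^{+J_{\widehat\Sigma}t}$ at $p_1,\dots,p_m$ versus $e^{-J_{\widehat\Sigma}t}$ at $p_0$), to whether each capping path is traversed forwards or backwards along $\partial\check F$, and to the Morse--Bott degeneracy of the base asymptotics; these corrections are precisely where $-m\kappa n$ comes from and they cannot simply be fixed by the $m=1$ case, since your $m=1$ sanity check is blind to any error proportional to $(m-1)\kappa$, which is exactly the new content of Lemma~\ref{lemma: Fredholm index 2}. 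So the plan is the right one, but as it stands the two identities above would derail the computation, and the decisive bookkeeping is missing. Once you have a candidate constant, a useful independent check is additivity under gluing a strip curve (Lemma~\ref{lemma: Fredholm index}) into one of the $\kappa$-tuples of ends, using $\chi(u_1\# u_2)=\chi(u_1)+\chi(u_2)-\kappa$ and additivity of $\mu$; the stated formula passes this test, and your derivation should too at every intermediate step.
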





Choose complete sets of capping surfaces
$$\{T^{\bs\alpha^0,\bs\alpha^1}_{\bs\delta^0,{\bf y}^0}\},\dots, \{T^{\bs\alpha^{m-1},\bs\alpha^m}_{\bs\delta^{m-1},{\bf y}^{m-1}}\}, \{T^{\bs\alpha^0,\bs\alpha^m}_{\bs\delta^m,{\bf y}^m}\}$$
corresponding to the pairs
$$(\bs\alpha^0,\bs\alpha^1),\dots,(\bs\alpha^{m-1},\bs\alpha^m),(\bs\alpha^0,\bs\alpha^m).$$
For each $(m+1)$-tuple $\bs\delta^0,\dots,\bs\delta^m$, we choose
$$[T^{\bs\alpha^0,\dots,\bs\alpha^m}_{\bs\delta^0,\dots,\bs\delta^m}]\in H_2(\check X_m, (\sqcup_{i=0}^m \check L_i) \cup (\sqcup_{i=0}^m \bs\delta^i)),$$
such that $\bdry T^{\bs\alpha^0,\dots,\bs\alpha^m}_{\bs\delta^0,\dots,\bs\delta^m}$ is equal to  $\bs\delta^0+\dots+\bs\delta^{m-1}-\bs\delta^m$ plus some arcs on $\sqcup_{i=0}^m \check L_i$, if such a homology class exists.  Such a collection $\{T^{\bs\alpha^1,\dots,\bs\alpha^m}_{\bs\delta^1,\dots,\bs\delta^m}\}$ is a {\em complete set of capping surfaces with base $D_m$}.

The coefficient ring for $\widehat{CF}(\Sigma,\bs\alpha^i,\bs\alpha^{i+1})$ is $\Lambda_{\bs\alpha^i,\bs\alpha^{i+1}}\llbracket \hbar \rrbracket$, where $\Lambda_{\bs\alpha^i,\bs\alpha^{i+1}}$ is the Novikov ring with $\Z$-coefficients over $$\mathcal{A}'_{\bs\alpha^i,\bs\alpha^{i+1}}:=\mathcal{A}_{\bs\alpha^i,\bs\alpha^{i+1}}/\rm{torsion}$$ 
with respect to the symplectic form $d\beta$ and the coefficient ring for $\widehat{CF}(\Sigma,\bs\alpha^0,\bs\alpha^m)$ is $\Lambda_{\bs\alpha^0,\bs\alpha^m}$. Let $\Lambda'$ be the Novikov ring with $\Z$-coefficients over 
$$\mathcal{A}':=H_2(\check X_m, \sqcup_{i=0}^m \check L_i)/\rm{torsion},$$  
which can be viewed as a module over $\Lambda_{\bs\alpha^i,\bs\alpha^{i+1}}$ and over $\Lambda_{\bs\alpha^0,\bs\alpha^m}$.  By tensoring with $\Lambda'\llbracket \hbar \rrbracket$, we obtain $\widehat{CF}(\Sigma,\bs\alpha^i,\bs\alpha^{i+1};\Lambda'\llbracket \hbar \rrbracket)$ and $\widehat{CF}(\Sigma,\bs\alpha^0,\bs\alpha^m;\Lambda'\llbracket \hbar \rrbracket)$.

Using the complete sets of capping surfaces, we define the map
$$\Psi_m: \widehat{CF}(\Sigma,\bs\alpha^{m-1},\bs\alpha^m;\Lambda'\llbracket \hbar \rrbracket)\otimes\dots \otimes \widehat{CF}(\Sigma,\bs\alpha^0,\bs\alpha^1;\Lambda'\llbracket \hbar \rrbracket)\to \widehat{CF}(\Sigma,\bs\alpha^0,\bs\alpha^m;\Lambda'\llbracket \hbar \rrbracket),$$
where the coefficient of $e^{\kappa-\chi}e^A\mathbf{y}^m$ in $\Psi_{m}(\mathbf{y}^0\otimes\dots\otimes \mathbf{y}^{m-1})$, with $\chi\leq \kappa$ and $A\in \mathcal{A}'$,  is the count of $\mathcal{M}_{J^\Diamond_m}^{\op{ind}=0,A,\chi}(\mathbf{y}^0,\dots,\mathbf{y}^m)$.

\begin{lemma}
$\Psi_{m}$ is a cochain map.
\end{lemma}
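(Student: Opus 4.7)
The plan is to identify the codimension-$1$ boundary of the compactified $1$-dimensional moduli space with base $D_m$ and match it term-by-term with the strict cochain-map identity relative to the tensor-product differential on the source.

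Fix asymptotics $\mathbf{y}^0,\dots,\mathbf{y}^m$, a class $A\in\mathcal{A}'$, and $\chi\leq\kappa$ giving total Fredholm index $1$, and consider $\overline{\mathcal{M}}:=\overline{\mathcal{M}}_{J^\Diamond_m}^{\op{ind}=1,A,\chi}(\mathbf{y}^0,\dots,\mathbf{y}^m)$. The crucial point, distinguishing this setup from the Fukaya $A_\infty$-setting, is that the conformal structure on $D_m$ and the almost complex structure $J^\Diamond_m$ are both \emph{fixed} in the definition of $\Psi_m$; we do not vary over the Deligne--Mumford moduli of $(m+1)$-pointed disks. Applying the SFT compactness theorem \cite{BEHWZ} to any sequence in $\mathcal{M}^{\op{ind}=1}$, the base $D_m$ therefore cannot degenerate into two disks joined at a node, so no disk-splitting composition term of the form $\Psi_k\circ(\cdots\otimes\Psi_{m-k+1}\otimes\cdots)$ appears on the boundary. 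The only admissible degenerations are neck-stretchings at the strip-like ends $\mathcal{E}_0,\dots,\mathcal{E}_m$.

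Next, I would rule out all remaining spurious degenerations verbatim as in Lemma~\ref{lemma: bdry squared equals zero}: (i) components mapping into fibers of $\pi_m$ are prevented by exactness of $\widehat\Sigma$ and of the $\bs\alpha^i$, via Lemma~\ref{lemma: simply-covered}(1); (ii) pinching an interior closed curve contradicts the index inequality $\op{ind}(u_\infty)\geq 2n-3+2(n-2)$ coming from Lemma~\ref{lemma: Fredholm index 2} and the matching condition at the node, which overshoots the total index $1$; (iii) interior arc-pinchings with both endpoints on a same Lagrangian side either violate disjointness of the $\alpha^i_j$ or produce a fiber component. The surviving degenerations correspond to stretching at exactly one end $\mathcal{E}_i$, yielding a $2$-level building $(v_0,v_1)$ with $v_0$ an $\op{ind}=0$ curve of base $D_m$ and $v_1/\R\in\mathcal{M}^{\op{ind}=1}_{J^\Diamond}/\R$ a strip attached at $p_i$. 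Transversality with base $D_m$ (a direct adaptation of Lemma~\ref{lemma: transversality}) and standard gluing identify $\partial\overline{\mathcal{M}}$ with the expected fiber product of such pairs.

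A strip attached at the output puncture of $D_m$ contributes to $d\bigl(\Psi_m(\mathbf{y}^0\otimes\cdots\otimes\mathbf{y}^{m-1})\bigr)$, while a strip at an input puncture contributes to $\Psi_m$ precomposed with $d$ acting on the corresponding tensor factor. Setting the signed count of $\partial\overline{\mathcal{M}}$ equal to zero yields
$$
d\bigl(\Psi_m(\mathbf{y}^0\otimes\cdots\otimes\mathbf{y}^{m-1})\bigr) \;=\; \sum_{i=0}^{m-1} \pm \,\Psi_m\bigl(\mathbf{y}^0\otimes\cdots\otimes d\mathbf{y}^i\otimes\cdots\otimes\mathbf{y}^{m-1}\bigr),
$$
which is exactly the identity asserting that $\Psi_m$ is a cochain map from the tensor-product complex to $\widehat{CF}(\Sigma,\bs\alpha^0,\bs\alpha^m;\Lambda'\llbracket\hbar\rrbracket)$. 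The main obstacle is sign bookkeeping: one must verify, using the coherent orientation data of Section~\ref{subsection: orientations} (trivializations, relative spin structure, and the capping Lagrangian paths and orientations at each intersection point), that gluing a strip at a given input puncture produces precisely the Koszul sign demanded by the tensor-product differential convention. The analytic content otherwise transcribes Lemma~\ref{lemma: bdry squared equals zero} directly, with the base $\R\times[0,1]$ replaced by the fixed disk $D_m$.
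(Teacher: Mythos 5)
Your proposal is correct and follows precisely the route the paper intends: the paper's own proof is the single sentence ``The proof is similar to that of Lemma~\ref{lemma: bdry squared equals zero},'' and what you have written is exactly that adaptation spelled out. Your observation that the fixed conformal structure on $D_m$ (as opposed to the family over $\mathcal{A}_m$ used for $\mu_m$) rules out disk-splitting strata, leaving only strip-breaking at the ends $\mathcal{E}_0,\dots,\mathcal{E}_m$, is the correct reason why the identity produced is the strict cochain-map equation rather than an $A_\infty$-relation, and the remaining degenerations are ruled out exactly as in Lemma~\ref{lemma: bdry squared equals zero} via Lemma~\ref{lemma: simply-covered}(1) and the index estimate from Lemma~\ref{lemma: Fredholm index 2}.
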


\begin{proof}
The proof is similar to that of Lemma~\ref{lemma: bdry squared equals zero}.
\end{proof}

We can also make a similar construction when the points $p_0,\dots,p_m\in \bdry D$ are allowed to move, i.e., when the conformal structure on $D_m$ is parametrized by the interior of the Stasheff associahedron $\mathcal{A}_m$. Summing over all rigid maps $u: (\dot F,j)\to (\widehat X_m,J^\Diamond_m)$ when viewed in the set of maps in a varying target parametrized by $\op{int}\mathcal{A}_m$ gives the $\ai$-map
\begin{equation} \label{eqn: A infty}
\mu_m:\widehat{CF}(\Sigma,\bs\alpha^{m-1},\bs\alpha^m;\Lambda'\llbracket \hbar \rrbracket)\otimes\dots \otimes \widehat{CF}(\Sigma,\bs\alpha^0,\bs\alpha^1;\Lambda'\llbracket \hbar \rrbracket)\to \widehat{CF}(\Sigma,\bs\alpha^0,\bs\alpha^m;\Lambda'\llbracket \hbar \rrbracket).
\end{equation}

\begin{prop}
The operations $\mu_m$, $m=1,2,\dots$, satisfy the $\ai$-relations.
\end{prop}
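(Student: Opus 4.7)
The plan is to identify the boundary of suitable one-dimensional moduli spaces with the terms appearing in the $\ai$-relations. Concretely, for fixed asymptotics $\mathbf{y}^0,\dots,\mathbf{y}^m$, fixed class $A\in\mathcal{A}'$, and fixed $\chi\leq \kappa$, consider
$$\mathcal{N}:=\mathcal{M}^{\op{ind}=1,A,\chi}_{J^\Diamond_m}(\mathbf{y}^0,\dots,\mathbf{y}^m),$$
where the domain $D_m$ is allowed to vary in the interior of the Stasheff associahedron $\mathcal{A}_m$. Regularity from Lemma~\ref{lemma: transversality} (extended in the standard way to families) makes $\mathcal{N}$ a one-dimensional manifold for generic $J^\Diamond_m\in\mathcal{J}^\Diamond$ close to a product $J_m$. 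The $\ai$-relations will follow from enumerating $\bdry\overline{\mathcal{N}}$ via SFT compactness~\cite{BEHWZ}.

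The key step is to show that only two kinds of degenerations occur. First, strip breaking at one of the $(m+1)$ strip-like ends $\mathcal{E}_i$ gives a product $\mu_1\circ\mu_m$ or $\mu_m\circ(\cdots\otimes\mu_1\otimes\cdots)$; this is exactly the analysis of Lemma~\ref{lemma: bdry squared equals zero} applied at the relevant end. Second, degenerations of the domain $D_m\in\op{int}\mathcal{A}_m$ to a codimension-one boundary face of $\mathcal{A}_m$ correspond to a nodal decomposition of the base into $D_{k+1}\cup D_{m-k+1}$ glued at a boundary node, and by the gluing theorem these contribute pairs
$$\mu_{m-k+1}(\mathbf{y}^0\otimes\cdots\otimes\mathbf{y}^{j-1}\otimes\mu_k(\mathbf{y}^j\otimes\cdots\otimes\mathbf{y}^{j+k-1})\otimes\mathbf{y}^{j+k}\otimes\cdots\otimes\mathbf{y}^{m-1}),$$
summed over all $k,j$ in the range prescribed by the $\ai$-relations. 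All other degenerations must be excluded: bubbling off of spheres or disks is ruled out by exactness of $\widehat\beta_\Sigma$ and of the Lagrangians $\bs\alpha^i$ (as in Lemma~\ref{lemma: simply-covered}(1)); internal node formation via pinching a closed curve in the domain is ruled out by the index computation of Lemma~\ref{lemma: Fredholm index 2} exactly as in Lemma~\ref{lemma: bdry squared equals zero}, since the incidence condition forces the Fredholm index to jump by more than the $\op{ind}=1$ budget; pinching an arc whose endpoints lie on two distinct $\alpha^i_j$'s on the same component of $\bdry D_m$ is ruled out by disjointness of the $\alpha^i_j$; and a boundary-node with both endpoints on the same $\alpha^i_j$ again forces an entire component of the limit to lie in a fiber, contradicting Lemma~\ref{lemma: simply-covered}(1). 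The book-keeping of the coefficients $\hbar^{\kappa-\chi}e^A$ is compatible with this decomposition because $\chi$ and $A$ are additive under node formation (with the correction $\chi_1+\chi_2-\kappa=\chi$ coming from the genus-zero gluing, which matches the shift in $\hbar$-degree already appearing in Equation~\eqref{eqn: differential}).

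Since $\mathcal{N}$ is a compact one-manifold (compactness in the non-collapsing directions follows as in Lemma~\ref{lemma: compactness}), the signed count of $\bdry\overline{\mathcal{N}}$ vanishes, and reading off the two types of contributions above yields
$$\sum_{k,j} (-1)^{\dagger}\,\mu_{m-k+1}\bigl(\mathbf{y}^0\otimes\cdots\otimes\mu_k(\mathbf{y}^j\otimes\cdots\otimes\mathbf{y}^{j+k-1})\otimes\cdots\otimes\mathbf{y}^{m-1}\bigr)=0,$$
with the usual Koszul sign $\dagger$ fixed by the orientation data of Section~\ref{subsection: orientations}. This is the $\ai$-relation.

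The principal technical obstacle, as always in this setting, is the exclusion of degenerate limits carrying a component that lies entirely in a fiber $\pi_m^{-1}(\text{pt})$: once $J^\Diamond_m$ is perturbed away from the product $J_m$, an a priori approximate-fiber component could in principle appear in $\bdry\overline{\mathcal{N}}$. This is handled exactly as in the proof of Lemma~\ref{lemma: bdry squared equals zero}: by taking $J^\Diamond_m$ sufficiently $C^\infty$-close to a product $J_m$ and invoking Gromov compactness, any such limit would have to be $C^0$-close to a genuine fiber component of a $J_m$-holomorphic building, which is ruled out by exactness. The sign computation, which I have compressed into a single symbol $(-1)^\dagger$, is a direct application of the Fukaya--Oh--Ohta--Ono orientation conventions \cite{FOOO} using the capping Lagrangian paths and stable capping trivializations fixed in Section~\ref{subsection: orientations}.
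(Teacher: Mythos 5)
Your argument is correct and is essentially the paper's own: the paper proves this proposition simply by asserting it is "similar to the proof of Lemma~\ref{lemma: bdry squared equals zero}," i.e.\ the same SFT-compactness analysis of one-dimensional (parametrized) moduli spaces, with the extra boundary strata coming from degenerations of the domain in the associahedron, which is exactly what you spell out. The exclusions you list (fiber components via exactness, interior nodes via the index jump, arc pinchings) and the additivity bookkeeping for $\hbar^{\kappa-\chi}e^A$ match the paper's intended reasoning, so no gap here.
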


\begin{proof}
The proof is also similar to that of Lemma~\ref{lemma: bdry squared equals zero}.
\end{proof}

Let $\mathcal{R}_\kappa(\Sigma)$ be the $A_\infty$-category whose objects are compact $\kappa$-component Lagrangian submanifolds $\bs\alpha=\sqcup_j \alpha_j$ of $\Sigma$, whose hom sets are
$$\op{Hom}(\bs\alpha,\bs\alpha'):=\widehat{CF}(\Sigma,\bs\alpha,\bs\alpha';\Lambda'),$$ 
and whose $A_\infty$ maps $\mu_m$ are given by Equation~\eqref{eqn: A infty}.

\section{Reformulation of $\widehat{HF}(W,\beta,\phi;h;J^\Diamond)$} \label{section: reformulation of HF}

In this section we give a reformulation of $\widehat{HF}(W,\beta,\phi;h;J^\Diamond)$ where we count holomorphic curves in $\widehat X_W: =\R\times[0,1]\times \widehat{W}$ instead of $\widehat X= \R\times[0,1]\times \widehat\Sigma$.  Here $\widehat{W}$ is the completion of $W$. We can think of $\widehat \Sigma$ as the higher-dimensional analog of a Heegaard surface, whereas $\widehat{W}$ is the page of an open book decomposition.

\subsection{The completion $\widehat{W}$}

Recall that $(W,\beta,\phi)$ has a collar neighborhood $N'(\bdry W)=[-\varepsilon,0]\times \bdry W$ with coordinates $(\sigma,x)$ on which $\beta=e^\sigma\beta|_{\bdry W}$ and $\phi(\sigma,x)=\sigma$.
Let $(\widehat W,\widehat{\beta},\widehat\phi)$ be the completion of $(W,\beta,\phi)$, obtained by gluing
$$([0,\infty)\times \bdry W,e^\sigma\beta|_{\bdry W}),$$
with coordinates $(\sigma,x)$, to $(W,\beta)$ along $\bdry W$ and extending $\phi$ to $\widehat\phi(\sigma,x)=\sigma$. Let $\widehat h$ be the extension of $\wt h|_W$ from Section~\ref{subsection: extension} such that $\widehat{h}(\sigma,x)=(\sigma,\wt h|_{\bdry W}(x))$ on $[0,\infty)\times \bdry W$.  Next we set
$$\widehat {\bs a} = {\bs a}\cup ([0,\infty)\times \bdry{\bs a}).$$
Note that $\bdry {\bs a}$ is a Legendrian submanifold of $(\bdry W,\beta|_{\bdry W})$ and that, by Condition (4) in the definition of $f$ in Section~\ref{subsection: extension}, $[-\varepsilon,\infty)\times \bdry {\bs a}$ is a cylinder over the Legendrian $\bdry {\bs a}$.

We are initially in a Morse-Bott situation where there is an $S^1$-family of short Reeb chords from $\bdry a_i$ to $\wt h(\bdry a_i)$ for each $i$, since $\wt h(\bdry a_i)$ is a positive Reeb pushoff of $\bdry a_i$ on $\bdry W$.  Here we are measuring the length of a Reeb chord $c$ using the {\em action} $\mathfrak{A}_{\beta_{\bdry W}}(c)= \int_c \beta_{\bdry W}$.  We then perturb $\wt h(\bdry {\bs a})$ (without changing its name) so that for each $i=1,\dots,\kappa$ there are two short Reeb chords from $\bdry a_i$ to $\wt h(\bdry a_i)$.  The longer (resp.\ shorter) of the two will be denoted by $\hat{x}_i$ (resp.\ $\check{x}_i$); see Figure~\ref{fig: legendrian-new}.

\begin{figure}[ht]
	\begin{overpic}[scale=.6]{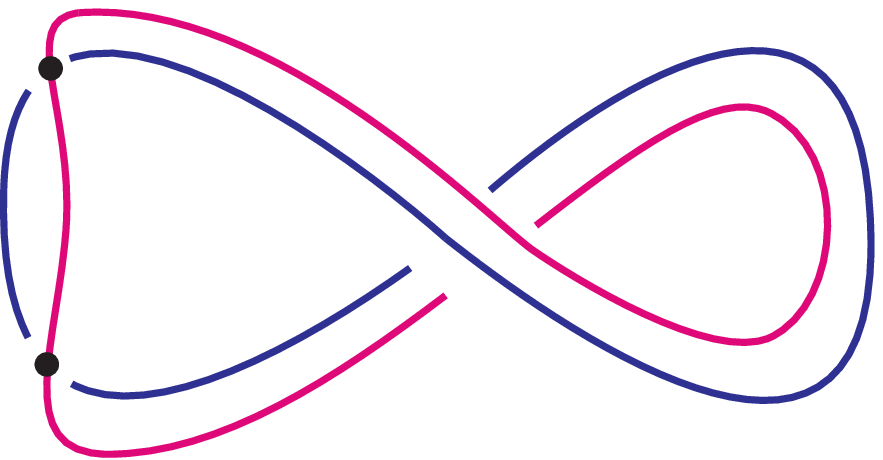}
		\put(-1,7){\tiny $\hat{x}_i$}
		\put(-1,45){\tiny $\check{x}_i$}
		\put(101.2,25){\tiny $\bdry a_i$}
		\put(24.7,50){\tiny $\wt h(\bdry a_i)$}
	\end{overpic}
	\caption{Legendrian $\bdry a_i$ and its pushoff $\wt h(\bdry a_i)$ in the Lagrangian projection.}
	\label{fig: legendrian-new}
\end{figure}

\subsection{Generators}

We modify the set of generators $\mathcal{S}$ as follows:  Given $\mathbf{y}=\{y_1,\dots,y_\kappa\}\in \mathcal{S}$, let $\widehat{\mathbf{y}}$ be the result of replacing all the contact class intersection points $x_i$ by $\hat{x}_i$ in $\mathbf{y}$. We then set $\widehat{\mathcal{S}}=\{\widehat{\mathbf{y}}~|~\mathbf{y}\in \mathcal{S}\}$.  Let us write $\widehat{\mathbf{y}}=\widehat{\mathbf{y}}_0\sqcup\widehat{\mathbf{y}}_1$, where none of the terms of $\widehat{\mathbf{y}}_0$ are of the form $\hat{x}_i$ and all of the terms of $\widehat{\mathbf{y}}_1$ are of the form $\hat{x}_i$.  As in Section~\ref{subsubsection: generators}, $\widehat{\mathcal{S}}$ admits a splitting
$$\widehat{\mathcal{S}}=\textstyle\coprod_{\Gamma\in H_1(M;\Z)} \widehat{\mathcal{S}}.$$

\subsection{Symplectic fibrations}

Let
$$\pi_{\R\times[0,1]}: \widehat X_W=\R\times[0,1]\times \widehat W \to \R\times [0,1]$$
be a symplectic fibration with fiber $\widehat W$, where $(s,t)$ are coordinates on $\R\times[0,1]$, and let $\pi_{\widehat W}$ be the projection $\widehat X_W\to\widehat W$.  We define the Lagrangian submanifolds
$$L_{0,\widehat {\bs a}}= \R\times \{1\}\times \widehat{\bs a},\quad L_{1,\widehat{h}(\widehat{\bs a})}= \R\times\{0\}\times \widehat{h}(\widehat{\bs a}).$$

Similarly let
$$\widehat X_{\R\times \bdry W}:=\R\times[0,1]\times (\R\times \bdry W)\to \R\times[0,1]$$
be a symplectic fibration whose fiber $\R\times \bdry W$ is a symplectization of $(\bdry W,\beta|_{\bdry W})$.  When we stretch a holomorphic curve in $\widehat X$ along $\R\times[0,1]\times \bdry W$, i.e., do a ``sideways stretching'', the sideways levels that we obtain are $\widehat X_W$, $\widehat X_H:=\R\times [0,1]\times \widehat H$, and several levels of $\widehat X_{\R\times\bdry W}$ in between. Here $\widehat H$ refers to the completion of $H$ obtained by attaching positive and negative ends. Refer to Figure~\ref{fig: handlestretching}.

\begin{figure}[ht]
	\begin{overpic}[scale=.6]{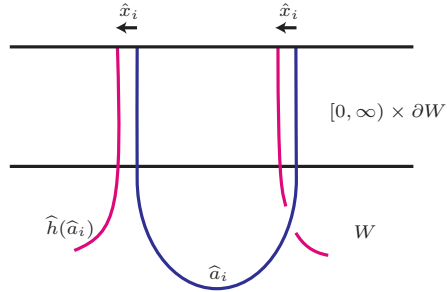}
		\put(49.3,3){\tiny $\widehat{a}_i$}
		\put(9,14){\tiny $\widehat{h}(\widehat{a}_i)$}
		\put(66.6,68){\tiny $\hat{x}_i$}
		\put(27,68){\tiny $\hat{x}_i$}
		\put(79,43){\tiny $[0,\infty)\times \bdry W$}
		\put(85,13){\tiny $W$}

	\end{overpic}
	\caption{Schematic picture indicating the result of stretching $\wt W$ along $\bdry W$. $\hat{x}_i$ indicates the possible locations of $\hat{x}_i$.}
	\label{fig: handlestretching}
\end{figure}

\subsection{Almost complex structures}

Let $J_{\widehat W}$ be an almost complex structure on $\widehat W$ which is tamed by $d\widehat\beta$ and is adapted to $\beta_{\bdry W}$ on the symplectization end $\widehat W- W$. The space of such almost complex structures $J_{\widehat W}$ will be denoted by $\mathcal{J}_{\widehat W}$.  We define $J_{\widehat{X}_W}=J_{\R\times[0,1]}\times J_{\widehat W}$ where $J_{\widehat W}\in \mathcal{J}_{\widehat W}$ and a $C^\infty$-small perturbation $J^\Diamond_{\widehat X_W}$ of $J_{\widehat X_W}$ as in Section~\ref{subsection: almost complex structures} so that (J1)--(J3) are satisfied with $\widehat\Sigma$ replaced by $\widehat W$. Let $\mathcal{J}_{\widehat X_W}$ and $\mathcal{J}^\Diamond_{\widehat X_W}$ be the space of such $J_{\widehat X_W}$ and $J^\Diamond_{\widehat X_W}$. We can similarly define $J_{\R\times \bdry W}$, $J_{\widehat{X}_{\R\times \bdry W}}=J_{\R\times[0,1]}\times J_{\R\times \bdry W}$, and spaces $\mathcal{J}_{\widehat{X}_{\R\times \bdry W}}$ and $\mathcal{J}^\Diamond_{\widehat X_{\R\times \bdry W}}$.

\subsection{Moduli spaces} \label{subsection: moduli spaces for variant}

When a curve in $\mathcal{M}_{J^\Diamond}({\bf y}, {\bf y}')$ is stretched along $\widehat X_{\R\times \bdry W}$, almost all of the essential information is contained in the $\widehat X_W$ part.

Let $(s_1,t_1)$ be coordinates on the infinite strip $\R\times[0,1]$ with the standard complex structure and let $J_{\widehat X_{\R\times \bdry W}}=J_{\R\times[0,1]}\times J_{\R\times \bdry W}\in \mathcal{J}_{\widehat X_{\R\times \bdry W}}$.

\begin{defn}
	Let $c$ be a Reeb chord of $(\bdry W, \beta|_{\bdry W})$. A holomorphic map
	$$v:\R\times[0,1]\to \widehat X_{\R\times \bdry W}$$
	is a {\em diagonal strip over a Reeb chord $c$} if
	$$\pi_{\R\times[0,1]}\circ v(s_1,t_1)=(s_1+C,t_1),\mbox{ or } (-s_1+C,1-t_1)$$
	for some $C\in \R$ and $\pi_{\R\times \bdry W}\circ v$ is a trivial strip over $c$.
\end{defn}

\n
{\em Definition of $\mathcal{M}(\widehat{\bf y},\widehat{\bf y}')$.}
Let $J^\Diamond_{\widehat X_W}\in (\mathcal{J}^\Diamond_{\widehat X_W})^{reg}$. We define $\mathcal{M}(\widehat{\bf y},\widehat{\bf y}')$, where $$\widehat{\bf y}=\widehat{\mathbf{y}}_0\sqcup\widehat{\mathbf{y}}_1 \quad \mbox{ and } \quad \widehat{\bf y}'=\widehat{\mathbf{y}}_0'\sqcup \widehat{\mathbf{y}}_1'.$$
If $\widehat{\mathbf{y}}_1\not \subset \widehat{\mathbf{y}}_1'$, then we set $\mathcal{M}(\widehat{\bf y},\widehat{\bf y}')=\varnothing$.    On the other hand, if $\widehat{\mathbf{y}}_1 \subset \widehat{\mathbf{y}}_1'$, then
$$\mathcal{M}(\widehat{\bf y},\widehat{\bf y}'):= \mathcal{M}_{J^\Diamond_{\widehat X_W}}(\widehat{\bf y}_0,\widehat{\bf y}'-\widehat{\bf y}_1)\times \mathcal{M}_{J^\Diamond_{\widehat X_H}}(\widehat{\bf y}_1,\widehat{\bf y}_1),$$
where $\mathcal{M}_{J^\Diamond_{\widehat X_W}}(\widehat{\bf y}_0,\widehat{\bf y}'-\widehat{\bf y}_1)$ is the set of holomorphic maps
$$\widehat u_0: (\dot F_0,j_0)\to (\widehat X_W, J^\Diamond_{\widehat X_W}),$$
where we range over all $(\dot F_0,j_0)$ such that $(F_0,j_0)$ is a compact Riemann surface with boundary, ${\bf p}_+$ and ${\bf p}_-$ are disjoint sets of boundary punctures of $F_0$, and $\dot F_0=F_0-{\bf p}_+-{\bf p}_-$, and:
\begin{enumerate}
	\item each component of $\bdry F_0- {\bf p}_+-{\bf p}_-$ maps to some component of $L_{1,\widehat {\bs a}}$ or $L_{0,\widehat{h}(\widehat{\bs a})}$ and each component of $L_{1,\widehat {\bs a}}$ and $L_{0,\widehat{h}(\widehat{\bs a})}$ is used at most once;
	\item at the positive end $s\gg 0$, $\widehat u_0$ maps the neighborhoods of the punctures of ${\bf p}_+$ asymptotically to strips over Reeb chords $[0,1]\times \widehat{\mathbf{y}}_0$;
	\item at the negative end $s\ll 0$, $\widehat u_0$ maps the neighborhoods of the punctures of ${\bf p}_-$ asymptotically to strips over Reeb chords $[0,1]\times \widehat{\mathbf{y}}'_0$ or to diagonal strips over $\widehat{\mathbf{y}}'_1-\widehat{\mathbf{y}}_1$;
\end{enumerate}
and $\mathcal{M}_{J^\Diamond_{\widehat X_H}}(\widehat{\bf y}_1,\widehat{\bf y}_1)$ is a one-element set $\{\widehat{u}_2\}$ where $\widehat{u}_2$ is the union of trivial strips in $\widehat X_H$ from ${\bf y}_1$ to ${\bf y}_1$.
Note that $\widehat u_0$ has $\# \widehat{\mathbf{y}}_0$ positive ends and $\# \widehat{\mathbf{y}}_0$ negative ends and $\widehat u_2$ has $\#\widehat{\bf y}_1$ positive ends and $\#\widehat {\bf y}_1$ negative ends.

As in Section~\ref{subsubsection: moduli spaces},
$\mathcal{M}(\widehat{\bf y},\widehat{\bf y}')$ admits a splitting
$$\mathcal{M}(\widehat{\bf y},\widehat{\bf y}')= \textstyle\coprod_{\chi\in \Z, A\in \mathcal{A}_{{\bf y},{\bf y}'}} \mathcal{M}^{A,\chi} (\widehat{\bf y},\widehat{\bf y}'),$$
where $\mathcal{A}_{{\bf y},{\bf y}'}$ is defined analogously.

\subsection{Definition of the variant}

We now define the variant
$$(C',d'):=\widehat{CF}'(W,\beta,\phi;h; J^\Diamond_{\widehat X_W})$$
of $\widehat{CF}(W,\beta,\phi;h;J^\Diamond)$, where $C'$ is the free $\Lambda\llbracket\hslash\rrbracket$-module generated by $\widehat{\mathcal{S}}$ and $\Lambda$ is the Novikov ring with $\Z$-coefficients over $H_2(M;\Z)$. The differential $d' \widehat{\mathbf{y}}$ is as given in Equation~\eqref{eqn: differential}, with ${\bf y}$ and ${\bf y}'$ replaced by $\widehat{\bf y}$ and $\widehat{\bf y}'$ and the moduli space $\mathcal{M}_{J^\Diamond}^{\op{ind}=1,A,\chi}({\bf y},{\bf y}')/\R$ replaced by $\mathcal{M}^{\op{ind}=1,A,\chi}(\widehat{\bf y},\widehat{\bf y}')/\R$.  In particular, when $\widehat{\bf y}=\widehat{\bf y}_1$, i.e., $\widehat{\bf y}_0=\varnothing$, then $\mathcal{M}^{\op{ind}=1}(\widehat{\bf y},\widehat{\bf y}')=\varnothing$ since $\mathcal{M}(\widehat{\bf y},\widehat{\bf y}')$ is a one-element set which is the union of trivial strips.

\begin{notation}
We often write $\widehat{CF}(\widehat{h}(\widehat{\bs a}),\widehat{\bs a})$ as shorthand for $\widehat{CF}'(W,\beta,\phi;h; J^\Diamond_{\widehat X_W})$.
\end{notation}

\begin{lemma}
	$(d')^2=0$.
\end{lemma}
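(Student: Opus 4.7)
The plan is to mirror the proof of Lemma~\ref{lemma: bdry squared equals zero}, identifying $\partial \mathcal{M}'$ with the broken configurations $(\mathcal{M}^{\operatorname{ind}=1, A_1, \chi_1}(\widehat{\bf y}, \widehat{\bf y}'')/\R) \times (\mathcal{M}^{\operatorname{ind}=1, A_2, \chi_2}(\widehat{\bf y}'', \widehat{\bf y}')/\R)$, where $\mathcal{M}' := \mathcal{M}^{\operatorname{ind}=2, A, \chi}(\widehat{\bf y}, \widehat{\bf y}')/\R$ and the sum is over all $\widehat{\bf y}''$ with $\widehat{\bf y}_1 \subset \widehat{\bf y}''_1 \subset \widehat{\bf y}'_1$, $\chi_1 + \chi_2 - \kappa = \chi$, and $A_1 + A_2 = A$. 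Since $\mathcal{M}(\widehat{\bf y}, \widehat{\bf y}')$ factors as the product of a nontrivial $\widehat X_W$-moduli space with the one-element set of trivial strips in $\widehat X_H$, all degenerations occur in the $\widehat X_W$-factor, and the trivial $\widehat X_H$-strips over $\hat{x}_i \in \widehat{\bf y}_1$ simply persist at the middle level (they can only split into trivial strips over the same $\hat x_i$), so $\widehat{\bf y}_1 \subset \widehat{\bf y}''_1$ is automatic.

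To rule out bubbling and non-level-splitting arc-pinching in the $\widehat X_W$-factor, I would repeat the Fredholm-index bookkeeping of the original lemma. Interior node-pinching raises the index by $2(n-2)$ while the incidence matching in $[0,1] \times \widehat W$ costs $2n-3$ dimensions, incompatible with $\operatorname{ind}(u_\infty) = 2$ by Equation~\eqref{eqn: Fredholm index formula}. Arc-pinching connecting two distinct $\widehat{a}_q, \widehat{a}_r$ or $\widehat{h}(\widehat{a}_q), \widehat{h}(\widehat{a}_r)$ is impossible by disjointness; arc-pinching of $\widehat{a}_q$ to itself would force a boundary component of the limit to map to a point on $\partial(\R\times[0,1])$, contradicting Lemma~\ref{lemma: simply-covered}(1). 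The only remaining arc-pinching type is $\widehat{a}_q$ to $\widehat{h}(\widehat{a}_r)$, which is precisely level-splitting in the $s$-direction.

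The new ingredient, and the main obstacle, is the noncompact end $[0,\infty) \times \partial W$ of $\widehat W$. Subharmonicity of $\sigma \circ \pi_{\widehat W} \circ u_i$, the direct analog of Remark~\ref{rmk: curve contained in Sigma}, confines each $u_i$ to a bounded piece of $\widehat W$ whose size is controlled by the fixed asymptotics $\widehat{\bf y}, \widehat{\bf y}'$ and by the diagonal strips at the negative end. One then has to verify that in an SFT limit no mass escapes to $\sigma = +\infty$ and, more subtly, that each diagonal strip over a contact-class point $\hat{x}_i \in \widehat{\bf y}'_1 - \widehat{\bf y}_1$ descends entirely to the bottom level of the limit (so it does not create a spurious $\hat{x}_i$-asymptote at the intermediate generator $\widehat{\bf y}''$, which would violate the required condition $\widehat{\bf y}''_1 \subset \widehat{\bf y}'_1$). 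This is handled by the action filtration, using that $\hat{x}_i$ is a short Reeb chord of small action $\mathfrak{A}_{\beta_{\partial W}}(\hat{x}_i)$ and that the intermediate level, having no diagonal-strip asymptote by construction, has no energy budget to produce one.

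Once these points are settled, the identification of $\partial \mathcal{M}'$ follows and $(d')^2 = 0$ drops out by the usual count of endpoints of the compact $1$-dimensional moduli space $\mathcal{M}'$; the counting of signs proceeds through the coherent orientation data of Section~\ref{subsection: orientations}, which is unaffected by the replacement of $\widehat X$ by $\widehat X_W$ since orientations are determined fiberwise.
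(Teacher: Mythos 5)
Your overall strategy---mirroring Lemma~\ref{lemma: bdry squared equals zero}, noting that all degenerations happen in the $\widehat X_W$-factor, and isolating the diagonal strips as the new ingredient---is the same as the paper's (whose proof is precisely this sketch), and your index bookkeeping ruling out interior node-pinching and same-Lagrangian arc-pinching carries over correctly. However, the one point the paper flags as requiring care, ``properly keeping track of components which are diagonal strips over $\hat{x}_i$,'' is handled incorrectly. You assert that each diagonal-strip end over $\hat{x}_i\in\widehat{\bf y}'_1-\widehat{\bf y}_1$ must descend entirely to the bottom level, so that the intermediate generator $\widehat{\bf y}''$ never acquires $\hat{x}_i$. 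This is false, and if it were true the lemma would fail: $(d')^2\widehat{\bf y}$ contains compositions through intermediate generators with $\widehat{\bf y}_1\subsetneq\widehat{\bf y}''_1\subset\widehat{\bf y}'_1$, in which the first differential creates $\hat{x}_i$ via a diagonal-strip end and the second freezes it as a trivial strip in $\widehat X_H$; these terms are generically nonzero, so the corresponding boundary points of $\mathcal{M}'=\mathcal{M}^{\op{ind}=2,A,\chi}(\widehat{\bf y},\widehat{\bf y}')/\R$---where the diagonal-strip end stays with the \emph{top} level---must occur, and they are exactly what cancels those terms. Your stated justification is also off: $\hat{x}_i\in\widehat{\bf y}''_1$ does not violate $\widehat{\bf y}''_1\subset\widehat{\bf y}'_1$, since $\hat{x}_i\in\widehat{\bf y}'_1$ by hypothesis.

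What actually has to be verified is that the ends of the one-dimensional moduli space are in bijection with \emph{both} distributions of each diagonal-strip end (top level carrying the diagonal end at $\hat{x}_i$ with the bottom level contributing the trivial $\widehat X_H$-strip over $\hat{x}_i$, versus the bottom level carrying the diagonal end), and that no genuinely spurious configurations arise---for instance a level containing a nonconstant component with a positive end at some $\hat{x}_j$ (excluded as in Lemma~\ref{lemma: contact cycle}), or intermediate fiber-direction symplectization levels carrying nontrivial energy (excluded by action estimates as in Lemma~\ref{lemma: A}). Your appeal to ``the intermediate level, having no diagonal-strip asymptote by construction, has no energy budget to produce one'' is circular: the intermediate level is whatever the SFT limit produces, and which asymptotes it has is exactly what must be determined, not stipulated.
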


\begin{proof}
	This follows from analyzing the degenerations of $\mathcal{M}^{\op{ind}=2,A,\chi}(\widehat{\bf y},\widehat{\bf y}')/\R$ and properly keeping track of components which are diagonal strips over $\hat{x}_i$ in $\widehat X_1$.
\end{proof}

\subsection{Equivalence with original definition}

The goal of this subsection is to prove the following equivalence result:

\begin{thm} \label{thm: alternative version}
Suppose the function $f$ in the definition of $\wt{h}$ is sufficiently $C^r$-small for $r\gg 0$. Then there is an isomorphism
$$\widehat{CF}'(W,\beta,\phi;h; J^\Diamond_{\widehat X_W})\simeq \widehat{CF}(W,\beta,\phi;h;J^\Diamond)$$
of cochain complexes for some $J^\Diamond\in (\mathcal{J}^\Diamond)^{reg}$.
\end{thm}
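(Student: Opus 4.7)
The plan is to prove Theorem~\ref{thm: alternative version} by neck-stretching the almost complex structure along the hypersurface $\R\times[0,1]\times \bdry W$ inside $\widehat X$, which separates $\widehat X$ sideways into $\widehat X_W$, several intermediate levels in $\widehat X_{\R\times\bdry W}$, and $\widehat X_H$. First, I would establish an identification of the underlying $\Lambda\llbracket\hbar\rrbracket$-modules: define a bijection $\Phi:\mathcal{S}\to\widehat{\mathcal{S}}$ which, for each $\bs y = \{y_1,\dots,y_\kappa\}\in\mathcal{S}$, replaces every contact class intersection point $x_i\in H_i$ by the longer Reeb chord $\hat x_i$, and otherwise leaves points in $\op{int}(W)$ unchanged (they naturally persist as intersection points of $\widehat{h}(\widehat{\bs a})\cap\widehat{\bs a}$). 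The $C^1$-smallness of $f$ guarantees that the only intersection points inside each handle $H_i$ are $\{x_i\}$, so $\Phi$ is indeed a bijection. Choosing the complete sets of capping surfaces compatibly on both sides identifies the grading, the classes $A\in\mathcal{A}$, and hence the Novikov coefficients.

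Next, pick $J^\Diamond\in(\mathcal{J}^\Diamond)^{reg}$ which is a product on a neighborhood of $\R\times[0,1]\times \bdry W$, and consider the family $J^\Diamond_\tau$, $\tau\in[0,\infty)$, obtained by stretching the neck there. For fixed $\bs y,\bs y'\in\mathcal{S}$ and $A,\chi$, I would apply SFT compactness (as in \cite{BEHWZ}) to a sequence $u_\tau\in\mathcal{M}_{J^\Diamond_\tau}^{\op{ind}=1,A,\chi}(\bs y,\bs y')/\R$: it converges to a building consisting of a level $v_-$ in $\widehat X_W$ (with $J^\Diamond_{\widehat X_W}$), $k\geq 0$ intermediate levels in $\widehat X_{\R\times\bdry W}$, and a level $v_+$ in $\widehat X_H$, matched asymptotically along Reeb chords of $(\bdry W,\beta|_{\bdry W})$ linking $\bdry a_i$ to $\wt h(\bdry a_i)$. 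The argument exactly parallels that of Lemma~\ref{lemma: bdry squared equals zero}, the only new ingredient being that the degenerations along the boundary circles $\R\times[0,1]\times \bdry W$ can only come from pinching arcs connecting $\wt a_i$ to $\wt h(\wt a_i)$, which corresponds to stretching in the horizontal direction.

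I would then use the smallness of $f$ to control the pieces in $\widehat X_H$ and in the intermediate symplectization levels. The chords $\hat x_i,\check x_i$ have action $O(\|f\|_{C^0})$, so any nontrivial curve in $\widehat X_{\R\times\bdry W}$ or in $\widehat X_H$ contributes a positive energy bounded below by the action of its Reeb chord asymptotics on each level. A standard monotonicity argument on $\widehat H$ (which is $C^0$-close to a neighborhood of the Lagrangian core $\wt a_i - a_i$ since $\wt h|_H$ differs from the identity by the small Hamiltonian flow of $X_f$) forces the only possible index-zero configurations in $\widehat X_H$ with matching ends at $\hat x_i$ to be the canonical ``half-strips'' $\widehat u_2$ from $\hat x_i$ to $x_i$ accounting for the contact class, and the intermediate levels to be trivial cylinders over Reeb chords. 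The leftover in $\widehat X_W$ is then an index-one curve in $\mathcal{M}^{\op{ind}=1,A,\chi}(\widehat{\bs y},\widehat{\bs y}')/\R$, with $\widehat{\bs y}_1=\widehat{\bs y}'_1$ precisely the $\hat x_i$'s that appear.

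Finally, for the reverse direction I would run standard SFT gluing: given an index-one element of $\mathcal{M}^{\op{ind}=1,A,\chi}(\widehat{\bs y},\widehat{\bs y}')/\R$ and the canonical trivial piece in $\widehat X_H$, glue along the matching Reeb chord ends to produce a family of $J^\Diamond_\tau$-holomorphic curves in $\widehat X$ for $\tau\gg 0$, giving a well-defined element of $\mathcal{M}_{J^\Diamond_\tau}^{\op{ind}=1,A,\chi}(\bs y,\bs y')/\R$; since we can take $J^\Diamond=J^\Diamond_{\tau_0}$ for some large $\tau_0$ without changing $\widehat{HF}$ by (I2) of Theorem~\ref{thm: invariance or lack thereof}, this gives the bijection of moduli spaces and hence the desired cochain isomorphism. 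The main obstacle will be step three: rigorously ruling out exotic curves in $\widehat X_H$ and in the intermediate levels, which requires a careful $C^r$-smallness estimate on $f$ (hence the hypothesis in the statement) together with a Gromov compactness argument on the compact neighborhood of $\widehat{\bs a}\cup \widehat h(\widehat{\bs a})$ in $\widehat H$, since in principle a holomorphic curve could escape the small neighborhood of the Lagrangian core and carry nontrivial topology; the smallness of $f$ controls this via a uniform monotonicity lower bound on the area of any non-constant such curve.
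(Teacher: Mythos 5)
Your overall strategy (sideways neck-stretching along $\R\times[0,1]\times\bdry W$, SFT compactness, action bounds from the $C^r$-smallness of $f$ to kill the intermediate symplectization levels, then gluing back) is exactly the route the paper takes, and most of your steps are fine. The genuine gap is in your treatment of the $\widehat X_H$-level. You assert that monotonicity and the smallness of $f$ force ``the only possible index-zero configurations in $\widehat X_H$ with matching ends at $\hat{x}_i$ to be the canonical half-strips from $\hat{x}_i$ to $x_i$,'' and later you glue back ``the canonical trivial piece.'' This is not correct: when $x_i\in\widehat{\bf y}'_1-\widehat{\bf y}_1$ the piece of the building in $\widehat X_H$ is a \emph{nontrivial} strip from $\hat{x}_i$ to the interior intersection point $x_i$, it has Fredholm index $1$ (not $0$; the codimension-one matching condition at the chord is what makes the total index bookkeeping come out to $\op{ind}(u_{\infty 1})=1$), and such strips are far from unique --- they form a positive-dimensional family. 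Monotonicity/Gromov compactness only confines them to a neighborhood of the Lagrangian core of $H_i$; it does not single out one of them. What the argument actually needs, and what your proposal is missing, is the statement that the mod $2$ count of strips from $\hat{x}_i$ to $x_i$ in $\widehat X_H$, modulo translation, equals $1$. Without that count the stretching/gluing correspondence only tells you that the differential of $\widehat{CF}$ equals the differential of $\widehat{CF}'$ multiplied by this count, so the claimed isomorphism of cochain complexes is unjustified.

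The paper supplies this missing input by a model computation: one reduces to $T^*S^n$ with $\wt{\bs a}$ the zero section and $\wt h(\wt{\bs a})$ the graph of $df$, uses Floer's theorem that through a generic point $q$ of the zero section there is a unique strip from the maximum $p_n$ to the minimum $p_s$, and then stretches along the equator (viewed as a Morse--Bott $S^1$-family) to conclude that the count of strips from $p_n$ to a generic point of the family, hence of strips from $x_i$ to $\hat{x}_i$ in $\widehat H$, is $1$. To repair your proof you should replace the ``canonical half-strip'' claim by this counting lemma (or an equivalent computation), and correct the index bookkeeping: each nontrivial $\widehat X_H$-component has index $1$ and is glued to $u_{\infty 1}$ across a codimension-one incidence condition, so $\op{ind}(u_i)=1$ if and only if $\op{ind}(u_{\infty 1})=1$.
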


\begin{proof}
	This follows from stretching $\wt W$ along $\bdry W$.
	
	Let $[-\varepsilon,\varepsilon]\times \bdry W$ be a fixed collar neighborhood of $\bdry W$ in $\wt W$ with coordinates $(\sigma,x)$.  Let $J_{\wt W^\wedge}^i\in \mathcal{J}_{\wt W^\wedge}^{reg}$, $i=1,2,\dots$, be a sequence of almost complex structures on ${\wt W^\wedge}$ that satisfy the following:
	\begin{itemize}
		\item $J_{\wt W^\wedge}^i$ agrees with a fixed $J_{\wt W^\wedge}\in \mathcal{J}_{\wt W^\wedge}^{reg}$ on $[-\varepsilon,\varepsilon]\times \bdry W$;
		\item on $[-{\varepsilon\over 2},{\varepsilon\over 2}]\times \bdry W$, $J_{\wt W^\wedge}^i$ is adapted to $\beta|_{\bdry W}$ with one slight modification: $J_{\wt W^\wedge}^i(\tfrac{1}{i}\bdry_\sigma)=R$ where $R$ is the Reeb vector field for $\beta|_{\bdry W}$;
	\end{itemize}
	We then set $J^i=J_{\R\times[0,1]}\times J^i_{\wt W^\wedge}$.  For questions about compactness, it suffices to use the split almost complex structures $J^i$ instead of the perturbed ones.

	Let $u_i\in \mathcal{M}_{J^i}^{\op{ind}=1,A,\chi}({\bf y},{\bf y}')$, $i=1,2,\dots$, be a sequence of holomorphic maps.
	We have the following compactness result:
	
	\begin{lemma} \label{lemma: A}
		Suppose the function $f$ in the definition of $\wt{h}$ is sufficiently $C^r$-small for $r\gg 0$.
		Then $u_i$ limits to $u_\infty=u_{\infty 1}\cup u_{\infty 2}$, where
		$$u_{\infty 1}\in \mathcal{M}_{J^\Diamond_{\widehat X_W}}^{\op{ind}=1,A,\chi}(\widehat{\bf y}_0,\widehat{\bf y}'-\widehat{\bf y}_1)$$
		and $u_{\infty 2}$ is the union of strips in $\widehat X_H$ from $\hat{x}_i$ to $x_i$ when $x_i\in \widehat{\bf y}'_1-\widehat{\bf y}_1$ and trivial strips in $\widehat X_H$ from $x_i$ to $x_i$ when $x_i\in \widehat{\bf y}_1$.
	\end{lemma}
	
	\begin{proof}
		Consider the sequence $v_i=\pi_{\wt W^\wedge}\circ u_i$.  Then, by the usual SFT compactness theorem~\cite{EGH}, $v_i$ limits to a multi-level building $v_\infty=v_{\infty 1}\cup \dots\cup v_{\infty l}$, where $v_{\infty 1}$ maps to $\widehat W$, $v_{\infty 2},\dots,v_{\infty, l-1}$ map to $\R\times \bdry W$, and $v_{\infty l}$ maps to $\widehat H$.  The ends of $v_{\infty,j}$ a priori limit to Reeb chords of $R$ from $\bdry {\bs a}$ to $\wt{h}(\bdry {\bs a})$ and closed orbits of $R$.
		
		We claim that the negative ends of $v_{\infty,l}$ can only be short Reeb chords for $f$ with sufficiently small $C^r$-norm.  If the claim fails to hold, then, since there are no positive punctures and $f$ has arbitrarily small $C^r$-norm, the presence of long negative Reeb chords and closed Reeb chords contradicts the usual energy bounds involving $\beta|_H$.
		
		The claim, together with action considerations, then implies that all the ends of $v_{\infty j}$, $j=1,\dots,l$, are short Reeb chords. By area and Fredholm index considerations, all the components of $v_{\infty j}$, $j=2,\dots,l-1$, are trivial strips over short Reeb chords, which is equivalent to saying that they can be discarded. A similar argument implies that $v_{\infty l}$ is a union of strips from $x_i$ to $\hat x_i$ (here $x_i$ is an intersection point and $\hat x_i$ is a negative puncture with respect to $\widehat H$).
		
		The above description of $v_\infty$ then implies the lemma.
	\end{proof}
	
	\begin{lemma} \label{lemma: B}
		$\op{ind}(u_i)=1$ if and only if $\op{ind}(u_{\infty 1})=1$.
	\end{lemma}
	
	\begin{proof}
		Each component of $u_{\infty 2}$ has index $1$.  Since the gluing/incidence condition for gluing a component of $u_{\infty 2}$ to $u_{\infty 1}$ is a codimension $1$ condition, $\op{ind}(u_i)=1$ is equivalent to $\op{ind}(u_{\infty 1})=1$.
	\end{proof}
	
	\begin{lemma} \label{lemma: C}
		The mod $2$ count of strips from $\hat{x}_i$ to $x_i$ in $\widehat{X}_H$ modulo translation is one.
	\end{lemma}
	
	\begin{proof}
		This is equivalent to the assertion that the mod $2$ count of strips from $x_i$ to $\hat{x}_i$ in $\widehat{H}$ is one.  Let $(\bs{\wt a}\cap H)^\wedge$ be the completion of $\bs{\wt a}\cap H$  in $\widehat{H}$, obtained by attaching cylindrical ends. By Gromov compactness, for sufficiently small $f$, all the strips are contained in a neighborhood of $(\bs{\wt a} \cap H)^\wedge$.   Hence it suffices to do a model calculation on $T^* S^n$, where $\bs{\wt a}$ is the $0$-section, $\wt{h}(\bs{\wt a})$ is the graph of $df$, $f$ is sufficiently small, $df=0$ at the north and south poles $p_n,p_s$, and we are stretching along the equator.  By the usual argument from \cite[Proposition 1]{Fl}, for any (generic) point $q\in \bs{\wt a}$, there is a unique holomorphic strip from $p_n$ to $p_s$ that passes through $q$.  When we stretch along the neck and view the equator as an $S^1$-family of Reeb orbits, the count of strips from $p_n$ to a generic point on the $S^1$-family is $1$,  implying the lemma.
	\end{proof}
	
	In view of Lemma~\ref{lemma: C} and the definition of $\mathcal{M}(\widehat{\bf y}, \widehat{\bf y}')$ from Section~\ref{subsection: moduli spaces for variant}, there is a bijection
	$$\mathcal{M}_{J^\Diamond}^{\op{ind}=1,A,\chi}({\bf y},{\bf y}')/\R\simeq\mathcal{M}^{\op{ind}=1,A,\chi}(\widehat{\bf y},\widehat{\bf y}')/\R,$$
	which implies the theorem.
\end{proof}

\section{Partial invariance} \label{section: partial invariance}

In this section we prove Theorem~\ref{thm: invariance or lack thereof}, i.e., invariance of the higher-dimensional Heegaard Floer homology groups with respect to:
\begin{enumerate}
	\item[(I1)] trivial Weinstein homotopies;
	\item[(I2)] changes of almost complex structure; 
	\item[(I3)] changes of symplectomorphism within its symplectic isotopy class.
\end{enumerate}
We will work out (I1) and (I2) in Section~\ref{subsection: I1 I2}; the invariance under (I3) is similar and will be omitted.  We will leave the non-invariance with respect to handleslides (I4) to the reader.

In Section~\ref{subsection: I5} we also discuss invariance under elementary stabilizations (cf.\ Definition~\ref{defn: elementary}).

\subsection{Invariance under (I1) and (I2)} \label{subsection: I1 I2}

Let $(W,\beta_s,\phi_s)$, $s\in[0,1]$, be a trivial Weinstein homotopy with compatible Liouville vector fields $Y_s$. Let ${\bs a}^s =a^s_1\cup \dots\cup a^s_\kappa$ be the Lagrangian basis for $(W,\beta_s,\phi_s)$.  We attach a cap $H_s$ to $W$ along the Legendrian spheres $\{ \partial a^s_1 ,\dots,\partial a^s_\kappa \}$ in a smooth manner with respect to $s$. The manifold $W\cup H_s$ is diffeomorphic to a fixed compact manifold $\wt W$, which inherits a family $(\wt\beta_s,\wt \phi_s)$ of Weinstein structures and cappings ${\bs \alpha}^s=\wt{\bs a}^s$ of ${\bs a}^s$.

By Lemma~\ref{lemma: Liouville homotopy}(2), there exists a $1$-parameter family
$$g_s: (\wt W^\wedge,d\wt \beta_0^\wedge) \stackrel\sim\to (\wt W^\wedge ,d \wt\beta_s^\wedge)$$
of symplectomorphisms such that $g_s^* \wt{\beta}_s^\wedge=\wt\beta_0^\wedge$ outside of a compact set. Hence, after pulling back using $g_s$ and renaming, we may assume that:
\begin{itemize}
	\item[(H1)] each ${\bs \alpha}^s$ is a submanifold of a fixed symplectic manifold $(\wt W^\wedge,\omega=d\wt \beta_0^\wedge)$ with a cylindrical end; and
	\item[(H2)] the Weinstein structure $(\wt{\beta}_s^\wedge ,\wt{\phi}_s^\wedge)$ depends on $s$ only on a compact subset of $\wt W^\wedge$.
\end{itemize}
In particular, the Liouville vector field $Y_s$ is constant outside of the same compact set.

Consider the submanifolds
\begin{align*}
L'_{1,\bs\alpha} & =((-\infty ,0] \times \{ 1\}  \times \bs\alpha^0) \cup (\cup_{s\in [0,1]} (\{ s\} \times \{1\} \times \bs\alpha^s ))\\
& \qquad \qquad \cup([1, \infty ) \times \{ 1\} \times \bs\alpha^1),\\
L'_{0, \wt{h} (\bs\alpha)} & =((-\infty ,0] \times \{ 0\} \times \wt{h}(\bs\alpha^0)) \cup (\cup_{s\in [0,1]} (\{ s\} \times \{0\} \times \wt{h} (\bs\alpha^s ))) \\
& \qquad\qquad \cup ([1, \infty ) \times \{0\} \times \wt{h} (\bs\alpha^1)),
\end{align*}
of $\widehat X= \R \times [0,1]\times\wt W^\wedge$ which agree with $L_{1,\bs\alpha^1}$ and $L_{0,\wt{h}(\bs\alpha^1)}$ at the positive end and with $L_{1,\bs\alpha^0}$ and $L_{0,\wt{h}(\bs\alpha^0)}$ at the negative end.

\begin{lemma} \label{lemma: existence of widehat omega prime}
	There exists a (closed) symplectic connection $\widehat\Omega'$ on $\widehat X$ which satisfies the following:
	\begin{enumerate}
		\item on each fiber $\widehat\Omega'$ restricts to $\omega$;
		\item $\widehat\Omega'$ agrees with $\widehat\Omega$ at the positive and negative ends of $\widehat X$; and
		\item $L'_{1,\bs\alpha}$ and $L'_{0,\wt{h} (\bs\alpha)}$ are Lagrangian.
	\end{enumerate}
\end{lemma}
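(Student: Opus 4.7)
The plan is to construct $\widehat\Omega'$ as a small exact perturbation
$$\widehat\Omega' := \widehat\Omega + d(F\,ds) = \omega + ds\wedge dt + d(F\,ds),$$
where $F: \widehat X \to \R$ is a function, compactly supported in $s$ and in $x \in \wt W^\wedge$, whose boundary trace at $t = 1$ (resp.\ $t = 0$) is engineered to make $L'_{1,\bs\alpha}$ (resp.\ $L'_{0,\wt{h}(\bs\alpha)}$) Lagrangian. Closedness of $\widehat\Omega'$ is automatic, and because every summand of $d(F\,ds) = \bdry_t F\,dt\wedge ds + d_x F\wedge ds$ carries a $ds$ factor that vanishes on vertical vectors, the fiberwise restriction stays equal to $\omega$. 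Arranging $F \equiv 0$ for $|s|$ large then gives (1) and (2) for free; the content of the lemma is entirely in (3).

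To build $F$, let $v_s$ denote the variation vector field of the family $\bs\alpha^s$ along $\bs\alpha^s$, well-defined modulo $T\bs\alpha^s$ (which is all we need). Since $\omega|_{\bs\alpha^s} = 0$ and $d\omega = 0$, Cartan's formula gives that $\iota_{v_s}\omega|_{\bs\alpha^s}$ is a closed $1$-form on $\bs\alpha^s$. Each component of $\bs\alpha^s$ is a Lagrangian sphere of dimension $\geq 2$ (Remark~\ref{rmk: spheres}), so $H^1(\bs\alpha^s) = 0$ and there is a smooth family of primitives $H_s: \bs\alpha^s \to \R$ with $dH_s = \iota_{v_s}\omega|_{\bs\alpha^s}$, vanishing for $s \notin [0,1]$ after normalizing the value at a smoothly chosen basepoint on each component. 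Extend $H_s$ to a smooth, compactly supported family $\wt H_s$ on $\wt W^\wedge$ using a Weinstein tubular neighborhood of $\bs\alpha^s$ and a bump function, and construct $\wt H'_s$ from the family $\wt{h}(\bs\alpha^s)$ analogously. Finally, pick a cutoff $\rho:[0,1]\to[0,1]$ with $\rho(0)=0$ and $\rho(1)=1$, and set
$$F(s,t,x) := \rho(t)\,\wt H_s(x) + (1-\rho(t))\,\wt H'_s(x).$$

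For (3), at $(s,1,x)\in L'_{1,\bs\alpha}$ the tangent space is spanned by $\bdry_s + v_s(x)$ and $w\in T_x\bs\alpha^s$. Using $ds(w) = dt(\bdry_s+v_s) = dt(w) = 0$, a direct expansion gives
$$\widehat\Omega'(\bdry_s + v_s,\,w) = \omega(v_s,w) - d_x F(w) = \omega(v_s,w) - dH_s(w) = 0,$$
since at $t=1$ one has $F|_{\bs\alpha^s} = H_s$ and $dH_s = \iota_{v_s}\omega$ on $\bs\alpha^s$. The remaining condition $\widehat\Omega'(w_1,w_2) = 0$ for $w_1,w_2\in T\bs\alpha^s$ is nothing but $\omega|_{\bs\alpha^s}=0$; the $t=0$ boundary is handled identically via $\wt H'_s$. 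The only mild obstacle is arranging smooth $s$-dependence of $H_s$ and its extension $\wt H_s$ (together with vanishing for $s\notin[0,1]$), but this reduces to a standard choice of smoothly varying basepoint and tubular neighborhood, feasible because the Lagrangian families are compact and eventually constant in $s$.
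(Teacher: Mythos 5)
Your proposal is correct and follows essentially the same route as the paper: both take $\widehat\Omega'=\widehat\Omega+dH\wedge ds$ with $H$ interpolating (in $t$) between Hamiltonians generating the Lagrangian isotopies $\bs\alpha^s$ and $\wt h(\bs\alpha^s)$, the existence of such Hamiltonians resting on exactness/simple-connectivity of the Lagrangian spheres. The only difference is presentational: you build the primitives of the flux forms and check condition (3) explicitly, whereas the paper phrases this via the Hamiltonian isotopies $\psi_{s,t}$ satisfying (A)--(B) and leaves the verification to the reader.
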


\begin{proof}
	Let $H_{s,t}$, $s,t\in[0,1]$, be a $2$-parameter family of Hamiltonian functions on $\wt W^\wedge$ with $H_{s,t}=0$ outside of a compact region of $[0,1]_s\times [0,1]_t\times \wt W^\wedge$.  We also view the family $H_{s,t}$ as a function $H$ on $\widehat X$.  Let $\psi_{s_0,t_0}$ be the corresponding family of Hamiltonian diffeomorphisms, obtained by integrating the vector field corresponding to $d_{\wt W^\wedge} H_{s,t}$ along the segment $\{t=t_0,0\leq s\leq s_0\}$. (We are taking $\psi_{s_0,t_0}=\op{id}$ for $s_0\leq 0$.) Here $d_{\wt W^\wedge}$ is the differential in the $\wt W^\wedge$-direction.  Recall that we are using the convention that the Hamiltonian vector field $X_g$ of a function $g$ with respect to a symplectic form $\omega$ satisfies $dg=i_{X_g} \omega$.
	
	We may choose $H_{s,t}$ such that $\psi_{s,t}$ satisfies the following:
	\be
	\item[(A)] $\psi_{s,t}= \op{id}$ for $s\leq 0$ and $\psi_{s,t}$ is independent of $(s,t)$ for $s\geq 1$;
	\item[(B)] $\psi_{s,1}(\bs\alpha^0)=\bs\alpha^s$ and $\psi_{s,0}(\wt{h}(\bs\alpha^0))=\wt{h}(\bs\alpha^s)$ for $s\in[0,1]$.
	\ee
	This is possible since the $\bs\alpha^s$ are exact.
	
	We then set
	\begin{equation} \label{eqn: defn of Omega prime}
	\widehat\Omega'=ds\wedge dt +\omega+ dH \wedge ds =ds\wedge dt +\omega+d_{\wt W^\wedge}H_{s,t}\wedge ds-\tfrac{\bdry H_{s,t}}{\bdry t}ds\wedge dt.
	\end{equation}
	$\widehat\Omega'$ is closed. (1) and (2) are immediate.  (3) uses (A) and (B) and is left to the reader.
\end{proof}

Let $J_i=J_{\R\times[0,1]}\times J_{\wt W^\wedge,i}$, $i=0,1$, be $(\wt{W}^\wedge,\wt{\beta}_i^\wedge)$-compatible almost complex structures on $\widehat X$ and let $J_i^\Diamond$ be their perturbations which we assume to be regular. Also let
$$\mathcal{E}=[1,\infty)\times\bdry \wt W \subset \wt W^\wedge.$$
(Here we are assuming that the boundary of $\wt W$ is $\{1\}\times \bdry \wt W$. Hence $\wt W^\wedge - \wt W=[0,\infty)\times \bdry \wt W$ is slightly larger than $\mathcal{E}$.)
By (H2) we may assume without loss of generality that $\wt \beta_s^\wedge$, $s\in[0,1]$, is independent of $s$ on $\mathcal{E}$.

\s\n
{\em Case 1.} First suppose that  $J_0^\Diamond=J_1^\Diamond$ on $\R_s\times[0,1]_t\times \mathcal{E}$.   Let $J'$ be an $\widehat\Omega'$-tame almost complex structure on $\widehat X$ which satisfies the following:
\begin{enumerate}
	\item[(K1)] $J'$ agrees with $J_1$ at the positive end ($s\gg 0$) and with $J_0$ at the negative end ($s\ll 0$);
	\item[(K2)] the map $\pi: \widehat X\to \R\times[0,1]$ is $(J',J_{\R\times[0,1]})$-holomorphic; and
	\item[(K3)] $J'$ is $\R\times[0,1]$-invariant on $\R\times[0,1]\times\mathcal{E}$;
\end{enumerate}
and let $(J')^\Diamond$ be its perturbation which agrees with $J_0^\Diamond=J_1^\Diamond$ at the positive and negative ends and which we assume to be regular.

The count of index $0$ holomorphic curves in $(\widehat X,\widehat\Omega',(J')^\Diamond)$ with boundary on $L'_{1,\bs\alpha}$ and $L'_{0,\wt{h}(\bs\alpha)}$ yields a map:
\begin{equation} \label{equation: map 1}
\mathcal{I}_{10} : \widehat{CF} (W,\beta_1,\phi_1;h;J_1^\Diamond)\to \widehat{CF} (W,\beta_0 ,\phi_0;h;J_0^\Diamond).
\end{equation}
The compactness of the relevant moduli space is proved as in Lemma~\ref{lemma: compactness}. In particular, curves cannot enter the region $\R\times[0,1]\times \mathcal{E}$ by the maximum principle. Reversing the roles of $(\beta_0 ,\phi_0 )$ and $(\beta_1 , \phi_1 )$, we obtain the inverse map:
\begin{equation} \label{equation: map 2}
\mathcal{I}_{01} : \widehat{CF} (W,\beta_0 ,\phi_0;h;J_0^\Diamond)\to \widehat{CF} (W,\beta_1,\phi_1;h;J_1^\Diamond).
\end{equation}

\begin{lemma} \label{lemma: chain map 1}
	The maps $\mathcal{I}_{10}$ and $\mathcal{I}_{01}$ are cochain maps.
\end{lemma}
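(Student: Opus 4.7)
The plan is to adapt the standard cobordism argument from Lemma~\ref{lemma: bdry squared equals zero} to the non-$\R$-invariant setting of $(\widehat X,\widehat\Omega',(J')^\Diamond)$. I will focus on $\mathcal{I}_{10}$; the argument for $\mathcal{I}_{01}$ is identical.

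First I would set up the relevant $1$-dimensional moduli space. For $\mathbf{y}\in \mathcal{S}_{\bs\alpha^1,\wt h(\bs\alpha^1)}$ and $\mathbf{y}'\in \mathcal{S}_{\bs\alpha^0,\wt h(\bs\alpha^0)}$, let
$$\mathcal{N}^{A,\chi}(\mathbf{y},\mathbf{y}')\subset \mathcal{M}_{(J')^\Diamond}^{A,\chi}(\mathbf{y},\mathbf{y}')$$
denote the moduli space of $(J')^\Diamond$-holomorphic curves with boundary on $L'_{1,\bs\alpha}$ and $L'_{0,\wt h(\bs\alpha)}$ in the homology class $A$, and of Euler characteristic $\chi$. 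Since $\widehat\Omega'$ is not $\R$-invariant we do \emph{not} quotient by translation. The Fredholm index formula of Lemma~\ref{lemma: Fredholm index} applies essentially verbatim, and $\mathcal{I}_{10}$ counts rigid ($\op{ind}=0$) elements; the chain-map identity will come from analyzing the boundary of the $1$-dimensional stratum $\mathcal{N}^{\op{ind}=1,A,\chi}(\mathbf{y},\mathbf{y}')$.

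Next I would establish regularity and compactness. Regularity follows from a minor extension of Lemma~\ref{lemma: transversality}: since $(J')^\Diamond$ is a small perturbation of a product $J'$ satisfying (K1)--(K3) and curves are genuine multisections away from fibers, they are somewhere injective. The compactification uses SFT compactness of \cite{BEHWZ}: the same subharmonicity/maximum principle argument as in Remark~\ref{rmk: curve contained in Sigma} shows images stay in $\R\times[0,1]\times \wt W$; the exactness of the Lagrangians $\bs\alpha^s$ and of the symplectic form (away from the compact support of $H$) rules out interior bubbling of closed components and disk bubbling along $L'_{1,\bs\alpha}\cup L'_{0,\wt h(\bs\alpha)}$; node-pinching on closed curves is excluded by the same index inequality~\eqref{eqn: ineq for ind u infty} used in Lemma~\ref{lemma: bdry squared equals zero}; and arc-pinching between distinct components of $\bs\alpha^s$ (or $\wt h(\bs\alpha^s)$) is ruled out by their disjointness. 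The only remaining degenerations are $s$-direction splittings.

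The crucial point is that, by (K1), $(J')^\Diamond$ and $\widehat\Omega'$ agree with the translation-invariant data $(J_1^\Diamond,\widehat\Omega)$ for $s\gg 0$ and with $(J_0^\Diamond,\widehat\Omega)$ for $s\ll 0$, while the Lagrangians $L'_{1,\bs\alpha},L'_{0,\wt h(\bs\alpha)}$ are cylindrical there. Hence an $s$-splitting of an $\op{ind}=1$ curve $u_i\in \mathcal{N}^{\op{ind}=1,A,\chi}(\mathbf{y},\mathbf{y}')$ yields a two-level building $v_1\cup v_2$ of one of the following two types:
\begin{itemize}
\item[(i)] $v_1\in \mathcal{M}_{J_1^\Diamond}^{\op{ind}=1,A_1,\chi_1}(\mathbf{y},\mathbf{y}'')/\R$ and $v_2\in \mathcal{N}^{\op{ind}=0,A_2,\chi_2}(\mathbf{y}'',\mathbf{y}')$; or
\item[(ii)] $v_1\in \mathcal{N}^{\op{ind}=0,A_1,\chi_1}(\mathbf{y},\mathbf{y}'')$ and $v_2\in \mathcal{M}_{J_0^\Diamond}^{\op{ind}=1,A_2,\chi_2}(\mathbf{y}'',\mathbf{y}')/\R$,
\end{itemize}
with $A_1+A_2=A$ and $\chi_1+\chi_2-\kappa=\chi$. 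The standard gluing theorem furnishes the converse: every such broken configuration arises uniquely as an endpoint of $\mathcal{N}^{\op{ind}=1,A,\chi}(\mathbf{y},\mathbf{y}')$. Equating the signed count of the two types of boundary endpoints of this compact oriented $1$-manifold to zero yields $\mathcal{I}_{10}\circ d_1 = d_0\circ \mathcal{I}_{10}$ (with the sign convention inherited from Section~\ref{subsection: orientations}). The main obstacle is checking that the coherent orientations constructed in Section~\ref{subsection: orientations} extend to the non-invariant connection $\widehat\Omega'$ and are compatible with the gluings in (i) and (ii); this is standard (cf.\ \cite{FOOO}) once one fixes stable trivializations that are constant in $s$ at the two ends, which is possible because $L'_{1,\bs\alpha}$ and $L'_{0,\wt h(\bs\alpha)}$ are cylindrical for $|s|\gg 0$.
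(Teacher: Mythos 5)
Your proposal is correct and is essentially the argument the paper intends: the paper's proof of this lemma simply defers to the proof of Lemma~\ref{lemma: bdry squared equals zero} (compactness via the maximum principle and exactness, exclusion of fiber components and pinching, $s$-direction splittings into a cylindrical level of index $1$ and a cobordism level of index $0$, plus gluing and coherent orientations), which is exactly what you have spelled out. Your explicit attention to the matching at the ends via (K1) and to the orientation compatibility is a faithful expansion of that same route, not a different one.
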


\begin{proof}
	Similar to the proof of Lemma~\ref{lemma: bdry squared equals zero}.
\end{proof}

\begin{lemma} \label{lemma: homotopy inverse 1}
	The maps $\mathcal{I}_{10}$ and $\mathcal{I}_{01}$ are homotopy inverses.
\end{lemma}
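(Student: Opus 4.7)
The plan is the standard continuation/homotopy argument, adapted to the setup of Lemma~\ref{lemma: existence of widehat omega prime}. I will construct a chain homotopy
$$K:\widehat{CF}(W,\beta_1,\phi_1;h;J_1^\Diamond)\to \widehat{CF}(W,\beta_1,\phi_1;h;J_1^\Diamond)$$
with $dK+Kd=\mathcal{I}_{01}\circ \mathcal{I}_{10}-\op{id}$, and analogously for the other composition. The input will be a $1$-parameter family of symplectic connections $\widehat\Omega'_R$, $R\in[0,\infty)$, on $\widehat X$ together with tame almost complex structures $(J'_R)^\Diamond$ and Lagrangian boundary conditions $L'_{1,\bs\alpha,R}$, $L'_{0,\wt h(\bs\alpha),R}$ that interpolate between two geometric limits (described below).

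First I would define the family. For $R\geq 0$, let $\widehat\Omega'_R$ be built exactly as in Equation~\eqref{eqn: defn of Omega prime} from a Hamiltonian $H_R$ whose supporting window in the $s$-direction sits in $[-R-1,-R]\cup[R,R+1]$; arrange the Hamiltonian isotopies so that on $[-R,R]$ the structure is just the product $ds\wedge dt+\omega$ with constant Lagrangian boundary conditions $L_{1,\bs\alpha^1}$, $L_{0,\wt h(\bs\alpha^1)}$, so that for each $R>0$ the curves counted in $\widehat X$ agree with a composition of two pieces separated by a long product region. As $R\to\infty$, an SFT/neck-stretching argument (as in Lemma~\ref{lemma: bdry squared equals zero}) shows that the moduli space of index $0$ curves breaks into a pair of index $0$ curves in the two halves, so that $R=\infty$ counts give precisely $\mathcal{I}_{01}\circ \mathcal{I}_{10}$. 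At $R=0$, the family deforms through symplectic connections to the trivial product connection $\widehat\Omega$ with constant Lagrangian boundary conditions $L_{1,\bs\alpha^1}$ and $L_{0,\wt h(\bs\alpha^1)}$; by Lemma~\ref{lemma: simply-covered}, index $0$ curves in this situation are exactly $\kappa$-tuples of trivial strips, yielding the identity.

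Next I would define $K$ by counting rigid elements of the parametrized moduli space $\bigcup_{R\in[0,\infty)} \mathcal{M}^{\op{ind}=-1,A,\chi}_{(J'_R)^\Diamond}(\mathbf{y},\mathbf{y}')$ (one dimension arises from varying $R$). The standard analysis of the boundary of the compactified $0$-dimensional parametrized moduli space for index $0$ input then yields, for generic one-parameter families, contributions of four types: (a) breaking off a $d$-strip at the positive or negative end, giving $dK+Kd$; (b) the $R=0$ boundary, giving $\op{id}$; and (c) the $R\to\infty$ degeneration, giving $\mathcal{I}_{01}\circ\mathcal{I}_{10}$. This produces the desired chain homotopy, and the symmetric argument handles $\mathcal{I}_{10}\circ \mathcal{I}_{01}$.

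The main obstacle is the compactness of the parametrized moduli spaces uniformly in $R\in[0,\infty)$: one must rule out curves escaping into $\R\times[0,1]\times \mathcal{E}$ (handled by the maximum principle via (K3) and condition (H2), as in Lemma~\ref{lemma: compactness}), rule out bubbling off of components in fibers (Lemma~\ref{lemma: simply-covered}(1) together with exactness of $\bs\alpha^s$ and $\wt h(\bs\alpha^s)$), and, for the $R\to\infty$ limit, show that the broken configurations that appear are exactly matched pairs of index $0$ curves in the two cobordisms---i.e., that no Reeb chords or closed orbits arise in the intermediate levels. This last point follows from the cylindrical structure at the neck and the fact that $\widehat\Omega'_R$ agrees with $\widehat\Omega$ and $L'_{*,*,R}$ is a product there, combined with the simply-connectedness of the spheres $\alpha^s_i$ and $\wt h(\alpha^s_i)$ used in the action/energy estimate. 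Granting these, the usual parametrized cobordism argument closes the proof.
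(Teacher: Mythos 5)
Your overall strategy is the same as the paper's: construct a one-parameter family of cobordisms with fixed ends interpolating between the trivial cobordism and the two-level building obtained by stacking the cobordisms defining $\mathcal{I}_{10}$ and $\mathcal{I}_{01}$, and extract the chain homotopy from the boundary of the parametrized (index $-1$) moduli space; the paper phrases this with a parameter $\eta\in[0,1)$ and delegates the bookkeeping to ``the usual chain homotopy argument''.

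However, as literally written your family does not do the job, because the data are placed in the wrong regions. Since the cobordism for $\mathcal{I}_{10}$ has $(\bs\alpha^1,J_1^\Diamond)$ at its positive end and $(\bs\alpha^0,J_0^\Diamond)$ at its negative end, and the one for $\mathcal{I}_{01}$ has these reversed, the glued cobordism computing $\mathcal{I}_{01}\circ\mathcal{I}_{10}$ must carry the fixed data $(L_{1,\bs\alpha^1},L_{0,\wt{h}(\bs\alpha^1)},J_1^\Diamond)$ at both far ends and the intermediate data $(L_{1,\bs\alpha^0},L_{0,\wt{h}(\bs\alpha^0)},J_0^\Diamond)$ along the long neck $[-R,R]$, so that stretching the neck breaks curves into matched pairs of rigid curves in the two cobordisms asymptotic to generators of $\widehat{CF}(W,\beta_0,\phi_0;h;J_0^\Diamond)$. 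You instead put the $\beta_1$-data $L_{1,\bs\alpha^1}$, $L_{0,\wt{h}(\bs\alpha^1)}$ on the middle region $[-R,R]$, with the Hamiltonian interpolations pushed to the ends; then the $(\beta_0,\bs\alpha^0,J_0^\Diamond)$ data never appears, the $R\to\infty$ degeneration is not the building whose levels are the cobordisms defining $\mathcal{I}_{10}$ and $\mathcal{I}_{01}$, and the identification of the $R=\infty$ counts with $\mathcal{I}_{01}\circ\mathcal{I}_{10}$ fails. (Putting $\bs\alpha^0$ at the far ends and $\bs\alpha^1$ in the neck would instead give the homotopy for $\mathcal{I}_{10}\circ\mathcal{I}_{01}$, an endomorphism of $\widehat{CF}(W,\beta_0,\phi_0;h;J_0^\Diamond)$, which is inconsistent with your $K$ being an endomorphism of $\widehat{CF}(W,\beta_1,\phi_1;h;J_1^\Diamond)$.) With the neck carrying the $(\beta_0,\bs\alpha^0,J_0^\Diamond)$ data and the fixed $(\beta_1,\bs\alpha^1,J_1^\Diamond)$ data at both ends, the rest of your argument --- parametrized moduli spaces, maximum principle via (K3) and (H2), exclusion of fiber components by exactness, matched breaking at the neck --- goes through and coincides with the paper's proof.
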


\begin{proof}
	There exists a family $(\widehat\Omega''_\eta,L''_{1,\bs\alpha,\eta}, L''_{0,\widehat{h}(\bs\alpha),\eta},(J'')^\Diamond_\eta)$ of cobordisms parametrized by $\eta\in[0,1)$ such that:
	\begin{itemize}
		\item $(\widehat\Omega''_0,L''_{1,\bs\alpha,0}, L''_{0,\widehat{h}(\bs\alpha),0},(J'')^\Diamond_0)$ is a trivial cobordism;
		\item $(\widehat\Omega''_\eta,L''_{1,\bs\alpha,\eta}, L''_{0,\widehat{h}(\bs\alpha),\eta},(J'')^\Diamond_\eta)$ coincides with $(\widehat\Omega,L_{1,\bs\alpha^1},L_{0,\wt{h}(\bs\alpha^1)},J_1^\Diamond)$ at the positive and negative ends; and
		\item as $\eta\to 1$, $(\widehat\Omega''_\eta,L''_{1,\bs\alpha,\eta}, L''_{0,\widehat{h}(\bs\alpha),\eta},(J'')^\Diamond_\eta)$ limits to a $2$-level building where the top level is the cobordism for $\mathcal{I}_{10}$ and the bottom level is the cobordism for $\mathcal{I}_{01}$.
	\end{itemize}
	By the usual chain homotopy argument, $\mathcal{I}_{01}\circ \mathcal{I}_{10}$ is chain homotopic to the identity.  The case of $\mathcal{I}_{10}\circ \mathcal{I}_{01}$ is similar.
\end{proof}

\s\n
{\em Case 2.}  Next suppose that $\wt \beta_s^\wedge$, $s\in[0,1]$, is independent of $s$ on all of $\wt W^\wedge$. Let $J_{\wt W^\wedge,s}$, $s\in[0,1]$, be a family of almost complex structures on $\wt W^\wedge$ compatible with $\wt\beta_s^\wedge$ (for any $s$) from $J_{\wt W^\wedge,0}$ to $J_{\wt W^\wedge,1}$ such that:
\begin{itemize}
	\item $J_{\wt W^\wedge,s}=J_{\wt W^\wedge,0}$ on $\mathcal{E}$ for all $s\in[0,\tfrac{1}{2}]$; and
	\item $J_{\wt W^\wedge,s}=J_{\wt W^\wedge,1}$ on $\wt W$ for all $s\in[\tfrac{1}{2},1]$.
\end{itemize}
Then let $J_s= J_{\R\times[0,1]}\times J_{\wt W^\wedge,s}$, $s\in[0,1]$, and let $J_s^\Diamond$ be its regular perturbation.

Using $J_s$, $s\in[\tfrac{1}{2},1]$, we can construct a cobordism which restricts to the trivial cobordism on $\R\times[0,1]\times \wt W$ and hence induces a quasi-isomorphism
$$\mathcal{I}_{1,1/2}:\widehat{CF} (W,\beta_1,\phi_1;h;J_1^\Diamond)\to \widehat{CF} (W,\beta_{1/2},\phi_{1/2};h;J_{1/2}^\Diamond),$$
by the maximal principle. In other words, if $u$ is a curve that is counted in $\mathcal{I}_{1,1/2}$, then $\pi_{\wt W}\circ u$ has image inside $\wt W$. Since we can also define a quasi-isomorphism
$$\mathcal{I}_{1/2,0}: \widehat{CF} (W,\beta_{1/2},\phi_{1/2};h;J_{1/2}^\Diamond)\to \widehat{CF} (W,\beta_0,\phi_0;h;J_0^\Diamond)$$
by Case 1, the invariance with respect to (I1) and (I2) follows.

\subsection{Invariance under elementary stabilizations} \label{subsection: I5}

Let
$$S^\pm_{c}(W,\beta,\phi;h)=(W',\beta',\phi';h')$$
be the positive/negative stabilization of $(W,\beta,\phi;h)$ along a Lagrangian disk $c\subset W$.

\begin{defn} \label{defn: elementary}
	An {\em elementary stabilization} is a stabilization where $c$ is disjoint from the Lagrangian basis ${\bs a} =a_1\cup\dots\cup a_\kappa$ of $W$ associated with the Weinstein structure $(\beta,\phi)$.
\end{defn}

We may assume that the handle $W_0=W'-W$ is attached to $W$ along the Legendrian sphere $\partial c$ and that $W_0$ is disjoint from $H$.  Hence we can attach the handles of $H$ and $W_0$ separately in any order. Let $H_0$ be an $n$-handle attached to the boundary of the cocore of $W_0$ and let $H'=H_0\cup H$.  We then obtain
$$\wt{W}':= W'\cup H'.$$
The cocore of $W_0$ together with the core of $H_0$ form a Lagrangian sphere $\wt a_0$. As before, we first extend $h'$ from $W'$ to $\wt W'$ by the identity and then perturb the extension by the Hamiltonian flow of a function $f_i$ near $\wt a_i$, for $i=0,\dots,\kappa$.  The result is a symplectomorphism $\wt{h}' \in \op{Symp}(\wt W',\bdry \wt W',d\beta')$. There is a unique point $x_0$ (resp.\ $z_0$) of $\wt a_0 \cap \wt{h}'(\wt a_0)$ in $H'$ corresponding to the minimum (resp.\ maximum) of $f_0$ in the case of a positive (resp.\ negative) stabilization.

Let $(J')^\Diamond$ and $J^\Diamond$ be perturbed $(\widehat W',\widehat\beta')$- and $(\widehat W,\widehat \beta)$-compatible almost complex structures on $\widehat X_{\wt W'}=\R\times[0,1]\times{(\wt W')^\wedge}$ and ${\widehat X}=\R\times[0,1]\times {(\wt W)^\wedge}$. We then have the following:

\begin{thm}[Invariance under elementary stabilizations]\label{thm: invariance under stabilization} $\mbox{}$
	\begin{enumerate}
		\item If $S^+_{c}(W,\beta,\phi;h)=(W',\beta',\phi';h')$ is an elementary stabilization, then the map
		$$\Theta^+ : \widehat{CF} (W,\beta,\phi;h;J^\Diamond)\to \widehat{CF} (W',\beta',\phi';h';(J')^\Diamond)$$
		$${\bf y}\mapsto \{x_0\}\cup {\bf y},$$
		is a quasi-isomorphism. It takes the contact class $[\mathbf{x}]$ to the contact class $[\{x_0\}\cup \mathbf{x}]$.
		\item If $S^-_{c}(W,\beta,\phi;h)=(W',\beta',\phi';h')$ is an elementary stabilization, then the map
		$$\Theta^- : \widehat{CF} (W,\beta,\phi;h;J^\Diamond)\to \widehat{CF} (W',\beta',\phi';h';(J')^\Diamond)$$
		$${\bf y}\mapsto \{z_0\}\cup {\bf y},$$
		is a quasi-isomorphism.
	\end{enumerate}
\end{thm}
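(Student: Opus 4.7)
I focus on the positive case; the negative case is parallel with $z_0$ and $\tau_\gamma^{-1}$ in place of $x_0$ and $\tau_\gamma$. The overall strategy is to isolate the contribution of the new handle $W_0\cup H_0$ via a neck-stretching argument, show that it contributes only the rigid generator $x_0$, and that the old Floer-theoretic data reassembles into $\widehat{CF}(W,\beta,\phi;h;J^\Diamond)$. Begin by arranging $\tau_\gamma$ to be compactly supported in an arbitrarily small neighborhood of $\gamma\subset W\cup W_0$, disjoint from $H_0$. Then $\wt a_0$ and $\wt h'(\wt a_0)$ coincide outside this neighborhood and the $f_0$-perturbation region, and $x_0$ is the unique intersection point of $\wt a_0\cap \wt h'(\wt a_0)$ lying in $H_0$.

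To verify that $\Theta^+$ is a cochain map, I would stretch the neck of $\wt W'$ along a separating hypersurface isolating $W_0\cup H_0$ from $W\cup H$, chosen disjoint from the support of $\tau_\gamma$. Following the template of Lemmas~\ref{lemma: A}--\ref{lemma: C} in the proof of Theorem~\ref{thm: alternative version}, any index-$1$ holomorphic curve $u':\dot F\to \widehat X_{\wt W'}$ asymptotic to $\{x_0\}\cup \mathbf{y}$ and $\{x_0\}\cup \mathbf{y}'$ splits in the limit into a component $u_W$ in $\widehat X_{\wt W}$ carrying the $(\mathbf{y},\mathbf{y}')$-asymptotics and a component $u_{W_0}$ in $\widehat X_{W_0\cup H_0}$ carrying the $(x_0,x_0)$-asymptotics, matched at Reeb chords or diagonal strips. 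A model computation in a $T^*S^n$-type neighborhood of $\gamma$, analogous to the one in Lemma~\ref{lemma: C} but incorporating Seidel's description of $\tau_\gamma(\wt a_0)$ relative to $\wt a_0$, forces the rigid handle piece $u_{W_0}$ to be the trivial strip over $x_0$; subharmonicity of the $\sigma$-coordinate on the symplectization end and action/energy bounds (with $f_0$ sufficiently $C^r$-small) prevent any escape into $\widehat H_0\setminus H_0$ or across $\gamma$. Gluing gives the required bijection $\mathcal{M}^{\op{ind}=1}(\{x_0\}\cup\mathbf{y},\{x_0\}\cup\mathbf{y}')/\mathbb{R}\simeq \mathcal{M}^{\op{ind}=1}(\mathbf{y},\mathbf{y}')/\mathbb{R}$, so $\Theta^+$ commutes with the differential and its image is a subcomplex isomorphic to $\widehat{CF}(W,\beta,\phi;h;J^\Diamond)$. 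The identification of contact classes follows by unpacking the definition: the stabilized contact class tuple simply adjoins $x_0$ to the original one.

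To upgrade this to a quasi-isomorphism, the remaining generators of $\widehat{CF}(W',\beta',\phi';h';(J')^\Diamond)$, namely those whose $\wt a_0$-component is one of the intersections of $\wt a_0$ with $\tau_\gamma(\wt a_0)$ created near $\gamma$, must form an acyclic summand. This is the step I expect to be the main obstacle: it amounts to showing that $\widehat{HF}(\wt a_0,\wt h'(\wt a_0))$ is generated by $x_0$ alone, which is essentially the content of Seidel's exact triangle applied to the positive Dehn twist along $\gamma$ intersecting $\wt a_0$ transversely in one point. Concretely, I would introduce an action or handle-crossing filtration on $\widehat{CF}(W';h')$ indexed by the number of $\gamma$-created $\wt a_0$-intersections, and use a Seidel-style local analysis near $\gamma$ to cancel the extra generators in pairs on the associated graded, leaving $\op{im}(\Theta^+)$ at $E_\infty$. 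An alternative bypass is to construct a candidate homotopy inverse $\Theta^+_\vee$ by counting curves in a ``destabilization'' cobordism and verify $\Theta^+_\vee\circ \Theta^+\simeq \op{id}$ and $\Theta^+\circ \Theta^+_\vee\simeq \op{id}$ by the standard degeneration-of-moduli argument (as in Lemma~\ref{lemma: homotopy inverse 1}); this route still requires the Lemma~\ref{lemma: C}-type model calculation in the handle but avoids a full Dehn-twist triangle.
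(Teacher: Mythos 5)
There is a genuine gap, and it sits exactly where you flag ``the main obstacle.'' For an \emph{elementary} stabilization there are no extra generators to cancel in the first place, and this observation is the whole content of the paper's proof. Since $c$ is disjoint from the basis $\bs a$ (Definition~\ref{defn: elementary}) and $h'$ is defined as $(h\cup \op{id}|_{W_0})\circ \tau_\gamma$ --- the twist applied \emph{first}, not last --- one has $\tau_\gamma(\wt a_i)=\wt a_i$ for $i\geq 1$, so $\wt h'(\wt a_i)$ stays in the region $W\cup H$ and is disjoint from $\wt a_0\subset W_0\cup H_0$; and with the twist and the Hamiltonian perturbation arranged as in Sections~\ref{subsection: extension} and \ref{subsection: I5}, the only intersection of $\wt a_0$ with $\wt h'(\wt a_0)$ is the single point $x_0$ (resp.\ $z_0$ in the negative case). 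Hence $\Theta^{\pm}$ is already a bijection on generators. Combined with Lemma~\ref{lemma: contact cycle} --- for a suitable $(J')^\Diamond$ the only curve with $x_0$ at its positive end is a trivial strip --- every index-one curve from $\{x_0\}\cup{\bf y}$ to $\{x_0\}\cup{\bf y}'$ is a trivial strip over $x_0$ together with curves from ${\bf y}$ to ${\bf y}'$ not involving $x_0$, so $\Theta^{+}$ is an isomorphism of cochain complexes (not merely a quasi-isomorphism), and it visibly takes $[{\bf x}]$ to $[\{x_0\}\cup{\bf x}]$. Your proposal instead posits extra generators ``created near $\gamma$'' whose acyclicity you defer to a Seidel exact triangle/filtration or to an unconstructed destabilization cobordism; since that is precisely the step you do not carry out, the proposal does not prove the theorem, and with the paper's conventions (order of composition plus elementarity) the step is unnecessary. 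Note that the filtration/triangle route would also not obviously suffice as stated: curves involving the putative extra generators need not decouple from the rest of the tuple, so ``$HF(\wt a_0,\wt h'(\wt a_0))$ generated by $x_0$'' alone would not immediately give acyclicity of a complementary summand of the full complex.

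A secondary problem is your neck-stretching set-up for the chain-map half: you ask for a hypersurface separating $W_0\cup H_0$ from $W\cup H$ that is disjoint from the support of $\tau_\gamma$. No such hypersurface exists, since $\gamma=c\cup(\mbox{core of }W_0)$ crosses from $W_0$ into $W$, so any separating hypersurface meets a neighborhood of $\gamma$ and also meets $\wt h'(\wt a_0)$, which is rerouted through a neighborhood of $c\subset W$. The paper avoids stretching altogether: once the generator sets are identified, the maximum-principle/asymptotics argument of Lemma~\ref{lemma: contact cycle} (only trivial strips emanate from $x_0$ at the positive end) does all the analytic work.
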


\begin{proof}
	We prove (1); (2) is similar. Note that $\wt a_0$ does not intersect any $\wt{h}'(\wt a_i)$ for $i=1,\dots,\kappa$ since $h'$ was defined to be $(h\cup \op{id}|_{W_0})\circ \tau_\gamma$ (not the other way around).  Hence the only intersection between $\wt a_0$ and $\cup_{i=0}^\kappa \wt{h}'(\wt a_i)$ is $x_0$.
	
	We will see in Lemma~\ref{lemma: contact cycle} that, for a good choice of almost complex structure $(J')^\Diamond$, the only curve with $x_0$ at the positive end is a trivial strip. Hence any curve $u$ from $ \{x_0\}\cup {\bf y}$ to  $\{x_0\}\cup {\bf y}'$ consists of a trivial strip from $x_0$ to itself and other curves from ${\bf y}$ to ${\bf y}'$ that do not involve $x_0$. 
\end{proof}

\section{The contact class} \label{section: defn of contact class}

The goal of this section is to define the {\em $p$-twisted contact class
$$c^p (W,\beta,\phi;h)\in \widehat{HF} (W,\beta ,\phi;h;J^\Diamond)$$
of a Weinstein open book decomposition $(W,\beta,\phi;h)$} for $p\in \Z$ and study its properties.  Although $\widehat{HF} (W,\beta ,\phi;h;J^\Diamond)$ is not invariant under handleslides, there are nevertheless surprising applications of the contact class.

One of the main properties of the contact class is Theorem~\ref{thm: non-vanishing}, which gives a convenient method for certifying that certain contact manifolds are {\em not} Liouville fillable.  Theorem~\ref{thm: non-vanishing} provides large classes of contact manifolds that are not Liouville fillable.

\subsection{Definition of the contact class}

Recall the functions $f:\wt W\to \R$ and $f_i:\wt a_i\to \R$ from Section~\ref{subsection: extension}.  Let $x_i\in \wt a_i -a_i$ be the minimum of the Morse function $f_i$, viewed as a point of $\wt a_i\cap \wt{h} (\wt a_i)$ and satisfying $\wt{h}(x_i)=x_i$. 

\begin{defn}\label{defn: contact class}
The {\em contact class of the open book decomposition $(W,\beta,\phi;h)$} is
\begin{equation}\label{eqn: contact class}
c^0 (W,\beta,\phi;h) ={\bf x}= \{x_1,\dots,x_\kappa\} \in \widehat{CF} (W,\beta ,\phi;h;J^\Diamond).
\end{equation}
More generally, for $p\in \Z$, the {\it $p$-twisted} contact class of $(W,\beta,\phi;h)$ is
$$c^p (W,\beta,\phi;h) =\hbar^p {\bf x}.$$
\end{defn}

\begin{lemma}\label{lemma: contact cycle}
For all $p\geq 0$, the  $p$-twisted contact class $c^p (W,\beta,\phi;h)$ is a cycle for a certain choice of almost complex structure $J^\Diamond\in (\mathcal{J}^\Diamond)^{reg}$ and sufficiently $C^1$-small $f:\wt W\to \R$.
\end{lemma}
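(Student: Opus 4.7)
The strategy is to invoke the reformulation of the higher-dimensional Heegaard Floer complex from Section~\ref{section: reformulation of HF} and reduce the vanishing of $d\mathbf{x}$ to a tautology about the variant differential $d'$. I would first fix $f:\wt W\to\R$ sufficiently $C^r$-small for $r\gg 0$ (in particular $C^1$-small) and choose the almost complex structure $J^\Diamond$ within the stretched family $J^i$ appearing in the proof of Theorem~\ref{thm: alternative version}, so that the cochain isomorphism
$$\widehat{CF}(W,\beta,\phi;h;J^\Diamond)\;\xrightarrow{\ \sim\ }\;\widehat{CF}'(W,\beta,\phi;h;J^\Diamond_{\widehat X_W})$$
is in hand. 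On the level of generators this isomorphism is the obvious bijection $\mathcal{S}\leftrightarrow\widehat{\mathcal{S}}$ that replaces each $x_i$ by $\hat x_i$; by Lemma~\ref{lemma: C}, the unique holomorphic strip from $\hat x_i$ to $x_i$ in $\widehat X_H$ is precisely what converts the $\widehat{CF}'$ differential into the $\widehat{CF}$ differential. In particular, the contact cycle $\mathbf{x}=\{x_1,\dots,x_\kappa\}$ is sent to $\widehat{\mathbf{x}}=\{\hat x_1,\dots,\hat x_\kappa\}$.

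Next I would show $d'\widehat{\mathbf{x}}=0$ directly from the definition. In the notation of Section~\ref{subsection: moduli spaces for variant} we have $\widehat{\mathbf{x}}_0=\varnothing$ and $\widehat{\mathbf{x}}_1=\widehat{\mathbf{x}}$. For any $\widehat{\mathbf{y}}'\in\widehat{\mathcal{S}}$, the moduli space $\mathcal{M}(\widehat{\mathbf{x}},\widehat{\mathbf{y}}')$ is nonempty only when $\widehat{\mathbf{x}}_1\subset\widehat{\mathbf{y}}'_1$; since every generator has exactly $\kappa$ components, this forces $\widehat{\mathbf{y}}'_1=\widehat{\mathbf{x}}$ and $\widehat{\mathbf{y}}'_0=\varnothing$, i.e.\ $\widehat{\mathbf{y}}'=\widehat{\mathbf{x}}$. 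The resulting $\mathcal{M}(\widehat{\mathbf{x}},\widehat{\mathbf{x}})$ is then the singleton given by the union of trivial strips in $\widehat X_H$, which is precisely the degenerate case observed at the end of Section~\ref{subsection: moduli spaces for variant}: $\mathcal{M}^{\op{ind}=1}(\widehat{\mathbf{x}},\widehat{\mathbf{y}}')/\R=\varnothing$ for every $\widehat{\mathbf{y}}'$. Hence $d'\widehat{\mathbf{x}}=0$, and transporting across the isomorphism yields $d\mathbf{x}=0$. The $p$-twisted version is automatic: since $\hbar$ lies in the coefficient ring $\Lambda\llbracket\hbar\rrbracket$ and $d$ is $\Lambda\llbracket\hbar\rrbracket$-linear, $d(\hbar^p\mathbf{x})=\hbar^p\,d\mathbf{x}=0$ for every $p\geq 0$.

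The main obstacle is really the setup: one must verify that a single $J^\Diamond\in(\mathcal{J}^\Diamond)^{reg}$ can be chosen stretched enough for the compactness argument of Theorem~\ref{thm: alternative version} (i.e.\ Lemmas~\ref{lemma: A}--\ref{lemma: C}) to localize every holomorphic curve with $x_i$ at a positive end into the handle $H_i$, while still being regular for the moduli spaces $\mathcal{M}_{J^\Diamond}^{\op{ind}=1,A,\chi}(\mathbf{x},\mathbf{y}')$. This is a standard density argument in the space of stretched almost complex structures and is the sole analytic input; once it is in place, the vanishing $d\mathbf{x}=0$ becomes an index/topological statement baked into the definition of the variant complex, and the same curves are later reused in the proof of Theorem~\ref{thm: invariance under stabilization} to justify the claim that the only curve with $x_0$ at the positive end is a trivial strip.
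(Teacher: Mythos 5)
Your proposal is correct as a proof of the literal statement, but it takes a genuinely different route from the paper. The paper argues directly in $\widehat X$: using \cite[Theorem 1.1(a)]{CE} it chooses a Morse function $\phi_1$, plurisubharmonic with respect to a complex structure $J_{\wt W^\wedge}$, whose critical points $x_i$ sit on the top level set $\{\phi_1=1\}$ while $\wt{\bs a}\cup\wt h(\wt {\bs a})\subset\{\phi_1\leq 1\}$; a local coordinate model at each $x_i$ and the asymptotic expansion of the projections $v_j$ show that any nonconstant curve with a positive end at $x_i$ sweeps out a sector of angle greater than $\pi/2$ and hence enters $\{\phi_1>1\}$, contradicting the maximum principle; finally $J$ is perturbed to a regular $J^\Diamond$ only over $\R\times[0,1]\times W$, which suffices because every nontrivial component meets that region. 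Your route instead quotes Theorem~\ref{thm: alternative version}: in the variant complex the generator $\widehat{\bf x}$ has $\widehat{\bf x}_0=\varnothing$, so $d'\widehat{\bf x}=0$ is built into the definition, and the cycle property of ${\bf x}$ follows by transport; the $\hbar^p$ case is then linearity, as you say. This is legitimate (Theorem~\ref{thm: alternative version} precedes the lemma and its proof does not invoke it), and it buys brevity, but note two trade-offs. First, the geometric content has not disappeared: it is precisely the clause of Lemma~\ref{lemma: A} asserting that when $x_i\in\widehat{\bf y}_1$ the handle level consists only of trivial strips, i.e.\ that no nonconstant curve emanates from $x_i$ at the positive end, so you are off-loading the key point onto the neck-stretching compactness statement rather than proving it. Second, your argument yields only $d\,{\bf x}=0$ for the particular highly stretched $J^\Diamond$ and $C^r$-small $f$ furnished by Theorem~\ref{thm: alternative version}, whereas the paper's proof establishes the stronger property that the only curve with some $x_i$ at its positive end is a trivial strip; it is this stronger property, not merely the cycle condition, that is quoted later (in the proof of Theorem~\ref{thm: invariance under stabilization}, in Lemma~\ref{lemma: constant}, and in Theorems~\ref{thm: vanishing} and \ref{thm: negative twists}), so your proof could not simply replace the paper's without reworking those arguments.
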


Hence the contact class can be viewed as an element of $\widehat{HF} (W,\beta ,\phi;h;J^\Diamond)$.  We often abuse notation and refer to both the cycle and the homology class as the ``contact class''.

Before proving Lemma~\ref{lemma: contact cycle} we review some notions from \cite{CE}.  Recall that two functions $\phi_0,\phi_1:\Sigma\to \R$ on a manifold $\Sigma$ are {\em target equivalent} if there is an increasing diffeomorphism $g:\R\stackrel\sim\to \R$ such that $\phi_1=g\circ \phi_0$.

According to \cite[Theorem 1.1(a)]{CE}, given a Weinstein domain $(\Sigma,\beta_0,\phi_0)$, there exists a Morse function $\phi_1$ which is target equivalent to $\phi_0$ and a complex structure $J_{\widehat\Sigma}$ on $\widehat\Sigma$ such that:
\begin{enumerate}
\item[(P1)] $\phi_1$ is plurisubharmonic with respect to $J_{\widehat\Sigma}$ on $\Sigma-N(\bdry\Sigma)$, where $N(\bdry\Sigma)$ is a small collared neighborhood of $\bdry\Sigma$;
\item[(P2)] $(\Sigma,\beta_0,\phi_1)$ is Weinstein homotopic to $(\Sigma,\beta_1,\phi_1)$ through a homotopy which fixes $\phi_1$, such that $\beta_1:= - d\phi_1\circ J_{\widehat\Sigma}$ on $\Sigma-N(\bdry\Sigma)$;
\item[(P3)] $J_{\widehat\Sigma}$ is adapted to $\beta_1|_{\bdry\Sigma}$ on $\widehat\Sigma-\Sigma$;
\item[(P4)] $x_i$, $i=1,\dots,\kappa$, are critical points of $\phi_1$ with $\phi_1$-value $1$ and all the other critical points of $\phi_1$ have $\phi_1$-value $<1$.
\end{enumerate}

Let $\wt{h}\in \op{Symp}(\wt W,\bdry\wt W,d\wt\beta_1)$ be the symplectomorphism from Section~\ref{subsection: extension} with $\wt\beta=\wt\beta_1$ and sufficiently $C^1$-small $f:\wt W\to \R$.  Let $\mathcal{J}$ be the space of $(\wt W^\wedge, \wt \beta_1^\wedge)$-compatible almost complex structures on $\widehat X=\widehat X_{\wt W}$.

\begin{proof}[Proof of Lemma~\ref{lemma: contact cycle}]
We show that there exist $J^\Diamond\in (\mathcal{J}^\Diamond)^{reg}$ and $f:\wt W\to\R$ such that no nonconstant $J^\Diamond$-holomorphic map is asymptotic to ${\bf x}$ at the positive end.

We may assume that $\wt{\bs a}\subset\{ \phi_1 \leq 1\}$.  If the $C^1$-norm of $f$ is sufficiently small, then we may also assume that $\wt{h} (\wt{\bs a})\subset \{ \phi_1 \leq 1\}$.

First consider the case $J^\Diamond =J=j_{\R\times[0,1]}\times J_{\wt W^\wedge}$, where the pair $(\phi_1, J_{\wt W^\wedge})$ satisfies (P1)--(P4) with $\Sigma=\wt W$.

We claim that each $x_i$ there exists a neighborhood $N(x_i)$ and a coordinate chart
$$\varphi_i:  N(x_i)\to \R^{2n}_{q_1,\dots,q_n,p_1,\dots,p_n},$$
such that $\varphi_i(x_i)=0$, the symplectic form is $\sum_j dq_j dp_j$, $J_{\wt W^\wedge}(\bdry_{q_j})=\bdry_{p_j}$, and $\varphi_i(\wt a_i)\subset \{p_1=\dots=p_n=0\}$. Indeed the proof of \cite[Theorem 1.1(a)]{CE} --- and in particular Step 1 of \cite[Proposition 13.12]{CE} --- gives such a neighborhood $N(x_i)$. We also take
$$f(q_1,\dots,q_n,p_1,\dots,p_n)= -\tfrac{\varepsilon}{2} \textstyle\sum_{j=1}^n q_j^2$$
on $N(x_i)$ for sufficiently small $\varepsilon>0$. Then $\varphi_i(\wt{h}(\wt a_i))$ is contained in the linear subspace spanned by $\bdry_{q_j}-\varepsilon \bdry_{p_j}$, $j=1,\dots,n$.

Arguing by contradiction, suppose there exists a nonconstant map
$$u : (\dot{F} ,j) \to (\widehat X,J)$$
which is asymptotic to ${\bf x}$ at the positive end. Since $J$ is a product, the map $u_{\wt W^\wedge}:=\pi_{\wt W^\wedge} \circ u$ is holomorphic. By Remark~\ref{rmk: curve contained in Sigma}, $\op{Im}(u_{\wt W^\wedge})\subset \Sigma$.  We may assume that $u_{\wt W^\wedge}$ is nonconstant near $x_i$. Consider the restriction $u_{\wt W^\wedge,i}$ of $u_{\wt W^\wedge}$ to $N(x_i)$.  Let $\pi_j: \R^{2n}\to \R^2$ be the projection to the $q_j p_j$-plane and let $v_j:= \pi_j\circ u_{\wt W^\wedge,i}$.\footnote{The letter $j$ is used to denote a complex structure on $\dot{F}$ and as a subscript; we hope this will not create any confusion.} The choices in the previous paragraph were made so that $\pi_j$ is holomorphic and that $\pi_j(\wt a_i)$ and $\pi_j(\wt{h}(\wt a_i))$ are real lines spanned by $\bdry_{q_j}$ and $\bdry_{q_j}-\varepsilon\bdry_{p_j}$, respectively.

The holomorphic map $v_j$ can be viewed as a map $(-\infty,0]_\sigma\times[0,1]_\tau\to \C$ which is dominated by a term of the form $c_je^{(\theta_0+m_j\pi)(\sigma+i\tau)}$ as $\sigma\to -\infty$, where $c_j\in \R-\{0\}$, $0<\theta_0<\pi$ is the angle corresponding to the vector $-\bdry_{q_j}+\varepsilon \bdry_{p_j}$, and $m_j$ is a nonnegative integer.  In other words, if $v_j$ is nonconstant, then the image of $v_j$ sweeps out a large (= angle $> {\pi\over 2}$) sector. The asymptotic description of $v_j$ then implies that $\op{Im}(u_{\wt W^\wedge})\cap \{ \phi_1 >1\}\not=\varnothing$. Now, since $u_{\wt W^\wedge}$ maps $\partial \dot{F}$ to $\wt{\bs a} \cup \wt{h}(\wt{\bs a}) \subset \{ \phi_1 \leq 1\}$, the map $\phi_1 \circ u_{\wt W^\wedge}$ attains a maximum in the interior of $\dot{F}$, contradicting the maximum principle.

Now consider the case where $J^\Diamond=J=j_{\R\times[0,1]}\times J_{\wt W^\wedge}$ on the subset $\R\times[0,1]\times (\wt W^\wedge-W)$.  Observe the following:
\begin{enumerate}
\item there are no other intersection points of $\wt a_i\cap \wt{h}(\wt a_j)$ in $\wt W^\wedge-W$ besides points of $\mathbf{x}$; and
\item if $v$ is a component of $u\in \mathcal{M}_{J^\Diamond}({\bf y},{\bf y}')$ and one of the positive ends of $v$ limits to $x_i$, then $v$ is a trivial strip $\R\times[0,1]\times \{x_i\}$.
\end{enumerate}
Hence every component of $u\in \mathcal{M}_{J^\Diamond}({\bf y},{\bf y}')$ nontrivially intersects $\R\times[0,1]\times W$, provided the component is not a trivial strip over some $x_i$. This means that there exists an almost complex structure $J^\Diamond\in (\mathcal{J}^\Diamond)^{reg}$ such that $J^\Diamond=J$ on $\R\times[0,1]\times (\wt W^\wedge-W)$; in other words, $J^\Diamond$ just needs to be generic on $\R\times[0,1]\times W$ to attain regularity. The contact class ${\bf x}$ is a cycle for such a $J^\Diamond$, proving the lemma.
\end{proof}

{\em From now on we assume that $J^\Diamond\in (\mathcal{J}^\Diamond)^{reg}$ and $f$ are chosen so that Lemma~\ref{lemma: contact cycle} holds.}

\subsection{Connected sums}\label{subsection: connected sums}

Let $(W_0,\beta_0,\phi_0;h_0)$ and $(W_1,\beta_1,\phi_1;h_1)$ be supporting open book decompositions for the contact manifolds $(M_0,\xi_0)$ and $(M_1,\xi_1)$. We assume the Morse functions $\phi_i$, $i=0,1$, have been normalized so that $\partial W_i$ is the regular level set $\phi_i^{-1} (1)$. 

Let $(W,\beta,\phi)$ be the Weinstein domain obtained by attaching a Weinstein $1$-handle $H$ to $(W_0,\beta_0,\phi_0)\sqcup(W_1,\beta_1,\phi_1)$ along small balls that are disjoint from the boundaries of the bases $\bs a_0$ and $\bs a_1$ of Lagrangian disks for $(W_0,\beta_0,\phi_0)$ and $(W_1,\beta_1,\phi_1)$ and then a collar neighborhood $N$, such that $\phi$ is an extension of $\phi_0\cup \phi_1$ to $W$, $\phi(\bdry W)=3$, and $\phi^{-1} ([2,3])=H\cup N$. Also let $h$ be the extension of $h_0\cup h_1$ by the identity on $H\cup N$. 

The open book $(W,\beta,\phi;h)$ is an adapted open book for the connected sum $(M_0 \# M_1 ,\xi_0 \# \xi_1)$, where $\xi =\xi_0 \# \xi_1$ is obtained by gluing the complement of two  standard Darboux balls respectively in $(M_0,\xi_0)$ and $(M_1,\xi_1)$ along their boundaries.

Let $(\wt W,\wt \beta, \wt \phi; \wt h)$ be the capping of $(W,\beta,\phi;h)$ as usual. The capping $(\wt W,\wt \beta, \wt \phi)$ is trivial Weinstein homotopic to $(\wt W',\wt \beta', \wt \phi')$, which is obtained from 
$$(\wt W_0,\wt \beta_0,\wt \phi_0) \sqcup (\wt W_1,\wt \beta_1,\wt \phi_1)$$ 
by first capping off the Lagrangian bases $\bs a_0$ and $\bs a_1$ so that $(\wt\phi')^{-1} (2)=\bdry \wt W_0\cup \bdry \wt W_1$, then attaching a $1$-handle $H'$ connecting $\wt W_0$ and $\wt W_1$, and finally attaching a collar neighborhood $N'$ such that $(\wt \phi')^{-1} ([2,3])=H'\cup N'$. The symplectomorphism $\wt h$ is homotopic to $\wt h'$, which is the extension of $\wt h_0\cup \wt h_1$ to $\wt W'$ by the identity.

The Heegaard Floer homology groups are well-behaved under this connected sum operation:
	
\begin{lemma}\label{lemma: connected sum}
		With $(W,\beta,\phi;h)$ as above,
		$$\widehat{HF}(W,\beta,\phi;h) \simeq \widehat{HF}(W_0,\beta_0,\phi_0;h_0) \otimes \widehat{HF}(W_1,\beta_1,\phi_1;h_1).$$
		The contact class $c^0 (W,\beta,\phi;h)$ vanishes if and only if one of the contact classes $c^0 (W_0,\beta_0,\phi_0;h_0)$ or $c^0 (W_1,\beta_1,\phi_1;h_1)$ vanishes.
\end{lemma}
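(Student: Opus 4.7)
The plan is to reduce to the disjoint-union picture via the Weinstein homotopy invariance (I1) of Theorem~\ref{thm: invariance or lack thereof}, show that the cochain complex splits as a tensor product via a fiberwise maximum-principle argument, and then conclude by Künneth. First I would replace $(\wt W, \wt\beta, \wt\phi; \wt h)$ by the homotopic data $(\wt W', \wt\beta', \wt\phi'; \wt h')$ described in the statement; by (I1) and the symplectic-isotopy version of (I3) this gives an isomorphism on $\widehat{CF}$. The Lagrangian basis $\bs a_0 \sqcup \bs a_1$ caps off to $\wt{\bs a}' = \wt{\bs a}_0 \sqcup \wt{\bs a}_1$ with $\wt{\bs a}_j \subset \wt W_j$, and the auxiliary function $f$ from Section~\ref{subsection: extension} can be chosen as $f_0 \cup f_1$, supported near $\wt{\bs a}'$ and identically zero on $H' \cup N'$. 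Consequently $\wt h'|_{\wt W_j} = \wt h_j$ and $\wt h'$ is the identity on $H' \cup N'$, so, since $\wt W_0$ and $\wt W_1$ are disjoint, every generator decomposes uniquely as $\mathbf{y} = \mathbf{y}_0 \sqcup \mathbf{y}_1$ with $\mathbf{y}_j \in \mathcal{S}_{\wt{\bs a}_j, \wt h_j(\wt{\bs a}_j)}$.

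Next, following the construction in the proof of Lemma~\ref{lemma: contact cycle}, I would choose an almost complex structure $J_{(\wt W')^\wedge}$ such that $\wt\phi'$ is plurisubharmonic on a neighborhood $(\wt\phi')^{-1}([3/2-\epsilon,3/2+\epsilon]) \subset H' \cup N'$ of a regular level in the middle of the $1$-handle. Take $J = j_{\R\times[0,1]} \times J_{(\wt W')^\wedge}$ and its regular perturbation $J^\Diamond$ supported in $\R\times[0,1]\times(\wt W_0 \sqcup \wt W_1)$. For $u \in \mathcal{M}_{J^\Diamond}(\mathbf{y},\mathbf{y}')$, the composition $\wt\phi' \circ \pi_{(\wt W')^\wedge} \circ u$ is subharmonic on the neck; since all Lagrangian boundaries and asymptotic ends lie in $\{\wt\phi' < 3/2\}$, the maximum principle forces $\pi_{(\wt W')^\wedge}(u)$ to lie in $\wt W_0^\wedge \sqcup \wt W_1^\wedge$. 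The two sides being disconnected yields a product decomposition
\[
\mathcal{M}_{J^\Diamond}(\mathbf{y},\mathbf{y}') \;=\; \mathcal{M}_{J_0^\Diamond}(\mathbf{y}_0,\mathbf{y}_0') \;\times\; \mathcal{M}_{J_1^\Diamond}(\mathbf{y}_1,\mathbf{y}_1').
\]
Fredholm-index additivity, together with the fact that generic index-$0$ curves modulo $\R$-translation reduce to trivial strips, produces the Leibniz rule
\[
d(\mathbf{y}_0 \sqcup \mathbf{y}_1) \;=\; (d_0\mathbf{y}_0) \sqcup \mathbf{y}_1 \,+\, (-1)^{|\mathbf{y}_0|}\, \mathbf{y}_0 \sqcup (d_1\mathbf{y}_1),
\]
so $\widehat{CF}(\wt W', \wt\beta', \wt\phi'; \wt h') \cong \widehat{CF}(W_0,\beta_0,\phi_0;h_0) \otimes \widehat{CF}(W_1,\beta_1,\phi_1;h_1)$ as cochain complexes. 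For $n \geq 2$, $H_2(M_0 \# M_1;\Z) \cong H_2(M_0;\Z) \oplus H_2(M_1;\Z)$, so the Novikov coefficient ring splits compatibly, and the Künneth formula (the Tor term vanishes since the factors are free over the Novikov domain) yields the cohomological statement of the lemma.

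Finally, under this identification the contact class $\mathbf{x} = \mathbf{x}_0 \sqcup \mathbf{x}_1$ corresponds to $\mathbf{x}_0 \otimes \mathbf{x}_1$. If $[\mathbf{x}_j] = 0$, then writing $\mathbf{x}_j = d_j A_j$ exhibits $\mathbf{x}_0 \otimes \mathbf{x}_1$ as a coboundary; the converse follows from injectivity of the Künneth map on tensor products of nonzero classes in free modules over a domain. The main technical point is ensuring that the maximum-principle splitting survives the perturbation $J^\Diamond$, handled as in Lemma~\ref{lemma: contact cycle} by perturbing only in the interior of the two pieces so that the split product structure --- and hence the plurisubharmonicity of $\wt\phi'$ --- is preserved on the neck.
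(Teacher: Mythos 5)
Your proposal is correct and follows essentially the same route as the paper: pass via (I1) to the capping $(\wt W',\wt\beta',\wt\phi';\wt h')$ in which the $1$-handle is attached only after capping off the Lagrangian bases, use the maximum principle together with Gromov compactness for $J^\Diamond$ close to a product $J$ to keep all counted curves out of $H'\cup N'$, split the moduli spaces over the two pieces, and transport the contact class through the invariance maps. Your additional bookkeeping (Künneth over the split Novikov coefficients, triviality of index-$0$ components, and nonvanishing of a tensor product of nonzero classes) fills in details the paper leaves implicit rather than constituting a different argument.
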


\begin{proof}
Let $\bs\alpha$, viewed as a Lagrangian in $\wt W'$, be the union of cappings $\wt{\bs a}_0$ and $\wt{\bs a}_1$ in $(\wt W_0,\wt \beta_0,\wt \phi_0)$ and $(\wt W_1,\wt \beta_1,\wt \phi_1)$ of the Lagrangian bases $\bs a_0$ and $\bs a_1$. Since $\wt{h}'|_{H'\cup N'}=\op{id}$, $\bs\alpha \cup \wt{h}'(\bs\alpha)\subset \{\wt{\phi}' \leq 2\}$ and is disjoint from $H'\cup N'$. No $J^\Diamond$-holomorphic curve $u$ that is counted in $\widehat{CF}(\wt W', \wt h'(\bs\alpha), \bs\alpha; J^\Diamond)$, for a generic $J^\Diamond$ close to a product $J$, has a projection $\pi_{\wt W'} \circ u$ to $\wt{W}$ that enters the handle $H'$ by the maximum principle and Gromov compactness. Hence
$$\widehat{CF}(\wt W', \wt h'(\bs\alpha), \bs\alpha)\simeq \widehat{CF}(W_0,\beta_0,\phi_0;h_0)\oplus \widehat{CF}(W_1,\beta_1,\phi_1;h_1).$$
Since $(\wt W,\wt \beta, \wt \phi)$ and $(\wt W',\wt \beta', \wt \phi')$ are Weinstein homotopic through a trivial Weinstein homotopy, Theorem~\ref{thm: invariance or lack thereof}(I1) implies the first statement of the lemma.  Since the maps in Theorem~\ref{thm: invariance or lack thereof} take the contact class to the contact class, the second statement follows.
\end{proof}

\begin{q}
If $(M_0,\xi_0)$ and $(M_1,\xi_1)$ have non-vanishing contact class for all supporting Weinstein open book decompositions, then does $(M_0 \# M_1 ,\xi_0 \# \xi_1)$ also have a non-vanishing contact class for all supporting Weinstein open book decompositions?  In particular is $(M_0 \# M_1 ,\xi_0 \# \xi_1)$ tight?
\end{q}

\subsection{Cobordisms} \label{subsection: cobordisms}

In this subsection we define several cobordisms and describe the effect of the induced Floer cohomology maps on the contact classes.

Let $(M=M_{(W,\beta;h)},\xi=\xi_{(W,\beta;h)})$ be the contact manifold supported by the open book decomposition $(W,\beta;h)$. As before, $(\wt W,\wt \beta)$ is the capping off of $W$ and $\wt W^\wedge$ is the completion of $\wt W$. Let $h^\star$ be the extension of $h$ to $\wt W$ or $\wt W^\wedge$ by the identity.

\s\n
{\em The cobordism $(X_+,\Omega_+,J_+)$.} The cobordism $(X_+,\Omega_+)$ interpolates from the symplectization of $\wt W^\wedge \times [0,1]$ at the positive end to the symplectization of $(M,\xi)$ with a Lagrangian boundary condition $L^+_{\wt {\bs a}} \subset \partial X_+$ at the negative end.  This is the higher-dimensional analog of the cobordism from \cite{CGH3} that was used to construct an isomorphism from the plus version of the Heegaard Floer homology of a $3$-manifold to its embedded contact homology. We only give a brief sketch since the constructions from \cite{CGH3} extend to higher dimensions with minimal change. 

Let $W_0=W$ and $W_{1/2} =\wt W^\wedge -W_0$.

First we construct fibrations $\pi_0: X^0_+\to B^0_+$ and $\pi_1: X^1_+\to D^2$ with fibers diffeomorphic to $\wt W^\wedge$ and $W_{1/2}$.  Here $B^0_+ =([0,\infty ) \times \R /2\Z)-B_+^c$ with coordinates $(s,t)$ and $B_+^c$ is $[2,\infty)\times[1,2]$ with the corners rounded. We then glue $X^0_+$ and $X^1_+$ and smooth a boundary component $M^\sqcap$ of $X^0_+\cup X^1_+$ to obtain $M$. Finally we attach the negative end $X_+^2=(-\infty,0]\times M$ to get $X_+$.

The fibration $\pi_0: X^0_+\to B^0_+$ is a subset of $[0,\infty)\times N_{(\wt W^\wedge,h^\star)}$, where
$$N_{(\wt W^\wedge, \wt{h})}=(\wt W^\wedge \times[0,2])/ (x,2)\sim (h^\star(x),0).$$
It has a symplectic form $\Omega^0_+ =\pi_0^* ds\wedge dt +d\beta$.  The fibration $\pi_1: X^1_+\to D^2$ is topologically trivial and its symplectic form $\Omega^1_+$ is the split form $d\beta\vert_{W_{1/2}} +\omega_{D^2}$.

We fiberwise glue $(X^0_+ ,\Omega^0_+)$ and $(X^1_+,\Omega^1_+)$ along $W_{1/2} \times \{0 \} \times \R /2\Z$ and $W_{1/2} \times \partial D^2$ with identity fiberwise gluing maps.
We then round corners to obtain the concave contact boundary $(M,\xi )$ with a compatible open book presentation and glue to it the negative symplectization $(X_+^2,\Omega_+^2)=(-\infty,0]\times M$.  This completes the construction of $(X_+,\Omega_+)$.

The Lagrangian $L^+_{\wt {\bs a}} \subset \partial X_+$ is obtained by placing a copy of $\wt{\bs a}$ over $(s,t)=(3,1)$ and parallel transporting it along $\bdry X_+$ using the symplectic connection $\Omega_+$. It agrees with $L_{1,\wt{\bs a}}$ on $\pi_0^{-1}([3,\infty)\times \{1\})$ and with $L_{0,h^\star (\wt{\bs a})}$ on $\pi_0^{-1}([3,\infty)\times \{0\})$.  (Strictly speaking, we want to use $L_{0,\wt h(\wt{\bs a})}$ where $\wt h$ is obtained from $h^\star$ by composing with a small Hamiltonian isotopy as in Section~\ref{subsection: extension}.  To do this, we need to slightly modify the symplectic connection $\Omega_+$ by adding a Hamiltonian term as in Equation~\eqref{eqn: defn of Omega prime}.)

The almost complex structure $J_+$ on $(X_+,\Omega_+)$ is
\be
\item[(i)] compatible with $(\wt W^\wedge, \wt\beta^\wedge)$ at the positive end;
\item[(ii)] of the form $J_+=j_{B_+^0\cup D^2}\times J_{W_{1/2}}$ on  $(B^0_+ \cup D^2)\times  W_{1/2}$; and
\item[(iii)] adapted to a contact form on the symplectization end $X^2_+$.
\ee
(ii) ensures that, as in Lemma~\ref{lemma: contact cycle}, the only holomorphic curves that limit to the contact class ${\bf x}$ at the positive end are constant horizontal sections over the once-punctured disk $B^0_+ \cup D^2$.

\s\n
{\em Lefschetz cobordism}.  A Lefschetz cobordism is a Lefschetz fibration over $\R_s\times[0,1]_t$ whose fiber is $(\wt W^\wedge, \wt\beta^\wedge)$ and which has $p$ singular fibers, located at $(s,t)=(1,1/2), \dots, (p,1/2)$ and corresponding to the vanishing cycles $S_1,\dots, S_p$ of $W$. We will not be explicit about the construction of the almost complex structure and the symplectic connection except to say that at the positive and negative ends they need to coincide with $J^\Diamond\in (\mathcal{J}^{\Diamond})^{reg}$ and $\widehat\Omega$.  The only additional ingredient is the construction near the singular fibers, which is carried out in Seidel~\cite{Se1}. Such a cobordism induces a map
$$\widehat{CF} (h({\bs a}),{\bs a}) \to \widehat{CF} (\tau_{S_p}^{-1} \circ \dots \circ \tau_{S_1}^{-1} (h({\bs a})),{\bs a}),$$
by a count of pseudo-holomorphic multisections. Here $\tau_{S}$ denotes the symplectic Dehn twist along the Lagrangian sphere $S$.

\s
The following two lemmas are proved in the same way as Lemma~\ref{lemma: contact cycle} and their proofs will be omitted.

\begin{lemma}\label{lemma: constant}
The only pseudoholomorphic curve in $(X_+ , \Omega_+ ,J_+ )$ that limits to the contact class ${\bf x}$ at the positive end is the union of $\kappa$ constant sections $x_i \times \overline{B}_+$.
\end{lemma}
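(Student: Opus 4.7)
The plan is to import the plurisubharmonic/maximum-principle argument of Lemma~\ref{lemma: contact cycle} into the cobordism $X_+$. The key observation is that each $x_i$ lies in $\wt a_i - a_i \subset W_{1/2}$, so any curve asymptotic to $\mathbf{x}$ has its asymptotic data contained entirely in the product region $(B_+^0\cup D^2)\times W_{1/2}$ of condition (ii), on which the fiberwise argument of Lemma~\ref{lemma: contact cycle} transfers without change.

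First I would fix the plurisubharmonic Morse function $\phi_1$ and the fiber almost complex structure $J_{\wt W^\wedge}$ satisfying (P1)--(P4) from the proof of Lemma~\ref{lemma: contact cycle}, so that each $x_i$ is a maximum of $\phi_1$ with value $1$ and $\wt{\bs a}\cup\wt{h}(\wt{\bs a})\subset\{\phi_1\leq 1\}$. I would then arrange $J_+$ to restrict to $J_{\wt W^\wedge}$ on every fiber of $\pi_0$ and $\pi_1$, and choose the symplectic connection along $\bdry X_+$ so that $\pi_{\wt W^\wedge}(L^+_{\wt{\bs a}})\subset\{\phi_1\leq 1\}$; this is compatible with (i)--(ii) and with the identifications of $L^+_{\wt{\bs a}}$ with $L_{1,\wt{\bs a}}$ and $L_{0,h^\star(\wt{\bs a})}$ on the long horizontal ends. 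With these choices, the composition $\Phi:=\phi_1\circ\pi_{\wt W^\wedge}\circ u$ is subharmonic on the preimage of $X^0_+\cup X^1_+$ for any $J_+$-holomorphic $u$, takes values $\leq 1$ on the parts of $\bdry \dot F$ mapping to $L^+_{\wt{\bs a}}$, and tends to $1$ at each positive puncture asymptotic to some $x_i$.

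Now suppose $u$ limits to $\mathbf{x}$ at the positive end but is not a union of constant sections $x_i\times \overline{B}_+$. Then some component of $\pi_{\wt W^\wedge}\circ u$ is nonconstant near at least one $x_i$. The Darboux-chart asymptotic expansion used in the proof of Lemma~\ref{lemma: contact cycle} --- still valid since $x_i\in W_{1/2}$ sits in the product region and is the standard Darboux chart provided by Step~1 of Proposition~13.12 of \cite{CE} --- shows that the image sweeps out a sector of angle $>\tfrac{\pi}{2}$ in the $q_jp_j$-planes, producing an interior point with $\Phi>1$ and contradicting the maximum principle.

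The main obstacle is preventing $u$ from wandering into the negative symplectization end $X^2_+=(-\infty,0]\times M$, where $\pi_{\wt W^\wedge}$ is not defined. I would handle this via adaptedness (iii): the symplectization coordinate $\sigma$ on $X^2_+$ makes $\sigma\circ u$ subharmonic, hence with no interior maximum, and an action/energy comparison with the asymptote $\mathbf{x}$ --- mirroring the parallel construction in \cite{CGH3} --- rules out negative Reeb asymptotes of $u$ in $M$. Together these force the image of $u$ to remain in $X^0_+\cup X^1_+$, at which point the maximum-principle contradiction of the previous paragraph applies and yields the lemma.
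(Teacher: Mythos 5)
Your proposal matches the paper's intended argument: the paper omits the proof, stating only that it is "proved in the same way as Lemma~\ref{lemma: contact cycle}," with condition (ii) (the product form of $J_+$ over $(B^0_+\cup D^2)\times W_{1/2}$, which contains the $x_i$) playing exactly the role you give it, namely transporting the plurisubharmonic/sector/maximum-principle argument to $X_+$. Your extra care with the negative end via (iii) and the fact that $\{\phi_1\geq 1\}$ sits in $W_{1/2}$ (where the monodromy is the identity, so the fiberwise function is well defined) are the right supplements, so this is essentially the paper's proof.
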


\begin{lemma}\label{lemma: Lefschetz}
The induced map
$$\widehat{CF} (h({\bs a}), {\bs a}) \to \widehat{CF} (\tau_{S_p}^{-1} \circ \dots \circ \tau_{S_1}^{-1} (h({\bs a})), {\bs a})$$
of a Lefschetz cobordism with vanishing cycles $S_1,\dots,S_p$ maps the contact class to the contact class.
\end{lemma}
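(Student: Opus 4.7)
The plan is to mimic the argument of Lemma~\ref{lemma: contact cycle} (and the parallel argument for $(X_+,\Omega_+,J_+)$ in Lemma~\ref{lemma: constant}), exploiting the crucial fact that the vanishing cycles $S_1,\dots,S_p$ all live in the interior of $W$, so the Dehn twists $\tau_{S_k}$ act as the identity on the cap $H = \wt W - W$ where the contact class intersection points $x_i \in \wt a_i \cap h(\wt a_i) \subset H$ are supported. In particular, $h(\wt{\bs a})\cap H = \tau_{S_p}^{-1}\circ\dots\circ\tau_{S_1}^{-1}(h(\wt{\bs a}))\cap H$, and the points $x_i$ are simultaneously generators of the domain and codomain complexes, defining the contact class $\mathbf{x}'$ on the negative end.

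First I would set up the almost complex structure $J_{\mathrm{Lef}}$ on the Lefschetz cobordism so that, on the cylindrical region $\R_s\times[0,1]_t\times (\wt W^\wedge - W)$, it splits as $j_{\R\times[0,1]}\times J_{\wt W^\wedge}$, where $(J_{\wt W^\wedge},\phi_1)$ is chosen as in the proof of Lemma~\ref{lemma: contact cycle}: $\phi_1$ is plurisubharmonic on $\wt W - N(\bdry \wt W)$ with respect to $J_{\wt W^\wedge}$, each $x_i$ is a critical point of $\phi_1$ with value $1$, and $\wt{\bs a}\cup h(\wt{\bs a})\subset\{\phi_1\leq 1\}$. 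Because the Lefschetz singularities occur only at $(s,t)=(k,1/2)\in \R\times[0,1]$ and the vanishing cycles lie in $\op{int}(W)$, this product structure in the cap region is compatible with the Seidel-type local model near each critical fiber, so $J_{\mathrm{Lef}}$ can be extended to a regular almost complex structure on all of the Lefschetz cobordism.

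Next, I would analyze a pseudo-holomorphic multisection $u$ asymptotic to $\mathbf{x}$ at the positive end. The projection $\pi_{\wt W^\wedge}\circ u$ is holomorphic on the preimage of the cap region, and the local coordinate computation at each $x_i$ (as in Lemma~\ref{lemma: contact cycle}) shows that any nonconstant component near $x_i$ would force $\phi_1\circ\pi_{\wt W^\wedge}\circ u$ to exceed $1$ somewhere in the interior of the domain, contradicting the maximum principle since both Lagrangian boundary conditions satisfy $\phi_1\leq 1$. Hence in a neighborhood of each $x_i$ the curve is a constant section equal to $\{x_i\}$. By an open-closed argument on the domain and the fact that $x_i$ is disjoint from $W$ (hence from all singular fibers and the Dehn twist regions), this constant section extends over the entire base, giving a unique (up to automorphism) constant multisection $\sqcup_i\{x_i\}\times \overline{B}$, where $\overline{B}$ is the compactified base. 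Summing over $i$, this contributes the generator $\mathbf{x}' =\{x_1,\dots,x_\kappa\}$ with coefficient $1$ in the target complex, establishing the lemma.

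The main obstacle I anticipate is the last step: ruling out other index-$0$ multisections asymptotic to $\mathbf{x}$ at the positive end. The maximum principle immediately kills any curve whose fiber projection leaves the sublevel set $\{\phi_1\leq 1\}$, but one must still check that no ``ghost-like'' curve component that is fibered over the base contributes; this follows from the exactness of the Lagrangians (as in Lemma~\ref{lemma: simply-covered}) together with the fact that the constant section is cut out transversely for a generic compactible choice of $J_{\mathrm{Lef}}$ and sufficiently $C^1$-small $f$, which is the analog of the computation at the very end of the proof of Lemma~\ref{lemma: contact cycle}.
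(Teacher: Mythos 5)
Your proposal is correct and is essentially the paper's intended argument: the paper omits the proof, stating only that it is proved in the same way as Lemma~\ref{lemma: contact cycle}, which is exactly your plurisubharmonicity/maximum-principle argument (with product-type $J$ over the cap region, where the $x_i$ live and the Dehn twists act trivially) showing that the only curve asymptotic to $\mathbf{x}$ at the positive end is the union of constant sections, so the cobordism map sends contact class to contact class.
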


We now come to the proof of Theorem~\ref{thm: non-vanishing}:

\begin{proof}[Proof of Theorem~\ref{thm: non-vanishing}]
(1)	If $(M,\xi )$ is Liouville fillable, then we can attach the filling to the negative end of $(X_+ ,\Omega_+ )$ to obtain an exact symplectic cobordism $(\overline{X}_+ ,\overline{\Omega}_+)$ from $\wt W^\wedge \times [0,1]$ to $\varnothing$. If (*) $\sum_i c_i d {\bf y}^i={\bf x}$ with $c_i\in \Z$, then we can glue the holomorphic curves in $\R \times [0,1]\times \wt W^\wedge$ that are involved in (*) to the constant sections ${\bf x}\times \overline{B}_+$ in $(\overline{X}_+,\overline{\Omega}_+)$ to get an index one family of curves. Algebraically over $\Z$, there is no possible breaking for this family other than the one we started from, contradicting the compactness of moduli spaces in $(\overline{X}_+,\overline{\Omega}_+)$.

(2) We show that every Reeb vector field $R$ for $\xi$ admits a finite collection of periodic orbits whose sum is homologous to zero in $M$. Consider the cobordism $(X_+,\Omega_+)$ and argue as in (1) with the understanding that $J_+$ is adapted to the symplectization at the negative end. Again, we can glue the holomorphic curves in $\R \times [0,1]\times \wt W^\wedge$ that are involved in (*) to the constant sections ${\bf x}\times \overline{B}_+$ in $(X_+,\Omega_+)$ to get an index one family of curves. Algebraically over $\Z$,  there is no possible breaking for this family other than a breaking at $-\infty$, involving periodic orbits of the Reeb vector field $R$ as well as pseudoholomorphic curves in the symplectization of $(M,\xi)$ without negative ends. Those provide a bounding chain for the orbits of $R$ in $M$.
\end{proof}

\section{Examples of non-Liouville-fillable contact structures} \label{section: examples}

\subsection{Right-veering symplectomorphisms} \label{subsection: defn of right-veering}

In this subsection we give two higher-dimensional generalizations of a {\em right-veering} surface diffeomorphism and discuss the non-Liouville-fillability of contact manifolds which admit open book decompositions with non-right-veering monodromy.

Let $S$ be a {\em bordered surface}, i.e., a compact connected oriented surface with nonempty boundary, and let $\op{Diff}^+(S,\bdry S)$ be the set of orien\-tation-preserving diffeomorphisms of $S$ that restrict to the identity on $\bdry S$. According to  \cite{HKM}, an element $h\in\op{Diff}^+(S,\bdry S)$ is {\em right-veering} if $h$ takes every arc $a\subset S$ with boundary on $\bdry S$ to itself or ``to the right", after isotopy.

Let $(W,\beta)$ be a Liouville domain.  

\begin{defn}[Locally right-veering]
A symplectomorphism $h\in  \op{Symp}(W,\bdry W, d\beta)$ is {\em locally right-veering} if there is a collar neighborhood $[-\epsilon,0]_s\times \bdry W$ of $\bdry W=\{0\}\times \bdry W$ such that $h= \phi_{-s}$ on $\{s\}\times \bdry W$, $s\in[-\epsilon,0]$, where $\phi_s$ is the time-$s$ flow of the Reeb vector field of $\beta|_{\bdry W}$. 
\end{defn}

{\em In what follows we assume that $h\in  \op{Symp}(W,\bdry W, d\beta)$ is locally right-veering.}

Given an exact Lagrangian submanifold $a\subset W$ with Legendrian boundary $\bdry a\subset \bdry W$, we write $HF(W,h(a),a)$ for the Lagrangian Floer cohomology of the pair $(h(a),a)$ subject to a clean intersection condition along $\bdry a= \bdry h(a)$.  Let $x_a$ be the ``contact class", i.e., the top generator corresponding to the Morse-Bott family $\bdry a$.  

The following is a straightforward higher-dimensional generalization of a right-veering surface diffeomorphism:

\begin{defn}[Right-veering]\label{defn: right-veering}
Let $h\in  \op{Symp}(W,\bdry W, d\beta)$ be locally right-veering.
\be
\item $h$ is {\em strongly right-veering} if there exists no exact Lagrangian submanifold $a\subset W$ with Legendrian boundary $\bdry a\subset \bdry W$ such that $[x_a]=0\in HF(W,h(a),a)$ (i.e., $h$ {\em sends $a$ to the left}).
\item $h$ is {\em weakly right-veering}  if there exists no {\em regular} Lagrangian disk $a\subset W$ with Legendrian boundary $\bdry a\subset \bdry W$ such that $[x_a]=0\in HF(W,h(a),a)$.
\ee
If we want to specify the coefficient ring $R$ for the Floer homology groups, we say strongly or weakly {\em $R$-right-veering.}
\end{defn}

By a {\em regular Lagrangian disk} (cf.\ \cite{EGL}) we mean a Lagrangian disk which can be completed to a Lagrangian basis with respect to some $(W,\beta',\phi')$ that is Liouville homotopic to $(W,\beta)$.  Note that strongly right-veering implies weakly right-veering.

\begin{q}
Is there a difference between strongly right-veering and weakly right-veering?
\end{q}

\begin{rmk}
Allowing individual Lagrangians in the definition of $\widehat{HF}$, it is immediate that $\widehat{HF}(h(a),a)= HF(h(a),a)$, where the left-hand side is the Heegaard Floer group using the cylindrical reformulation and the right-hand side is the usual Floer cohomology group.
\end{rmk}

We first prove the following warm-up theorem:

\begin{thm}\label{thm: vanishing}
If  $(M,\xi)$ is overtwisted, then $\xi$ admits a supporting open book decomposition $(W,\beta,\phi ;h)$ for which $h$ is not weakly right-veering and $c^0 (W,\beta,\phi ;h) =0$ in $\widehat{HF} (W,\beta,\phi ;h)$.
\end{thm}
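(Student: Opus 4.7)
The strategy is to exhibit, for each overtwisted $(M,\xi)$, a supporting Weinstein open book which is an elementary negative stabilization, and to use the extra intersection produced by the resulting negative Dehn twist to explicitly null-cohomologize the contact class. Specifically, I would invoke the Casals--Murphy--Presas criterion for overtwistedness \cite{CMP}: any overtwisted $(M,\xi)$ admits a supporting Weinstein open book $(W',\beta',\phi';h') = S^-_{c}(W,\beta,\phi;h)$ that is an elementary negative stabilization of some $(W,\beta,\phi;h)$ along a Lagrangian disk $c\subset W$ disjoint from the Lagrangian basis $\bs a$. By \eqref{defn: def of h prime}, $h' = (h\cup \op{id}|_{W_0})\circ \tau_\gamma^{-1}$, where $W_0$ is the new critical handle and $\gamma$ is the Lagrangian sphere formed by gluing $c$ to the Lagrangian core of $W_0$. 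Adjoining the cocore $a_0 \subset W_0$ to $\bs a$ gives the Lagrangian basis $\bs a' = a_0 \cup \bs a$ for $(W',\beta',\phi')$; in particular $a_0$ is a regular Lagrangian disk which meets $\gamma$ transversely at a single point.

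Next I would analyze the local geometry near $\gamma$ using a Weinstein neighborhood modeled on $T^*S^n$ with $\wt a_0$ a capped cotangent fiber. In this model the negative Dehn twist $\tau_\gamma^{-1}$ produces, in addition to the contact-class intersection $x_0 \in \wt a_0 \cap \wt h'(\wt a_0)$ arising from the Morse perturbation in the cap $H'$, a second transverse intersection point $w_0$ together with an explicit index-$1$ holomorphic bigon $u_0$ with positive end on $x_0$ and negative end on $w_0$, as in the standard computation underlying Seidel's Dehn-twist exact triangle.

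The key claim is then: for $J^\Diamond \in (\mathcal{J}^\Diamond)^{reg}$ chosen of product form on a neighborhood of $W_0\cup H'$ (modeled on $T^*S^n$) and arranged as in Lemma~\ref{lemma: contact cycle} near each $x_i$, the only index-$1$ holomorphic curve with positive asymptotic $\{w_0, x_1, \dots, x_\kappa\}$ is the product of $u_0$ with trivial strips over $x_1, \dots, x_\kappa$. Granting this,
$$d(\{w_0, x_1, \dots, x_\kappa\}) = \{x_0, x_1, \dots, x_\kappa\} = c^0(W',\beta',\phi';h'),$$
so $c^0(W',\beta',\phi';h')$ vanishes in $\widehat{HF}(W',\beta',\phi';h')$. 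Applied to the single pair $(\wt h'(\wt a_0), \wt a_0)$ instead of the full basis, the same bigon $u_0$ exhibits $x_0$ as the image of $w_0$ under the differential in $HF(W', h'(a_0), a_0)$, so $h'$ sends the regular Lagrangian disk $a_0$ to the left in the sense of Definition~\ref{defn: right-veering}(2) and is therefore not weakly right-veering.

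The main obstacle is the key claim isolating the differential of $\{w_0, x_1, \dots, x_\kappa\}$: ruling out unwanted holomorphic curves combines the maximum-principle confinement of Lemma~\ref{lemma: contact cycle} near each $x_i$ with $i\geq 1$ (trapping their factors in a sublevel set of the plurisubharmonic function on $\wt W$) with a neck-stretching argument along $\partial(W_0\cup H')$ that reduces the $a_0$-factor to the $T^*S^n$ local model, in which $u_0$ is the unique such curve. Simultaneously one must support the generic perturbation securing $J^\Diamond \in (\mathcal{J}^\Diamond)^{reg}$ away from these controlled regions so as not to disturb the local rigidity.
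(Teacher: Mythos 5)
Your opening move (Casals--Murphy--Presas plus a negative stabilization along a disk $c$ disjoint from the basis, with $h'=(h\cup\op{id}|_{W_0})\circ\tau_\gamma^{-1}$ and the extra Lagrangian $a_0$) is the same as the paper's, and your closing observation that the same bigon shows $h'$ sends $a_0$ to the left is also how the paper deduces failure of weak right-veering. The gap is in your key claim. First, as stated it is internally inconsistent: with the paper's conventions the differential counts curves from the positive asymptotic to the negative one, so a bigon ``with positive end on $x_0$ and negative end on $w_0$'' would contribute $w_0$ to $d\,x_0$, not $x_0$ to $d\{w_0,x_1,\dots,x_\kappa\}$; moreover no such curve can exist for the $J^\Diamond$ of Lemma~\ref{lemma: contact cycle}, which forbids nonconstant curves with a contact coordinate at the positive end. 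What you need is a bigon with positive end $w_0$ and negative end $x_0$, and you need the chain-level identity $d w_0=\pm x_0$ with no other terms. Second, the proposed localization does not work as described: $\gamma=c\cup c'$ runs through $c\subset W$, so neither $\gamma$, nor the support of $\tau_\gamma^{-1}$, nor the twisted Lagrangian $\wt h'(\wt a_0)$ is contained in $W_0\cup H'$; the relevant model is a neighborhood of the plumbing $\wt a_0\cup\gamma$, and even there ``the standard computation underlying Seidel's exact triangle'' only gives homology-level information about the cone, not the exact chain-level count $d w_0=\pm x_0$ (nor uniqueness of the extra intersection point after the cap perturbation). That this extraction is genuinely delicate in higher dimensions is illustrated by the paper's own Theorem~\ref{thm: negative twists}, where killing the contact class under a negative twist requires the exact triangle together with a nontrivial $\mu_2$ and inner-product argument over $\R$.

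The paper closes exactly this gap by a different device: instead of using an intersection point created by the Dehn twist, it perturbs $\alpha_0=\wt a_0$ by $\pi^*f$ for a Morse function $f$ on $\alpha_0$ with \emph{four} critical points $x_0,x_0',y_0,z_0$, arranged so that $x_0'=\alpha_0\cap\gamma$ is the point eliminated by the twist, the gradient trajectory from $x_0$ to $y_0$ lies in $\alpha_0-a_0$, and the pair $(x_0,y_0)$ is cancellable by a Hamiltonian isotopy. Then $\wt h'(\alpha_0)\cap\alpha_0=\{x_0,y_0,z_0\}$, the confinement statement (Lemma~\ref{lemma: constant}) forces any index-one curve out of $\{y_0,x_1,\dots,x_\kappa\}$ to be trivial strips over $x_1,\dots,x_\kappa$ times a strip from $y_0$, and the cancellation gives $\pm d y_0=x_0$ in $CF(\wt h'(\alpha_0),\alpha_0)$ with no holomorphic-curve count near $\gamma$ needed. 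If you want to salvage your route, you would have to either import this cancellation trick or actually carry out the curve count in the plumbing model, including ruling out contributions from $w_0$ to the remaining generator; at present that is asserted rather than proved.
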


In this paper ``overtwisted" means overtwisted in the sense of Borman-Eliashberg-Murphy~\cite{BEM}.
Theorem~\ref{thm: vanishing} and Theorem~\ref{thm: non-vanishing} imply the well-known fact that overtwisted contact structures are not Liouville fillable and satisfy the Weinstein conjecture.

\begin{proof}
According to Casals-Murphy-Presas \cite{CMP}, an overtwisted contact structure is supported by an open book decomposition $(W',\beta',\phi' ;h')$ which is a negative stabilization of some Weinstein open book $(W,\beta,\phi;h)$.  Using the notation from Section~\ref{subsection: open books}, we have a basis ${\bs a}= a_1\cup\dots\cup a_\kappa$, a Lagrangian disk $c\subset W$ with Legendrian boundary which is disjoint from $\bdry {\bs a}$, a handle $W_0$ attached along $\bdry c$, and $\gamma= c\cup c'$, where $c'$ is the core of $W_0$. Then
$$h'=(h\cup \op{id}_{W_0} )\circ \tau_\gamma^{-1}.$$
We let $a_0$ be the cocore of $W_0$. Then $(W',\beta',\phi')$ has basis ${\bs a}'=a_0\cup\dots \cup a_\kappa$.

We now define a special Hamiltonian deformation of $\alpha_{0}=\wt a_0$ in a given Weinstein neighborhood $N(\alpha_{0})\simeq T^* \alpha_{0}$ equipped with the projection $\pi :T^* \alpha_{0} \to \alpha_{0}$. To that end, we consider a Morse function $f:\alpha_{0} \to \R$ with four critical points $x_{0}$, $x_{0}'$, $y_{0}$ and $z_{0}$, where $x_{0}$, $x_{0}'$ have index $0$, $y_{0}$ is an index $1$ saddle point between them, and $z_{0}$ has index $n$. We may assume that:
\be
\item $x_0$, $y_0$, and $z_0$ are contained in $\alpha_0-a_0$;
\item the trajectory of $\nabla f$ from $x_{0}$ to $y_{0}$ is entirely contained in $\alpha_{0} -a_{0}$;
\item $x_{0}'=\alpha_{0} \cap \gamma$ and the intersection $T^*\alpha_{0}\cap \gamma$ is the fiber of $T^* \alpha_{0}$ over $x_{0}'$.
\ee

We define $F : (\wt W')^\wedge\to \R$ to be $\pi^* f$ on $T^*\alpha_{0}$ and $0$ on a slightly bigger neighborhood and let $\phi_\epsilon$ be the time-$\epsilon$ flow of the Hamiltonian vector field $X_F$. The monodromy $\wt h'$ on $\wt W'$ sends $\alpha_{0}$ to $\tau_\gamma (\phi_{-\epsilon} (\alpha_{0} ))$. In particular the intersection $\wt h' (\alpha_{0} ) \cap \alpha_{0}$ consists of three points: $x_{0}$, $y_{0}$, and $z_{0}$, where the pair $x_{0}$ and $y_{0}$ can be eliminated by a Hamiltonian isotopy.

Let ${\bf y}=\{y_0,x_1,\dots,x_\kappa\}$ and ${\bf x}=\{x_0,\dots, x_\kappa\}$. We claim that $\pm d {\bf y}={\bf x}$ when $\epsilon>0$ is sufficiently small and $J^\Diamond$ is suitably chosen. Indeed, by Lemma \ref{lemma: constant}, for a well-chosen $J^\Diamond$ there is no nonconstant connected holomorphic curve that has a contact coordinate at the positive end. Hence any curve from ${\bf y}$ is a collection of $\kappa$ trivial strips from $x_1,\dots,x_\kappa$ to itself, together with an index $1$ curve from $y_{0}$ to a point in $\wt h' (\alpha_{0} ) \cap \alpha_{0}$, that must therefore be $x_{0}$.  We have now reduced the calculation to a standard Floer homology calculation of $d y_0$ in $CF(\wt h'(\alpha_0), \alpha_0)$.  Since $x_0$ and $y_0$ can be canceled by a Hamiltonian isotopy, $HF(\wt h'(\alpha_0),\alpha_0)$ is generated by $z_0$ and $\pm d y_0=x_0$. In other words, $h'$ sends $a_0$ to the left. This implies the claim and hence the theorem.
\end{proof}

\begin{proof}[Proof of Theorem~\ref{thm: not right-veering}]
	Follows immediately from the proofs of Theorems~\ref{thm: non-vanishing} and \ref{thm: vanishing}.  Note that in the proof of Theorem~\ref{thm: non-vanishing} it does not matter whether we take a full Lagrangian basis or just a single exact Lagrangian.
\end{proof}

The following theorem gives examples of symplectomorphisms that are not weakly right-veering. 

\begin{thm}\label{thm: negative twists}
If $h\in \op{Symp}(W,\bdry W,d\beta)$ is a product of negative symplectic Dehn twists, then $h$ is not weakly $\R$-right-veering.
\end{thm}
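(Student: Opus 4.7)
I would plan to show that, for $h=\tau_{S_k}^{-1}\circ\cdots\circ\tau_{S_1}^{-1}$ with $k\geq 1$ (the empty product $h=\op{id}$ is trivially right-veering), there is a regular Lagrangian disk $a\subset W$ with Legendrian boundary that $h$ sends to the left. The strategy is to displace the other $S_i$ from $a$ so as to reduce to a single negative Dehn twist, and then replay verbatim the Morse--Floer calculation at the end of the proof of Theorem~\ref{thm: vanishing}.

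First I would construct $a$ using Weinstein flexibility~\cite{CE}: pick a Weinstein structure $(W,\beta',\phi')$ Liouville homotopic to $(W,\beta)$ with a critical $n$-handle whose cocore meets $S_1$ transversely in a single point $p$ and is disjoint from $S_2,\ldots,S_k$ (the disjointness for $i\geq 2$ being achieved by a generic ambient Hamiltonian isotopy of those $S_i$ supported off a neighborhood of $S_1$). By construction $a$ is regular. Since each $\tau_{S_i}$ is supported in a Weinstein tubular neighborhood of $S_i$, the disjointness forces $\tau_{S_i}^{-1}(a)=a$ for $i\geq 2$, so $h(a)=\tau_{S_1}^{-1}(a)$.

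Next I would cap off $a$ to a Lagrangian sphere $\wt a$ in an auxiliary capping $W\cup H_a$, exactly as in Section~\ref{subsection: extension}. Within a Weinstein neighborhood $T^*\wt a$ of $\wt a$, the sphere $S_1$ appears as the cotangent fiber over $p\in\wt a$, so the local geometry exactly reproduces the configuration at the end of the proof of Theorem~\ref{thm: vanishing}, with $\wt a$ in the role of $\alpha_0$ and $S_1$ in the role of the vanishing cycle $\gamma$. Choose the Morse function $f:\wt a\to\R$ with four critical points $x_a,x_a',y_0,z_0$ of indices $0,0,1,n$, with $x_a'=p$, with the $\nabla f$-trajectory from $x_a$ to $y_0$ lying in $\wt a-a$, and with $z_0$ the maximum, and compose $\wt h$ with the time-$\epsilon$ Hamiltonian flow of $\pi^*f$ on $T^*\wt a$. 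Then $\wt h(\wt a)\cap \wt a=\{x_a,y_0,z_0\}$, and for the $J^\Diamond$ produced by Lemma~\ref{lemma: contact cycle} the only curves with $x_a$ at the positive end are trivial strips. A local strip count in the $T^*S^n$ model (cf.\ Lemma~\ref{lemma: C}) delivers $\pm dy_0=x_a$, so $[x_a]=0\in HF(W,h(a),a)$, which is what we need.

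The hard part, in my view, is the first step: that Weinstein flexibility permits a regular Lagrangian disk meeting $S_1$ transversely in exactly one point while avoiding the other $S_i$. This should follow from standard Weinstein moves (birth of a critical point pair near a chosen point on $S_1$ together with appropriate handleslides), but its verification may require care when the given Weinstein structure on $W$ is geometrically constrained. As a fallback when this direct construction is awkward, I would invoke Seidel's exact triangle for the triple $(a,a,S_1)$: since $a\cap S_1$ is a single transverse point, the $\mu_2$ product $HF(S_1,a)\otimes HF(a,S_1)\to HF(a,a)$ is an isomorphism of one-dimensional spaces, which forces $HF(\tau_{S_1}^{-1}(a),a)=0$ and hence $[x_a]=0$ automatically.
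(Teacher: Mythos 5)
Your plan hinges on Step 1, and that is where it breaks down. You need a \emph{regular} Lagrangian disk $a$ with Legendrian boundary meeting $S_1$ transversely in exactly one point and disjoint from $S_2,\dots,S_k$, but neither part of this is available in general. Geometric intersection number one forces the algebraic pairing of $[a]\in H_n(W,\bdry W;\Z)$ with $[S_1]\in H_n(W;\Z)$ to be $\pm 1$, so if $[S_1]$ is torsion or divisible no such disk exists at all; this is not a matter of "standard Weinstein moves," and Weinstein flexibility does not produce cocores dual to a prescribed Lagrangian sphere. The disjointness claim is also false as stated: a generic ambient Hamiltonian isotopy makes the $S_i$ ($i\geq 2$) intersect $a$ transversely in points, it does not displace them, and when $HF(S_i,a)\neq 0$ no Hamiltonian isotopy can displace them; so you cannot conclude $h(a)=\tau_{S_1}^{-1}(a)$. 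Your fallback via Seidel's triangle does not repair this, since it presupposes the same single-point disk. (The reduction to one twist should instead be done as in the paper, by the Lefschetz cobordism map of Lemma~\ref{lemma: Lefschetz} applied to a single Lagrangian: it is a chain map sending the contact cycle to the contact cycle, so exactness of $x_a$ for $\tau_{S_1}^{-1}$ propagates to the full product --- no disjointness is needed.)

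The paper's proof avoids the intersection-number-one requirement entirely, and this is the essential difference. It takes $a$ to be a member of the given Lagrangian basis with $HF(S,a)\neq 0$, which exists because the basis generates the wrapped Fukaya category (by the cited result of Chantraine--Dimitroglou Rizell--Ghiggini--Golovko), and then shows the contact class lies in the image of $\mu_2\colon HF(S,\wt a)\otimes HF(\wt a',S)\to HF(\wt a',\wt a)$, hence dies in $HF(\tau_S^{-1}(\wt a'),\wt a)$ by Seidel's exact triangle. Since $a\cap S$ may consist of many points with a nontrivial Floer differential, this needs the $\R$-coefficient "harmonic representative" trick ($y$ with $dy=d^*y=0$) together with the computation that $\mu_2(y_i,y_i)$ counts one thin triangle landing on $x_a$ with coherent signs, giving $\mu_2(y,y)=\pm(\sum_i a_i^2)\,x_a\neq 0$ over $\R$; this is exactly why the statement is about weak $\R$-right-veering. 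Your Morse--Floer model computation (patterned on Theorem~\ref{thm: vanishing}) is fine in the special situation where a once-intersecting disk happens to exist, but as a proof of the general statement it has a genuine gap.
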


We remark that it is currently not known whether the corresponding contact manifold $(M,\xi)$ is overtwisted.

\begin{proof}
In this proof the Floer homology groups are over $\R$.

Since the proof of Lemma~\ref{lemma: Lefschetz} also applies to a single exact Lagrangian $a$, it suffices to show that if $h$ is a single negative symplectic Dehn twist $\tau_S^{-1}$ along a Lagrangian sphere $S$, then there exist a regular exact Lagrangian $a$ such that $[x_a]=0\in HF(h(a),a)$.  

We use the fact that, since the basis ${\bs a}=\{a_1,\dots ,a_\kappa\}$ generates the wrapped Fukaya category of $W$ by \cite{CDGG}, there exists $i\in \{1,\dots,\kappa\}$ for which the Lagrangian Floer homology $HF(S,a_i) \neq 0$. (Note that since $S$ is compact, there is no difference between the wrapped and unwrapped groups involving $S$.)  Up to reordering, we assume $a=a_1$ has this property.

Next, switching to the Heegaard decomposition perspective, we show that the element $[x_a]=[x_1] \in HF(\tau_S^{-1} (\wt a),\wt a)$ is zero: This is a consequence of Seidel's exact triangle \cite{Se1}:
$$\xymatrix{
HF(\wt a',\wt a) \ar[rr]^{L_*} & & HF(\tau_S^{-1} (\wt a'), \wt a) \ar[dl] \\
& HF(S,\wt a)\otimes  HF(\wt a', S) \ar[ul]^{\mu_2} &
}$$
where $\wt a'$ is the small Hamiltonian deformation of $\wt a$ as given in Section~\ref{subsection: extension}. Note that $HF(S,a)$ and $HF(S,\wt a)$ are canonically isomorphic. 
In this triangle, the map $L_*$ is the one given by the Lefschetz cobordism over the strip with one singular fiber and  whose vanishing cycle is $S$. Hence it maps the contact class in $HF(\wt a',\wt a)$ to the contact class in $HF(\tau_S^{-1} (\wt a'),\wt a)$ by the analog of Lemma~\ref{lemma: Lefschetz}. The latter is zero if the contact class in $HF(\wt a',\wt a)$ is in the image of $\mu_2$, as defined in Section~\ref{section: A infty operations}. 

From now on we make crucial use of $\R$-coefficients. For $\wt a'$ sufficiently close to $\wt a$, the intersection points $\wt a'\cap S$ and $\wt a\cap S$ are canonically identified and we refer to either by $\{y_1,\dots, y_s\}$. This gives us an identification $CF(\wt a',S)\simeq CF(S,\wt a)$ as vector spaces. 
We consider the standard inner products on $CF(\wt a',S)$ and $CF(S,\wt a)$ for which $\{y_1,\dots, y_s\}$ is an orthonormal basis and identify both with an inner product space $(V,\langle,\rangle)$.  The differentials on $CF(\wt a',S)$ and $CF(S,\wt a)$ can be written as adjoints $d$ and $d^*$ on $V$ satisfying 
\begin{equation} \label{eqn: adjoint}
\langle d^* y_i,y_j\rangle = \langle y_i,dy_j\rangle.
\end{equation}
Taking an orthogonal decomposition $\ker d=\op{Im} d\oplus (\op{Im} d)^\perp$, there exists a nonzero $y\in (\op{Im} d)^\perp\subset  V$ such that $dy=0$  since $HF(S,\wt a)\simeq HF(\wt a',S)\not=0$.  By Equation~\eqref{eqn: adjoint}, $y\in  (\op{Im} d)^\perp$ implies that $d^*y=0$.  


We now claim that $\mu_2 (y,y)$ is a positive multiple of $x$. Writing $y=\sum_{i=1}^s a_i y_i$, where $a_i\in \R$, we show that (i) $\mu_2(y_i,y_{i'})=0$ for $i\not=i'$, (ii)  $\mu_2(y_i,y_i)$ counts a single thin triangle $T_i$ which corresponds to a Morse gradient trajectory on $\wt a$ from $y_i$ to $x_a$, viewed as the top generator of $CF(\wt a',\wt a)$, and (iii) there exists an orientation system such that $\mu_2(y_i,y_i)=\pm 1$, where all the signs are same.  Here we are considering upward gradient trajectories with respect to a Morse function $f$ on $\wt a$ whose maximum is the generator $x_a$ and whose minimum $\not= y_1,\dots, y_s$ is in $\op{int} a$. 

(i) and (ii) are consequences of taking the limit $\wt a'\to \wt a$.  Given a sequence of curves $u_j$ that are counted in $\mu_2(y_i,y_{i'})$ (where $i,i'$ may be equal) with $\wt a'\to \wt a$, the limit $u_\infty$ consists of a (possibly broken) holomorphic strip $v$ with boundary on $\wt a$ and $S$ between $y_i$ and $y_{i'}$, together with a gradient trajectory $\gamma$ to the critical point $x_a$ of $f$. If $v$ is nontrivial, then $\op{ind}(v)\geq 1$ by regularity. 
This implies that $\op{ind}(u_j)\geq 1$, a contradiction.  If $v$ is trivial, then $u_\infty$ is a single gradient trajectory and $u_j$ are curves of the form (ii). 

(iii) Referring to Section~\ref{subsection: orientations} (and using the open book perspective), we can take neighborhoods $N_i\subset \wt W$ of the thin triangles $T_i$ and diffeomorphisms $\psi_{ij}:N_i\stackrel\sim \to N_j$ taking $T_i$ to $T_j$, $N_i\cap a$ to $N_j\cap a$, $N_i\cap a'$ to $N_j\cap a'$, and $N_i\cap S$ to $N_j\cap S$; and such that all the orientation data for (i.e., used in defining the sign/orientation of) $T_i$ is taken to the orientation data for $T_j$. This guarantees that all the curves counted in (ii) have the same sign. 




Therefore, working over $\R$-coefficients, $\mu_2(y,y)= \pm(\sum_i a_i^2) x_a$ and there exists an element $z \in CF(\tau_S^{-1} (\wt a), \wt a)$ such that $d z=x_a$. By Lemma \ref{lemma: constant},  
$$d\{z, x_2,\dots ,x_\kappa \}=c^0  (W,\beta,\phi;\tau^{-1}_S)\in \widehat{CF}(W,\beta,\phi;\tau^{-1}_S),$$
which implies the theorem.
\end{proof}

\section{Variant of symplectic Khovanov homology} \label{section: variant of symplectic Khovanov homology}

\subsection{Definitions} \label{subsection: symplectic Kh definitions}

Let $\wt W$ be the $4$-dimensional Milnor fiber of the $A_{2\kappa-1}$ singularity and let 
$$\wt p: \wt W\to \wt D\subset \C_{z}$$  
be the standard Lefschetz fibration over the square $\wt D=\{-2 \leq \op{Re}z, \op{Im} z\leq 2\}$ with regular fiber $A=S^1\times[-1,1]$ and $2\kappa$ critical values $\wt {\bf z}=\{z_1,\dots, z_{2\kappa}\}$, where $\op{Re} z_i = \op{Re} z_{i+\kappa}$, $\op{Im} z_i=-1$, and $\op{Im} z_{i+\kappa}=1$ for $i=1,\dots,\kappa$. Let 
$$p: W:=\wt p^{-1}(D)\to D$$ 
be its restriction to $D= \wt D\cap \{\op{Im} z\leq 0\}$.   Let $\wt {\bs a}=\{\wt a_1,\dots,\wt a_\kappa\}$ be the ``basis'' of $\kappa$ Lagrangian spheres over the $\kappa$ disjoint arcs $\{\wt \gamma_1,\dots,\wt \gamma_\kappa\}$, where $\wt \gamma_i$ is the straight line segment connecting $z_i$ to $z_{\kappa+i}$. The restrictions of $\wt {\bf z}$, $\wt a_i$ and $\wt \gamma_i$ to $p$ are denoted ${\bf z}$, $a_i$ and $\gamma_i$.

Let $h_\sigma\in \op{Symp}(W,\bdry W)$ be the monodromy on $W$ which descends to a braid $\sigma$, viewed as an element of $\op{Diff}^+(D,\bdry D,{\bf z})$ (an orientation-preserving diffeomorphism of $D$ which is the identity on $\bdry D$ and takes ${\bf z}$ to itself setwise), let $\widehat\sigma$ be the braid closure of $\sigma$, and let $\wt h_\sigma$ be the extension of $h_\sigma$ to $\wt W$ by the identity.

Let $h_{\wt {\bs a}, \wt h_\sigma(\wt {\bs a})}$ be given by \eqref{eqn: h}, let $\{T_{\bs \delta, {\bf y}}\}$ be a complete set of capping surfaces from Definition~\ref{defn: complete set}, and let 
\begin{equation}
\mathcal{A}=\mathcal{A}_{\wt {\bs a}, \wt h_\sigma(\wt {\bs a})}=H_2([0,1]\times \wt W, (\{1\}\times \wt{\bs a})\cup (\{0\}\times \wt h_\sigma (\wt {\bs a}));\Z)
\end{equation}
as in \eqref{eqn: mathcal A}.

Our variant $C\Kh(\widehat \sigma)$ of the symplectic Khovanov (co)chain complex is defined as the higher-dimensional Heegaard Floer (co)chain complex $\widehat{CF}(\wt W,\wt h_\sigma(\wt {\bs a}),\wt {\bs a})$ with coefficients $\F[\mathcal{A}]\llbracket \hbar, \hbar^{-1}]$ (this means power series in $\hbar$ and polynomial in $\hbar^{-1}$).  We write $\Kh(\widehat \sigma)$ for the homology $\widehat{HF}(\wt W,\wt h_\sigma(\wt{\bs a}),\wt{\bs a})$. Here $\F[\mathcal{A}]$ is the group ring over $\mathcal{A}$ which we determine to be isomorphic to $\Z^{r-1}$ in Lemma~\ref{lemma: meaning of A}, where $r$ is the number of connected components of $\widehat\sigma$.

After discussing coefficients in Section~\ref{subsection: coefficients}, we prove Theorem~\ref{thm: Kh is invariant of link} in Sections~\ref{subsection: arc slides} and \ref{subsection: stabilization invariance}.  This entails proving the invariance of $\widehat{HF}(\wt W,\wt h_\sigma(\wt{\bs a}),\wt{\bs a})$ under handleslides and Markov stabilizations.

\subsection{Coefficients} \label{subsection: coefficients}

We explain why we can use coefficients $\F[\mathcal{A}]\llbracket \hbar,\hbar^{-1}]$ instead of the Novikov ring and also describe $\F[\mathcal{A}]$.

Given $\kappa$-tuples ${\bf y}$ and ${\bf y}'$ of intersection points of $\wt {\bs a}$ and $\wt h_\sigma (\wt{\bs a})$, consider the moduli space $\mathcal{M}_J({\bf y},{\bf y}')$, where $J$ is the product $J_{\R\times[0,1]}\times J_{\wt W^\wedge}$ such that the projection $\wt p: \wt W^\wedge\to \C$ is $(J_{\wt W^\wedge},i)$-holomorphic. Let $J^\Diamond\in \mathcal{J^\Diamond}^{reg}$ be a perturbation of $J$.

\begin{claim}  \label{claim: images under projection same}
For all $u: \dot F\to \R\times[0,1]\times \wt W^\wedge$ in $\mathcal{M}_J({\bf y},{\bf y}')$, the images under the projection $\wt p \circ \pi_{\wt W^\wedge}\circ u$ to $\C$ are the same when viewed as weighted domains in the usual Heegaard Floer sense.
\end{claim}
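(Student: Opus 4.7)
Since $J = J_{\R\times[0,1]} \times J_{\wt W^\wedge}$ is a product and $\wt p\colon \wt W^\wedge \to \C$ is $(J_{\wt W^\wedge},i)$-holomorphic by construction, the composition $f_u := \wt p \circ \pi_{\wt W^\wedge} \circ u\colon (\dot F, j) \to (\C, i)$ is holomorphic. Its boundary maps into the planar graph $X := \wt{\bs\gamma} \cup \sigma(\wt{\bs\gamma})$, where $\wt{\bs\gamma} = \cup_i \wt\gamma_i$, with punctures at $\wt p(\mathbf{y})$ and $\wt p(\mathbf{y}')$. The weighted domain $D(f_u) = \sum_R m_u(R)\,[R]$, summed over the regions $R$ of $\C \setminus X$, satisfies $m_u(R) \geq 0$ for all $R$ by positivity of intersection of $J$-holomorphic curves with the fibers of $\wt p$.

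The plan is to prove that, for every compactly supported $2$-form $\eta$ on $\C$,
\[
I(u,\eta) \;:=\; \int_{D(f_u)} \eta \;=\; \int_{\dot F} f_u^*\eta
\]
depends only on $(\mathbf{y},\mathbf{y}')$, not on $u$. Taking $\eta$ to be a bump $2$-form supported in a single region $R_0$ then forces $m_{u_1}(R_0) = m_{u_2}(R_0)$ for any $u_1, u_2 \in \mathcal{M}_J(\mathbf{y},\mathbf{y}')$, which is the claim.

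To establish the boxed statement, use that $\C$ is contractible to write $\eta = d\eta'$ for some $1$-form $\eta'$ on $\C$. Stokes' theorem gives $I(u,\eta) = \int_{\bdry \dot F} f_u^*\eta'$. The boundary $\bdry \dot F$ decomposes into arcs, each mapped by $\pi_{\wt W^\wedge}\circ u$ into some Lagrangian sphere $L$ equal to $\wt a_j$ or $\wt h_\sigma(\wt a_j)$, with endpoints at corners in $\mathbf{y} \cup \mathbf{y}'$. On each such $L$, the restriction $(\wt p^*\eta')|_L$ is a closed $1$-form: its exterior derivative equals $(\wt p^*\eta)|_L$, which vanishes because $\wt p|_L$ factors through the $1$-dimensional arc $\wt\gamma_j$ and hence pulls every $2$-form on $\C$ back to zero. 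Since $H^1(L;\R) = 0$ (as $L \simeq S^2$), this closed form is exact: $(\wt p^*\eta')|_L = d\psi_L$ for some smooth $\psi_L\colon L \to \R$. The integral of $f_u^*\eta'$ along each boundary arc of $\dot F$ thus equals the difference of values of the corresponding $\psi_L$ at its two corner endpoints; summing over arcs yields a quantity depending only on $(\mathbf{y},\mathbf{y}')$.

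The main technical point to check carefully is that the primitive $\psi_L$ is genuinely smooth across the two Lefschetz branch points of $\wt p|_L\colon L \to \wt\gamma_j$ (the critical values of $\wt p$ lying on $L$); this is automatic because $\wt p^*\eta'$ is smooth on all of $\wt W^\wedge$ and its restriction to the sphere $L$ is a smooth closed $1$-form globally. The remaining routine ingredient is the pushforward identity $\int_{\dot F} f_u^*\eta = \int_{D(f_u)} \eta$, which follows from holomorphicity of $f_u$ together with the positivity of its local degrees encoded in the multiplicities $m_u(R)$.
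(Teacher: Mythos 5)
Your argument is correct, but it reaches the conclusion by a genuinely different route than the paper. The paper works entirely downstairs in $\C$: because each $\wt\gamma_k$ and $\sigma(\wt\gamma_k)$ is an embedded arc, the boundary $\wt p\circ\pi_{\wt W^\wedge}\circ u|_{\bdry \dot F}$ is forced to consist of the unique subpaths of these arcs joining the projected corners of ${\bf y}$ and ${\bf y}'$, and a weighted domain in the plane is determined by its boundary. You instead work upstairs: you pair the projected domain with a compactly supported $2$-form $\eta=d\eta'$, apply Stokes, and use that $(\wt p^*\eta')|_L$ is closed (since $\wt p|_L$ has $1$-dimensional image) and exact (since $H^1(L;\R)=0$ for the spheres $L=\wt a_j$ or $\wt h_\sigma(\wt a_j)$), so each boundary-arc integral is a difference of values of a primitive at the two corners. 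What this buys is a cleaner identification of the geometric input --- the Lagrangians are simply connected and project into a $1$-complex --- rather than uniqueness of paths inside embedded arcs; in particular your argument makes transparent why the Clifford-torus phenomenon of Remark~\ref{rmk: signs}, where $H^1(T)\neq 0$, is not an obstruction here. The paper's argument is shorter and purely combinatorial. Two points you should state explicitly, though both are routine: first, that the boundary combinatorics is the same for every $u\in\mathcal{M}_J({\bf y},{\bf y}')$ --- by condition (1) in the definition of the moduli space there are exactly $2\kappa$ boundary arcs, and the arc lying on a given $\wt a_i$ (resp.\ $\wt h_\sigma(\wt a_j)$) necessarily joins the puncture asymptotic to the element of ${\bf y}$ on that Lagrangian to the puncture asymptotic to the element of ${\bf y}'$ on it, with orientation fixed by the signs of the punctures; this is exactly what makes your ``sum over arcs'' independent of $u$. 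Second, Stokes' theorem on the punctured domain and the convergence of the boundary integrals require the exponential convergence of $u$ to the chords at the strip-like ends, so that the cut-off arcs near the punctures contribute nothing in the limit.
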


\begin{proof}
Write 
$$\{y_{1\lambda(1)},\dots, y_{\kappa \lambda(\kappa)}\} \quad \mbox{and} \quad \{y'_{1 \mu(1)},\dots, y'_{\kappa \mu(\kappa)}\}$$ 
for the projections of ${\bf y}$ and ${\bf y}'$ to $\C$, where $\lambda$, $\mu$ are permutations of $(1,\dots,\kappa)$ and $y_{k\lambda(k)}\in \wt\gamma_k \cap \sigma(\wt\gamma_{\lambda(k)})$ and $y'_{k\mu(k)}\in \wt\gamma_k\cap \sigma(\wt\gamma_{\mu(k)})$.   Then $\wt p \circ \pi_{\wt W^\wedge}\circ u|_{\bdry F}$ is the union of:
\be
\item the unique path in $\wt \gamma_k$ from $y_{k\lambda(k)}$ to $y'_{k\mu(k)}$ for $k=1,\dots,\kappa$, and 
\item the unique path in $\sigma (\wt\gamma_{\mu(k)})$ from $y'_{k\mu(k)}$ to $y_{\lambda^{-1}(\mu(k))\mu(k)}$ for $k=1,\dots,\kappa$,
\ee 
and $\wt p \circ \pi_{\wt W^\wedge}\circ u|_{\bdry F}$ uniquely determines the weighted domain that it bounds.
\end{proof}

\begin{lemma} \label{lemma: finite for Khovanov}
	For fixed ${\bf y}$, ${\bf y}'$, and $\chi$, $\# \mathcal{M}_{J^\Diamond}^{\chi,\op{ind}=1}({\bf y},{\bf y}')/\R$ is finite.
\end{lemma}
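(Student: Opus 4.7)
The plan is to leverage the Lefschetz fibration structure together with Claim~\ref{claim: images under projection same} to obtain an \emph{a priori} area bound on curves in $\mathcal{M}_{J^\Diamond}^{\chi}(\mathbf{y},\mathbf{y}')$ that is uniform in the relative class $A\in\mathcal{A}$, and then to conclude by SFT compactness on the zero-dimensional space $\mathcal{M}_{J^\Diamond}^{\chi,\op{ind}=1}(\mathbf{y},\mathbf{y}')/\R$. This upgrades Lemma~\ref{lemma: compactness} (per-class compactness, which still goes through here) to finiteness of the total count over all classes.

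First, work with the product $J=J_{\R\times[0,1]}\times J_{\wt W^\wedge}$ for which $\wt p$ is holomorphic. For every $u\in\mathcal{M}_J(\mathbf{y},\mathbf{y}')$, Claim~\ref{claim: images under projection same} pins down $\wt p\circ\pi_{\wt W^\wedge}\circ u$ as a fixed weighted domain $D=D(\mathbf{y},\mathbf{y}')\subset\C$. Since $(\wt W^\wedge,\wt\beta^\wedge)$ is exact and the Lagrangian spheres $\wt a_i, \wt h_\sigma(\wt a_i)$ are exact (Remark~\ref{rmk: spheres}), Stokes' theorem applied to $\wt\beta$ computes $\int(\pi_{\wt W^\wedge}\circ u)^*d\wt\beta$ as a sum of primitive values at the asymptotic intersection points of $\mathbf{y}$ and $\mathbf{y}'$, plus a topological correction that depends only on the homotopy type of the lift of the branched cover $\dot F\to D$ to $\wt W^\wedge$. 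With $\chi(\dot F)=\chi$ fixed, Riemann-Hurwitz applied to $\dot F\to D$ bounds the number of sheets and branch points, and hence bounds the possible winding of $u$ in the annular fibers of $\wt p$. Combining this with the fixed base contribution gives a uniform Hofer-energy bound $E(u)\leq E_0=E_0(\mathbf{y},\mathbf{y}',\chi)$ independent of $A$.

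For $J^\Diamond\in(\mathcal{J}^\Diamond)^{reg}$ sufficiently $C^\infty$-close to $J$, the same estimate persists up to a small perturbation error. The SFT compactness theorem of \cite{BEHWZ} then applies to $\mathcal{M}_{J^\Diamond}^{\chi,\op{ind}=1}(\mathbf{y},\mathbf{y}')/\R$: any sequence subconverges to a holomorphic building of bounded total energy and bounded total Euler characteristic. Exactly as in the proof of Lemma~\ref{lemma: bdry squared equals zero}, regularity of $J^\Diamond$ together with $\op{ind}=1$ modulo $\R$-translation forbids all nontrivial splittings. Hence $\mathcal{M}_{J^\Diamond}^{\chi,\op{ind}=1}(\mathbf{y},\mathbf{y}')/\R$ is compact; being zero-dimensional, it is finite.

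The main obstacle I anticipate is making the ``topological correction'' in the area formula quantitatively precise: one must verify that for branched covers of $D$ of fixed Euler characteristic, only finitely many relative homotopy classes of lifts to $\wt W^\wedge$ are compatible with the prescribed Lagrangian boundary conditions on $\wt{\bs a}\cup\wt h_\sigma(\wt{\bs a})$. This is essentially a Riemann-Hurwitz plus action-filtration bookkeeping argument, using exactness to control primitives and the annular-fiber structure of $\wt p$ to control winding, but it is the technical heart on which the uniformity over $A$ rests.
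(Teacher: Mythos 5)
Your proposal is correct and follows essentially the same route as the paper: Claim~\ref{claim: images under projection same} yields an energy bound uniform in the class $A$, and with $\chi$ fixed this gives finiteness via Gromov/SFT compactness for $J^\Diamond$ close to $J$, exactly as in the paper's (much terser) argument. Your Stokes/Riemann--Hurwitz discussion merely fills in details the paper leaves implicit, so no substantive difference in approach.
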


\begin{proof}
Claim~\ref{claim: images under projection same} implies area bounds for $u\in \mathcal{M}_J({\bf y},{\bf y}')$ but not necessarily Euler characteristic bounds.  Hence, after perturbing $J$ to $J^\Diamond$, the desired curve count $\# \mathcal{M}_{J^\Diamond}^{\chi,\op{ind}=1}({\bf y},{\bf y}')/\R$ is finite. 
\end{proof}

Lemma~\ref{lemma: finite for Khovanov} justifies the use of the coefficient system $\F[\mathcal{A}]\llbracket \hbar,\hbar^{-1}]$.

\begin{lemma} \label{lemma: meaning of A}
$\mathcal{A}_{\wt {\bs a}, \wt h_\sigma(\wt {\bs a})}\simeq \Z^{r-1}$, where $r$ is the number of connected components of $\widehat\sigma$.
\end{lemma}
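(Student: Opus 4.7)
The plan is to reduce the computation of $\mathcal{A}$ to a graph-theoretic count. First deformation retract $[0,1]\times\wt W$ onto $\wt W$, so that $\mathcal{A}\cong H_2(\wt W,L)$ where $L:=\wt{\bs a}\sqcup \wt h_\sigma(\wt{\bs a})$. Each component of $L$ is a Lagrangian $2$-sphere, so $H_1(L)=0$; also $\wt W$ is simply connected, so $H_1(\wt W)=0$. The long exact sequence of the pair collapses to
\[ H_2(L)\xrightarrow{\iota_*} H_2(\wt W)\to \mathcal{A}\to 0, \]
so $\mathcal{A}=\op{coker}(\iota_*)$.

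Next I would identify $H_2(\wt W)$ via the matching-sphere description. For the Lefschetz fibration $\wt p:\wt W\to \wt D$ with annular regular fibers and $2\kappa$ critical values $\wt{\bf z}$, the matching-sphere construction gives an isomorphism $\Phi:H_1(\wt D,\wt{\bf z})\xrightarrow{\cong}H_2(\wt W)$ sending the class of an arc $\gamma$ from $z_i$ to $z_j$ (avoiding the other critical values) to $[\Sigma_\gamma]$. Both sides are free of rank $2\kappa-1$, and a family of $2\kappa-1$ matching spheres over a chain of consecutive-critical-value arcs gives a basis of the right-hand side, so $\Phi$ is an isomorphism. Via the boundary sequence
\[ 0\to H_1(\wt D,\wt{\bf z})\to H_0(\wt{\bf z})\xrightarrow{\varepsilon}\Z\to 0, \]
this further identifies $H_2(\wt W)\cong \ker\varepsilon\subset \Z\langle \wt{\bf z}\rangle$, with $[\Sigma_\gamma]\leftrightarrow [z_j]-[z_i]$ up to an irrelevant overall sign. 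Under this identification,
\[ [\wt a_i]=[z_{\kappa+i}]-[z_i],\qquad [\wt h_\sigma(\wt a_i)]=[z_{\kappa+i}]-[z_{\pi(i)}], \]
where $\pi\in S_\kappa$ is the permutation of the bottom critical values induced by $\sigma$ (since $\wt\sigma$ is the identity on the top row of $\wt{\bf z}$).

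I then introduce the graph $G$ with vertex set $\wt{\bf z}$ and the $2\kappa$ oriented edges $\wt\gamma_i$ and $\wt\sigma(\wt\gamma_i)$, whose cellular boundaries are $[z_{\kappa+i}]-[z_i]$ and $[z_{\kappa+i}]-[z_{\pi(i)}]$. Under the previous identifications, $\op{Im}(\iota_*)$ is exactly the image of $\partial:C_1(G)\to C_0(G)=\Z\langle\wt{\bf z}\rangle$, and so
\[ \mathcal{A}\cong \ker(\varepsilon)/\op{Im}(\partial). \]
The cokernel $C_0(G)/\op{Im}(\partial)$ is $H_0(G)\cong \Z^{r(G)}$, and the composite $C_0(G)\to H_0(G)\to \Z$ sends every component-generator to $1$, so the resulting short exact sequence
\[ 0\to \mathcal{A}\to H_0(G)\to \Z\to 0 \]
yields $\mathcal{A}\cong \Z^{r(G)-1}$ (torsion-free, since the target $\Z$ is free).

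It remains to check $r(G)=r$. The graph $G$ is $2$-regular: $z_{\kappa+i}$ meets exactly $\wt\gamma_i$ and $\wt\sigma(\wt\gamma_i)$, while $z_i$ meets $\wt\gamma_i$ and $\wt\sigma(\wt\gamma_{\pi^{-1}(i)})$. Tracing the component through $z_1$ one visits $z_1\to z_{\kappa+1}\to z_{\pi(1)}\to z_{\kappa+\pi(1)}\to z_{\pi^2(1)}\to\cdots$, closing up after $2\ell$ edges where $\ell$ is the length of the $\pi$-orbit of $1$. The components of $G$ thus match the cycles of $\pi$, which in turn index the components of the braid closure $\widehat\sigma$, proving $r(G)=r$. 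The main delicate point is the matching-sphere identification $\Phi$; this is classical for Lefschetz fibrations with simply connected base but is most transparently justified here from the $A_{2\kappa-1}$-plumbing handle decomposition of $\wt W$, under which consecutive-arc thimbles are precisely the plumbing spheres.
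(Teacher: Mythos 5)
Your proof is correct and takes essentially the same route as the paper: the long exact sequence of the pair gives $\mathcal{A}\simeq H_2(\wt W)/\iota_*H_2$ of the $2\kappa$ Lagrangian spheres, $H_2(\wt W)\simeq \Z^{2\kappa-1}$ is generated by a chain of matching spheres, and the quotient is computed combinatorially. Your graph $G$ and its reduced $H_0$ just make precise the paper's ``iteratively collapsing each pair of points connected by an arc $\wt\gamma_i$ or $\sigma(\wt\gamma_i)$,'' with components of $G$ matching cycles of the braid permutation, i.e.\ components of $\widehat\sigma$.
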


We also remark that $\mathcal{A}_{\wt {\bs a}, \wt h_\sigma(\wt {\bs a})}= \mathcal{A}'_{\wt {\bs a}, \wt h_\sigma(\wt {\bs a})}$ since there is no torsion.

\begin{proof}
Writing $X=[0,1]\times \wt W$, $Y=(\{1\}\times \wt{\bs a})\cup (\{0\}\times \wt h_\sigma (\wt {\bs a}))$, and $i:Y\to X$ for the inclusion, the relative homology sequence gives:
$$\mathcal{A}_{\wt {\bs a}, \wt h_\sigma(\wt {\bs a})}=H_2(X,Y)\simeq H_2(X)/i_*H_2(Y).$$
Now $H_2(X)\simeq \Z^{2\kappa-1}$ and is generated by a chain of $2\kappa-1$ spheres lying over a chain of arcs in $D'$ and one can compute the quotient $H_2(X)/ i_*H_2(Y)$ by iteratively collapsing each pair of points connected by an arc $\wt \gamma_i$ or $\sigma(\wt \gamma_i)$ to a point. 
\end{proof}

\subsection{Proof of invariance under arc slides} \label{subsection: arc slides}

Given $\{\wt\gamma_1,\dots,\wt\gamma_\kappa\}$, let $c\subset \wt D$ be a chord from $\wt\gamma_1$ to $\wt\gamma_2$ which does not intersect any other $\wt\gamma_i$.  Let $\wt\gamma_1'$ be obtained by arc sliding $\wt\gamma_1$ over $\wt\gamma_2$ along $c$ and let $\wt\gamma_i'$, $i=2,\dots,\kappa$, be a pushoff of $\wt\gamma_i$ which fixes $z_i, z_{i+\kappa}$; see Figure~\ref{fig: An-handleslide}.   Let $\wt{\bs a}'=\{\wt a'_1,\dots, \wt a'_\kappa\}$ be the Lagrangian basis corresponding to $\{\wt\gamma_1',\dots,\wt\gamma_\kappa'\}$.  

Let $\Theta_i$ (resp.\ $\Xi_i$), $i=1,\dots,\kappa$, be the unique intersection point of $\wt a_i$ and $\wt a_i'$ that lies over $z_i$ (resp.\ $z_{i+\kappa}$), and let $\bs\Theta=\{\Theta_1,\dots, \Theta_\kappa\}$ and $\bs \Xi= \{\Xi_1,\dots, \Xi_\kappa\}$. 

\begin{figure}[ht]
	\begin{overpic}[scale=1]{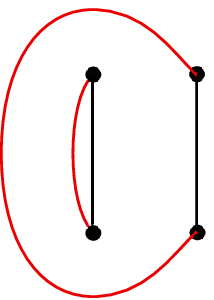}
		\put(35.2,25){\tiny $\Theta_2$}
		\put(71.8,25){\tiny $\Theta_1$}
		\put(35.3,77){\tiny $\Xi_2$}
		\put(71.8,77){\tiny $\Xi_1$}
		\put(14.55,51){\tiny $\wt \gamma_2'$}
		\put(-10,51){\tiny $\wt\gamma_1'$}
		\put(33.35,51){\tiny $\wt\gamma_2$}
		\put(69.7,51){\tiny $\wt\gamma_1$}
	\end{overpic}
	\caption{Arc sliding $\wt\gamma_1$ over $\wt\gamma_2$.}
	\label{fig: An-handleslide}
\end{figure}

\begin{thm} \label{thm: quasi-isomorphism}
	$\widehat{CF}(\wt W,\wt h_\sigma(\wt {\bs a}),\wt {\bs a})$ and $\widehat{CF}(\wt W,\wt h_\sigma(\wt {\bs a}'),\wt {\bs a}')$ are quasi-isomorphic.
\end{thm}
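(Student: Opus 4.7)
The plan is to construct an explicit quasi-isomorphism using the $A_\infty$-operation $\mu_2$ of Section~\ref{section: A infty operations} together with the ``top generators'' $\bs\Theta$ and $\bs\Xi$, in direct analogy with the handleslide invariance proof in classical Heegaard Floer homology. First I would establish that $\bs\Theta \in \widehat{CF}(\wt W, \wt{\bs a}, \wt{\bs a}')$ and $\bs\Xi \in \widehat{CF}(\wt W, \wt{\bs a}', \wt{\bs a})$ are cycles, behaving as top generators. Taking each $\wt a_i'$ to be the image of $\wt a_i$ under a small Hamiltonian diffeomorphism supported outside the sliding chord $c$, and choosing the local Morse functions so that $\Theta_i$ (resp.~$\Xi_i$) is the maximum (resp.~minimum) on $\wt a_i$, the argument mirrors Lemma~\ref{lemma: contact cycle}: the Lefschetz projection $\wt p\colon \wt W^\wedge\to\C$ is $J_{\wt W^\wedge}$-holomorphic, and applying the maximum principle to $\phi_1\circ\pi_{\wt W^\wedge}\circ u$ for a suitable plurisubharmonic function $\phi_1$ rules out nonconstant $J^\Diamond$-holomorphic curves positively asymptotic to $\bs\Theta$.

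Next, using the $A_\infty$ operations, define
\begin{align*}
\Phi_1 \colon \widehat{CF}(\wt h_\sigma(\wt{\bs a}),\wt{\bs a}) & \to \widehat{CF}(\wt h_\sigma(\wt{\bs a}),\wt{\bs a}'), & {\bf y} & \mapsto \mu_2(\bs\Theta,{\bf y}),\\
\Phi_2 \colon \widehat{CF}(\wt h_\sigma(\wt{\bs a}),\wt{\bs a}') & \to \widehat{CF}(\wt h_\sigma(\wt{\bs a}'),\wt{\bs a}'), & {\bf y} & \mapsto \mu_2({\bf y},\wt h_\sigma(\bs\Xi)),
\end{align*}
and set $\Phi := \Phi_2\circ \Phi_1$. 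The $A_\infty$-relation, together with $\mu_1(\bs\Theta)=0$ and $\mu_1(\wt h_\sigma(\bs\Xi))=0$ (the latter follows from the former since $\wt h_\sigma$ is a symplectomorphism), immediately implies that $\Phi_1$, $\Phi_2$, and $\Phi$ are cochain maps.

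Finally, I would show each $\Phi_i$ is a quasi-isomorphism via an area-filtration argument. Choosing $\wt{\bs a}'$ sufficiently $C^\infty$-close to $\wt{\bs a}$ outside a neighborhood of $c$, the index-zero holomorphic triangles counted by $\mu_2(\bs\Theta,\cdot)$ split into two classes: (i) ``thin'' triangles concentrated near each $\Theta_i$, of which there is a unique one per input generator, contributing the identity on the diagonal after a local-model calculation in $T^*\wt a_i$ parallel to Lemma~\ref{lemma: C} together with the orientation scheme of Section~\ref{subsection: orientations}; and (ii) triangles whose $\wt p$-projections have strictly positive area, which are strictly higher-order in the $\F[\mathcal{A}]\llbracket\hbar,\hbar^{-1}]$-filtration on the coefficient ring of Section~\ref{subsection: coefficients} (with compactness provided by Lemma~\ref{lemma: finite for Khovanov}). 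Hence $\Phi_1$ is upper-triangular with identity diagonal and is therefore a quasi-isomorphism; the same reasoning applies to $\Phi_2$.

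The main obstacle will be the third step, specifically the sign/orientation bookkeeping for the thin triangles: extending the relative spin-structure data of Section~\ref{subsection: orientations} across the triple $(\wt h_\sigma(\wt{\bs a}),\wt{\bs a},\wt{\bs a}')$ so that the thin-triangle counts produce exactly $\pm\op{id}$ on the diagonal, and ensuring that thick triangles genuinely lie in strictly higher filtration level (in particular, that the $\wt p$-projected areas are bounded away from zero uniformly). A natural sanity check is to evaluate $\mu_2(\bs\Theta,\mu_2(\wt h_\sigma(\bs\Xi)^{-1},\cdot))$ (with inverse taken with respect to the opposite arc slide) on thin triangles at both $\Theta_i$ and $\Xi_i$; an $A_\infty$ computation using $\mu_3$ should identify this composition with the identity up to chain homotopy, producing an explicit homotopy inverse and completing the proof that $\Phi$ is a quasi-isomorphism.
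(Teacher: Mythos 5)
Your setup (showing $\bs\Theta$, $\bs\Xi$ are cycles, defining triangle maps via $\mu_2$) matches the paper's strategy, but the heart of your argument --- step three --- has a genuine gap. You treat $\wt{\bs a}'$ as if it were a small Hamiltonian pushoff of $\wt{\bs a}$, so that $\mu_2(\bs\Theta,\cdot)$ is ``upper-triangular with identity diagonal'' via thin triangles. That picture is false here: $\wt a_1'$ is the Lagrangian sphere over the \emph{arc-slid} arc $\wt\gamma_1'$ (only $\wt a_i'$, $i\geq 2$, are small pushoffs), so it is not Hamiltonian isotopic to $\wt a_1$ relative to the relevant data, the generators of $\widehat{CF}(\wt h_\sigma(\wt{\bs a}),\wt{\bs a})$ and $\widehat{CF}(\wt h_\sigma(\wt{\bs a}),\wt{\bs a}')$ are not in any natural bijection (the intersection patterns, even the numbers of intersection points, differ), and there is no ``diagonal'' for your map to be the identity on. Relatedly, the coefficient ring $\F[\mathcal{A}]\llbracket\hbar,\hbar^{-1}]$ is a group ring, not a Novikov ring, so ``strictly positive projected area'' does not place the thick triangles in a topologically small ideal; the finiteness of Lemma~\ref{lemma: finite for Khovanov} is what permits dropping the Novikov completion, but it simultaneously removes the filtration your invertibility argument needs.

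The paper instead proves the theorem by composing the two triangle maps $\Phi$ and $\Psi$ and degenerating the $4$-punctured base the other way: the composite is controlled by the moduli space $\mathcal{M}(\bs\Xi,\bs\Theta)$ with \emph{two generic point constraints}, and the essential input is Theorem~\ref{thm: count}, a delicate count showing that the $\chi=0$ curves (holomorphic \emph{annuli}, not triangles) through a generic ${\bf w}$ contribute $\pm 1$. The upshot is that $\Psi\circ\Phi$ is chain homotopic to $\hbar^2\cdot p(\hbar)\cdot\op{id}$ with $p(0)=\pm1$ --- invertible only because $\hbar^{-1}$ is in the coefficient ring. Note the $\hbar^2$ factor: even the true answer is not ``identity plus higher order,'' which shows that a thin-triangle/filtration argument cannot recover it. Your proposal is missing exactly this annulus count (the analogue of the doubly-periodic-domain computation in Ozsv\'ath--Szab\'o's handleslide invariance) and the role of inverting $\hbar$; without these, the claim that $\Phi_1$ is a quasi-isomorphism is unsupported.
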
 

This is expected by \cite[Lemma 49]{SS}, but there is some benefit in carrying out some holomorphic curve calculations in preparation for the next section.  Our proof closely follows the handleslide invariance proof of the original version of Heegaard Floer homology~\cite{OSz1}.

\begin{proof}
Omitting $\wt W$ from the notation, it suffices to prove that $\widehat{CF}(\wt h_\sigma(\wt {\bs a}),\wt {\bs a})$ and $\widehat{CF}(\wt h_\sigma(\wt {\bs a}),\wt{\bs a}')$ are quasi-iso\-morphic since the quasi-isomorphism of $\widehat{CF}(\wt h_\sigma(\wt{\bs a}),\wt{\bs a}')$ and $\widehat{CF}(\wt h_\sigma(\wt{\bs a}'),\wt{\bs a}')$ is proved analogously. 

We claim that $\bs\Theta$ is a cycle in $\widehat{CF}(\wt{\bs a}',\wt {\bs a})$ and $\bs\Xi$ is a cycle in $\widehat{CF}(\wt{\bs a},\wt{\bs a}')$. For simplicity let $\kappa=2$.  The Fredholm index differences are:
\begin{equation}\label{eqn: Fredholm index difference}
\op{ind}(\bs\Theta, \bs\Xi)=4, \quad \op{ind}(\bs\Theta, \{\Theta_1,\Xi_2\})=\op{ind}(\bs\Theta, \{\Theta_2,\Xi_1\})=2.
\end{equation}
We will treat the first case; the other cases are easier. A holomorphic map $u: \dot F\to \R\times [0,1]\times W$ that limits to $\bs\Theta$ and $\bs\Xi$ at the positive and negative ends satisfies $\op{ind}(\bs\Theta, \bs\Xi)=\mu(u)$ by Lemma~\ref{lemma: Fredholm index}. Let $A$ and $B$ be the outer and inner components of $\wt D$ that are bounded by $\wt \gamma_1,\wt\gamma_2, \wt \gamma_1',\wt \gamma_2'$.  Then the projection of $u$ to $\wt D$ has degree $1$ over $A$ and degree $2$ over $B$ and regardless of the number of branch points of this projection we have $\mu(u)= 4$.  Equation~\eqref{eqn: Fredholm index difference} implies that $\bs\Theta$ is cycle; the situation for $\bs\Xi$ is similar.

In view of the claim we can define the cochain map
\begin{gather} \label{eqn: chain map Phi}
\Phi: \widehat{CF}(\wt h_\sigma(\wt{\bs a}),\wt{\bs a})\to \widehat{CF}(\wt h_\sigma(\wt {\bs a}),\wt{\bs a}'),\\
\nonumber {\bf y}\mapsto \mu_2(\bs\Xi \otimes {\bf y}),
\end{gather}
where $\mu_2$ is the product map
$$\mu_2: \widehat{CF}(\wt{\bs a},\wt {\bs a}')\otimes \widehat{CF}(\wt h_\sigma(\wt {\bs a}), \wt {\bs a})\to \widehat{CF}(\wt h_\sigma(\wt{\bs a}),\wt {\bs a}').$$
Similarly we have the cochain map
$$\Psi: \widehat{CF}(\wt h_\sigma(\wt {\bs a}),\wt {\bs a}') \to \widehat{CF}(\wt h_\sigma(\wt{\bs a}),\wt{\bs a}),$$
$${\bf y}\mapsto \mu_2'({\bs \Theta}\otimes {\bf y}),$$
where $\mu_2'$ is the product map
$$\mu_2': \widehat{CF}(\wt{\bs a}', \wt{\bs a})\otimes \widehat{CF}(\wt h_\sigma(\wt {\bs a}), \wt{\bs a}')\to \widehat{CF}(\wt h_\sigma(\wt {\bs a}), \wt {\bs a}).$$

Composing the two maps corresponds to degenerating the base $D_4$ in one way. We now degenerate the base in the ``other way" as in Figure~\ref{fig: degenerating-D4}.

\begin{figure}[ht]
	\begin{overpic}[scale=0.8]{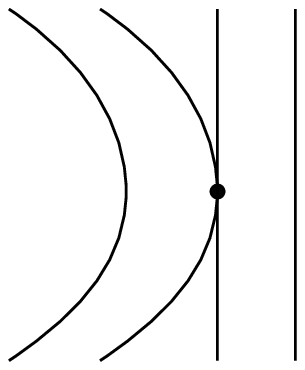}
		\put(83,78){\tiny $\wt h_\sigma(\wt{\bs a})$}
		\put(52,82){\tiny $\wt{\bs a}$}
		\put(52,15){\tiny $\wt{\bs a}$}
		\put(25,48){\tiny $\wt{\bs a}'$}
		\put(12,0){\tiny $\bs\Theta$}
		\put(13,98){\tiny $\bs \Xi$}
	\end{overpic}
	\caption{}
	\label{fig: degenerating-D4}
\end{figure}

Again assuming that $\kappa=2$ we consider the moduli space 
$$\mathcal{M}_{J}(\bs\Xi, \bs\Theta),$$ 
where the symplectic fibration and Lagrangian boundary conditions are:
$$\pi: \R\times[0,1]\times \wt W^\wedge \to \R\times[0,1], \quad L_1= \R\times \{1\}\times \wt{\bs a}', \quad L_0=\R\times\{0\}\times \wt{\bs a},$$
and we take $J=J_{\R\times[0,1]}\times J_{\wt W^\wedge}$. (To avoid cumbersome notation, we will not put a hat over $\wt p$.) We will use the following modifiers:
\begin{itemize}
	\item $\chi=j$, which means the domain $\dot F$ of the holomorphic map $u$ has Euler characteristic $j$;
	\item ${\bf w}=\{w_1,w_2\}$, which means that $u$ passes through the points $(0,0,w_1)$ and $(0,0,w_2)$, where $(0,0)\in \R\times[0,1]$ and $w_i\in \wt a_i$.
\end{itemize}

Since the projection of $u$ to $\C$ is a holomorphic map which is a degree $1$ map to region $A$ and a degree $2$ map to region $B$, where $A$ and $B$ are as before and $\op{ind}(\bs\Xi,\bs\Theta)= \mu(u)=4$, the only type of degeneration as in Figure~\ref{fig: degenerating-D4} is if the left-hand side had $\op{ind}=4$ and the right-hand side had $\op{ind}=0$.

By Theorem~\ref{thm: count} below, the count of $\chi=0$ curves from $\bs\Xi$ to $\bs \Theta$ (i.e., annuli) passing through a generic ${\bf w}=\{w_1,w_2\}$ (by this we mean passing through a pair of points $(0,0,w_1)$ and $(0,0,w_2)$ with $w_1,w_2$ generic) is $\pm 1$.  Although it would be interesting to do, we do not calculate the contributions from $\chi<0$. Thus $\Psi\circ \Phi$ is chain homotopic to $\hbar^2\cdot p(\hbar)$ times the identity, where $p(\hbar)$ is a power series in $\hbar$ whose constant term is $\pm 1$. (Note that the fact that $\Psi\circ \Phi$ is chain homotopic to $\hbar^2 \cdot p(\hbar)\op{id}$ forces us to work over $\F\llbracket \hbar, \hbar^{-1}]$.) The situation for $\Phi\circ \Psi$ is analogous. This completes the proof of Theorem~\ref{thm: quasi-isomorphism}, modulo a discussion of coefficients and the rather nontrivial calculations involved in the proof of Theorem~\ref{thm: count}.
\end{proof}

\subsubsection{Coefficients}

We relate $h_{\wt {\bs a}, \wt h_\sigma(\wt {\bs a})}$ and $\mathcal{A}_{\wt {\bs a}, \wt h_\sigma(\wt {\bs a})}$ for the pair $(\wt h_\sigma(\wt {\bs a}),\wt {\bs a})$ with those for $(\wt h_\sigma(\wt {\bs a}),\wt {\bs a}')$.

First consider the map 
\begin{gather*}
\bs\Xi_*: h_{\wt {\bs a}, \wt h_\sigma(\wt {\bs a})} \to h_{\wt {\bs a}', \wt h_\sigma(\wt {\bs a})},\quad [\bs\delta]\mapsto[\bs\Xi\bs\delta],
\end{gather*} 
where $\bs\Xi  \bs\delta$ is obtained from $\bs\Xi$, viewed as an $\kappa$-tuple of chords on $[\tfrac{1}{2},1]\times \wt W$, and $\bs\delta$, viewed as an $\kappa$-tuple of chords on $[0,\tfrac{1}{2}]\times \wt W$, by concatenating with an $\kappa$-tuple of connecting arcs on $\{\tfrac{1}{2}\}\times \wt {\bs a}$.   Since each component of $\wt {\bs a}$ is simply-connected, $[\bs\Xi\bs\delta]$ does not depend on the choice of connecting arcs. The map $\bs\Xi_*$ is a bijection since there is an analogously defined inverse 
$$\bs\Theta_*:  h_{\wt {\bs a}', \wt h_\sigma(\wt {\bs a})}\to h_{\wt {\bs a}, \wt h_\sigma(\wt {\bs a})},\quad [\bs\delta']\mapsto [\bs \Theta\bs \delta'].$$

Next we choose complete sets of capping surfaces $\{T_{\bs \delta, {\bf y}}\}$ for the pair $(\wt h_\sigma(\wt {\bs a}),\wt {\bs a})$, $\{T_{\bs\Xi_*\bs \delta, {\bf y}'}\}$ for the pair $(\wt h_\sigma(\wt {\bs a}),\wt {\bs a}')$, and $\{T_{\bs\delta, \bs\Xi, \bs\Xi_*\bs\delta}\}$ for the triple $(\wt h_\sigma(\wt {\bs a}),\wt {\bs a}, \wt {\bs a}')$. 

The map $\Phi$ given by Equation~\eqref{eqn: chain map Phi} can then be defined with coefficients 
$$\F[\mathcal{A}_{\wt {\bs a}, \wt h_\sigma(\wt {\bs a})}]\simeq \F[\mathcal{A}_{\wt {\bs a}', \wt h_\sigma(\wt {\bs a})}]$$ 
in view of the following lemma:

\begin{lemma}
$\mathcal{A}_{\wt {\bs a}, \wt h_\sigma(\wt {\bs a})}\simeq\mathcal{A}_{\wt {\bs a}', \wt h_\sigma(\wt {\bs a})}$.
\end{lemma}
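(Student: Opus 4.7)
The plan is to realize both $\mathcal{A}$-groups as cokernels of the same subgroup inclusion into $H_2(\wt W)$, using the fact that all Lagrangian components involved are $2$-spheres in the simply-connected $4$-manifold $\wt W$.

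First I would write down the long exact sequence of the pair $(X,L)$ with $X=[0,1]\times \wt W$ and $L=(\{1\}\times \bs\alpha)\cup(\{0\}\times \bs\alpha^\dagger)$, where $\bs\alpha^\dagger=\wt h_\sigma(\wt{\bs a})$ and $\bs\alpha$ is either $\wt{\bs a}$ or $\wt{\bs a}'$. Since each component of $\bs\alpha$ and $\bs\alpha^\dagger$ is a $2$-sphere (being a capping of a Lagrangian disk in the $A_{\kappa-1}$ Milnor fiber) and $\wt W$ is simply-connected, one has $H_1(L)=0$ and $H_1(X)=0$. The projection $X\to\wt W$ induces $H_2(X)\simeq H_2(\wt W)$, and $H_2(L)$ is freely generated by the fundamental classes of the spherical components. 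The long exact sequence therefore identifies
\[
\mathcal{A}_{\bs\alpha,\bs\alpha^\dagger}\;\simeq\; H_2(\wt W)\big/\bigl\langle [\wt a_i^{(\prime)}],\,[\wt h_\sigma(\wt a_j)]\;\big|\;i,j=1,\dots,\kappa\bigr\rangle.
\]

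Next I would compare the two subgroups of $H_2(\wt W)$ appearing on the right-hand side. The arc slide of $\wt\gamma_1$ over $\wt\gamma_2$ along $c$ is a unimodular operation on the basis of matching spheres: up to sign, $[\wt a_1']=[\wt a_1]+[\wt a_2]$ in $H_2(\wt W)$ while $[\wt a_i']=[\wt a_i]$ for $i\geq 2$. Hence the two subgroups
\[
\langle [\wt a_1],\dots,[\wt a_\kappa]\rangle\quad\text{and}\quad \langle [\wt a_1'],\dots,[\wt a_\kappa']\rangle
\]
of $H_2(\wt W)$ coincide, so after adjoining the common span of $\{[\wt h_\sigma(\wt a_j)]\}_j$ the resulting subgroups are equal and the two quotients are (canonically) isomorphic.

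Finally, to obtain an isomorphism $\bs\Xi_{**}$ that is compatible with the bijection $\bs\Xi_\ast$ on relative $H_1$-classes and with the chain-level product map $\Phi$, I would give the explicit construction: for $[T]\in\mathcal{A}_{\wt{\bs a},\wt h_\sigma(\wt{\bs a})}$ with $T$ representing a class from ${\bf y}$ to ${\bf y}'$ as in Equation~\eqref{eqn: from y to y prime}, splice $T$ with the triple capping surfaces $T_{\bs\delta,\bs\Xi,\bs\Xi_\ast\bs\delta}$ (at both ${\bf y}$ and ${\bf y}'$) to produce a class in $\mathcal{A}_{\wt{\bs a}',\wt h_\sigma(\wt{\bs a})}$; the well-definedness follows because any two triple-capping choices differ by a $2$-chain in $L$, which dies in the quotient by the previous paragraph. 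The inverse is defined analogously using $\bs\Theta$. The main (mild) obstacle is bookkeeping: verifying that this explicit map agrees with the abstract quotient isomorphism from the long exact sequence, which reduces to checking that the triple capping surfaces push the boundary arcs from $\{1\}\times\wt{\bs a}$ to $\{1\}\times\wt{\bs a}'$ via chords lying on $\bs\Xi$---an essentially tautological verification given the definition of $\bs\Xi_\ast$.
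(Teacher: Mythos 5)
Your argument is correct and is essentially the paper's proof: both $\mathcal{A}$-groups are identified with $H_2([0,1]\times\wt W)$ modulo the image of $H_2$ of the Lagrangian spheres (using $H_1$ of the spheres vanishing), and the arc slide replaces $[\wt a_1]$ by $\pm[\wt a_1]\pm[\wt a_2]$, so the two subgroups and hence the two quotients coincide. Your third paragraph on splicing with the triple capping surfaces is extra bookkeeping beyond what the lemma requires (the paper handles the choice of capping surfaces in the surrounding text), but it is consistent with the intended use of the isomorphism in defining $\Phi$.
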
  

\begin{proof}
Let us write
$$X=[0,1]\times \wt W, ~~~ Y=(\{1\}\times \wt{\bs a})\cup (\{0\}\times \wt h_\sigma (\wt {\bs a})), ~~~ Y'=(\{1\}\times \wt{\bs a}')\cup (\{0\}\times \wt h_\sigma (\wt {\bs a})),$$
and $i:Y\to X$, $i': Y'\to X$ for the inclusions. We have $i_* H_2(Y)=i'_* H_2(Y')$ since $\wt \gamma_1'$ is obtained from $\wt \gamma_1$ by arc sliding over $\wt \gamma_2$. Hence
\begin{align*}
\mathcal{A}_{\wt {\bs a}, \wt h_\sigma(\wt {\bs a})}& =H_2(X,Y)\simeq H_2(X)/i_*H_2(Y)\\
&\simeq  H_2(X)/ i'_*H_2(Y')\simeq  H_2(X,Y') = \mathcal{A}_{\wt {\bs a}', \wt h_\sigma(\wt {\bs a})}.
\end{align*} \vskip-.23in
\end{proof}

\subsubsection{Holomorphic curves after Lagrangian surgery}

As preparation for Theorem~\ref{thm: count}, we state a useful fact which is a restatement of \cite[Theorem 55.11]{FOOO2}.  Let $b_0,\dots, b_m$ be transversely intersecting Lagrangians in $\widehat W$ that are either compact or with cylindrical Legendrian ends and project to arcs under the map $\widehat p: \widehat W\to \C$. 

Let $\mathcal{M}(y_0,\dots, y_m)$ be a moduli space of holomorphic maps 
$$v: D_m=D-\{p_0,\dots, p_m\} \to (\widehat W, J_{\widehat W}),$$ 
such that: 
\be
\item $\bdry_i D_m$, $i=0,\dots,m$, is mapped to $b_i$;
\item $p_i$, $i=0,\dots,m$, corresponds to the intersection point $y_i\in b_{i-1}\cap b_{i}$, where $i$ is viewed modulo $m+1$;
\item the intersection point $y_0$ lies above a singular value $z$ of $\widehat p$;
\item the $p_i$ may be allowed to vary or they may be fixed, and the maps $v$ may or may not have point constraints; 
\item $\mathcal{M}(y_0,\dots, y_m)$ is compact; and
\item all the curves of $\mathcal{M}(y_0,\dots, y_m)$ project to thin sectors between $b_m$ and $b_0$ near $p_0$ as shaded in Figure~\ref{fig: lagrangian-surgery}. 
\ee
Here we are using the notation from the beginning of Section~\ref{section: A infty operations}. 

Now there are two types of Lagrangian surgery operations at $p_0$; the results are called $b_+$ and $b_-$, corresponding to the right and middle pictures of Figure~\ref{fig: lagrangian-surgery}. The modified moduli spaces $\mathcal{M}^+(y_1,\dots, y_m)$ and $\mathcal{M}^-(y_1,\dots, y_m)$ are the analogously defined moduli spaces of maps 
$$v_\pm: D-\{p_1,\dots, p_m\}\to (\widehat W, J_{\widehat W})$$ 
such that (setting $b_\pm=b_0=b_m$):
\be
\item the counterclockwise arc component of $\bdry (D-\{p_1,\dots, p_m\})$ connecting $p_i$ and $p_{i+1}$ is mapped to $b_i$ for $i=1,\dots,m$;
\item $p_i$, $i=1,\dots,m$, corresponds to the intersection point $y_i\in b_{i-1}\cap b_{i}$;
\item if the points $p_0,\dots p_m$ were fixed for $\mathcal{M}(y_0,\dots,y_m)$, then $p_1,\dots, p_m$ are also for $\mathcal{M}^\pm(y_1,\dots, y_m)$; if there were point constraints for $\mathcal{M}(y_0,\dots,y_m)$, then the same point constraints are used for $\mathcal{M}^\pm(y_1,\dots, y_m$); and
\item all the curves of $\mathcal{M}^\pm (y_1,\dots, y_m)$ project to the shaded regions in Figure~\ref{fig: lagrangian-surgery}.
\ee

\begin{figure}[ht]
	\begin{overpic}[scale=1]{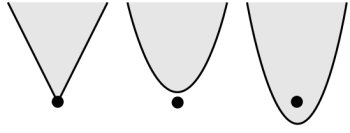}
	\end{overpic}
	\caption{The result of performing the two different kinds of Lagrangian surgery. The picture on the left shows the intersection of $b_m$ and $b_0$, and the pictures in the middle and on the right are $b_-$, $b_+$.}
	\label{fig: lagrangian-surgery}
\end{figure}

\begin{thm}[Fukaya-Oh-Ohta-Ono] \label{thm: Lagrangian surgery}
Suppose $J_{\widehat W}$ has a specific form on a small neighborhood of $y_0$ and $\mathcal{M}(y_0,\dots, y_m)$ is regular.  Then $\mathcal{M}^\pm(y_1,\dots,y_m)$ are regular and there are maps
$$\theta_\pm: \mathcal{M}^\pm(y_1,\dots, y_m)\to\mathcal{M}(y_0,\dots, y_m),$$
where $\theta_-$ is a $C^1$-diffeomorphism and $\theta_+$ is a $C^1$-double covering map.
\end{thm}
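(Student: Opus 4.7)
The plan is to follow the original strategy of Fukaya-Oh-Ohta-Ono~\cite{FOOO2}, adapted to the present Lefschetz-fibration setting. The assertion is genuinely local near the surgery point $y_0$, so the first step is to reduce to a model calculation in a Darboux-Weinstein chart.

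First I would localize. Since $y_0$ lies above a singular value $z$ of $\widehat{p}$, and the projections of all $b_i$ to $\C$ are arcs, the projections of $b_m$ and $b_0$ meet at $z$ with some angle (thin sector). Choose a chart around $y_0$ identifying $(\widehat W, J_{\widehat W})$ near $y_0$ with $(\C^n, i)$, so that $b_m$ and $b_0$ locally coincide with two real Lagrangian subspaces meeting transversely at the origin; pull back the prescribed $J_{\widehat W}$ to the standard complex structure. In this model the two surgeries $b_\pm$ are given by the two standard Polterovich-type Lagrangian handles attaching $b_m$ to $b_0$, distinguished by the choice of one of the two complementary sectors. By the thin-sector condition, every curve in $\mathcal{M}(y_0,\ldots,y_m)$ approaches $p_0$ through the prescribed sector, and any curve in $\mathcal{M}^\pm(y_1,\ldots,y_m)$ stays (near its portion of the boundary that came from the surgery) in the shaded sector of Figure~\ref{fig: lagrangian-surgery}.

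Next I would carry out the gluing construction. Given $v\in\mathcal{M}(y_0,\ldots,y_m)$, the task is to construct a $1$-parameter family (in the $-$ case) or two $1$-parameter families (in the $+$ case) of pregluings $v_\pm^\epsilon$ obtained by excising a small disk around the preimage of $p_0$, replacing the corner by a model surgery strip of size $\epsilon$, and smoothing. A standard right-inverse-plus-Newton-iteration argument, using the assumed regularity of $\mathcal{M}(y_0,\ldots,y_m)$ and the specific local form of $J_{\widehat W}$, then produces honest holomorphic maps $v_\pm$ close to these pregluings and shows that the map $v \mapsto v_\pm$ has the appropriate degree. The asymmetry between $+$ and $-$ reflects the local model: on the $-$ side the surgery arc is confined to a single sector and there is a unique gluing parameter, giving $\theta_-$ a $C^1$-inverse, while on the $+$ side the surgery arc may be rounded on either of two sides of $y_0$, producing two distinct gluing parameters and hence a $C^1$ double covering $\theta_+$. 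Regularity of $\mathcal{M}^\pm$ comes for free from the gluing, as the linearized operators for the glued curves are small perturbations of the regular operators for the original curves together with a standard surgery-neck operator.

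Finally I would prove surjectivity: every $v_\pm\in \mathcal{M}^\pm(y_1,\ldots,y_m)$ arises from this construction. Here the compactness hypothesis on $\mathcal{M}(y_0,\ldots,y_m)$ is crucial. Given a sequence of curves in $\mathcal{M}^\pm$ with the surgery neck-length tending to $0$, one extracts a Gromov-Hofer limit. The thin-sector hypothesis, together with the fact that $y_0$ is forced by the local energy/action accounting (no Reeb chord or sphere bubble can form generically near a singular value in this local model), guarantees that the limit is a curve in $\mathcal{M}(y_0,\ldots,y_m)$ with a single corner introduced at $p_0$; it cannot degenerate into a multi-level building because any other degeneration would require positive energy escaping through the surgery neck, violating the area/index accounting in the local chart. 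Combined with the gluing theorem this yields the stated bijection/double cover. The main technical obstacle is precisely this degluing/compactness argument, since one must rule out unwanted bubbling in the neck region and verify the cardinality of preimages on each side; this is exactly the content of \cite[Theorem 55.11]{FOOO2}, to which we ultimately defer for the full analytic details.
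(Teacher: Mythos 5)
First, note that the paper does not actually prove this statement: it is quoted as a restatement of \cite[Theorem 55.11]{FOOO2}, with the only in-text justification being the local model of Remark~\ref{rmk: signs} (the Lefschetz fibration \eqref{eqn: model for C} and the Clifford torus \eqref{eqn: Clifford torus}). So your overall scaffolding --- localize near $y_0$, glue with a Newton iteration using regularity of $\mathcal{M}(y_0,\dots,y_m)$, then deglue via a neck-stretching/compactness argument, deferring to FOOO for the analysis --- is the natural route and is consistent with what the cited theorem does.

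However, there is a genuine gap in the one place where the content of the theorem lives, namely the asymmetry between $\theta_-$ (diffeomorphism) and $\theta_+$ (double cover). You attribute the two sheets of $\theta_+$ to the fact that ``the surgery arc may be rounded on either of two sides of $y_0$, producing two distinct gluing parameters.'' That choice of rounding side is precisely what distinguishes the two surgeries $b_+$ and $b_-$ from each other (the middle and right pictures of Figure~\ref{fig: lagrangian-surgery}); it is not an internal two-fold ambiguity of the $+$ gluing, and as stated your argument would equally well produce two sheets for $\theta_-$. The actual mechanism is visible in the paper's local model: after the $+$-surgery the curve must run through the Lagrangian handle, and in the model $\widehat p:\C^2\to\C$, $(z_1,z_2)\mapsto z_1z_2$, with the Clifford torus $T$, there are exactly two holomorphic disks through a generic point of $T$ projecting with degree $1$ to the unit disk, namely $z\mapsto(z,a_2)$ and $z\mapsto(a_1,z)$ (Remark~\ref{rmk: signs}); these two lifts are the two preimages of $\theta_+$, and in dimension $2n$ they become an $S^{n-2}$-family, which is exactly the statement of Remark~\ref{rmk: general case}. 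Your rounding-choice explanation cannot reproduce that $S^{n-2}$-fiber statement, which is the telltale sign that it identifies the wrong source of multiplicity. Also, in your localization you model $b_m,b_0$ as two transverse Lagrangian planes in $(\C^n,i)$; since $y_0$ sits over a critical value of $\widehat p$ and the fibered structure (degrees over regions of the base, thin-sector condition) is used throughout the paper's applications, the correct local model is the fibered one above, not a plain Darboux chart. With the mechanism corrected, the rest of your gluing/degluing outline goes through as in \cite{FOOO2}.
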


\begin{rmk} \label{rmk: general case}
	In general, if $\op{dim}W=2n$, then $\theta_-$ is still a $C^1$-diffeomorphism but $\theta_+$ is a fiber bundle with fiber $S^{n-2}$. 
\end{rmk}

\begin{rmk} \label{rmk: signs}
Theorem~\ref{thm: Lagrangian surgery} is essentially a consequence of the following fact:  Consider the Lefschetz fibration 
\begin{equation}\label{eqn: model for C}
\widehat p: \C^2_{z_1,z_2}\to \C_\zeta,\quad (z_1,z_2)\mapsto z_1z_2.
\end{equation}
Then the Clifford torus 
\begin{equation} \label{eqn: Clifford torus}
T=\{|z_1|=1\}\times \{|z_2|=1\}\subset \C^2
\end{equation}
is a Lagrangian that ``lies over'' $\{|\zeta|=1\}\subset \C$.  There exist two holomorphic disks $u$ in $\C^2$ with boundary on $T$ that pass through a generic point $(a_1,a_2)$ of $T$ and such that $\widehat p\circ u$ is a degree $1$ map to $\{|\zeta|< 1\}$.  They are of the form $z\mapsto (z,a_2)$ and $z\mapsto (a_1,z)$.
\end{rmk}

\subsubsection{Warm-up problem}

We will first treat a slightly easier warm-up problem.  Consider Figure~\ref{fig: An-arcslide}, which is half of Figure~\ref{fig: An-handleslide} and corresponds to the Lefschetz fibration $p: W\to D$. Strictly speaking, we are working in the cylindrical end situation of $\widehat p: \widehat W\to \C$ and $\gamma_i$, $\gamma_i'$, $a_i$, $a_i'$ have been completed by attaching cylindrical ends. In general, the completion of $*$ will be denoted by $\widehat{*}$. 
Let $c_{ij}$ be the shortest counterclockwise chord in $\bdry D$ that connects $\gamma_i$ to $\gamma_j'$. Corresponding to each chord $c_{ij}$ there exists an $S^1$-Morse-Bott family of chords in $\bdry W$; in the perturbed version we consider we consider chords $\check c_{ij}$ and $\hat c_{ij}$ which are the longer and shorter Reeb chords.\footnote{This appears to be the opposite of the previous usage.  Our rule is to put a check on the lower-degree generator in the appropriate Floer cohomology group, which in the current situation is $\widehat{CF}(\bs{ \widehat{a}}, \bs{\widehat{a}'})$.}  We write ${\bf c}= \{\check c_{12},\check c_{21}, \check c_{33},\dots, \check c_{\kappa \kappa}\}$.  
 
\begin{figure}[ht]
	\begin{overpic}[scale=1]{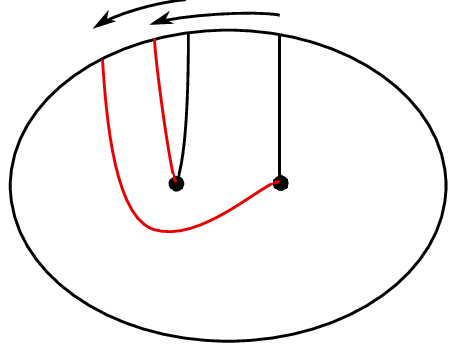}
		\put(41,35){\tiny $\Theta_2$}
		\put(65,35){\tiny $\Theta_1$}
		\put(28,51){\tiny $\gamma_2'$}
		\put(33,20){\tiny $\gamma_1'$}
		\put(42,51){\tiny $\gamma_2$}
		\put(63,51){\tiny $\gamma_1$}
		\put(50,77){\tiny $c_{12}$}
		\put(25,79){\tiny $c_{21}$}
	\end{overpic}
	\caption{}
	\label{fig: An-arcslide}
\end{figure}

Again assuming that $\kappa=2$, we consider the moduli space 
$$\mathcal{M}_{J^\Diamond}({\bf c}, \bs\Theta),$$ 
where the symplectic fibration and Lagrangian boundary conditions are:
$$\pi: \R\times[0,1]\times \widehat W \to \R\times[0,1], \quad L_1= \R\times \{1\}\times \widehat{\bs a}', \quad L_0=\R\times\{0\}\times \widehat {\bs a},$$
and we take $J$ to be the product $J_{\R\times[0,1]}\times J_{\widehat W}$ such that the projection $\widehat p: \widehat W \to \C$ is $(J_{\widehat W},i)$-holomorphic.

\begin{thm} \label{thm: count warm-up}
$\# \mathcal{M}^{\chi=1, {\bf w}}_{J^\Diamond}({\bf c}, \bs\Theta)=1$ mod $2$ for generic ${\bf w}$.
\end{thm}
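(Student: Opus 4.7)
The plan is to project the curves to the base $\widehat{D}$ via the Lefschetz fibration $\widehat{p}: \widehat{W} \to \C$, classify the resulting planar holomorphic curves by a Riemann-mapping argument, and then reconstruct the lift to $\widehat{W}$ via a model computation in the fiber.

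For any $u \in \mathcal{M}_{J^\Diamond}^{\chi=1,{\bf w}}({\bf c}, \bs\Theta)$, I would first consider the composition $v = \widehat{p} \circ \pi_{\widehat{W}} \circ u: \dot F \to \widehat{D}$. Since $\widehat{p}$ is $(J_{\widehat{W}},i)$-holomorphic and $J^\Diamond$ is close to the product $J_{\R\times[0,1]}\times J_{\widehat{W}}$, the map $v$ is holomorphic. Its boundary lies on $\widehat\gamma_1 \cup \widehat\gamma_2 \cup \widehat\gamma_1' \cup \widehat\gamma_2'$, with corners at the critical values $z_1, z_2$ (projections of $\Theta_1, \Theta_2$) and at the asymptotic chords $c_{12}, c_{21}$ in the cylindrical end of $\widehat{D}$. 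Combinatorial analysis of the non-crossing configuration in Figure~\ref{fig: An-arcslide} shows that the boundary circuit of $v$ encloses a unique topological disk $R \subset \widehat{D}$ bounded by the four arcs and the two chords, and the index formula (Lemma~\ref{lemma: Fredholm index}) together with Riemann-Hurwitz then forces $v : \dot F \to R$ to be a degree-one unramified map.

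Next, I would reconstruct $u$ from $v$ by analyzing holomorphic sections of the pullback Lefschetz fibration $R \times_{\widehat{D}} \widehat{W} \to R$ with the prescribed Lagrangian boundary conditions on $\widehat a_i, \widehat a_i'$ and asymptotics at $\check c_{12}, \check c_{21}$. A local model computation using the standard Lefschetz fibration $\C^2 \to \C$, $(z_1,z_2)\mapsto z_1 z_2$, and the disks bounding the Clifford torus (as in Remark~\ref{rmk: signs}), together with a gluing/linearization analysis near each $\Theta_i$, should show that the two interior point constraints $w_1 \in \widehat a_1$ and $w_2 \in \widehat a_2$ cut the space of lifts down to exactly one (up to sign). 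Combining this with the uniqueness of $v$ yields $\# \mathcal{M}_{J^\Diamond}^{\chi=1,{\bf w}}({\bf c}, \bs\Theta) = \pm 1$, giving $1 \pmod{2}$.

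The hard part will be the lifting step. The critical values $z_1, z_2$ lie inside the region $R$ and correspond to the pinch points $\Theta_i$ where the Lagrangian spheres collapse and the fiber of $\widehat{p}$ degenerates; pinning down the finite count of holomorphic lifts requires a careful local model near each singular fiber, either a direct computation in the standard Lefschetz model or a variant of Theorem~\ref{thm: Lagrangian surgery} adapted to corners at critical values of a Lefschetz fibration. Once this local analysis is in hand, combining it with the global Riemann-mapping picture should give the claimed count.
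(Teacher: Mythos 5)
There is a genuine gap, and it occurs at the very first step of your argument. For $\kappa=2$ and $\chi(F)=1$ the domain is a single disk with four boundary punctures, so $u$ is a connected degree-two multisection of $\pi$; correspondingly its projection $\widehat p\circ\pi_{\widehat W}\circ u$ to the base covers the thin region between $\widehat\gamma_2$ and $\widehat\gamma_2'$ with multiplicity two (and the outer region with multiplicity one). By Riemann--Hurwitz such a map cannot be a degree-one unramified map onto a disk region: it must carry either one interior branch point or a pair of boundary ``switch points'' (a slit) over the thin region. This is exactly the structure the paper's proof is organized around (Types $int$, $\bdry_1$, $\bdry_2$), and the difficulty of the theorem is precisely to control where these branch/switch points sit. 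So the classification you propose for the projected curve is false, and the subsequent reconstruction-by-lifting step has no uniquely determined base curve to lift.

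Your second step also underestimates what is needed to get the count $1$. The relevant local count is not $1$: through a generic point of the Clifford torus there are \emph{two} holomorphic disks in the model Lefschetz fibration $(z_1,z_2)\mapsto z_1z_2$ (Remark~\ref{rmk: signs}), so a naive ``point constraints cut the lifts down to one'' argument would, if anything, produce an even count. In the paper the answer $1\bmod 2$ emerges from a combination of: a special choice of ${\bf w}$ and a thin-strip/neck-stretching degeneration; Theorem~\ref{thm: Lagrangian surgery} to push curves off the critical value $z_1$; a mod $2$ cancellation of the configurations where the branch/switch points sit on the wrong side of $w_2$ (via an auxiliary computation in $\C\times T^*S^1$); a degree-one evaluation-map computation for the moduli space $\mathcal{M}^{w_1}_J$ of strips on the Clifford torus; and, crucially, the conformal (involution) constraint on the $4$-punctured disk with two marked points, which is what finally selects a single glued solution. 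None of these ingredients is visible in your outline, and without them the count cannot be pinned down; in particular the interior branch point over the thin region, which your ``unramified'' claim excludes, is one of the configurations that must be analyzed and shown to cancel or contribute correctly.
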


\begin{proof}
We write $u$ for an element of $\mathcal{M}^{\chi=1, {\bf w}}_{J^\Diamond}({\bf c}, \bs\Theta)$ and $v$ for its projection $\pi_{\widehat W}\circ u: \dot F\to \widehat W$, where $F$ is a unit disk in $\C$.

When $\chi=1$, there are two types of maps $v$:
\begin{itemize}
	\item Type $int$ (for ``interior'') which has a branch point ${\frak b}$ that maps to the interior of $B$;
	\item Type $\bdry$ (for ``boundary'') which does not have a branch point that maps to the interior of $B$.
\end{itemize} 	

We claim that Type $\bdry$ falls into two subcases: $\bdry_1$ and $\bdry_2$, where for $v$ of Type $\bdry_1$ (resp.\ $\bdry_2$) there are two ``switch points" ${\frak b}_1, {\frak b}_2\in \bdry \dot F$ (they may be the same point) that both map to $\widehat \gamma_2'$ (resp.\ both map to $\widehat \gamma_2$) and where $\widehat p\circ v|_{\bdry \dot F}$ switches directions along $\widehat\gamma_2'$ (resp.\ $\widehat \gamma_2$).  This a consequence of the fact that there is only one component $\zeta$ of $\bdry \dot F$ that maps to $\widehat\gamma_2'$ (resp.\ $\widehat\gamma_2$) and $v(\zeta)$ must connect the terminal point of $c_{12}$ to $\widehat{p}(\Theta_2)$ (resp.\ $\widehat{p}(\Theta_2)$ to the initial point of $c_{21}$).

If ${\frak b}_1\not={\frak b}_2$, let ${\frak b}_1$ be the point that is closer to the puncture corresponding to $c_{12}$ (resp. $c_{21}$) along the arc of $\bdry \dot F$ that maps to $\widehat \gamma_2'$ (resp.\  $\widehat \gamma_2$). 

The calculation of $\# \mathcal{M}^{\chi=1, {\bf w}}_{J^\Diamond}({\bf c}, \bs\Theta)$ does not depend on the choice of generic ${\bf w}$ since ${\bf c}$ and $\bs\Theta$ are closed. We therefore choose ${\bf w}$ as follows, subject to genericity:
\begin{enumerate} 
	\item[(R1)] $w_1\in \widehat{a}_1$ so that its projection $\widehat p(w_1)$ to $\widehat\gamma_1$ is close to $z_1$;
	\item[(R2)] $w_2\in \widehat{a}_2$ so that its projection $\widehat p(w_2)$ to $\widehat\gamma_2$ satisfies $|\widehat p(w_2)|\gg 0$.
\end{enumerate}
We also assume that:
\be
\item[(R3)] the region in $\C$ between $\widehat \gamma_2'$ and $\widehat \gamma_2$ corresponding to $B$ is an arbitrarily thin strip whose width is $\frak w$. 
\ee 

\s\n
{\em Order of choice of $w_1, w_2, {\frak w}$.}  This will become important when taking limits using Gromov compactness:
\begin{enumerate}
	\item[(S1)] First choose $w_2$ satisfying (R2) and an interval $[0,1]$ so that 
	$$|\widehat p(w_1)-z_1|\in[0,1].$$
	\item[(S2)] Then choose ${\bf w}= \frak w(w_2)$ for (R3).
	\item[(S3)] Finally choose $w_1$ satisfying (R1).
\end{enumerate}

\s
We also remark that the proof of the regularity of any explicit holomorphic curves with constraints that we use is straightforward and is omitted.

\s\n
{\em Step 1.} We first apply Theorem~\ref{thm: Lagrangian surgery} to push the holomorphic curve $v$ off of the critical value $z_1$; see Figure~\ref{fig: lagrangian-surgery2}.  The mod $2$ curve counts remain the same as long as no point constraints are prescribed at the clean $S^1$-intersection on the right-hand side of the diagram.

\begin{figure}[ht]
	\begin{overpic}[scale=1]{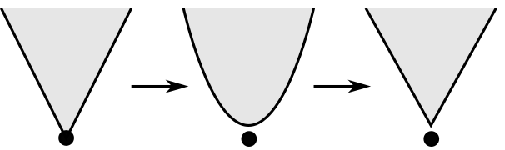}
	\end{overpic}
	\caption{}
	\label{fig: lagrangian-surgery2}
\end{figure}

\s\n
{\em Step 2.}
Fix $w_2$ with $\widehat p (w_2)$ large and $\frak w= \frak w(w_2)$ small. We write $\iota =\op{Im}\circ \widehat p$, where $\op{Im}$ refers to the imaginary part.
We claim that 
$$\iota \circ v ({\frak b}) \gg \iota(w_2) \quad \mbox{or} \quad \iota\circ v({\frak b}_2)  \gg \iota(w_2)$$ 
for $|\widehat p (w_1)- z_1|$ small. (Keep in mind the order (S1)--(S3).)  We will treat the ${\frak b}$ case; the ${\frak b}_2$ case is similar.  We use the notation $q(\Theta_1)$ to denote the point on $\bdry F$ that ``maps to'' $\Theta_1$, etc. See Figure~\ref{fig: involution-newer}.

\begin{figure}[ht]
	\begin{overpic}[scale=1]{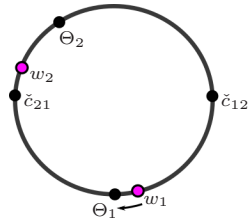}
		\put(39.3,-2){\tiny $\Theta_1$}
		\put(64.2,3.7){\tiny $w_1$}
		\put(101,50){\tiny $\check c_{12}$}
		\put(6.8,50){\tiny $\check c_{21}$}
		\put(9.8,63){\tiny $w_2$}
		\put(23.7,79.7){\tiny $\Theta_2$}
	\end{overpic}
	\caption{The location of the points $q(\Theta_1)$, etc.\ on $\bdry F$, assuming  $\iota\circ v({\frak b}), \iota\circ v ({\frak b}_2)\gg \iota(w_2)$ does not hold. Here $q$ is omitted from the notation.}
	\label{fig: involution-newer}
\end{figure}

Arguing by contradiction, if the claim does not hold, then there exists a sequence $\{v_i: \dot F_i\to \widehat W\}_{i=1}^\infty$ of $v$'s with respect to a sequence $w_1^{(i)}$ of $w_1$'s approaching $\Theta_1$ such that all the $\iota\circ v_i({\frak b}^{(i)})$ are bounded above,  where ${\frak b}^{(i)}$ is ${\frak b}$ for $v_i$.  By Gromov compactness, there exists a subsequence that converges to $v_\infty: \dot F_\infty\to \widehat W$ with a branch point ${\frak b}^\infty$ that is a subsequential limit of ${\frak b}^{(i)}$.  There also exist $\delta>0$ and a normalization of the points on $\bdry F_\infty$ such that 
\begin{itemize}
	\item[(i)] $q(\check c_{12})=1$, $q(\check c_{21})=-1$, $q(\Theta_1)=-i$;
	\item[(ii)] $|q(\Theta_2)-q(\check c_{12})|, |q(\Theta_2)-q(\check c_{21})| >\delta$;
	\item[(iii)] $|q(w_2)-q(\Theta_2)|>\delta$.
\end{itemize}

On the other hand, (i)--(iii) contradicts the requirement that there be an involution of $F_i$ for $i\gg 0$ taking
\begin{equation} \label{eqn: involution constraint}
q(\Theta_1)\mapsto q(\Theta_2), \quad q(\check c_{21})\mapsto q(\check c_{12}), \quad q(w_2)\mapsto q(w_1). 
\end{equation} 
This is because $|q(w_1^{(i)})-q(\Theta_1)|\to 0$ as $i\to\infty$.  
(Recall that such an involution exists if and only if there is a fractional linear transformation of the disk $F_i$ which fixes $q(\check c_{12})$ and $q(\check c_{21})$ and simultaneously moves $q(\Theta_1)$ and $q(\Theta_2)$ to antipodal points on $\bdry F_i$ and $q(w_1)$ and $q(w_2)$ to antipodal points on $\bdry F_i$.)

\s\n
{\em Step 3.}  We claim that $\# \mathcal{M}^{\chi=1, {\bf w},\sharp}_{J^\Diamond}({\bf c}, \bs\Theta)=0$ mod $2$, where $\sharp$ means we are counting curves that satisfy 
$$\iota\circ v({\frak b}) \gg \iota(w_2) \quad \mbox{or} \quad \iota\circ v({\frak b}_1)\geq \iota(w_2)-C,$$
for $C>0$ a fixed constant. 
Recall our convention is that $\iota\circ v({\frak b}_2) \geq \iota\circ v({\frak b}_1)$.  Also $\iota\circ v({\frak b}_2)\gg \iota(w_2)$  by Step 2.

We apply SFT-type stretching with $\iota(w_2)\to \infty$ and count $2$-level curves as given on the left-hand side of Figure~\ref{fig: stretching1}. The components are labeled $1,2,3$ and any $v\in \mathcal{M}^{\chi=1, {\bf w},\sharp}_{J^\Diamond}({\bf c}, \bs\Theta)$ is close to breaking into $v^{(1)}\cup v^{(2)}\cup v^{(3)}$. (The case drawn is for Type $int$, but $v^{(3)}$ may also have two switch points ${\frak b}_1$ and ${\frak b}_2$.) 

\begin{figure}[ht]
	\begin{overpic}[scale=1]{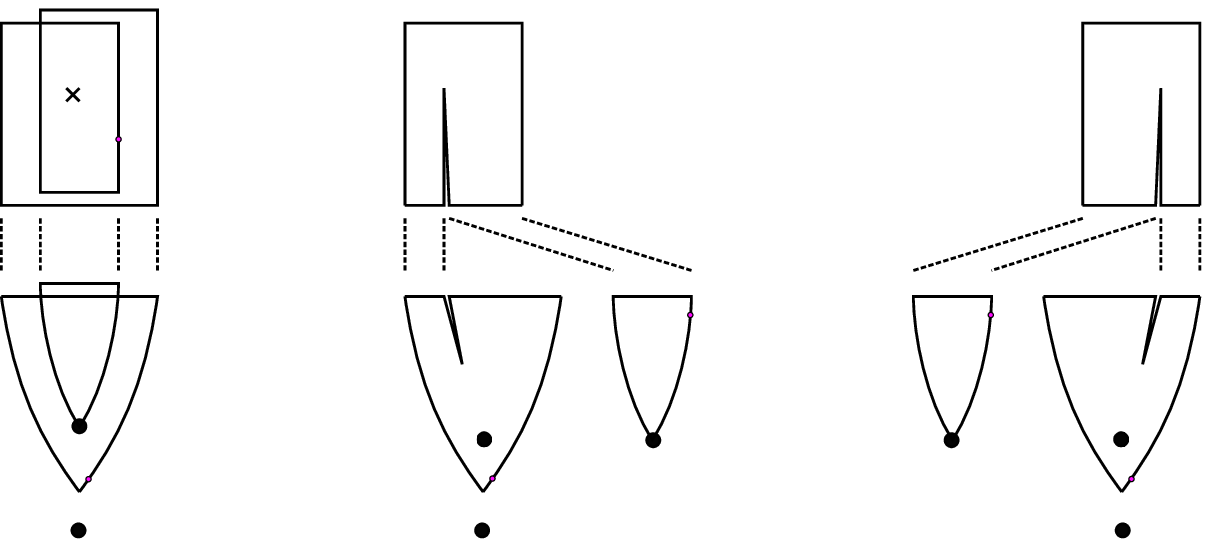}
		\put(43,-5){\tiny $\mbox{Type $\bdry_1$}$}
		\put(85,-5){\tiny $\mbox{Type $\bdry_2$}$}
		\put(2.1,15){\tiny $1$}
		\put(6,16){\tiny $2$}
		\put(5.7,32){\tiny $3$}
		\put(5.4,38.7){\tiny $\frak b$}
		\put(41,16){\tiny $1$}
		\put(53.5,16){\tiny $2$}
		\put(39.5,34){\tiny $3$}
		\put(91,16){\tiny $1$}
		\put(78.6,16){\tiny $2$}
		\put(92.7,34){\tiny $3$}
		\put(36.3,38.7){\tiny $\frak b_2$}
		\put(95.8,38.7){\tiny $\frak b_2$}
		\put(38,12){\tiny $\frak b_1$}
		\put(94,12){\tiny $\frak b_1$}
		\put(7,45){\tiny $\check c_{12}$}
		\put(41,21){\tiny $\check c_{12}$}
		\put(90,21){\tiny $\check c_{21}$}
		\put(94,44){\tiny $\check c_{12}$}
		\put(-1,44){\tiny $\check c_{21}$}
		\put(37,44){\tiny $\check c_{21}$}
		\put(5.5,22){\tiny $\check c_{22}$}
		\put(52.5,21){\tiny $\check c_{22}$}
		\put(78,21){\tiny $\check c_{22}$}
		\put(8.3,4.5){\tiny $w_1$}
		\put(41.9,4.5){\tiny $w_1$}
		\put(10.2,33){\tiny $w_2$}
		\put(58,17.3){\tiny $w_2$}
		\put(95,4.5){\tiny $w_1$}
		\put(83,17.3){\tiny $w_2$}
	\end{overpic}
	\vskip.2in
	\caption{}
	\label{fig: stretching1}
\end{figure}

The chords at the negative ends of $v^{(3)}$ must match the chords at the positive ends of $v^{(1)}$ and $v^{(2)}$. 
By Theorem~\ref{thm: Lagrangian surgery}, the algebraic count of curves $v^{(1)}$ satisfying the point constraint $w_1$ is $0$ mod $2$; in particular the set of chords at the positive end of $v^{(1)}$ is finite.
Since $\iota\circ v({\frak b}),\iota\circ v({\frak b}_2) \gg \iota(w_2)$, there is a portion of $v^{(3)}$ passing through $w_2$ that is close to a curve corresponding to a gradient trajectory through $w_2$; this effectively constrains the positive end of $v^{(2)}$ to a single chord.

We can therefore convert the count of $v^{(3)}$ to the count of $\nu\in \mathcal{M}^{\chi=1}_{J^\Diamond}({\bf c}, {\bf c}')$ that satisfy the following:  The symplectic fibration is
$$ \pi: \R\times[0,1]\times (\C\times T^*S^1)\to \R\times [0,1],$$ 
and the Lagrangian boundary conditions are
$$L_1=\R\times\{1\}\times {\bs a}'^\star,  \quad L_0=\R\times \{0\}\times {\bs a}^\star,$$
where 
\begin{gather*}
a_1^\star=\{x=1\}\times S^1, \quad a_2^\star= \{x=0\}\times S^1,\\
a_1'^\star= \{x=-1\}\times S^1, \quad a_2'^\star= \{x=-\epsilon\}\times S^1,
\end{gather*} 
are in $\C\times T^*S^1$, $\epsilon>0$ is small, and $S^1$ is the zero section of $T^*S^1$. Also the positive and negative ends of $\nu$ are
$${\bf c}=\{\check c_{12}, \check c_{21}\}, \quad  {\bf c}'=\{\check c_{11}, \check c_{22}\},$$
where $\check c_{ij}, \hat c_{ij}$ are the longer and shorter Reeb chords in the perturbed version of the $S^1$-Morse-Bott family of Reeb chords from $a_i^\star$ to $a_j'^\star$.  
\begin{figure}[ht]
	\begin{overpic}[scale=1.3]{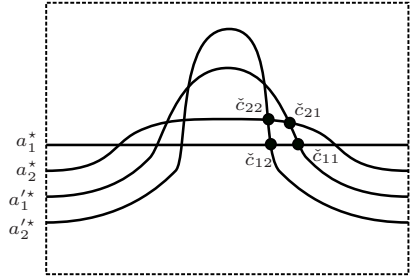}
		\put(-8,36){\tiny $a_1^\star$}
		\put(-8,28){\tiny $a_2^\star$}
		\put(-10,20){\tiny $a_1'^\star$}
		\put(-10,12){\tiny $a_2'^\star$}
		\put(54.6,31){\tiny $\check c_{12}$}
		\put(68.1,44.6){\tiny $\check c_{21}$}
		\put(73,32){\tiny $\check c_{11}$}
		\put(52,46){\tiny $\check c_{22}$}
	\end{overpic}
	\caption{The fiber $T^*S^1$ where the sides are identified.}
	\label{fig: TstarS1}
\end{figure}

On the other hand, generically $\mathcal{M}^{\chi=1}_{J^\Diamond}({\bf c}, {\bf c}')=\varnothing$.  We give two proofs: (P1)  Applying Hamiltonian perturbations to $S^1$ (on $T^*S^1$) we can push off the $a_i^\star$ and $a_i'^\star$ as in Figure~\ref{fig: TstarS1}. We then note that there is no domain in the picture with all positive weights which could be the projection of $\nu$ with positive corners $\check c_{12}, \check c_{21}$ and negative corners $\check c_{11}, \check c_{22}$.  (P2) Alternatively, we can verify that the Fredholm index $\op{ind}(\nu)=0$; this is consistent with $\op{ind}(v^{(1)})=2$ and $\op{ind}(v^{(2)})=2$ when we view $v^{(2)}$ as having $\check c_{22}$ at the positive end. 

The claim then follows.

\s\n
{\em Step 4.} We make one model calculation. {\em  Any notation introduced here is limited to this step.}

Consider the Lefschetz fibration $\widehat p$ given by Equation~\eqref{eqn: model for C}.  Let $T$ be the Clifford torus $\{|z_1|=1\}\times \{|z_2|=1\}$ over $|\zeta|=1$ and let $L$ be the Lagrangian thimble in $\C^2$ emanating from $\zeta=0$ along $$[-1,0]=\{-1\leq \op{Re}(\zeta)\leq 0, \op{Im}(\zeta)=0\}.$$ 
We note that for any $\zeta= re^{i\theta}\in \widehat p (T\cup L)$, $$\widehat p^{-1}(\zeta)\cap (T\cup L)=\{ \sqrt{r}( e^{i\phi},e^{i(-\phi+\theta)}),\phi\in [0,2\pi]\}.$$
Let $\mu$ be the $S^1$-family of intersections $T\cap L$.

Let $\mathcal{M}_J^{w_1}$ be the moduli space of holomorphic maps 
$$u=(u_1,u_2): \R_\sigma\times [0,1]_\tau\to \C^2$$ 
with respect to the standard complex structure $J$ satisfying the following:
\be
\item[(B1)] $u(\R\times\{0\})\subset T$ and $u(\R\times \{1\})\subset L$;
\item[(B2)] $u$ limits to points of $\mu$ as $\sigma\to \pm \infty$;
\item[(B3)] $\widehat p\circ u$ has degree $1$ over $\{|\zeta|\leq 1\}- \widehat p(T\cup L)$ and degree $0$ outside $\{|\zeta|\leq 1\}$;
\item[(B4)] $u(0,0)=w_1=(w_{11},w_{12})$.
\ee

In our current situation we take $w_1=(1,1)$.
Its boundary $\bdry \mathcal{M}^{w_1}_J$ consists of pairs $v\cup c$, where $c$ is a trivial strip that maps to a point in $\mu$ and $v: D^2\to \C^2$ is a disk bubble that passes through $(1,1)$ and such that $\widehat p\circ v$ has degree $1$ over $\{|\zeta|\leq 1\}$.  Recall that $\#\bdry \mathcal{M}^{w_1}_J=2$ by Remark~\ref{rmk: signs} and that the two maps $v$ are of the form $z\mapsto (z,1)$ and $z\mapsto (1,z)$.  See Figure~\ref{fig: modelcalc}, which gives a schematic description of $\overline{\mathcal{M}}^{w_1}_J$. The left-hand side of each row represents the image of $\widehat p\circ u$ or $\widehat p\circ v$ in $\C$ and the right-hand side of each row represents the image of $u$ or $v$ in $\C^2$.  The top and bottom maps are $v$ where $v\cup c\in \bdry \mathcal{M}^{w_1}_J$ and the middle map is of the form $(g(z),g(z))$ where $g$ is a conformal map to the half-disk.

\begin{figure}[ht]
	\begin{overpic}[scale=0.7]{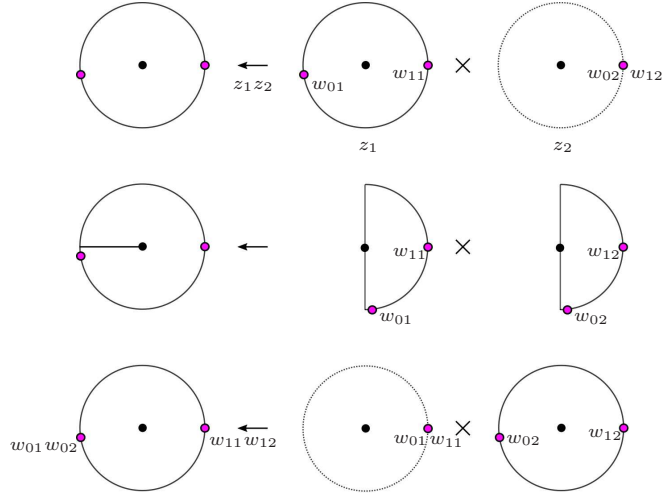}
		\put(24,8.6){\tiny $w_{11}w_{12}$}
		\put(-12,7.7){\tiny $w_{01}w_{02}$}
		\put(57.3,9){\tiny $w_{01}$}
		\put(63.8,8.7){\tiny $w_{11}$}
		\put(77.7,9){\tiny $w_{02}$}
		\put(92.5,10){\tiny $w_{12}$}
		\put(54.7,31){\tiny $w_{01}$}
		\put(57.2,42.7){\tiny $w_{11}$}
		\put(90,31){\tiny $w_{02}$}
		\put(42.5,73){\tiny $w_{01}$}
		\put(57.1,75){\tiny $w_{11}$}
		\put(92.2,75){\tiny $w_{02}$}
		\put(100,75){\tiny $w_{12}$}
		
		\put(92.2,42.7){\tiny $w_{12}$}
		\put(28.3,73){\tiny $z_1z_2$}
		\put(51,62){\tiny $z_1$}
		\put(86.1,62){\tiny $z_2$}
	\end{overpic}
	\caption{A schematic description of $\overline{\mathcal{M}}^{w_1}_J$.  The top and bottom rows represent curves $v$ where $v\cup c\in\bdry \mathcal{M}^{w_1}_J$ and the middle row represents a curve $u$ in the ``middle of" $\mathcal{M}^{w_1}_J$. All the circles are unit circles. A solid circle indicates the boundary of a holomorphic disk and a dotted circle with a red dot on it indicates a constant map to $1$.}
	\label{fig: modelcalc}
\end{figure}

Next pick a point $e^{i(\pi+\epsilon)}$ with $\epsilon>0$ small and consider the evaluation map
\begin{equation} \label{eqn: evaluation map}
ev_J:\mathcal{M}^{w_1}_J\to S^1_{|z_1|=1}, \quad u\mapsto w_{01},
\end{equation}
where $w_0=(w_{01},w_{02})$ is the unique point of intersection between $u(\R\times \{0\})$ and $\widehat{p}^{-1}(e^{i(\pi+\epsilon)})$.  While $J$ is not necessarily regular, there exists a small perturbation $J^\Diamond$ of $J$ with an analogously defined moduli space $\mathcal{M}^{w_1}_{J^\Diamond}$ and an evaluation map $ev_{J^\Diamond}: \mathcal{M}^{w_1}_{J^\Diamond}\to S^1$, such that $\mathcal{M}^{w_1}_{J^\Diamond}$ is transversely cut out and $\bdry \mathcal{M}^{w_1}_{J}=\bdry \mathcal{M}^{w_1}_{J^\Diamond}$. The condition $\bdry \mathcal{M}^{w_1}_{J}=\bdry \mathcal{M}^{w_1}_{J^\Diamond}$ is possible because the maps $v$ are transversely cut out by automatic transversality.

We claim that, if $z_1\in C=\{e^{i\theta_1}~|~\pi+\epsilon<\theta_1<2\pi\}$, then $\# ev_{J^\Diamond}^{-1}(z_1)=1$ modulo $2$. The evaluation map over $C$ is a degree $1$ map since:
\begin{itemize}
	\item The analogously defined $ev^{-1}_{J^\Diamond}$ values for $v\in \bdry \mathcal{M}^{w_1}_{J^\Diamond}$ are the endpoints $e^{i(\pi+\epsilon)}$ and $e^{2\pi i}$ of $C$.
	\item For $i=1,2$, consider the restriction $u_i(s,0)$ of $u_i$ to $\R_s\times\{0\}$, noting that $u_i(s,0)\in S^1_{|z_i|=1}$.  Since $u_1(s,0)$, $u_2(s,0)$, and $u_1(s,0)u_2(s,0)$ all monotonically rotate in the counterclockwise manner as $s\in \R$ increases and $u_i(0,0)=0$, $ev_J(\mathcal{M}^{w_1}_J)\subset C$.  By Gromov compactness, $ev_{J^\Diamond}(\mathcal{M}^{w_1}_{J^\Diamond})$ is contained in a small neighborhood of $C$.
\end{itemize}

\s\n
{\em Step 5.} We claim that $\# \mathcal{M}^{\chi=1, {\bf w}, \flat}_{J^\Diamond}({\bf c}, \bs\Theta)=1$ mod $2$, where we are counting curves of Type $\bdry$ that satisfy
\begin{equation} \label{eqn: iotas}
\iota\circ v({\frak b}_2)\gg \iota(w_2) \quad \mbox{but} \quad \iota\circ v({\frak b}_1)\leq \iota(w_2)-C,
\end{equation} 
for $C>0$ fixed.  We can apply SFT-type stretching and count $2$-level curves as given on the middle and right-hand side of Figure~\ref{fig: stretching1}. In view of \eqref{eqn: iotas}, we may take the long neck to be above $\iota(w_2)$.

We will treat the Type $\bdry_1$ case; the Type $\bdry_2$ case is complementary and analogous. Component $2$ corresponds to a gradient trajectory and there is a unique curve from $\check c_{22}$ to $\Theta_2$ passing through $w_2$. Component 3 has $\check c_{21}$ at the positive end and $\hat d_{21}$ and $\check c_{22}$ at the negative end, where $\hat d_{21}$ is the shorter Reeb chord from $a_2'$ to $a_1'$.  There is a single such curve (modulo $2$) by a calculation analogous to that of Figure~\ref{fig: TstarS1}: we perturb the $T^*S^1$-projections of the ends of $a_2,a_2',a_1'$ as in Figure~\ref{fig: TstarS1} and count a single holomorphic triangle with vertices $\check  c_{21}, \hat d_{21}, \check c_{22}$.   Next for Component 1 we view  $\hat d_{21}$ as a point constraint which corresponds to $w_0$ in Step 4 ($w_1$ in Step 4 directly corresponds to $w_1$ in our case). By Step 4, there is a single curve (modulo $2$) corresponding to Component $1$ which passes through $w_1$ and ``half" of the values of $\hat d_{21}$ (which in turn come from half of the values of $w_2$).  The reason the Type $\bdry_2$ case is complementary is that the ``other half" of the values of $w_2$ are taken care of by Type $\bdry_2$.

Finally, still assuming we are in the Type $\bdry_1$ case, we glue the $3$ components and impose the involution condition \eqref{eqn: involution constraint} on $F$; also refer to Figure~\ref{fig: involution-newer}. As we take $\iota\circ v({\frak b}_2)\to \infty$, $q(\Theta_2)$ approaches $q(\check c_{21})$ but $|q(w_2)-q(\Theta_2)|\ll |q(w_2)-q(\check c_{21})|$.  Hence, assuming $w_1$ is sufficiently close to $\Theta_1$, there is a single value of $\iota\circ v({\frak b}_2)$ for which there exists an involution of $F$ satisfying \eqref{eqn: involution constraint}.  This completes the proof of Theorem~\ref{thm: count}. 
\end{proof}

\subsubsection{Curve count}

\begin{thm} \label{thm: count}
$\# \mathcal{M}^{\chi=0, {\bf w}}_{J^\Diamond}(\bs\Xi, \bs\Theta)=1$ mod $2$ for generic ${\bf w}$.
\end{thm}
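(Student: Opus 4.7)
The plan is to parallel the warm-up Theorem~\ref{thm: count warm-up} by a global SFT-type neck-stretching along the preimage of the equator $E=\{\op{Im} z=0\}\subset \wt D$, which reduces the count of annuli from $\bs\Xi$ to $\bs\Theta$ to a product of two disk counts, each of which is an instance of Theorem~\ref{thm: count warm-up}. First, any $u\in\mathcal{M}^{\chi=0,{\bf w}}_{J^\Diamond}(\bs\Xi,\bs\Theta)$ is an annulus whose projection $\wt p\circ \pi_{\wt W^\wedge}\circ u$ to $\wt D$ has degree $1$ over the outer region $\wt A$ (bounded by $\wt\gamma_1,\wt\gamma_2,\wt\gamma_1',\wt\gamma_2'$) and degree $2$ over the inner region $\wt B$, with $\mu(u)=4$, exactly as computed at the start of the proof of Theorem~\ref{thm: quasi-isomorphism}. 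I would arrange the point constraints so that $w_1$ projects into the lower half $\{\op{Im} z<0\}$ and $w_2$ projects into the upper half $\{\op{Im} z>0\}$.

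Next I would introduce a family $J^\Diamond_R$ (still of product type away from $\wt p^{-1}(E)$) that inserts a long neck of length $R$ along $\wt p^{-1}(E)$, using the same circle-fiber stable Hamiltonian structure that is implicitly used in the warm-up when stretching along $\wt p^{-1}(\partial D\cap E)$. By SFT compactness (plus the maximum principle in $\wt W^\wedge$ to rule out escape to the cylindrical end), a sequence $u_{R_i}\in \mathcal{M}^{\chi=0,{\bf w}}_{J^\Diamond_{R_i}}(\bs\Xi,\bs\Theta)$ limits to a two-level building $u^+\cup u^-$, with $u^+$ living in $\wt W^\wedge_+:=\wt p^{-1}(\{\op{Im} z\geq 0\})$ (compactified by the symplectization of $\wt p^{-1}(E)$ at the negative end) and $u^-$ in $\wt W^\wedge_-$ (symmetrically). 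By Claim~\ref{claim: images under projection same} applied to each half, the degree data $(1,2)$ over $(\wt A,\wt B)$ splits as $(1,2)$ over the upper and lower half-regions respectively, which, combined with $\chi(u^+)+\chi(u^-)=\chi(\text{annulus})+2=2$, forces $\chi(u^+)=\chi(u^-)=1$ and pins down the chord set at the neck to be ${\bf c}=\{\check c_{12},\check c_{21}\}$ in the notation of the warm-up (the shorter Reeb chords at the equator from $\wt\gamma_i$ to $\wt\gamma_j'$). Hence $u^-$ is precisely a curve counted by Theorem~\ref{thm: count warm-up} and $u^+$ is a curve counted by the mirror-image version obtained via the reflection $z\mapsto\bar z$ of $\wt W$, which is a symplectic involution exchanging upper and lower halves, $\bs\Xi\leftrightarrow\bs\Theta$, and ${\bf c}$ with itself.

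By Theorem~\ref{thm: count warm-up} (applied with $w_1$ for the lower half and, via the reflection, with $w_2$ for the upper half), both counts are $1\pmod 2$. It remains to verify the gluing. The conformal modulus of an annulus is $1$-dimensional, and under the neck-stretching this modulus is precisely absorbed into the neck-length parameter $R$ modulo $\R$-translation in the $\R$-factor of $\R\times[0,1]\times \wt W^\wedge$; the matching/rotation parameter along each of the two glued Reeb orbits is killed by the two independent $\R$-translations along the two Reeb chords $\check c_{12},\check c_{21}$ (relative to the single overall $\R$-translation we are quotienting by). Thus each matched pair $(u^+,u^-)$ glues to a unique annulus in $\mathcal{M}^{\chi=0,{\bf w}}_{J^\Diamond_R}(\bs\Xi,\bs\Theta)/\R$ for $R\gg 0$, yielding $\#\mathcal{M}^{\chi=0,{\bf w}}_{J^\Diamond}(\bs\Xi,\bs\Theta)\equiv 1\cdot 1 = 1\pmod 2$.

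The main obstacle is the gluing step: verifying that the hypersurface $\wt p^{-1}(E)$ admits a suitable stable Hamiltonian structure so that SFT compactness and the standard gluing theorems apply, and carefully checking that no unexpected components (ghosts, closed curves in the neck, or branch points escaping to $\wt p^{-1}(E)$) contribute. Ruling out such degenerations requires the same kind of action-and-switch-point analysis as in Steps 2--5 of the warm-up, but applied symmetrically on both sides of the neck; in particular the ``Type $\bdry_1$/$\bdry_2$'' dichotomy of the warm-up reappears on each half and must be paired consistently in order to see that the combined algebraic count is genuinely $1\pmod 2$ rather than $0$.
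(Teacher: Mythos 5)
Your overall strategy---cutting the picture into two halves and reducing to model counts related to Theorem~\ref{thm: count warm-up}---is in the same spirit as the paper's proof (which makes the base long in the $\op{Im}z$-direction and then matches a model curve near $z_1$ with a curve over the remaining region). However, the central counting step of your argument fails as stated. Theorem~\ref{thm: count warm-up} counts curves in the lower half subject to \emph{both} point constraints $w_1$ and $w_2$; after your neck-stretching, $w_1$ constrains $u^-$ and $w_2$ constrains $u^+$, so neither half is an instance of the warm-up theorem, and neither half is rigid: each constrained half-moduli space is positive-dimensional, and the two halves must be matched along the Morse--Bott $S^1$-families of chords over the equator. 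The count is therefore not a product of two integers but a fiber product of evaluation maps into $S^1\times S^1$---precisely what Steps 5--6 of the paper's proof compute, with $\mathcal{M}_1^\theta$ one-dimensional, $\mathcal{M}_2$ two-dimensional, and the matching locus $\mathcal{I}$ of Figure~\ref{fig: torus2} still one-dimensional. Your claim that the annulus modulus is ``absorbed into the neck length'' and that the two $S^1$ rotation parameters are ``killed by two independent $\R$-translations'' does not survive dimension counting: once the point constraints $(0,0,w_i)$ are imposed there is no residual $\R$-translation at all, and the $S^1$-matching parameters are genuine conditions, not gauge. What actually cuts the one-dimensional matched family down to a finite count is the requirement that both marked points sit over the same base point $(0,0)\in\R\times[0,1]$, equivalently the existence of the deck involution \eqref{eqn: involution} of the degree-two branched cover of the strip; this is the content of Step 6 (the monotonicity observation (*) together with taking $w_1$ close to $\Theta_1$) and it is entirely absent from your argument.

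In addition, the assertion that the limit is forced to break along exactly the chord set $\{\check c_{12},\check c_{21}\}$ with $\chi(u^\pm)=1$ is not justified: you must rule out interior branch points remaining in the neck, breakings along chords such as $\check c_{11},\check c_{22}$ or the hat chords, and the other combinatorial configurations of slits and switch points---this is the role of Steps 2--3 of the paper's proof (Types $2$, $1L$, $1R$, $0LL$, $0RR$), which you acknowledge as ``the main obstacle'' but do not carry out, and for which the existence of a suitable stable Hamiltonian structure on $\wt p^{-1}(E)$ would also have to be established. As it stands, the proposal shows at most that the count equals the mod $2$ number of points in a one-dimensional fiber product, which is not yet a number; the missing involution/modulus analysis and the elimination of the other degeneration types are precisely the hard parts of the theorem.
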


\begin{proof}
We write $u:\dot F\to \R\times[0,1]\times \wt W^\wedge$ for an element of $\mathcal{M}^{\chi=0, {\bf w}}_{J^\Diamond}(\bs\Xi, \bs\Theta)$ and $v=v(u)$ for its projection to $\wt W^\wedge$.
	
We stretch the base $D'$ in the $\op{Im} z$-direction, as given in Figure~\ref{fig: stretching2}.  This means that we are keeping $\op{Re} z_i=\op{Re} z_{i+\kappa}$ fixed and taking $\op{Im} z_i=-2K$, $\op{Im} z_{i+\kappa}=2K$, for $i=1,\dots,\kappa$ and $K\gg 0$.  The region $\mathcal{R}$ bounded by $\wt \gamma_1'$ and $\wt\gamma_1$ is divided into three regions $\mathcal{R}_1$, $\mathcal{R}_2$, and $\mathcal{R}_3$, which are intersections of $\mathcal{R}$ with $\{\op{Im} z\leq -K\}$, $\{-K\leq \op{Im} z\leq K\}$, and $\{K\leq \op{Im} z\}$. 
\begin{figure}[ht]
	\begin{overpic}[scale=1]{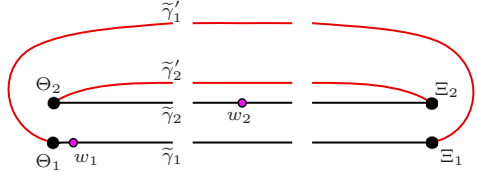}
		\put(6,-3.8){\tiny $\Theta_1$}
		\put(6,12.7){\tiny $\Theta_2$}
		\put(92.6,-2.8){\tiny $\Xi_1$}
		\put(91.5,11.7){\tiny $\Xi_2$}
		\put(14.2,-2.8){\tiny $w_1$}
		\put(47,6.2){\tiny $w_2$}
		\put(33,16){\tiny $\wt \gamma_2'$}
		\put(33,6.2){\tiny $\wt \gamma_2$}
		\put(33,-2.5){\tiny $\wt \gamma_1$}
		\put(33,28.5){\tiny $\wt \gamma_1'$}
	\end{overpic}
	\caption{The horizontal direction is the $\op{Im} z$-direction. The regions $\mathcal{R}_1$, $\mathcal{R}_2$, $\mathcal{R}_3$ are from left to right.}
	\label{fig: stretching2}
\end{figure}

Since the calculations do not depend on the choice of generic ${\bf w}$, we choose ${\bf w}$ to satisfy the following, subject to genericity:
\begin{enumerate} 
	\item[(R1$'$)] $w_1\in \wt{a}_1$ so that its projection $\wt p(w_1)$ to $\wt\gamma_1$ is close to $z_1$;
	\item[(R2$'$)] $w_2\in \wt{a}_2$ so that its projection $\wt p(w_2)$ to $\wt\gamma_2$ is close to $\op{Im} z=0$.
\end{enumerate}	
We also assume that:
\be
\item[(R3$'$)] the region in $\C$ between $\wt \gamma_2'$ and $\wt \gamma_2$ is an arbitrarily thin strip whose width is $\frak w$. 
\ee 
Additional conditions will be imposed later.

We classify the types of maps $v$ by their branch points and switch points.  The types are $2$, $1L$, $1R$, $0LL$, $0LR$, $0RR$, where the number is the number of interior branch points and each occurrence of $L$ (resp.\ $R$) indicates two switch points that map to $\wt \gamma_2'$ (resp.\ $\wt \gamma_2$).  If they exist, the interior branch points are enumerated by ${\frak b}, {\frak b}'\in \op{int}(\dot F)$ and the switch points by ${\frak b}_1,\dots, {\frak b}_4\in \bdry \dot F$. The switch points come in adjacent pairs $({\frak b}_1, {\frak b}_2)$ and $({\frak b}_3, {\frak b}_4)$ that map to the same $\wt\gamma_i'$ and we assume that $\iota({\frak b}_{j+1})\geq \iota({\frak b}_{j})$ for $j=1,3$; and if there is more than one branch point we assume that $\iota({\frak b}')\geq \iota({\frak b})$.  Here we are writing $\iota(x)= \op{Im}(\wt p \circ v(x))$ for $x\in \dot F$. 

\s\n
{\em Step 1.}  Suppose $K\gg 0$. We describe the limit of $v^{(i)}: F^{(i)}\to \wt W^\wedge$ as ${\frak w}_i \to 0$, where ${\frak w}_i$ is the width of the thin strip and $u^{(i)}\in \mathcal{M}^{\chi=0, {\bf w}}_{J^\Diamond}(\bs\Xi, \bs\Theta)$ and $v^{(i)}=v(u^{(i)})$ are with respect to ${\frak w}_i$. 

After possibly passing to a subsequence the limit of $v^{(i)}$ is $v_\infty\cup \delta_+\cup\delta_-$, where
\begin{itemize}
	\item $v_\infty: F_\infty\to \wt W^\wedge$ is a holomorphic annulus,
	\item $\delta_+$ is a gradient trajectory on $\wt a_2$ from $\Xi_2$ to a point $p_+$ on $\op{Im} v_\infty$,  and
	\item $\delta_-$ is a gradient trajectory on $\wt a_2$ from a point $p_-$ on $\op{Im} v_\infty$ to $\Theta_2$.
\end{itemize}     
Here $v_\infty$ could be degenerate in the sense that $F_\infty$ is a disk with an interior point $F_\infty$ that ``behaves like" a boundary component of $F_\infty$ and maps to $\wt a_2$.
We also use the following conventions:
\begin{itemize}
	\item If $w_2$ lies on the gradient trajectory $\delta_\pm$, then $q(w_2)= q(p_\pm)\in \bdry F_\infty$. 
	\item $q(\Theta_2)=q(p_-)$ and $q(\Xi_2)=q(p_+)$ on $\bdry F_\infty$.
\end{itemize}
In particular, in the limit it is possible for $q(w_2)= q(\Theta_2)$ or $q(\Xi_2)$. (In the ``close to breaking" picture for $v^{(i)}$ with $i\gg 0$, $q(w_2)$ is close to $q(\Theta_2)$ or $q(\Xi_2)$.)

The limit of Figure~\ref{fig: stretching2} is given by Figure~\ref{fig: stretching4}. 
\begin{figure}[ht]
	\begin{overpic}[scale=1]{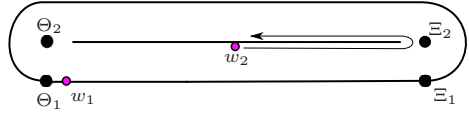}
		\put(6,-3.8){\tiny $\Theta_1$}
		\put(6,12.7){\tiny $\Theta_2$}
		\put(92.6,-2.8){\tiny $\Xi_1$}
		\put(91.5,11.7){\tiny $\Xi_2$}
		\put(13.5,-2.8){\tiny $w_1$}
		\put(47,5.8){\tiny $w_2$}
	\end{overpic}
	\caption{The limit of Figure~\ref{fig: stretching2} as we take ${\frak w}_i\to 0$.  In the picture, the red dot $w_2$ is placed just below the slit, indicating that $v_\infty$ passes through $w_2$ along the lower branch of the slit. The arrow indicates the possible locations of $\wt p \circ v_\infty(q(w_2))$. }
	\label{fig: stretching4}
\end{figure}

Let ${\frak b}^{(i)}, {\frak b}^{'(i)}, {\frak b}_j^{(i)}$ be ${\frak b}, {\frak b}', {\frak b}_j$ for ${\frak w}_i$ small and let ${\frak b}^\infty, {\frak b}^{'\infty}, {\frak b}_j^\infty$ be their limits after passing to a subsequence. {\em We will often omit the superscripts $(i)$ and $\infty$.}

\s\n
{\em Step 2.} We claim that, for $K\gg 0$ and ${\frak w}_i$ small, we can restrict to the case where:
\begin{gather}
\label{eqn: first} \op{max}\{\iota({\frak b'}), \iota({\frak b}_2),\iota({\frak b}_4)\}\geq K \quad \mbox{and}\\
\label{eqn: second} \op{min}\{\iota({\frak b}), \iota({\frak b}_1),\iota({\frak b}_3)\}\leq -K,
\end{gather}
and the left-hand sides of \eqref{eqn: first} and \eqref{eqn: second} correspond to the endpoints of the slit in the limit ${\frak w}_i\to 0$. 

We argue as in Step 3 of Theorem~\ref{thm: count warm-up}.  If \eqref{eqn: second} does not hold, then the restriction of $v_\infty$ to $\mathcal{R}_1$ (for $K\gg 0$) can be viewed as a curve of the type described in Remark~\ref{rmk: signs}.  Since such curves come in pairs, the mod $2$ count is unaffected by restricting to curves satisfying \eqref{eqn: second}. The argument for \eqref{eqn: first} is similar.

\s\n
{\em Step 3.}  We show that, for $K\gg 0$ and ${\frak w}_i$ small,  $\#\mathcal{M}^{\chi=0, {\bf w}, *}_{J^\Diamond}(\bs\Xi, \bs\Theta)=0$ mod $2$, where $*=2, 1L, 0LL$, or $0RR$.  By Step 2 we may restrict to the case where \eqref{eqn: first} and \eqref{eqn: second} hold. Hence in all the cases, $v_\infty$ has a long slit along $\wt \gamma_2'=\wt \gamma_2$ at least from $\op{Im} z=-K$ to $\op{Im}z=K$.

\s
{\em Type $2$.} Note that $v_\infty$ maps $q(\Theta_2)$ and $q(\Xi_2)$ to the endpoints of the slit.  Hence $q(\Theta_2)$ cannot be close to $q(w_2)$, a contradiction, and $v$ cannot be of Type $2$.

\s
{\em Type $1L$ or $1R$.} There are three cases.  First assume that $\iota({\frak b})\leq \iota({\frak b}_1)\leq \iota({\frak b}_2)$.  In this case $\iota({\frak b})$ and $\iota({\frak b}_2)$ are the endpoints of the slit, $q(\Theta_2)$ maps to $\iota({\frak b})$, and $q(\Xi_2)$ maps to $\iota({\frak b}_1)$.  If $\iota({\frak b}_1)\geq 0$, then the images of $q(\Theta_2)$ and $q(w_2)$ are too far, a contradiction. If $\iota({\frak b}_1)\leq 0$, then the images of $q(\Theta_2)$ and $q(\Xi_2)$ are too close, which is also a contradiction.  
 
Next assume that $\iota({\frak b}_1)\leq \iota({\frak b})\leq \iota({\frak b}_2)$. In this case $\iota({\frak b}_1)$ and $\iota({\frak b}_2)$ are the endpoints of the slit and $q(\Theta_2)$ and $q(\Xi_2)$ map to $\iota({\frak b})$. Hence the images of $q(\Theta_2)$ and $q(\Xi_2)$ are too close, a contradiction.

Finally assume that $\iota({\frak b}_1)\leq \iota({\frak b}_2)  \leq \iota({\frak b})$, where $q(\Theta_2)$ maps to $\iota({\frak b}_2)$ and $q(\Xi_2)$ maps to $\iota({\frak b})$.  If $\iota({\frak b}_2)\leq -1$, then the images of $q(\Theta_2)$ and $q(w_2)$ are too far, a contradiction. If $\iota({\frak b}_2)\geq -1$, then the images of $q(\Theta_2)$ and $q(\Xi_2)$ are too close, which is also a contradiction.

\s
{\em Type $0LL$ or $0RR$.} We treat Type $0LL$; Type $0RR$ is similar. We may assume that
$$\iota ({\frak b}_3) \leq  \iota({\frak b}_4) \leq \iota({\frak b}_1) \leq \iota({\frak b}_2).$$ 
This is because $\iota({\frak b}_1)< \iota({\frak b}_4)$ is incompatible with $v_\infty$ having degree $1$ over $\mathcal{R}$ and degree $2$ over the thin strip.
Then $\iota({\frak b}_3)$ and $\iota({\frak b}_2)$ are the endpoints of the slit, $q(\Theta_2)$ maps to $\iota({\frak b}_4)$, and $q(\Xi_2)$ maps to $\iota({\frak b}_1)$. 
If $\iota({\frak b}_4)\leq -1$, then the images of $q(\Theta_2)$ and $q(w_2)$ are too far, a contradiction.  If $\iota({\frak b}_4)\geq -1$, then the images of $q(\Theta_2)$ and $q(\Xi_2)$ are too close, which is also a contradiction.  

\s
It remains to consider Type $0LR$.

\s\n
{\em Step 4.} Let $K\gg 0$.  We describe the limit $v_\infty\cup \delta_+\cup \delta_-$ of $v^{(i)}$ as ${\frak w}_i\to 0$ (from Step 1), where $u^{(i)}\in\#\mathcal{M}^{\chi=0, {\bf w},0LR}_{J^\Diamond}(\bs\Xi, \bs\Theta)$.

Suppose for $v^{(i)}$ the switch points ${\frak b}_1^{(i)}, {\frak b}_2^{(i)}$ map to $\wt\gamma_2'$ and the switch points ${\frak b_3^{(i)}}, {\frak b}_4^{(i)}$ map to $\wt \gamma_2$. As before we are assuming 
$$\iota({\frak b}_{2}^{(i)})\geq \iota({\frak b}_{1}^{(i)}), \quad \iota({\frak b}_{4}^{(i)})\geq \iota({\frak b}_{3}^{(i)}).$$ 

By Step 2, we have:

\begin{claim} \label{claim A}
	$v_\infty$ has a long slit along $\wt \gamma_2'=\wt \gamma_2$ at least from $\op{Im} z=-K$ to $\op{Im}z=K$ with endpoints 
	$$\op{max}(\iota({\frak b}_2^\infty), \iota({\frak b}_4^\infty)),\quad \op{min}(\iota({\frak b}_1^\infty), \iota({\frak b}_3^\infty)).$$ 
\end{claim} 

See Figure~\ref{fig: slit} for an example. The restriction $C_+$ of $v^{(i)}$ over the shaded region converges to the trajectory $\delta_+$; there is an analogous curve $C_-$ converging to the trajectory $\delta_-$.  
\begin{figure}[ht]
	\begin{overpic}[scale=1]{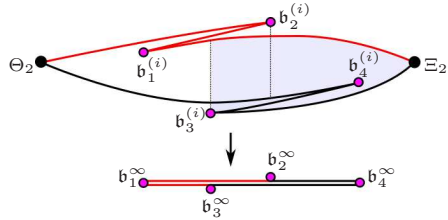}
		\put(-7,32.5){\tiny $\Theta_2$}
		\put(101,32.5){\tiny $\Xi_2$}
		\put(26,31){\tiny ${\frak b}_1^{(i)}$}
		\put(63,45){\tiny ${\frak b}_2^{(i)}$}
		\put(36,18){\tiny ${\frak b}_3^{(i)}$}
		\put(81,32.5){\tiny ${\frak b}_4^{(i)}$}
		\put(21.2,3.7){\tiny ${\frak b}_1^{\infty}$}
		\put(60,8){\tiny ${\frak b}_2^{\infty}$}
		\put(43,-4.2){\tiny ${\frak b}_3^{\infty}$}
		\put(86,3.7){\tiny ${\frak b}_4^{\infty}$}
	\end{overpic}
	\caption{A schematic picture of the switch points and the limit slit as $i\to \infty$. In the figure ${\frak b}_1^{(i)}$ is shorthand for $\wt p\circ v^{(i)}({\frak b}_1^{(i)})$, for example.}
	\label{fig: slit}
\end{figure}

We also have: 

\begin{claim} \label{claim B}
	\begin{align*}
		q(p_+)& = \left\{ \begin{array}{ll}  {\frak b}_3^\infty & \mbox{ if } \iota({\frak b}_3^\infty)\geq \iota({\frak b}_1^\infty),\\
			{\frak b}_1^\infty & \mbox{ if }  \iota({\frak b}_1^\infty)\geq \iota({\frak b}_3^\infty). \end{array} \right.\\
		q(p_-)& = \left\{  \begin{array}{ll}  {\frak b}_2^\infty & \mbox{ if } \iota({\frak b}_4^\infty)\geq \iota({\frak b}_2^\infty),\\
			{\frak b}_4^\infty & \mbox{ if }  \iota({\frak b}_2^\infty)\geq \iota({\frak b}_4^\infty). \end{array} \right.
	\end{align*}
\end{claim} 

The various possibilities for ${\frak b}_j^\infty$ around the slit are given in Figure~\ref{fig: slit2} and are denoted by (A)--(D).

\begin{figure}[ht]
	\begin{overpic}[scale=1.3]{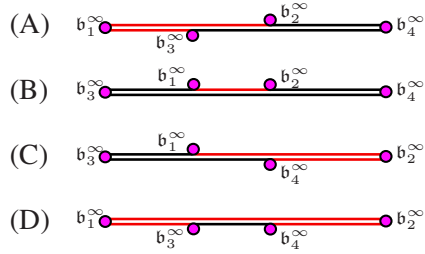}
		\put(-8,70){\tiny ${\frak b}_1^{\infty}$}
		\put(61,74.4){\tiny ${\frak b}_2^{\infty}$}
		\put(19,64){\tiny ${\frak b}_3^{\infty}$}
		\put(100,70){\tiny ${\frak b}_4^{\infty}$}
		
		\put(19.5,53){\tiny ${\frak b}_1^{\infty}$}
		\put(61,52.4){\tiny ${\frak b}_2^{\infty}$}
		\put(-8,48.3){\tiny ${\frak b}_3^{\infty}$}
		\put(100,48){\tiny ${\frak b}_4^{\infty}$}
		
		\put(19.5,31){\tiny ${\frak b}_1^{\infty}$}
		\put(100,26.8){\tiny ${\frak b}_2^{\infty}$}
		\put(-8,26.3){\tiny ${\frak b}_3^{\infty}$}
		\put(61,20){\tiny ${\frak b}_4^{\infty}$}
		
		\put(-8,5){\tiny ${\frak b}_1^{\infty}$}
		\put(100,4.7){\tiny ${\frak b}_2^{\infty}$}
		\put(19,-1.8){\tiny ${\frak b}_3^{\infty}$}
		\put(61,-2){\tiny ${\frak b}_4^{\infty}$}
		
		\put(-30,69){(A)}
		\put(-30,47){(B)}
		\put(-30,25){(C)}
		\put(-30,3){(D)}
	\end{overpic}
	\caption{The various possibilities for ${\frak b}_j^\infty$ around the slit.}
	\label{fig: slit2}
\end{figure}

\s\n
{\em Step 5.} We make one model calculation, which is a slightly more involved variant of Step 4 of Theorem~\ref{thm: count warm-up}.  
Let $\widehat p$, $T$, $L$, $\mathcal{M}^{w_1}_J$ be as in Step 4 of Theorem~\ref{thm: count warm-up}. 

We will define two moduli spaces $\mathcal{M}_1^\theta$ and $\mathcal{M}_2$ of holomorphic maps $\R\times[0,1]\to\C^2$, define the evaluation maps
$$ev_1, ev_2: \mathcal{M}_1^\theta, \mathcal{M}_2\to S^1\times S^1=\R/2\pi \Z \times \R/2\pi \Z,$$ 
and determine their images.   The fiber product of $ev_1$ and $ev_2$, i.e., the set of pairs $(v',v'')\in \mathcal{M}_1^\theta\times \mathcal{M}_2$ such that $ev_1(v')=ev_2(v'')$ with $\theta=-\tfrac{\pi}{2}+\tilde \epsilon$ (shown to be given by the intersection of the blue line and the pink region in Figure~\ref{fig: torus2})  will be the model for the gluing of $v'_\infty$ and $v''_\infty$ (cf.\ \eqref{eqn: gluing}) from Step 6.





\s
{\em Step 5A.} For $\theta\in [0,\tfrac{\pi}{2})$, let $\mathcal{M}_1^\theta= \mathcal{M}^{w_1}_J$, where $w_1=(e^{i\theta},e^{-i\theta})$.  
Let $w_\pm= e^{i(\pi\pm\epsilon)}\in \C_\zeta$, where $\epsilon>0$ is small, and let $ev_{1\pm}(v')$ be the $z_1$-coordinate of $(\widehat p \circ v')^{-1}(w_\pm)$, where $v'\in \mathcal{M}^\theta_1$.  We then define the map 
\begin{gather*}
	ev_1:\mathcal{M}^\theta_1\to S^1\times S^1,\\
	v'\mapsto (ev_{1+}(v'), ev_{1-}(v')).
\end{gather*}

The calculation of Step 4 of Theorem~\ref{thm: count warm-up} (i.e., examining the $1$-parameter family of maps in Figure~\ref{fig: modelcalc} that  depicts $\mathcal{M}^{\theta=0}_1$) implies that $ev_1(\overline{\mathcal{M}}^{0}_1)$ is homotopic rel endpoints to
\begin{gather} \label{eqn: ev1 part 0}
\{(1-t)(\pi+\epsilon, \pi-\epsilon)+ t(2\pi,0)~|~ t\in [0,1]\}.
\end{gather}
Note that there is a diffeomorphism 
$$\mathcal{M}_1^0\stackrel\sim\to\mathcal{M}_1^\theta, \quad v'\mapsto v'_\theta,$$
where $v'_\theta$ is obtained from $v'$ by rotating the first component by $\theta$ and the second component by $-\theta$. It follows that
\begin{gather} \label{eqn: ev1 part theta}
	ev_1(\mathcal{M}^{\theta}_1)=ev_1(\mathcal{M}^{0}_1)+(\theta,\theta).
\end{gather}
We will take $\theta=-\tfrac{\pi}{2}+\tilde\epsilon$ where $\tilde\epsilon>0$ is small. See Figure~\ref{fig: torus2}. 

\begin{figure}[ht]
	\begin{overpic}[scale=.8]{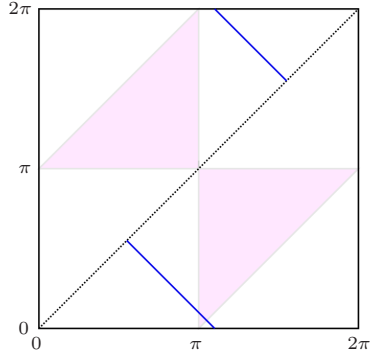}
		\put(-2,-6){\tiny $0$}
		\put(47,-6){\tiny $\pi$}
		\put(96,-6){\tiny $2\pi$}
		\put(-6,-1.3){\tiny $0$}
		\put(-6,48.5){\tiny $\pi$}
		\put(-9,98){\tiny $2\pi$}
	\end{overpic}
	\caption{The torus $S^1\times S^1$ with coordinates $(\theta_1,\theta_2)$.  The sides are identified and the top and the bottom are identified. The dotted line is the diagonal.  The blue line represents $ev_1(\mathcal{M}^\theta_1)$, where $\theta=-\tfrac{\pi}{2}+\tilde\epsilon$, and the pink region represents $ev_2(\mathcal{M}_2)$, as we take $\epsilon\to 0$.}
	\label{fig: torus2}
\end{figure}

\s
{\em Step 5B.}
The moduli space $\mathcal{M}_2$ is the set of maps $v'':\R\times[0,1]\to \C^2$ that satisfy (B1)--(B3) and
\be
\item[(B4$'$)] $v''$ nontrivially intersects $\ell:=\{(-r,r)~|~ r\in [0,1]\}\subset  L$. (Note that $\ell$ is obtained by parallel transporting a point on $L$.)
\ee
We will denote a point of intersection between $v''$ and $\ell$ by $w_0= w_0(v'')$. Let $ev_{2\pm}(v'')$ be the $z_1$-coordinate of $(\widehat p \circ v'')^{-1}(w_\pm)$ and define the map 
\begin{gather*}
	ev_2:\mathcal{M}_2\to S^1\times S^1,\\
	v''\mapsto (ev_{2-}(v''), ev_{2+}(v'')).
\end{gather*}
Note that we have switched the $+$ and $-$ compared to $ev_1$; this is because we want to identify $w_\pm$ for $v'$ with $w_\mp$ for $v''$.

We claim that, as we take $\epsilon\to 0$, $ev_2(\mathcal{M}_2)$ limits to the two pink triangles in Figure~\ref{fig: torus2}: $T_1$ with vertices $(0,\pi)$, $(\pi,\pi)$, $(\pi,2\pi)$ and $T_2$ with vertices $(\pi,0)$, $(\pi,\pi)$, $(2\pi,\pi)$. Let $\mathcal{G}_1$ (resp.\ $\mathcal{G}_2$) be the $2$-dimensional family of maps $v''$ satisfying (B1)--(B3) and (B4$'$) with image $T_1$ (resp.\ $T_2$). Figure~\ref{fig: modelcalc3} describes maps $v''=(v''_1,v''_2)$ that are on or close to $\bdry \mathcal{G}_1$;  the left-hand side depicts the image of $\widehat p\circ v''$ in $\C$ and the right-hand side depicts the images of the components $v''_1$ and $v''_2$. The red (resp.\ blue) dots on the left-hand side are $w_\pm$ (resp.\ $w_0$) and the red dots on the right-hand side with labels $w_\pm$ are the preimages of $w_\pm$ (resp.\ $w_0$). 
The maps are denoted $\phi_1,\dots, \phi_4$ from top to bottom.  The first map $\phi_1$ is $z\mapsto (z,1)$. The third map $\phi_3$ is close to $z\mapsto (g(z),g(z))$, where $g$ is a conformal map from $D^2$ to the half-disk.  Maps $\phi_2,\phi_3, \phi_4$ are of the form $(\phi_3)_\theta$, where the subscript $\theta$ indicates $\phi_3$ has been modified by rotating the first component by $\theta$ and the second component by $-\theta$.  There is a bijection
$$\mathcal{G}_1\stackrel\sim\to \mathcal{G}_2, \quad v''=(v''_1, v''_2)\mapsto (-v''_2,-v''_1).$$

Let $s(v'')$ be the length of the slit $\subset [-1,0]$ of the image $\widehat p\circ v''$.  We partition
$$\mathcal{M}_2=\sqcup_{s\in [0,1)}\mathcal{M}_2^s$$ 
by the slit length $s$ and note that $ev_2(\mathcal{M}_2^s)$, in the limit $\epsilon\to 0$, consists of two line segments of slope $1$: a segment from $\theta_2=\pi$ to $\theta_1=\pi$ in $T_1$ and a segment from $\theta_2=0$ to $\theta_1=2\pi$ in $T_2$.  Here the set of $(\phi_3)_\theta$ is half of $\mathcal{M}_2^s$ corresponding to $T_1$, for $s$ close to $1$. Since  $ev_2(\phi_1),ev_2(\phi_2),ev_2(\phi_4)$ are close to the vertices of $T_1$ and $s(\phi_1)\approx 0$ and $s(\phi_i)\approx 1$ for $i=2,3,4$, the claim follows.

\begin{figure}[ht]
	\begin{overpic}[scale=.85]{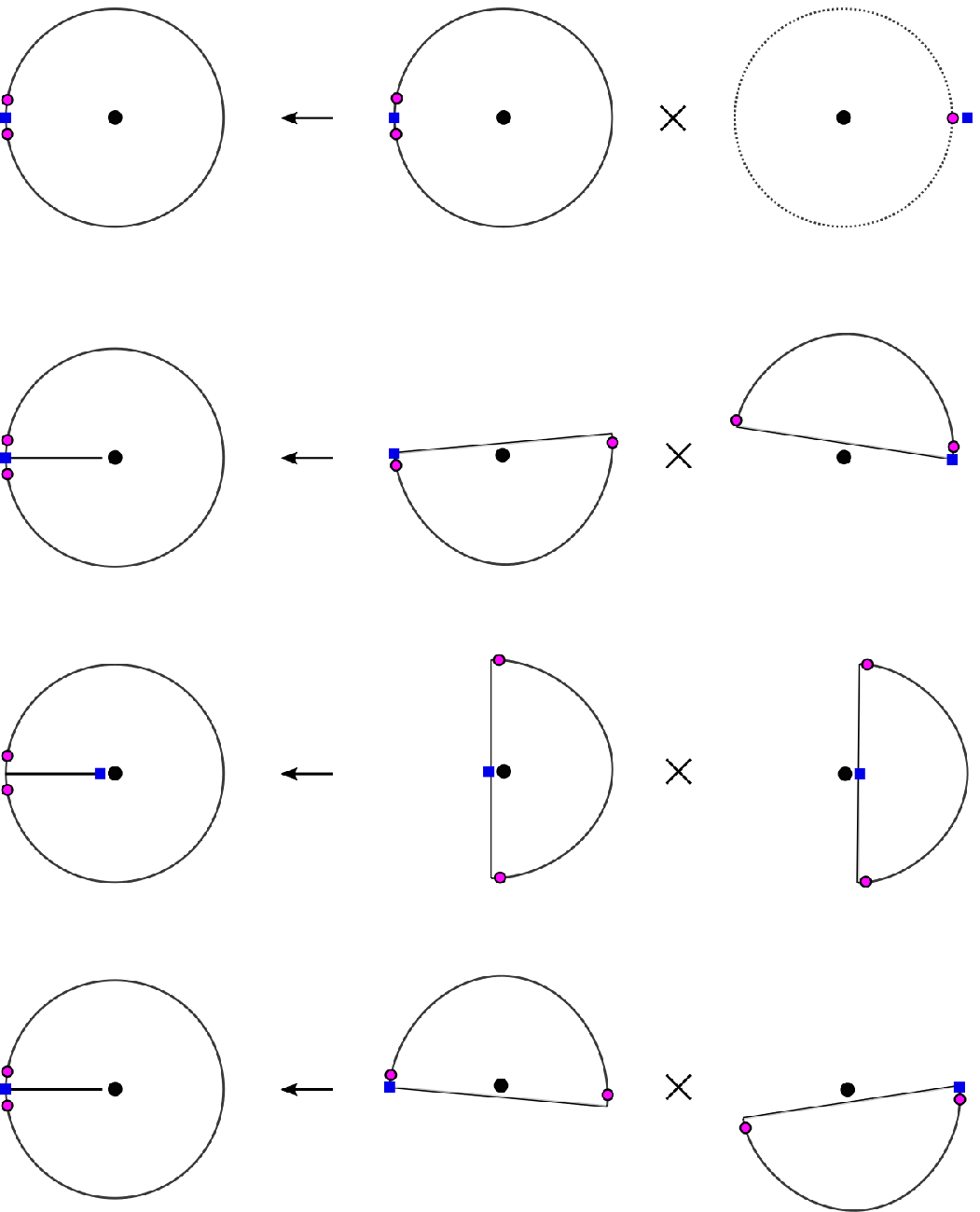}
		\put(-10,10){$\phi_4$}
		\put(-10,36){$\phi_3$}
		\put(-10,62.2){$\phi_2$}
		\put(-10,90.35){$\phi_1$}
		
		\put(23.2,87.7){\tiny $z_1z_2$}
		\put(41,79){\tiny $z_1$}
		\put(69.2,79){\tiny $z_2$}
		\put(-3,87.5){\tiny $w_+$}
		\put(-3,93){\tiny $w_-$}
		\put(-3.3,90.2){\tiny $w_0$}
		\put(-3.3,63){\tiny $w_0$}
		\put(-3.3,10){\tiny $w_0$}
		\put(6.7,37.5){\tiny $w_0$}
		\put(33.7,88.5){\tiny $w_+$}
		\put(33.7,92){\tiny $w_-$}
		\put(75,91){\tiny $w_+$}
		\put(80,91.5){\tiny $w_-$}
		
		\put(51.8,64.2){\tiny $w_-$}
		\put(62.5,66){\tiny $w_-$}
		\put(37.2,46){\tiny $w_-$}
		
		\put(34,61){\tiny $w_+$}
		\put(80,64){\tiny $w_+$}
		\put(37.2,28){\tiny $w_+$}
		\put(67.7,28){\tiny $w_+$}
		\put(67.7, 45.6){\tiny $w_-$}
		\put(33.5,11){\tiny $w_-$}
		\put(81,9){\tiny $w_-$}
		\put(46.2,10){\tiny $w_+$}
		\put(63,6){\tiny $w_+$}
	\end{overpic}
	\caption{}
	\label{fig: modelcalc3}
\end{figure}

\s\n
{\em Step 6.}
We finally show that $\#\mathcal{M}^{\chi=0, {\bf w}, \flat, 0LR}_{J^\Diamond}(\bs\Xi, \bs\Theta)=1$ mod $2$, where ${\bf w}$ satisfies (R1$'$) and (R2$'$) and ${\frak w}_i$ is close to $0$.

In view of Claim~\ref{claim A}, the holomorphic map $v_\infty$ from Step 4 can viewed as the gluing of 
\begin{equation}\label{eqn: gluing}
(v'_\infty:F'_\infty\to \wt W^\wedge) \in \mathcal{M}^\theta_1 \quad \mbox{ and } \quad (v''_\infty: F''_\infty \to \wt W^\wedge) \in \mathcal{M}_2,
\end{equation}
where $v'_\infty$ where corresponds to the left-hand side $\mathcal{R}_1$ and $v''_\infty$ corresponds to the right-hand side $\mathcal{R}_2\cup \mathcal{R}_3$.
\begin{figure}[ht]
	\begin{overpic}[scale=1]{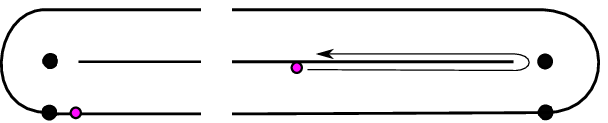}
		\put(6,-3.8){\tiny $\Theta_1$}
		\put(6,12.7){\tiny $\Theta_2$}
		\put(92.6,-2.8){\tiny $\Xi_1$}
		\put(91.5,11.7){\tiny $\Xi_2$}
		\put(13.5,-2.8){\tiny $w_1$}
		\put(47,5.8){\tiny $w_2$}
		\put(30,4.7){\tiny $c_{12}$}
		\put(30,13.5){\tiny $c_{21}$}
		\put(38,4.7){\tiny $c_{12}^*$}
		\put(38,13.5){\tiny $c_{21}^*$}
	\end{overpic}
	\caption{}
	\label{fig: stretching3}
\end{figure}
{\em We abuse notation and will not distinguish between $v'_\infty, v''_\infty$ and the restrictions of $v_\infty$ to $\mathcal{R}_1$ and $\mathcal{R}_2\cup\mathcal{R}_3$.} Under this identification,
\be
\item[(i)] the chords $c_{12}$ and $c_{21}$ (see Figure~\ref{fig: stretching3}) correspond to $w_-$ and $w_+$ for $\mathcal{M}_1^\theta$ and the chords $c_{12}^*$ and $c_{21}^*$ correspond to $w_+$ and $w_-$ for $\mathcal{M}_2$;
\item[(ii)] $v_\infty$ passing through $w_1$ corresponds to the constraint (B4) with $w_1=(e^{i\theta}, e^{-i\theta})$ for $v'_\infty$ and $v_\infty\cup \delta_+\cup \delta_-$ passing through $w_2$ corresponds to the constraint (B4$'$) for $v''_\infty$.
\ee
We further assume ${\bf w}$ has been chosen so that the constraint of passing through ${\bf w}$ is transformed to (B4) and (B4$'$) with $\theta=-\tfrac{\pi}{2} +\tilde \epsilon$. 

We now consider the matching condition $ev_1(v'_\infty)= ev_2(v''_\infty)$ for the gluing of $v'_\infty$ and $v''_\infty$. Let $\mathcal{I}$ be the intersection of the blue line and the pink region in Figure~\ref{fig: torus2}.  In view of the location of $\mathcal{I}$, we have the following:
\be
\item[(X)] The slits for $v'_\infty$ and $v''_\infty$ must be long, i.e., end near $\Theta_2$ and $\Xi_2$.  
\item[(Y)] The location of ${\frak b}_2^\infty$ is as depicted in Cases (A) and (B).
\ee
We will explain (Y): Since $q(w_1)$ must be close to $q(\Theta_1)$ and $F^{(i)}$ must have an involution taking $q(w_1)$ to $q(w_2)$ and $q(\Theta_1)$ to $q(\Theta_2)$, it follows that $q(w_2)$ is close to $q(\Theta_2)$ and in the limit $q(w_2)= q(p_-)=q(\Theta_2)$. The intersection $\mathcal{I}$ indicates that $v''_\infty$ intersects $w_2$ along the upper branch of the slit. Let $\mathcal{M}_{\mathcal{I}}$ be the set of curves $v_\infty$ that are obtained by gluing $(v'_\infty,v''_\infty)$.

At this point the possible gluings of $v'_\infty$ and $v''_\infty$ are parametrized by $\mathcal{I}$, but if we fix ${\frak b}_2^\infty$, corresponding to fixing a circle $\{\theta_1-\theta_2=\pi+\delta\}\subset S^1\times S^1$ for sufficiently small $\delta>0$, then the endpoints of the slit are uniquely determined.  Once ${\frak b}_2^\infty$ (i.e., where $q(\Theta_2)$ is mapped) and the slit are determined, the position of $q(\Xi_2)=q(p_+)$ (i.e., ${\frak b}_3^\infty$ in Case (A) or ${\frak b}_1^\infty$ in Case (B), as appropriate) is uniquely determined using Claim~\ref{claim B} and the involution of $F^\infty$.  In summary, fixing ${\frak b}_2^\infty$ uniquely determines all the other ${\frak b}_j^\infty$. 

Finally we consider $v^{(i)}$ for $i\gg 0$.  Taking $w_1$ to be sufficiently close to $\Theta_1$ forces $w_2$ to be on $C_-$ in Figure~\ref{fig: slit}.  The key observation is the following:
\be
\item[(*)]  the distance between $q(w_2)$ and $q(\Theta_2)$ decreases monotonically as ${\frak b}_2^{(i)}$ moves from left to right.
\ee  
The unique solution $v^{(i)}$ with an involution 
\begin{equation}\label{eqn: involution}
q(w_1)\mapsto q(w_2), \quad q(\Theta_1) \mapsto q(\Theta_2),  \quad q(\Xi_1)\mapsto q(\Xi_2)
\end{equation} 
is obtained by taking an open set of holomorphic curves $v^{(i)}$ that are close to $v_\infty \cup \delta_+\cup \delta_-$, where $v_\infty\in \mathcal{M}_{\mathcal{I}}$, and picking the unique one satisfying \eqref{eqn: involution} from this set using (*).

This completes the proof of Theorem~\ref{thm: count}.
\end{proof}

\subsection{Proof of invariance under Markov stabilizations} \label{subsection: stabilization invariance}

A Markov stabilization is given as follows:  Let $\sigma$ be an $\kappa$-strand braid which intersects $D$ along ${\bf z}=\{z_1,\dots,z_\kappa\}$. We view $\sigma$ as an element of $\op{Diff}^+(D,\bdry D,{\bf z})$ which additionally restricts to the identity on a neighborhood $N(\gamma_0)\subset D$ of a short arc $\gamma_0$ from a point $z_0$ to $\bdry D$.  Given an arc $c$ from $z_0$ to some $z_i$, $i>0$, which is disjoint from the other $z_j$, let $\sigma_c$ be the positive half twist along $c$.  Then a {\em positive} (resp.\ {\em negative}) {\em Markov stabilization} is the $(\kappa+1)$-strand braid given by $\sigma\circ \sigma_c$ (resp.\ $\sigma\circ \sigma_c^{-1}$). 

Let $\bs\gamma:=\{\gamma_1,\dots,\gamma_\kappa\}$ be a basis of half arcs, where $\gamma_i$ connects $z_i$ to $\bdry D$.  By the handleslide invariance, we may assume that $c$ connects from $z_0$ to $z_1$ and does not intersect any $\gamma_i$, $i>0$, in its interior. Then let $\bs\gamma':=\{\gamma_0,\dots,\gamma_\kappa\}$. Let 
$$p: W\to D-N(\gamma_0) \quad \mbox{and} \quad p': W'\to D$$  
be the standard Lefschetz fibrations with critical values ${\bf z}=\{z_1,\dots, z_\kappa\}$ and ${\bf z}'=\{z_0,\dots,z_\kappa\}$ and regular fiber $A=S^1\times[-1,1]$, and such that $p'|_{W}=p$. Let ${\bs a}=\{a_1,\dots,a_\kappa\}$ be the Lagrangian thimbles over $\bs\gamma$ and let ${\bs a}'=\{a_0,\dots, a_\kappa\}$ be the Lagrangian thimbles over $\bs\gamma'$.  If $h_\sigma\in \op{Symp}(W,\bdry W)$ descends to $\sigma$, then let $h'_\sigma\in \op{Symp}(W',\bdry W')$ be its extension to $W'$ by the identity. Finally let $\tau_c\in \op{Symp}(W',\bdry W')$ be the Dehn twist along the Lagrangian sphere over $c$. 

\subsubsection{Model calculation} \label{subsubsection: model calculation}

{\em Any notation introduced here is limited to this subsection.}
Consider the product fibration $\widehat p:\C\times T^*S^1\to \C$ and Lagrangians $a_i= \{x=i\}\times S^1$, $i=1,2$, and $b_j=\{y=j\}\times S^1$, $j=1,2$, where $S^1$ is the zero section of $T^*S^1$. We write ${\bs a}=\{a_1,a_2\}$ and ${\bs b}=\{b_1,b_2\}$. Let $x_{ij}=(i,j)\in \C$ and let $\check x_{ij}, \hat x_{ij}\in CF(b_j,a_i)$ be the bottom and top generators of the clean intersection $x_{ij} \times S^1$.  We can alternatively take ${\bs a}'=\{a'_1,a'_2\}$ and ${\bs b}'=\{b'_1,b'_2\}$, where $a_i'$ is $\{x=i\}$ (resp.\ $b_j'$ is $\{y=j\}$) times a Hamiltonian perturbation of $S^1$, where all the perturbations intersect transversely; see the right-hand side of Figure~\ref{fig: modelcalc2}.

\begin{figure}[ht]
	\begin{overpic}[scale=1]{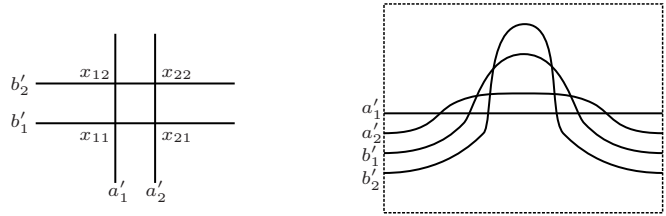}
		\put(11.5,3){\tiny $a'_1$}
		\put(17.5,3){\tiny $a'_2$}
		\put(-4,14){\tiny $b'_1$}
		\put(-4,20){\tiny $b'_2$}
		\put(7,22){\tiny $x_{12}$}
		\put(20,22){\tiny $x_{22}$}
		\put(7,12){\tiny $x_{11}$}
		\put(20,12){\tiny $x_{21}$}
		
		\put(51.8,16.2){\tiny $a_1'$}
		\put(51.8,12.3){\tiny $a_2'$}
		\put(51.8,8.65){\tiny $b_1'$}
		\put(51.8,4.95){\tiny $b_2'$}
	\end{overpic}
	\caption{The base $\C$ on the left and the fiber $T^*S^1$ on the right (the sides are identified).}
	\label{fig: modelcalc2}
\end{figure}

\begin{lemma}\label{lemma: model calc}
	$\widehat{CF}({\bs b}', {\bs a}')$ is generated by $8$ generators $\{x_{12}^\dagger,x_{21}^\dagger\}$ and $\{x_{11}^\dagger,x_{22}^\dagger\}$, where $\dagger$ may be a hat or check, and the differentials are given by $\hbar$ times the arrows given in Figure~\ref{fig: differential}.
	
\end{lemma}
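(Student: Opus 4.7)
The plan is to enumerate generators via the Morse–Bott perturbation of the clean intersections $x_{kl}\times S^1$, and then to count holomorphic differentials by exploiting the product structure of the fibration $\widehat p\colon \C\times T^*S^1\to\C$ together with a product almost complex structure.

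For the generators: each of the four clean intersections $a_k\cap b_l=\{x_{kl}\}\times S^1$ is a circle that, after the small Hamiltonian perturbation of $S^1$ used to define $a_k',b_l'$ (as in the right-hand side of Figure~\ref{fig: modelcalc2}), splits into a bottom generator $\check x_{kl}$ and a top generator $\hat x_{kl}$, corresponding to the minimum and maximum of the associated Morse function on $S^1$. For $\kappa=2$, a generator of $\widehat{CF}({\bs b}',{\bs a}')$ is a $2$-tuple assigning each of $a_1',a_2'$ a distinct $b_l'$-partner, so the two permutations of $\{1,2\}$ produce the families $\{x_{11}^\dagger,x_{22}^\dagger\}$ and $\{x_{12}^\dagger,x_{21}^\dagger\}$, each with $2\times 2$ decorations, for $8$ generators in total.

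For the differentials, I take $J=j_{\R\times[0,1]}\times j_\C\times J_{T^*S^1}$ and a small generic perturbation $J^\Diamond\in(\mathcal{J}^\Diamond)^{reg}$, so that every holomorphic curve $u$ in $\R\times[0,1]\times\C\times T^*S^1$ projects to holomorphic maps in each factor. The projection $\pi_\C\circ u$ has boundary on the four lines $\{\mathrm{Re}\,z=1,2\}\cup\{\mathrm{Im}\,z=1,2\}$, so the maximum principle applied to the real and imaginary parts of $z$ confines its image to the closed square $R=[1,2]\times[1,2]$ with corners at the $x_{kl}$. A degree/topology computation leaves two cases: $\pi_\C\circ u$ is degree $0$ (the curve lies in a single fiber $T^*S^1$), or degree $1$ (a branched cover of $R$). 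In the degree-$1$ case the domain is connected with $\chi=1$ and four boundary arcs mapping cyclically to $b_1',a_1',b_2',a_2'$ with one corner at each $x_{kl}^\dagger$, so it contributes with weight $\hbar^{\kappa-\chi}=\hbar$, which explains the overall $\hbar$ factor in the statement; the $\pi_{T^*S^1}$-projection is then a four-cornered disk between the two perturbed circles and can be enumerated directly, in the style of Figure~\ref{fig: TstarS1}, yielding one disk per arrow in Figure~\ref{fig: differential}. Degree-$0$ curves would give reducible $\chi=2$ configurations, which are excluded by the embedded/ECH counting convention imposed in the $n=2$ setting (see the remark following Equation~\eqref{eqn: differential}).

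The main obstacle is the precise matching between the degree-$1$ disks and the arrows in Figure~\ref{fig: differential}, namely identifying which pairs of generators support such a disk, with what multiplicity, and with what sign. The enumeration combines two ingredients that are largely decoupled by the product structure: the rectangular shape of the $\C$-projection, which is essentially rigid once the four corners are fixed, and a bigon-style count for the $T^*S^1$-projection in the presence of four punctures, in the spirit of Step 5 of the proof of Theorem~\ref{thm: count warm-up}. Regularity of $\mathcal{M}^{\op{ind}=1,A,\chi}$ is automatic after a small perturbation to $J^\Diamond$ by \cite[Proposition 3.8]{Li}, and coherent signs follow from the orientation data fixed in Section~\ref{subsection: orientations}.
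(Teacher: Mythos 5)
Your setup (product almost complex structure, base projection of degree $1$ onto the square, fiber projection into $T^*S^1$, weight $\hbar^{\kappa-\chi}=\hbar$ for $\chi=1$) agrees with the paper, but the central step is left unresolved, and the way you propose to close it would not work as stated. The base and fiber projections are \emph{not} ``largely decoupled'': both must be holomorphic with respect to the \emph{same} complex structure on the $4$-punctured disk, and the degree-$1$ projection onto the rigid square $[1,2]\times[1,2]$ pins that conformal structure down to a specific $\dot F_0$ (a fixed cross-ratio of the four boundary punctures). The count for each arrow of Figure~\ref{fig: differential} is therefore a count of holomorphic maps $w:\dot F_0\to T^*S^1$ with this \emph{fixed} domain, not a domain-positivity count ``in the style of Figure~\ref{fig: TstarS1}'': for fixed asymptotics the fiber maps come in a $1$-parameter family of slit domains (the domains $A$ and $B$ of Figure~\ref{fig: curves}, with slits of varying length), sweeping out a full interval of conformal structures, and one must show that the matching condition with $\dot F_0$ is achieved algebraically once. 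This is exactly what the paper does, by identifying the degenerations at the ends of the slit family (which pairs of punctures collide as each slit reaches its extreme) and concluding the count is $1$; without some such argument you cannot determine the multiplicity of an arrow, nor rule out arrows between pairs of generators that admit the same square as base domain. Flagging this as ``the main obstacle'' does not discharge it.

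A second, smaller error: your exclusion of the base-degree-$0$ configurations by appeal to ``the embedded/ECH counting convention in the $n=2$ setting'' is not available here --- the complex $C\Kh^\sharp$ is defined with the $\hbar^{\kappa-\chi}$-weighted count over $\F[\mathcal A]\llbracket\hbar,\hbar^{-1}]$, with no embeddedness constraint imposed. Components whose $\C$-projection is constant (a strip over some $x_{kl}$ together with a trivial strip over the other intersection point) are legitimate index-$1$ configurations with $\chi=\kappa$, hence would contribute $\hbar^0$-terms; they are absent from the differential not by convention but because the two fiberwise bigons in $T^*S^1$ between the perturbed circles over each $x_{kl}$ cancel mod $2$. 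So the lemma's claim that every differential carries a factor of $\hbar$ needs this cancellation, not an appeal to an ECH index condition.
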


\begin{figure}[ht]
	\begin{overpic}[scale=1.4]{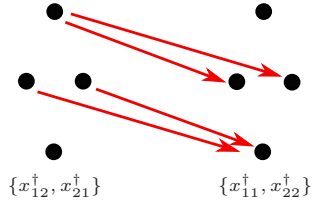}
	\put(-3.5,-10){\tiny $\{x_{12}^\dagger,x_{21}^\dagger\}$}
	\put(70,-10){\tiny $\{x_{11}^\dagger,x_{22}^\dagger\}$}
	\end{overpic}
\vskip.15in
	\caption{Description of the differentials of $\widehat{CF}({\bs b}', {\bs a}')$. The generators in the top row have $2$ checks, those in the middle row have $1$ check, and those in the bottom row have no checks.}
	\label{fig: differential}
\end{figure}

\begin{proof}
Let $u: \dot F\to \R\times [0,1]\times (\C\times T^*S^1)$ be a holomorphic map with $\{\check x_{12},\check x_{21}\}$ at the positive end and $\{\hat x_{11},\check x_{22}\}$ at the negative end. Its projection to $\C$ is a degree $1$ map over $[1,2]\times[1,2]$; this determines the complex structure of the $4$-punctured disk $\dot F=\dot F_0$. 

Next we consider holomorphic maps $w:\dot F\to T^*S^1$ which could be projections of $u$ to $T^*S^1$.  The two possible domains $A,B$ that $w$ can map to are shaded in Figure~\ref{fig: curves}.  The domain $A$ on the left (resp.\ $B$ on the right) has a $\tfrac{3\pi}{2}$ corner at $\hat x_{11}$ (resp.\ $\check x_{22}$) and there may be slits to the right or pointing up (resp.\ to the right or pointing up).  We will refer to the four types of slits by $AR$, $AU$, $BR$, $BU$. As the slit $AR$ goes all the way to the right (i.e., until it hits $a_2'$), the punctures $q(\hat x_{11})$ and $q(\check x_{21})$ approach one another; when $AU$ goes all the way up, we can view it as continuing to $BR$ going all the way to the right; finally, as $BU$ goes all the way up, $q(\check x_{21})$ and $q(\check x_{22})$ approach one another.  Hence the algebraic count of $w$ with the given domain $\dot F=\dot F_0$ is one.

\begin{figure}[ht]
	\begin{overpic}[scale=1.35]{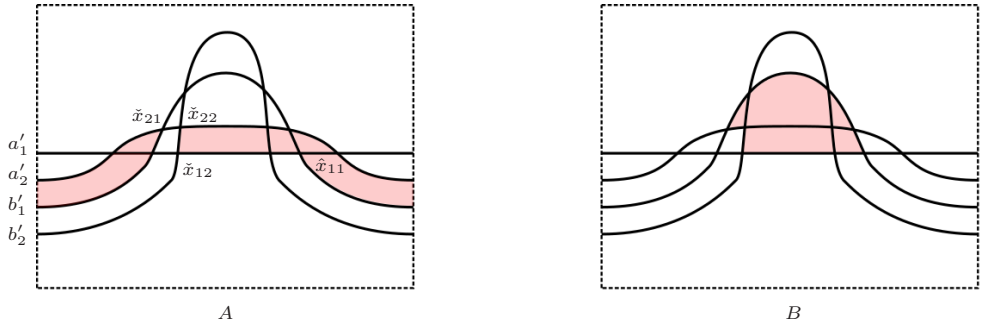}
		\put(15.5,12.3){\tiny $\check x_{12}$}
		\put(16,18.2){\tiny $\check x_{22}$}
		\put(10.15,18){\tiny $\check x_{21}$}
		\put(29.5,12.7){\tiny $\hat x_{11}$}
		\put(19.2,-3){\tiny $A$}
		\put(79.2,-3){\tiny $B$}
		\put(-3,15){\tiny $a_1'$}
		\put(-3,11.8){\tiny $a_2'$}
		\put(-3,8.6){\tiny $b_1'$}
		\put(-3,5.4){\tiny $b_2'$}
	\end{overpic}
\s
	\caption{The two domains $A$ and $B$ representing the possible closures of $w(\dot F)$.}
	\label{fig: curves}
\end{figure}

The determination of the other arrows of Figure~\ref{fig: differential} is analogous.
\end{proof}

\subsubsection{Proof of stabilization invariance}

In this subsection we prove the invariance under Markov stabilization.  We will use the open book interpretation of the Heegaard Floer groups.

\begin{thm} \label{thm: stabilization invariance}
$\widehat{CF}(W,h_\sigma({\bs a}),{\bs a}))$ and $\widehat{CF}(W',h'_\sigma \circ \tau_c({\bs a}'), {\bs a}')$ are isomorphic cochain complexes for specific choices of almost complex structures and $h_\sigma({\bs a})$ and $h'_\sigma \circ \tau_c({\bs a}')$ after a Hamiltonian isotopy.
\end{thm}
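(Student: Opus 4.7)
The plan is to exhibit an explicit cochain isomorphism by (a) putting the Lagrangians ${\bs a}'$ and $h'_\sigma\circ\tau_c({\bs a}')$ into a normal form by Hamiltonian isotopy in which generators biject with those of $\widehat{CF}(W, h_\sigma({\bs a}),{\bs a})$, and (b) using a neck-stretching argument, controlled by the local model of Lemma~\ref{lemma: model calc}, to match differentials. The map will be ${\bf y}\mapsto \{x_0\}\cup {\bf y}$, where $x_0$ is a distinguished intersection point lying over $z_0$.

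First I would normalize $h'_\sigma\circ\tau_c({\bs a}')$. Since $h'_\sigma$ restricts to the identity over $N(\gamma_0)$ and agrees with $h_\sigma$ on $W$, and since $\tau_c$ is the Dehn twist along the Lagrangian sphere over the arc $c$ joining $z_0$ to $z_1$, the descent of $h'_\sigma\circ\tau_c(a'_0)$ to $D$ is the image of $\gamma_0$ under the half-twist about $c$: it emanates from $z_0$, follows one side of $c$ to $z_1$, loops once around $z_1$, and continues along a parallel pushoff of $h_\sigma(\gamma_1)$ out to $\bdry D$. For $i\geq 2$, $\tau_c$ is supported away from $\gamma_i$, so $h'_\sigma\circ\tau_c(a'_i)$ agrees with the trivial extension of $h_\sigma(a_i)$. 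A $C^1$-small Hamiltonian isotopy of ${\bs a}'$, supported on $(p')^{-1}(U)$ where $U\subset D$ is a disk containing $\{z_0,z_1\}\cup c$ and disjoint from $z_2,\dots,z_\kappa$, can be chosen so that inside $U$ the quadruple $\{a'_0,a'_1,h'_\sigma\circ\tau_c(a'_0),h'_\sigma\circ\tau_c(a'_1)\}$ restricts to the product model $\C\times T^*S^1$ of Lemma~\ref{lemma: model calc}, and so that outside $U$ the data agree with $({\bs a},h_\sigma({\bs a}))$ on $W$. In this normal form, the unique intersection point of $a'_0$ with $h'_\sigma\circ\tau_c(a'_0)$ that persists is the ``top'' generator $x_0$ sitting over $z_0$, and the remaining intersection points biject with those entering the generators of $\widehat{CF}(W,h_\sigma({\bs a}),{\bs a})$.

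Second, I would stretch the almost complex structure along $(p')^{-1}(\bdry U)$, parallel to the proof of Theorem~\ref{thm: alternative version}. The SFT compactness theorem decomposes any rigid holomorphic multisection $u$ into an outer piece $u_{\op{out}}$ lying over $D\setminus U$ and an inner piece $u_{\op{in}}$ lying over $U$, with matching Reeb-chord asymptotics on $(p')^{-1}(\bdry U)$. By the normal form, $u_{\op{out}}$ is precisely a multisection counted by $\widehat{CF}(W,h_\sigma({\bs a}),{\bs a})$, while $u_{\op{in}}$ is controlled by the local product model of Lemma~\ref{lemma: model calc}. Applying the top-generator argument of Lemma~\ref{lemma: contact cycle} to the local complex of Figure~\ref{fig: differential}, the only $u_{\op{in}}$ compatible with the positive asymptote at $x_0$ is the trivial strip at $x_0$; conversely, each outer curve glues uniquely to this trivial strip. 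This identifies the differential on $\{x_0\}\cup{\bf y}$ with the differential on ${\bf y}$, yielding the cochain isomorphism.

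The main obstacle is ensuring that the neck-stretching genuinely localizes the differential. Two technical points must be verified: first, that no long Reeb chords of $(p')^{-1}(\bdry U)$ appear as asymptotics of degenerate limits, which is handled by taking the Hamiltonian isotopy to be $C^r$-small for $r\gg 0$ and invoking the action/energy estimate used in Lemma~\ref{lemma: A}; second, that within the local complex of Figure~\ref{fig: differential}, the stratum containing the ``top'' generator $x_0$ at the positive end contributes only the trivial strip after Hamiltonian isotopy, so that the remaining (acyclic) portion of the local model does not yield extra generators or extra differentials on the stabilized side. Once these are established, the explicit bijection of generators together with the matching of differentials gives the desired cochain isomorphism, and the same argument applies mutatis mutandis to negative Markov stabilizations (replacing $\sigma_c$ by $\sigma_c^{-1}$ and using the other ``top'' generator in the local model).
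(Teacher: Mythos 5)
There is a genuine gap, and it is located exactly where the real content of the theorem lies. Your proposed isomorphism ${\bf y}\mapsto\{x_0\}\cup{\bf y}$ is not even well defined on generators. After the Markov stabilization the monodromy is $h'_\sigma\circ\tau_c$, and the twist $\tau_c$ reroutes the image arcs: the projection of $h'_\sigma\circ\tau_c(a_1)$ stays in the stabilization region (it runs from near the boundary end of $\gamma_1$, past $x_1$, around $z_0$, meeting $a_0$ only at $\Theta_{01}$), while $h'_\sigma\circ\tau_c(a_0)$ inherits the entire ``tail'' of $h_\sigma(a_1)$ out in the rest of the disk. Consequently $a_0\cap h'_\sigma\circ\tau_c(a_0)=\{x_0\}$ and $a_0\cap h'_\sigma\circ\tau_c(a_1)=\{\Theta_{01}\}$, and a $(\kappa+1)$-tuple containing $x_0$ is forced to also contain $x_1$. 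So for a generator ${\bf y}$ of $\widehat{CF}(W,h_\sigma({\bs a}),{\bs a})$ \emph{not} containing $x_1$, the tuple $\{x_0\}\cup{\bf y}$ is not a generator on the stabilized side: the point of ${\bf y}$ lying on $h_\sigma(a_1)$ corresponds to a point on $h'_\sigma\circ\tau_c(a_0)$, which $x_0$ has already used, and $h'_\sigma\circ\tau_c(a_1)$ is left unpaired. The correct bijection (Equation~\eqref{eqn: another Phi}, Figure~\ref{fig: stabilization}) is type-dependent: $\{x_1\}\cup{\bf y}'\mapsto\{x_0,x_1\}\cup{\bf y}'$ and ${\bf y}\mapsto\{\Theta_{01}\}\cup{\bf y}$. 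Also, $x_0$ is the contact-class intersection near $\bdry W$, not a point ``over $z_0$''; the two thimbles over $\gamma_0$ and $\sigma\sigma_c(\gamma_0)$ do not meet over $z_0$ at all.

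Because the generator identification is type-dependent, the differential cannot be matched by the ``trivial strip at $x_0$ plus outer curve'' argument you borrow from the elementary-stabilization case (Theorem~\ref{thm: invariance under stabilization}); that argument only handles differentials that stay within one type. The stabilized complex has genuinely mixed differentials, from $\{\Theta_{01}\}\cup{\bf y}_1$ to $\{x_0,x_1\}\cup{\bf y}_2'$, and these must be identified with the unstabilized differentials from ${\bf y}_1$ to $\{x_1\}\cup{\bf y}_2'$. This is precisely where Lemma~\ref{lemma: model calc} is needed: after normalizing $\sigma(\gamma_1)$ and $\sigma\sigma_c(\gamma_0)$ to share a tail $\zeta_2\cup\zeta_3$ separated by a thin $\epsilon$-neck, each index-one unstabilized curve glues to the count-one holomorphic four-punctured disk of Figure~\ref{fig: differential} over the shaded region near $z_0,z_1$, producing the corresponding index-one stabilized curve, and conversely. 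In your write-up Lemma~\ref{lemma: model calc} is only invoked to argue that the inner piece asymptotic to $x_0$ is a trivial strip, which both misses the mixed differentials and misuses the lemma; in addition, stretching along $(p')^{-1}(\bdry U)$ with $z_1\in U$ does not decompose a stabilized curve into ``an unstabilized curve over $D\setminus U$'' plus a local piece, since unstabilized curves do pass over $z_1$. To repair the argument you would need to replace the uniform bijection by the two-type bijection above and carry out the thin-neck gluing against the quadrilateral count of Lemma~\ref{lemma: model calc}, which is exactly the route the paper takes.
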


\begin{proof}
We will treat the positive stabilization case; the negative stabilization case is analogous.

The generators of $\widehat{CF}(W,h_\sigma({\bs a}),{\bs a}))$ come in two types of $\kappa$-tuples: $\{x_1\}\cup \bf y'$ and $\bf y$, where $\{x_1,\dots,x_\kappa\}$ is the contact class and $\bf y$ does not contain $x_1$.  The generators of $\widehat{CF}(W',h'_\sigma \circ \tau_c({\bs a}'), {\bs a}')$ are in bijection with the generators of $\widehat{CF}(W,h_\sigma({\bs a}),{\bs a}))$ and have the form $\{x_0,x_1\}\cup \bf y'$ and $\{\Theta_{01}\}\cup \bf y$, where $\{x_0,\dots, x_\kappa\}$ is the contact class and $\Theta_{01}$ is the unique intersection point between $a_0$ and $h'_\sigma\circ \tau_c(a_1)=\tau_c(a_1)$.   Refer to Figure~\ref{fig: stabilization}.  

Let $\gamma_1=\{\op{Re} z_1\}\times[-1,0]\subset D$. Choose $\epsilon>0$ small.  We normalize $h_\sigma (a_1)$ and $h_\sigma'\circ \tau_c(a_0)$ within their Hamiltonian isotopy classes (rel boundary) such that their projections $\sigma (\gamma_1)$ and $\sigma\circ \sigma_c(\gamma_0)$ can be written as concatenations $\zeta_1\cup \zeta_2\cup \zeta_3$ and $\zeta_1'\cup \zeta_2\cup \zeta_3$ of arcs, where:
\begin{itemize}
	\item $\zeta_1$ starts at $p'(x_1)$ and ends at $(\op{Re} z_1-\epsilon, -1/3)$; 
	\item $\op{Re} z_1-\epsilon < \op{Re}(\op{int}\zeta_1)< \op{Re} z_1$;
	\item $\zeta'_1$ starts at $p'(x_0)$ and ends at $(\op{Re} z_1-\epsilon, -1/3)$; and
	\item $\zeta_2=\{\op{Re} z_1-\epsilon\}\times[-2/3,-1/3]$.
\end{itemize}

We claim that the linear isomorphism 
\begin{gather} \label{eqn: another Phi}
	\Phi_s: \widehat{CF}(W,h_\sigma({\bs a}),{\bs a}))\to \widehat{CF}(W',h'_\sigma \circ \tau_c({\bs a}'), {\bs a}'),\\
\nonumber \{x_1\}\cup {\bf y}'\mapsto \{x_0,x_1\}\cup {\bf y'}, \quad {\bf y}\mapsto \{\Theta_{01}\}\cup \bf y,
\end{gather}
commutes with the differentials for $\epsilon>0$ small. Here we are assuming that $D$ has the standard complex structure. 

We will use the notation $u':\dot F'\to \R\times[0,1]\times \widehat W'$ for a holomorphic map that is counted in the differential of $\widehat{CF}(W',h'_\sigma \circ \tau_c({\bs a}'), {\bs a}')$ and $u$  for a holomorphic map $\dot F\to \R\times[0,1]\times \widehat W$ that is counted in the differential of $\widehat{CF}(W,h_\sigma({\bs a}),{\bs a}))$.

If a curve $u'$ goes from $\{x_0,x_1\}\cup \bf y_1'$ to $\{x_0,x_1\}\cup \bf y_2'$, then, apart from the trivial strip from $x_0$ to itself, it projects to the region $D-N(\gamma_0)$ and hence is in bijection with a curve $u$ that goes from $\{x_1\}\cup \bf y_1'$ to $\{x_1\}\cup \bf y_2'$. Similarly, a curve $u'$ from $\{\Theta_{01}\}\cup \bf y_1$ to $\{\Theta_{01}\}\cup \bf y_2$ is in bijection with a curve $u$ from $\bf y_1$ to $\bf y_2$. There are no curves from $\{x_0,x_1\}\cup \bf y_1'$ to $\{\Theta_{01}\}\cup \bf y_2$ and likewise no curves from $\{x_1\}\cup \bf y_1'$ to $\bf y_2$. 

It remains to identify curves $u'$ from $\{\Theta_{01}\}\cup \bf y_1$ to $\{x_0,x_1\}\cup \bf y_2'$ with curves $u$ from $\bf y_1$ to $\{x_0\}\cup \bf y_2'$. 
By our normalization of $\sigma (\gamma_1)$ and $\sigma\circ \sigma_c(\gamma_0)$ with $\epsilon>0$ small, if $\op{ind}(u)=1$, then there exists $u'$ with $\op{ind}=1$ which is obtained by gluing $u$ (where the negative end $x_1$ is now viewed as the thin neck between $h_\sigma'\circ \tau_c(a_0)$ and $a_1$ that projects to $[\op{Re} z_1-\epsilon,\op{Re}z_1]\times[-2/3,-1/3]$) and a holomorphic $4$-punctured disk $u'': \dot F''\to \R\times\widehat W'$ that projects to the shaded region in Figure~\ref{fig: stabilization}, i.e, a curve of the type calculated in Lemma~\ref{lemma: model calc}.  The algebraic count of such a curve $u''$ is $1$.  Conversely a curve $u'$ with $\op{ind}(u')=1$ is obtained by gluing $u$ and $u''$.  
\end{proof}

\begin{figure}[ht]
	\begin{overpic}[scale=1]{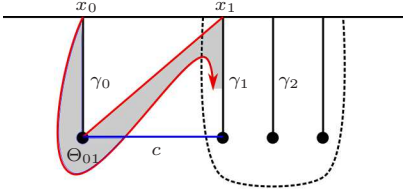}
		\put(21.7,26){\tiny $\gamma_0$}
		\put(56.5,26){\tiny $\gamma_1$}
		\put(69,26){\tiny $\gamma_2$}
		\put(18,45){\tiny $x_0$}
		\put(53,45){\tiny $x_1$}
		\put(15.6,7){\tiny $\Theta_{01}$}
		\put(37,8.5){\tiny $c$}
	\end{overpic}
	\caption{The region cut off by the dotted line and containing $\gamma_1,\dots\gamma_\kappa$ is $D-N(\gamma_0)$. The red half-arcs are $\sigma\circ \sigma_c(\gamma_0)$ and $\sigma\circ \sigma_c(\gamma_1)$.}
	\label{fig: stabilization}
\end{figure}

\subsection{Contact class}

In this section we prove Theorem~\ref{thm: invariance of contact class in Kh}.  

\begin{proof}[Proof of Theorem~\ref{thm: invariance of contact class in Kh}]
We first prove the invariance under handleslides.  Consider the cochain map $\Phi$ given by Equation~\eqref{eqn: chain map Phi}. 
\begin{figure}[ht]
	\begin{overpic}[scale=1]{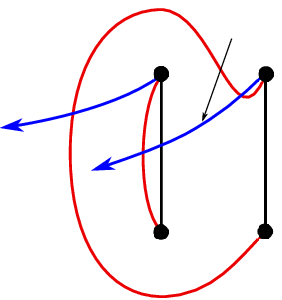}
		\put(92.6,50){\tiny $\wt \gamma_1$}
		\put(57.3,38){\tiny $\wt \gamma_2$}
		\put(39.7,32){\tiny $\wt \gamma_2'$}
		\put(16.7,25){\tiny $\wt \gamma_1'$}
		\put(80,91){\tiny $\wt \delta_1$}
		\put(-7,64.8){\tiny $\wt \delta_2$}
	\end{overpic}
	\caption{}
	\label{fig: An-handleslide-contact}
\end{figure}
We take $\wt h_\sigma(\wt a_i)$ to be the Lagrangian over the blue arc $\wt \delta_i$ in Figure~\ref{fig: An-handleslide-contact}. The key is to position $\wt \delta_i$ as shown so it is ``locally to the right" of $\wt \gamma_i$ and $\wt \gamma_i'$ as viewed from $z_{\kappa+i}$: by this we mean that on a small neighborhood $N(z_{\kappa+i})$ of $z_{\kappa+i}$,
\be
\item the arcs $\wt \delta_i$, $\wt\gamma_i'$, $\wt\gamma_i$ restrict to rays emanating from $z_{\kappa+i}$;
\item they are contained in a thin circular sector ${\frak S}_{\kappa+i}$ with center $z_{\kappa+i}$;
\item inside ${\frak S}_{\kappa+i}$ the arcs are $\wt\delta_i$, $\wt\gamma_i'$, $\wt\gamma_i$ in counterclockwise order.
\ee 
Strictly speaking, we then further apply a $C^\infty$-small perturbation to $\wt h_\sigma(\wt a_i)$ so that it is slightly pushed off of the intersection point $\Xi_i$ between $\wt a_i$ and $\wt a_i'$; this can be done in the local model $\widehat p: \C^2_{z_1,z_2}\to \C_\zeta$, $(z_1,z_2)\mapsto z_1z_2$, from Equation~\eqref{eqn: model for C}. The contact class ${\bf x}=\{x_1,\dots, x_\kappa\}\in \widehat{CF}(\wt h_\sigma(\wt {\bs a}), \wt {\bs a})$ is the $\kappa$-tuple of points where $x_i$ is over $z_{\kappa+i}$.  Let ${\bf x}'=\{x_1',\dots, x_\kappa'\}\in \widehat{CF}(\wt h_\sigma(\wt {\bs a}), \wt {\bs a}')$ be the pushoff of the $\kappa$-tuple of points over $z_{\kappa+1},\dots, z_{2\kappa}$. 

We claim that $\Phi({\bf x})={\bf x}'$:  Using the local model one can verify that every curve $u$ that is counted in $\mu_2(\bs\Xi,{\bf x})$ must have a component which is a small triangle $\Xi_i,x_i, x'_i$ for $i\not=2$.  Once $\wt a_i$, $\wt a_i'$, and $\wt h_\sigma (\wt a_i)$ are used up for $i\not= 2$, the only possible holomorphic triangle involving $\Xi_2$ and $x_2$ is the small triangle $\Xi_2,x_2,x_2'$.  Hence $\Phi({\bf x})={\bf x}'$. 

The analogous map 
$$\Phi': \widehat {CF}(\wt h_\sigma (\wt {\bs a}), \wt{\bs a}')\to \widehat{CF}(\wt h_\sigma (\wt {\bs a}'), \wt {\bs a}')$$
similarly takes ${\bf x}'$ to the contact class.  This proves the invariance under handleslides.

The invariance under positive Markov stabilizations is immediate from the definition of the cochain map $\Phi_s$ given by Equation~\eqref{eqn: another Phi}.
\end{proof}

\subsection{Relationship with symplectic Khovanov homology} \label{subsection: spectral sequence}

We will work over the ring $\F[\mathcal{A}]\llbracket\hbar,\hbar^{-1}]\llbracket U^{-1}\rrbracket$. Consider the following filtration for $CKh^\sharp (\widehat \sigma)\otimes \llbracket U^{-1}\rrbracket$:
$$\mathcal{F}:= (\dots\subset  \mathcal{F}_{-2}\subset \mathcal{F}_{-1}\subset \mathcal{F}_0), \quad \mathcal{F}_i=U^i\cdot CKh^\sharp (\widehat \sigma)\otimes \llbracket U^{-1}\rrbracket,$$
where the tensor product is over $\F$.

Given $u: \dot F\to \R\times[0,1]\times \wt W$ in $\mathcal{M}_{J^\Diamond}^{\op{ind}=1,A,\chi}({\bf y},{\bf y}')$ and its projections
$v_1$ to $\R\times[0,1]$ and $v_2$ to the base $\wt D$ of the Lefschetz fibration $\wt p: \wt W \to \wt D\subset \C$, we consider the map
$$(v_1,v_2): \dot F\to \R\times[0,1]\times \wt D.$$
Note that the target is $4$-dimensional. Although we will not go into detail here, it is possible to choose a regular almost complex structure $J^\Diamond$ on $\R\times[0,1]\times \wt W$, a regular almost complex structure $J_{\R\times[0,1]\times \wt D}$ on $\R\times[0,1]\times \wt D$ for curves of the form $(v_1,v_2)$, and the standard complex structure $J_{\R\times[0,1]}$ on $\R\times[0,1]$ such that the projections
$$(\R\times[0,1]\times \wt W,J^\Diamond) \to (\R\times[0,1]\times \wt D, J_{\R\times[0,1]\times \wt D})\to (\R\times[0,1],J_{\R\times[0,1]})$$
are holomorphic and the fibers of the projections are holomorphic.

Let $W(u)$ be the total weight of the singularities of $(v_1,v_2)$, including self-intersections. By the positivity of intersections for pseudoholomorphic curves in dimension $4$, this is a nonnegative number which 
is equal to the difference between the Heegaard Floer index of \cite{CGH1} and the Fredholm index in the usual Heegaard Floer homology. Writing $d_U$ for the differential of $CKh^\sharp(\widehat \sigma)\otimes \llbracket U^{-1}\rrbracket$, the contribution of $u$ to $d_U(U^a{\bf y})$ is $\hbar^{\kappa-\chi}e^A U^{a-W(u)}{\bf y}'$ (modulo $2$) and 
$$(CKh^\sharp (\widehat \sigma)\otimes \llbracket U^{-1}\rrbracket,d_U,\mathcal{F})$$ 
is a filtered cochain complex.  Reinterpreting Manolescu~\cite{Ma}, the symplectic Khovanov condition is that the holomorphic map $(v_1,v_2)$ be embedded, i.e., that we only count curves with $W(u)=0$.

The following lemma is immediate from the above discussion.

\begin{lemma} \label{lemma: spectral sequence}
	With $\F[\mathcal{A}]\llbracket\hbar,\hbar^{-1}]\llbracket U^{-1}\rrbracket$-coefficients, the $E_1$ term of the spectral sequence of $(CKh^\sharp (\widehat \sigma)\otimes \llbracket U^{-1}\rrbracket,d_U,\mathcal{F})$ is the symplectic Khovanov homology $Kh_{\op{symp}}(\widehat\sigma)$ tensored with $\llbracket U^{-1}\rrbracket$.
\end{lemma}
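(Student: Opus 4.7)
The plan is to verify that the filtration $\mathcal{F}$ is preserved by $d_U$, identify the induced differential on the associated graded with the symplectic Khovanov differential, and then invoke the standard spectral sequence of a filtered complex.

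First I would check that $d_U$ is filtration-preserving. By the discussion preceding the lemma, a curve $u\in\mathcal{M}^{\op{ind}=1,A,\chi}_{J^\Diamond}({\bf y},{\bf y}')$ contributes $\hbar^{\kappa-\chi}e^A U^{a-W(u)}{\bf y}'$ to $d_U(U^a{\bf y})$, and positivity of intersections applied to the pseudoholomorphic map $(v_1,v_2)\colon\dot F\to \R\times[0,1]\times \wt D$ in the $4$-manifold $\R\times[0,1]\times \wt D$ gives $W(u)\ge 0$. Therefore $U^{a-W(u)}{\bf y}'\in \mathcal{F}_{a-W(u)}\subset\mathcal{F}_a$, so $d_U(\mathcal{F}_i)\subset \mathcal{F}_i$ for every $i$.

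Next I would identify the $E_0$ page. The associated graded $E_0^p=\mathcal{F}_p/\mathcal{F}_{p-1}$ is canonically isomorphic to $CKh^\sharp(\widehat\sigma)$ placed in filtration degree $p$, and the induced differential $d_0$ retains only those contributions to $d_U$ that preserve the $U$-power exactly, i.e., the curves with $W(u)=0$. By the reinterpretation of \cite{Ma} quoted in the excerpt, $W(u)=0$ is precisely the embeddedness condition on $(v_1,v_2)$ that defines the symplectic Khovanov differential. Consequently $d_0$ agrees, after the obvious identifications, with the symplectic Khovanov differential tensored with the identity on $\llbracket U^{-1}\rrbracket$, and taking cohomology yields $E_1\simeq Kh_{\op{symp}}(\widehat\sigma)\otimes \llbracket U^{-1}\rrbracket$.

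The hard part is not the spectral-sequence bookkeeping but rather the prerequisite almost-complex-structure setup: one must produce a jointly regular triple $(J^\Diamond, J_{\R\times[0,1]\times \wt D}, J_{\R\times[0,1]})$ making the tower $(\R\times[0,1]\times \wt W,J^\Diamond)\to(\R\times[0,1]\times \wt D, J_{\R\times[0,1]\times \wt D})\to(\R\times[0,1], J_{\R\times[0,1]})$ holomorphic with holomorphic fibers, so that positivity of intersections applies on the $4$-manifold $\R\times[0,1]\times \wt D$ and so that $W(u)$ genuinely coincides with the difference between the Heegaard-Floer index of \cite{CGH1} and the usual Fredholm index, with $W(u)=0$ characterizing exactly the embedded projections. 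This is the step the authors explicitly defer (``we will not go into detail here''); once it is in place, the spectral-sequence identification above is purely formal.
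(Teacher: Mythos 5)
Your proposal is correct and follows essentially the same route as the paper: the paper states the lemma is immediate from the preceding discussion, namely that $W(u)\ge 0$ makes $d_U$ filtration-preserving and that the associated graded differential counts exactly the $W(u)=0$ (embedded-projection) curves, which by the reinterpretation of Manolescu is the symplectic Khovanov differential. You also correctly flag that the only substantive input is the deferred construction of the compatible regular almost complex structures, which the paper likewise takes as given.
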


The main issue that we face is the convergence of the spectral sequence.  The following is the key question:

\begin{q}
Is there a bound on $W(u)$ for $u\in \mathcal{M}_{J^\Diamond}^{\op{ind}=1,A,\chi}({\bf y},{\bf y}')$?  This is closely related to whether there is a bound on $\chi$. 
\end{q}

What we can say for the moment is the following:

\begin{thm}
There is a filtered cochain map 
$$\widehat{CF}(\wt W,\wt h_\sigma(\wt {\bs a}),\wt {\bs a})\otimes \llbracket U^{-1}\rrbracket \to \widehat{CF}(\wt W,\wt h_\sigma(\wt {\bs a}'),\wt {\bs a}')\otimes  \llbracket U^{-1}\rrbracket$$
for a handleslide (cf.\ Theorem~\ref{thm: quasi-isomorphism}) which is a quasi-isomorphism of cochain complexes. There is also a filtered cochain map for a Markov stabilization (cf.\ Theorem~\ref{thm: stabilization invariance}) which is a quasi-isomorphism of cochain complexes. Hence the $E_k$ term of the spectral sequence of $(CKh^\sharp (\widehat \sigma)\otimes \llbracket U^{-1}\rrbracket,d_U,\mathcal{F})$ is a link invariant.
\end{thm}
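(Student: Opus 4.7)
The plan is to promote each of the quasi-isomorphisms from Theorem~\ref{thm: quasi-isomorphism} (handleslide) and Theorem~\ref{thm: stabilization invariance} (Markov stabilization) to the $\llbracket U^{-1}\rrbracket$-completed setting, verify they respect the filtration $\mathcal{F}$, and then invoke the standard comparison theorem for filtered complexes to conclude invariance on each page $E_k$. The key observation in both cases is that the $U$-variable records the weight $W(u) \geq 0$, which is additive under gluing and pullback by positivity of intersections in the $4$-dimensional target $\R\times[0,1]\times\wt D$.

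For the handleslide, I would define
$$\Phi_U \colon \widehat{CF}(\wt h_\sigma(\wt{\bs a}),\wt{\bs a}) \otimes \llbracket U^{-1} \rrbracket \to \widehat{CF}(\wt h_\sigma(\wt{\bs a}),\wt{\bs a}') \otimes \llbracket U^{-1} \rrbracket$$
by weighting each rigid curve $u$ counted in $\mu_2(\bs\Xi\otimes \cdot)$ by $U^{-W(u)}$, and likewise for the inverse $\Psi_U$ via $\mu_2'(\bs\Theta\otimes \cdot)$. Since $W(u)\geq 0$, both maps send $\mathcal{F}_i$ to $\mathcal{F}_i$. The cochain map property is obtained by running the SFT degeneration of Lemma~\ref{lemma: bdry squared equals zero} adapted to the pair-of-pants base, with weights adding across the two levels. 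The chain homotopy $\Psi_U \circ \Phi_U \simeq \hbar^2 p(\hbar) \cdot \op{id}$ is obtained by interpolating through the base $D_4$ exactly as in the proof of Theorem~\ref{thm: quasi-isomorphism}, with weights of all curves in the parametrized moduli tracked; the key calculation of Theorem~\ref{thm: count}, which counts embedded annuli, contributes at $U$-shift $0$ to the leading term, while higher $W$ contributions enter in lower filtration.

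For Markov stabilization, I would promote $\Phi_s$ of Equation~\eqref{eqn: another Phi} to a filtered map. The proof of Theorem~\ref{thm: stabilization invariance} identifies every rigid curve $u'$ for the stabilized diagram either with a curve $u$ in the complement of $N(\gamma_0)$ (giving $W(u')=W(u)$, so the $U$-weight is preserved under the bijection) or with the gluing of such a $u$ to the model square $u''$ of Lemma~\ref{lemma: model calc}. The latter disk is embedded in the product model $\C\times T^*S^1$, so $W(u'') = 0$ and gluing preserves filtration. Thus $\Phi_s \otimes \op{id}_{\llbracket U^{-1}\rrbracket}$ is a filtered isomorphism of cochain complexes, in particular a filtered quasi-isomorphism.

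The main obstacle is the comparison of $E_k$ pages. The cleanest route is to check that each filtered map induces a quasi-isomorphism on the associated graded complex, whose differential counts only $W(u)=0$ curves (i.e., embedded pseudoholomorphic sections in the sense of Manolescu~\cite{Ma}). On the associated graded, $\Phi_U$ and $\Phi_s$ reduce to the maps computing symplectic Khovanov homology $Kh_{\op{symp}}(\widehat\sigma)$ with its handleslide and Markov invariance already established by Seidel--Smith~\cite{SS} and Mak--Smith~\cite{MS}. Once $E_1$-invariance is known, the standard comparison theorem for filtered complexes (applicable here because the filtration $\mathcal{F}$ is exhaustive and bounded above by $\mathcal{F}_0$) yields isomorphisms on all subsequent $E_k$ pages, proving that each $E_k$ is a link invariant. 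The delicate technical point to verify carefully is the convergence of the spectral sequence in the $\llbracket U^{-1}\rrbracket$-completed setting, which rests on showing that for fixed $\mathbf{y},\mathbf{y}'$ and fixed $\op{ind}=1$ there are only finitely many values of $W(u)$ contributing to $d_U$ in each filtered piece; this reduces, via Lemma~\ref{lemma: finite for Khovanov} and Claim~\ref{claim: images under projection same}, to the area/weight bound from the projection $\wt p \circ \pi_{\wt W^\wedge} \circ u$.
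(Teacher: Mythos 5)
Your construction of the filtered maps is the same as the paper's: weight each curve by $U^{-W(u)}$, use $W(u)\ge 0$ for filtration-preservation, and for the stabilization note that the model curves of Lemma~\ref{lemma: model calc} have $W=0$ so that $\Phi_s$ of \eqref{eqn: another Phi} becomes a filtered isomorphism of complexes. The genuine gap is in your last step, where you deduce invariance of the pages by claiming that on the associated graded the maps ``reduce to the maps computing $Kh_{\op{symp}}(\widehat\sigma)$ with its handleslide and Markov invariance already established by Seidel--Smith and Mak--Smith.'' This does not give what the comparison theorem needs: invariance of $Kh_{\op{symp}}$ as a group says the $E_1$ pages are \emph{abstractly} isomorphic, not that the specific map induced by your $\Phi_U$ on $E_1$ is an isomorphism, and identifying that induced map (a count of $W=0$ triangles with the cycle $\bs\Xi$ inserted) with a map whose invariance is actually proved in \cite{SS} or \cite{MS} is an unproven comparison -- Lemma~\ref{lemma: spectral sequence} only identifies the $E_1$ \emph{groups}. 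It would also be partly circular, since the point of the theorem is to obtain page-invariance internally.

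The repair is already contained in your own observations and is how the paper argues: in the chain homotopy $\Psi_U\circ\Phi_U\simeq \hbar^2\bigl(p(\hbar)+O(U^{-1})\bigr)\op{id}$ the only nonzero count in Theorem~\ref{thm: count} is the $\chi=0$ count in Step 6, whose curves satisfy $W=0$; hence the $U^{0}$-coefficient of the composite is the unit $\hbar^2 p(\hbar)$, and all homotopies are filtration-preserving. Therefore the composites induce multiplication by a unit on the associated graded, so $\Phi_U$ induces an isomorphism on $E_1$ directly, and the purely algebraic comparison of spectral sequences gives isomorphisms on every $E_k$, $k\ge 1$ (the same unit-leading-coefficient argument over $\F[\mathcal A]\llbracket\hbar,\hbar^{-1}]\llbracket U^{-1}\rrbracket$ also gives the quasi-isomorphism of total complexes, since an element with invertible $U^0$-part is a unit). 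Note also that convergence of the spectral sequence, which you flag as the delicate point, is not needed anywhere in this statement: page-by-page comparison requires no convergence hypotheses, and the paper deliberately leaves convergence as an open question immediately before this theorem.
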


\begin{proof}
The cochain map is an enhancement of 
$$\widehat{CF}(\wt W,\wt h_\sigma(\wt {\bs a}),\wt {\bs a})\to  \widehat{CF}(\wt W,\wt h_\sigma(\wt {\bs a}'),\wt {\bs a}')$$
from the proof of Theorem~\ref{thm: quasi-isomorphism} which keeps track of the powers of $U^{-1}$. Since the total singularity weight $W(u)$ of the appropriate projection of any curve $u$ counted in the maps $\Phi$ and $\Psi$ is always nonnegative, the cochain map is filtration-preserving.  The quasi-isomorphism as cochain complexes follows from observing that in the proof of Theorem~\ref{thm: count} the only nonzero (mod $2$) curve count is that of $\mathcal{M}^{\chi=0, {\bf w}, \flat, 0LR}_{J^\Diamond}(\bs\Xi, \bs\Theta)$ in Step 6, whose elements satisfy $W=0$. 

The case of Markov stabilizations is straightforward and is left to the reader.
\end{proof}

\section{The Baldwin-Plamenevskaya examples} \label{section: BP}

The goal of this section is to prove Theorem~\ref{thm: BP}.

\begin{proof}
	In view of Lemma~\ref{lemma: Lefschetz} it suffices to show that $\psi^\sharp(\widehat\sigma_{BP,3})=0$. We claim that
\begin{gather*}
	d (\{\hat y_1,\hat y_2,x_3\} + \{\check y_1, z_2, x_3\})= \hbar \{x_1,x_2,x_3\} \mbox{ mod } 2
\end{gather*}
in Figure~\ref{fig: sigma-BP}. Referring to Figure~\ref{fig: sigma-BP}, there is a single intersection point of ${\bs a}\cap h_{BP,3}(\bs a)$ over each $z_i$, denoted by $\check z_i$ or simply $z_i$; two intersection points (here we are doing Morse-Bott theory) over each of $r_i,s_i,t_i,y_i$, given by $\check r_i$ of lower degree, $\hat r_i$ of higher degree, etc.; and one intersection point over $x_i$, denoted by $\hat x_i$ or simply $x_i$.  The point $\hat x_i$ is a component of the contact class.
\begin{figure}[ht]
	\begin{overpic}[scale=1.2]{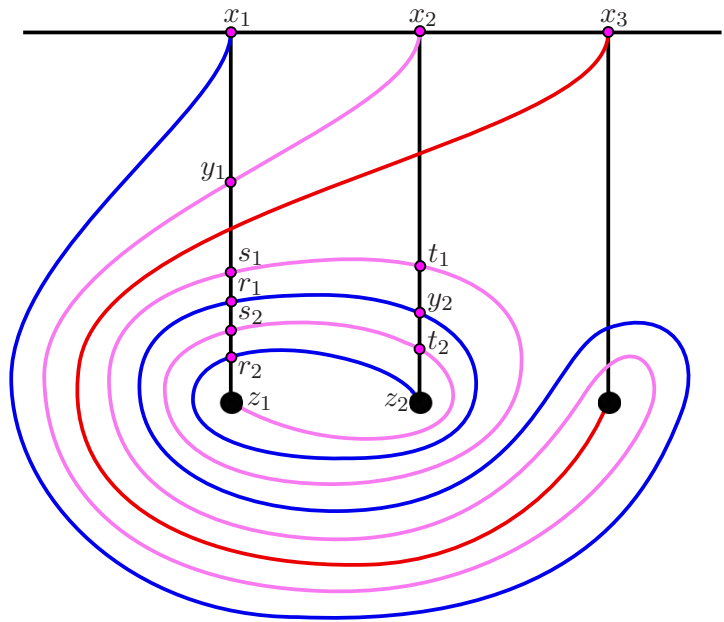}
		\put(30,84){$x_1$}
		\put(56,84){$x_2$}
		\put(82.8,84){$x_3$}
		\put(26.8,62.5){$y_1$}
		\put(58.55,44.1){$y_2$}
		\put(32,46.2){$r_1$}
		\put(32,50.4){$s_1$}
		\put(32,34.6){$r_2$}
		\put(32,42.25){$s_2$}
		\put(58.6,38){$t_2$}
		\put(58.7,50.3){$t_1$}
		\put(33.3,30.2){$z_1$}
		\put(52.5,30){$z_2$}
	\end{overpic}
	\caption{The braid $\sigma_{BP,3}$ acting on $\gamma_1,\gamma_2,\gamma_3$ on $D$.}
	\label{fig: sigma-BP}
\end{figure}
	
Since the only holomorphic curve that has $x_i$ at the positive end is a trivial strip, we may erase the arcs $\gamma_3$ and $\sigma_{BP,3}(\gamma_3)$ and omit $x_3$ from the notation; see Figure~\ref{fig: sigma-BP-alt}.
\begin{figure}[ht]
	\begin{overpic}[scale=0.5]{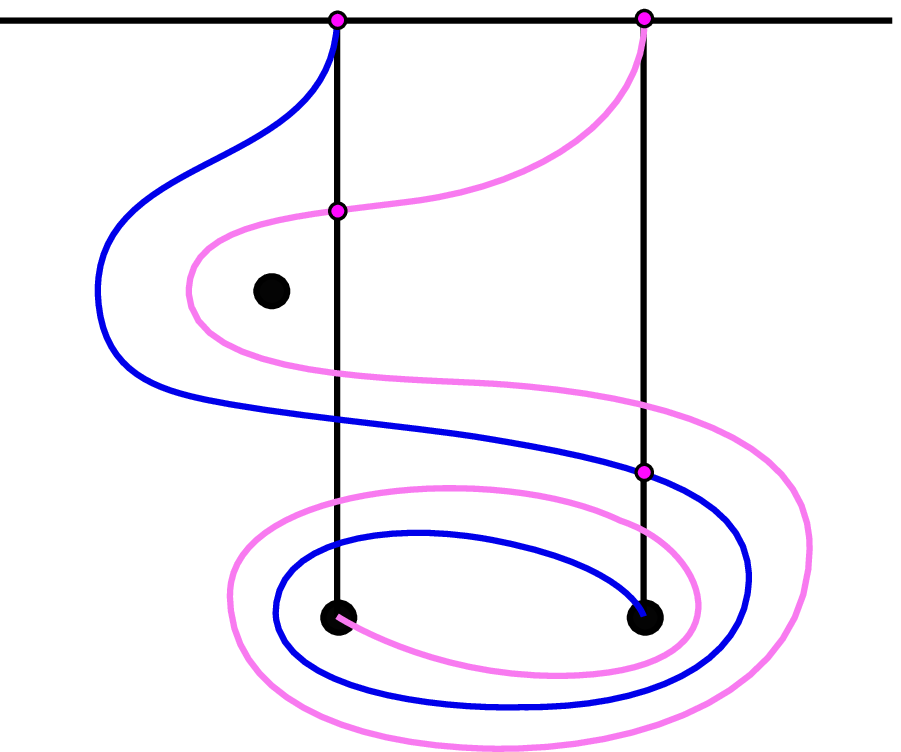}		
	\end{overpic}
	\caption{}
	\label{fig: sigma-BP-alt}
\end{figure}
Excluding trivial strips, all the curves $u:\dot F\to \R\times[0,1]\times \widehat W$ that we end up counting have a domain $\dot F$ which is a disk with $4$ boundary punctures and contributes the coefficient $\hbar$. 

\begin{rmk}
In fact, the proof of Theorem~\ref{thm: BP} shows that $\psi^\sharp (\widehat \sigma)=0$ for any $\kappa$-braid $\sigma$ for which there exist arcs $\gamma_1$ and $\gamma_2$ such that $\sigma(\gamma_1)$ and $\sigma(\gamma_2)$ are as given in Figure~\ref{fig: sigma-BP-alt} and there are no other critical points in regions bounded by subarcs of $\gamma_1,\gamma_2,\sigma(\gamma_1),\sigma(\gamma_2)$. 
\end{rmk}

\s\n
{\em First Calculation.}  The next several pages are devoted to showing that 
$$d \{\hat y_1,\hat y_2\} = \hbar (\{x_1,x_2\}+ \{\check r_2, x_2\}+ \{\check t_2,x_1\}) \mbox{ mod } 2.$$

All the loops in $D$ that are concatenations of an arc each of 
$$\gamma_1,\gamma_2,  \sigma_{BP,3}(\gamma_1), \sigma_{BP,3}(\gamma_2)$$ 
and that switch at $y_1, y_2$ are given by:
$$y_2\to x_2,t_1,t_2\to y_1\to r_1,r_2,x_1 \to y_2,$$
where $x_2,t_1,t_2$ means any one of them can be used.  The only loops that can possibly bound domains with nonnegative weights (denoted $D_1,D_1',D_2, D_3, D_4$) are:
\be
\item[(1)] $y_2\to x_2\to y_1\to r_1\to y_2$,
\item[(1')]  $y_2\to t_1\to y_1\to x_1\to y_2$,
\item[(2)] $y_2\to x_2\to y_1\to x_1\to y_2$,
\item[(3)] $y_2\to x_2\to y_1\to r_2\to y_2$,
\item[(4)] $y_2\to t_2\to y_1\to x_1\to y_2$.
\ee

\s\n
{\em Cases (1) and (1').} $D_1$ is a quadrilateral and an easy Maslov index calculation gives:
\be
\item[(i)] $\op{ind}(u)=0$ if $u$ has $\{\hat y_1,\hat y_2\}$ at the positive end and $\{\hat r_1,x_2\}$ at the negative end;
\item[(ii)] $\op{ind}(u)=-1$ if $u$ has $\{\hat y_1,\hat y_2\}$ at the positive end and $\{\check r_1,x_2\}$ at the negative end.
\ee	
Hence curves $u$ that project to $D_1$ do not have the right indices and are not counted. The $D_1'$ case is analogous.

\begin{figure}[ht]
	\begin{overpic}[scale=1]{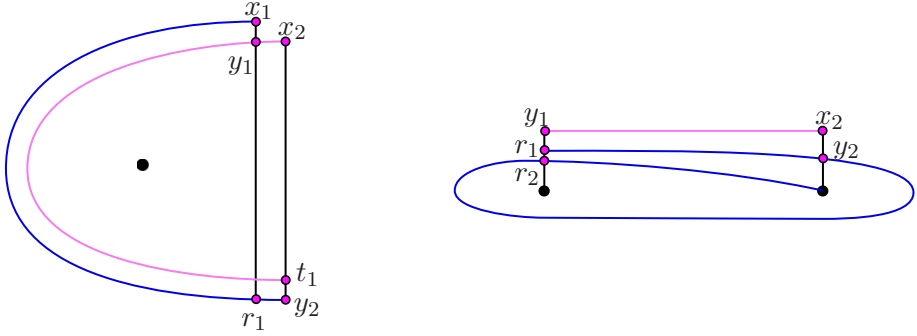}
		\put(26.3,32){$x_1$}
		\put(30,29.8){$x_2$}
		\put(24.5,26){$y_1$}
		\put(31.75,-.6){$y_2$}
		\put(32,2.7){$t_1$}
		\put(26,-2.2){$r_1$}
		\put(57,20.3){$y_1$}
		\put(89,20){$x_2$}
		\put(91,16.7){$y_2$}
		\put(56,16.8){$r_1$}
		\put(56,13.75){$r_2$}
	\end{overpic}
	\caption{The domains $D_2$ (left) and $D_3$ (right).}
	\label{fig: BC}
\end{figure}

\begin{figure}[ht]
	\begin{overpic}[scale=1.25]{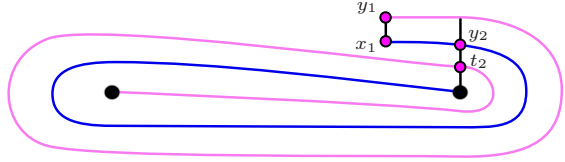}
		\put(62.4,20){\tiny $x_1$}
		\put(62.7,27){\tiny $y_1$}
		\put(83,17){\tiny $t_2$}
		\put(82.7,22){\tiny $y_2$}
	\end{overpic}
	\caption{The domain $D_4$.}
	\label{fig: D}
\end{figure}

\s\n
{\em Case (2).} Denoting the closure of $D_2$ by $\overline{D}_2$, we choose $D_2$ as in Figure~\ref{fig: BC} such that the intersections $\overline{D}_2\cap \gamma_1$ and $\overline{D}_2\cap\gamma_2$ are close and that $\overline{D}_2\cap \sigma_{BP,3}(\gamma_1)$ and $\overline{D}_2\cap \sigma_{BP,3}(\gamma_2)$ are close. 

\begin{claim}\label{claim: count}
The count of $u:\dot F\to \R\times[0,1]\times \widehat W$ from $\{\hat y_1,\hat y_2\}$ to $\{x_1,x_2\}$ modulo $\R$-translation is $1$ mod $2$ if its projection to $\C$ is $D_2$.
\end{claim}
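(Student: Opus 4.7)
The plan is to reduce the count to a local model computation by exploiting the thinness of $D_2$, mirroring the degeneration strategy from the proof of Theorem~\ref{thm: count}. Writing $\pi_{\wt D} := \wt p \circ \pi_{\widehat W}$, any $u$ in the moduli space projects to $D_2$ with degree $1$ over its interior, with its four boundary corners mapping to $x_1, y_1, x_2, y_2$. Since $n=2$, Lemma~\ref{lemma: Fredholm index} reduces to $\op{ind}(u) = \mu(u)$, and the Lagrangian boundary conditions determine this Maslov index uniquely; the corners at $x_1, x_2$ are critical values of $\wt p$, so locally each sits in a Lefschetz chart $\widehat p : \C^2 \to \C$, $(z_1,z_2) \mapsto z_1 z_2$, with $\wt a_i \cup \wt h_{BP,3}(\wt a_i)$ projecting to a pair of arcs through $0$.

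The main step is to apply a neck-stretching degeneration along each pair of close-together arcs, precisely as was done in Steps 1--4 of the proof of Theorem~\ref{thm: count}. In the limit, $u$ breaks into (i) thin holomorphic strips over each of the two narrow corridors of $D_2$, (ii) two local pieces near the critical corners $x_1, x_2$ governed by the Clifford-torus model of Remark~\ref{rmk: signs}, and (iii) gluing pieces at the smooth corners $y_1, y_2$. I would then apply Theorem~\ref{thm: Lagrangian surgery} at each of $x_1, x_2$ to replace each critical-corner boundary condition by a Clifford-torus Lagrangian over a small disk, reducing the problem to a count of multisections of a standard symplectic fibration with torus boundary conditions. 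As in Step 5--6 of the proof of Theorem~\ref{thm: count}, the two local contributions give rise to evaluation maps $ev_1, ev_2$ into a common torus $S^1 \times S^1$, whose fiber product over the matching condition yields a single transverse intersection mod $2$.

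The main obstacle I expect is handling the two FOOO Lagrangian surgeries at $x_1$ and $x_2$ simultaneously, while keeping track of the incidence at the smooth corners $y_1, y_2$ and matching the signs/orientations of the emerging pieces; this is the same technical issue that was resolved by the fiber-product analysis in Theorem~\ref{thm: count}, and the argument should transfer with only notational changes. A secondary point is ruling out contributions from curves with smaller Euler characteristic or with branch points in the interior of $D_2$: the thinness of $D_2$ (and the absence of other critical values inside the quadrilateral) forces the domain to be a disk with four boundary punctures, and positivity of intersections, used exactly as in Claim~\ref{claim: images under projection same}, precludes unwanted degenerations. The net count is therefore $1$ mod $2$, establishing the claim.
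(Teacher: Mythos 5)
There is a genuine gap, and it begins with a misreading of the geometry. In the picture used in this section the curves live in $\R\times[0,1]\times\widehat W$, where $\widehat W$ is the completion of the page; the Lefschetz critical values are $z_1,z_2,z_3$, and the corners $x_1,x_2$ of $D_2$ are \emph{not} critical values of the fibration but the contact-class corners near $\bdry D$, where $\gamma_i$ meets $\sigma_{BP,3}(\gamma_i)$ close to the boundary (the intersection points over the actual critical values are the separate generators $\check z_i$; indeed $z_2$ occurs as a generator in the Second Calculation). Consequently you cannot invoke Theorem~\ref{thm: Lagrangian surgery} at $x_1,x_2$ --- that statement requires the corner to lie over a singular value of $\widehat p$ --- and the Clifford-torus picture of Remark~\ref{rmk: signs} is not the local model at those corners either: near $x_i$ the local picture is a thin sector between a Legendrian boundary and its positive pushoff, with no vanishing cycle present. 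So the decomposition you propose (thin strips over the corridors, two ``critical-corner'' pieces at $x_1,x_2$, FOOO surgeries there, then a matching of evaluation maps as in Steps 5--6 of Theorem~\ref{thm: count}) is organized around local models that simply do not occur in this count.

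The second, independent, problem is that the actual number is never computed. Asserting that the fiber product ``yields a single transverse intersection mod $2$'' is precisely the content of the claim; in Theorem~\ref{thm: count} that conclusion came out of two bespoke model computations (the blue line versus the pink region in Figure~\ref{fig: torus2}) tailored to a different geometric situation (two interior critical values with interior point constraints), and it does not transfer ``with only notational changes.'' The paper's argument instead analyzes the $1$-parameter family of slit projections of $D_2$ (the Type $D$ and Type $L$ slits emanating from $y_1$, whose extreme positions are the two ends of the family) and transfers the whole count to the model of Lemma~\ref{lemma: count}: degree-one disks with boundary on the Clifford torus $T$, constrained to pass through $\hat y_2$ and to meet the arc $\eta$ obtained from $\hat y_1$ by parallel transport around $\bdry D$. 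There the answer is the mod $2$ intersection number on $T$ between the slope-$\tfrac12$ arc $\eta$ and the boundaries of the two disks $w_1,w_2$ through $\hat y_2$ furnished by Remark~\ref{rmk: signs}, which is $1$. Your proposal contains no step playing the role of Lemma~\ref{lemma: count}, so even granting a degeneration scheme, it does not explain why the count is $1$ rather than $0$ or $2$ mod $2$.
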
 

Note that such $u$ satisfies $\op{ind}(u)=1$ and that the index difference between $\{y_1^\dagger,y_2^\dagger\}$ and $\{x_1,x_2\}$ is $1$ plus the number of $\dagger=\check{\mbox{}}$.  The intersection points $\hat y_1, \hat y_2$ should be viewed as point constraints and $x_1,x_2$ as imposing no point constraints (and can be disregarded).

\begin{proof}[Proof of Claim~\ref{claim: count}]
There are two types of slits in $D_2$ that start at $y_1$: Type $D$ that goes straight down along $\gamma_1$ and Type $L$ that initially goes to the left along $\sigma_{BP,3}(\gamma_2)$.  As the slit $D$ goes all the way down so that it hits $r_1$, the punctures $q(y_2), q(x_2), q(y_1)$ can be viewed as approaching one another on the domain $\dot F$; as the slit $L$ goes all the way to $\gamma_2$, the punctures $q(y_1), q(x_1), q(y_2)$ approach one another on $\dot F$. Moreover, for Type $D$, $q(y_1)$ comes right after $q(y_2)$, traveling counterclockwise, whereas $q(y_1)$ comes right before $q(y_2)$ for Type $L$. These two degenerations can be viewed as the ends of a $1$-dimensional family of maps $\widehat p\circ v: \dot F\to \C$. 

Recall the Lefschetz fibration 
$$\widehat p: \C^2_{z_1,z_2}\to \C_\zeta,\quad (z_1,z_2)\mapsto z_1z_2,$$
and the Clifford torus $T=\{|z_1|=1\}\times \{|z_2|=1\}$ over $|\zeta|=1$. 
\begin{figure}[ht]
	\begin{overpic}[scale=1]{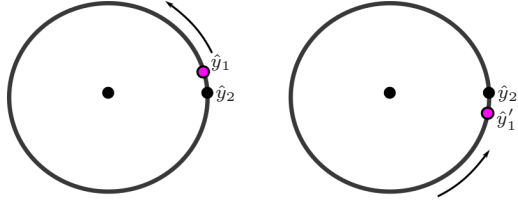}
		\put(42.5,20.3){\tiny $\hat y_2$}
		\put(41.5,26.7){\tiny $\hat y_1$}
		\put(100.3,20.3){\tiny $\hat y_2$}
		\put(100,15){\tiny $\hat y_1'$}
	\end{overpic}
	\caption{The base of the Lefschetz fibration $\widehat p$. The circles are $\widehat p(T)=\{|\zeta|=1\}$. The path $\widehat p \circ \eta$ goes counterclockwise from $e^{i\epsilon}$ to $e^{-i\epsilon}$, given by red dots.}
	\label{fig: moving-points}
\end{figure}
Let $\epsilon>0$ be small. 
Referring to Figure~\ref{fig: moving-points}, we view $\hat y_1, \hat y_2$ as points on $T$ sitting over $e^{i\epsilon},1$ and take a path $\eta$ on $T$ which starts at $\hat y_1$, is obtained by parallel transport around $\bdry D$ in the counterclockwise direction via a symplectic connection, and ends at $\hat y_1'$ sitting over $e^{-i\epsilon}$. The desired curve count is equivalent to the curve count in Lemma~\ref{lemma: count} below and implies Claim~\ref{claim: count}.
\end{proof}

\begin{lemma} \label{lemma: count}
The count of disks $w: F\to \C^2$ modulo reparametrizations satisfying (i) and (ii) below is $1$ modulo $2$.
\be
\item[(i)] $w(\bdry F)\subset T$ and $w(F)=D$ with degree $1$.
\item[(ii)] $w$ intersects $\hat y_2$ and $\eta$.
\ee
\end{lemma}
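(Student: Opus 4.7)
The plan is to classify all holomorphic disks satisfying (i), then impose the two geometric conditions in (ii) and check the algebraic count. Writing $w = (w_1, w_2): F \to \C^2$, the boundary condition $w(\partial F) \subset T$ forces both $w_1$ and $w_2$ to be holomorphic maps $F \to \overline{D}$ with $|w_i| \equiv 1$ on $\partial F$. By the maximum principle each $w_i$ is either a unimodular constant or a finite Blaschke product, and $\widehat p \circ w = w_1 w_2$ has degree $\deg w_1 + \deg w_2$. Since (i) demands total degree $1$, exactly one of the two coordinates is constant and the other is a Blaschke factor; modulo reparametrization by $\op{PSL}(2,\R)$ this yields precisely two families, $w_A(z) = (z,c)$ and $w_B(z) = (c,z)$ with $c\in S^1$, recovering the two disks from the Remark about signs.

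Writing $\hat y_2 = (b_1,b_2)$ with $b_1 b_2 = 1$, the point constraint picks out a unique representative in each family: $w_A(z) = (z,b_2)$ (hitting $\hat y_2$ at $z=b_1$) and $w_B(z) = (b_1, z)$ (hitting $\hat y_2$ at $z = b_2$). So after imposing the $\hat y_2$-constraint there are exactly two candidate disks to test against $\eta$.

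Using coordinates $(\theta_1,\theta_2)$ on $T \simeq (\R/2\pi \Z)^2$, the symplectic connection for $\widehat p$ lifts a counterclockwise rotation of $\{|\zeta|=1\}$ by $\alpha$ to $(\theta_1,\theta_2)\mapsto(\theta_1+\alpha/2,\theta_2+\alpha/2)$, so that $\eta$ is the diagonal segment $\{(\theta,\theta): \theta\in[\epsilon/2,\pi-\epsilon/2]\}$. The image of $w_A$ has boundary $\{\theta_2 = \arg b_2\}$ (a horizontal circle), which meets $\eta$ iff $\arg b_2 \in [\epsilon/2, \pi - \epsilon/2]$; symmetrically $w_B$ meets $\eta$ iff $\arg b_1 \in [\epsilon/2, \pi - \epsilon/2]$. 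Since $\arg b_1 + \arg b_2 \equiv 0 \pmod{2\pi}$, these two arcs on $S^1$ are disjoint, so for $\hat y_2$ away from the $\epsilon$-sized ambiguity near $\theta_1 = 0$ or $\theta_1 = \pi$ exactly one of $w_A, w_B$ intersects $\eta$. This gives the desired mod-$2$ count of $1$.

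The main subtlety will be verifying that the intersection with $\eta$ is transverse and that each candidate disk is regular. For $w_A$, the intersection with $\eta$ takes place on the boundary $\partial w_A$, but in the $(\theta_1,\theta_2)$-coordinates $\partial w_A$ is horizontal while $\eta$ is diagonal, so the crossing is transverse on $T$; one then has to observe that $\partial w_A$ genuinely ``crosses'' $\eta$ from the half of $T$ lying under $w_A(F)$, which follows from the explicit formula. Regularity of $w_A$ and $w_B$ in their moduli spaces is automatic since both are embedded disks in $\C^2$ with trivial normal bundle and totally real boundary on $T$, so standard automatic transversality for holomorphic disks with Lagrangian boundary applies.
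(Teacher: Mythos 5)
Your argument is correct and is essentially the paper's own proof carried out in explicit coordinates: the paper invokes Remark~\ref{rmk: signs} for the two disks through $\hat y_2$ and reads the mod $2$ intersection of the slope-$\tfrac{1}{2}$ arc $\eta$ with their boundaries off the torus picture, while you re-derive that classification via Blaschke factors and perform the same intersection count in $(\theta_1,\theta_2)$-coordinates. One point to make explicit: writing $\eta$ as the diagonal segment $\{(\theta,\theta):\theta\in[\epsilon/2,\pi-\epsilon/2]\}$ silently places $\hat y_1$ at the fiber position $(\epsilon/2,\epsilon/2)$; this normalization is harmless, but only because the fiber-preserving rotation $(z_1,z_2)\mapsto(e^{i\mu}z_1,e^{-i\mu}z_2)$ preserves $T$, the fibration, the symplectic connection and both disk families, so one may rotate $\hat y_1$ onto the diagonal at the cost of moving $\hat y_2$ --- say this, since otherwise your final criterion in terms of $\arg b_2$ is not well posed. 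Granting that, your two exceptional $\epsilon$-windows correspond precisely to $\hat y_2$ lying within $O(\epsilon)$ of $\hat y_1$ or of its fiberwise antipode; the first is exactly the paper's proviso that $\hat y_1$ not approach $\hat y_2$ as $\epsilon\to 0$, and both are avoided by the actual positions of $\hat y_1,\hat y_2$ arising in Claim~\ref{claim: count}, so your caveat is consistent with, and in fact slightly sharper than, the condition under which the paper asserts the count is $1$ mod $2$.
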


\begin{proof}[Proof of Lemma~\ref{lemma: count}]
Refer to Figure~\ref{fig: torus}, which depicts the Lagrangian torus $T$, where the vertical direction represents the fibers of $\widehat p: T^2\to \{|\zeta|=1\}$. The blue curves are boundaries of the $2$ curves $w_1$ and $w_2$ that pass through $\hat y_2$ by Remark~\ref{rmk: signs}, and the curve $\eta$ has slope $\tfrac{1}{2}$ and connects from the fiber $T^2_{e^{i\epsilon}}$ over $e^{i\epsilon}$ to the fiber $T^2_{e^{-i\epsilon}}$. As long as $\epsilon$ is small and $\hat y_1$ does not go to $\hat y_2$ as $\epsilon\to 0$, the intersection number between $\eta$ and $w_1\cup w_2$ is $1$ modulo $2$, which implies the lemma.
\end{proof}

\begin{figure}[ht]
	\begin{overpic}[scale=.6]{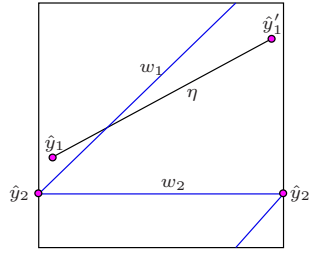}
		\put(-10,20.3){\tiny $\hat y_2$}
		\put(4,40){\tiny $\hat y_1$}
		\put(101,20.3){\tiny $\hat y_2$}
		\put(90,87){\tiny $\hat y_1'$}
		\put(60,60){\tiny $\eta$}
		\put(41.5,70){\tiny $w_1$}
		\put(50,24.5){\tiny $w_2$}
	\end{overpic}
	\caption{The sides of the torus $T$ are identified and the top and the bottom are also identified.}
	\label{fig: torus}
\end{figure}

We now claim that the count of $u$ from $\{\hat y_1,\hat y_2\}$ to $\{x_1,x_2\}$ mod $2$ does not change under a (generic) isotopy of $D_2$:  As we isotop the shape of $D_2$, the $1$-dimensional family of curves from $\{\hat y_1,\hat y_2\}$ to $\{x_1,x_2\}$ can degenerate to an $\op{ind}=0$ curve from $\{\hat y_1,\hat y_2\}$ to $\{\hat r_1,x_2\}$ and an $\op{ind}=1$ curve from $\{\hat r_1\}$ to $\{x_1\}$ (or an $\op{ind}=0$ curve from $\{\hat y_1,\hat y_2\}$ to $\{\hat t_1,x_1\}$ and an $\op{ind}=1$ curve from $\{\hat t_1\}$ to $\{x_2\}$).  Since the $\op{ind}=1$ curve types come in pairs by Remark~\ref{rmk: signs}, the mod $2$ curve count remains invariant as we pass through this bifurcation.

\s\n
{\em Case (3).} We choose $D_3$ as in Figure~\ref{fig: BC} such that the two critical values $z_1$ and $z_2$ are far apart and that $D_3$ has a long neck in the middle. A Maslov index calculation implies that, in Case (3), $\op{ind}(u)=1$ if and only if $u$ is from $\{\hat y_1,\hat y_2\}$ to $\{\check r_2,x_2\}$.

We claim that the count of $u: \dot F\to \R\times[0,1]\times \widehat W$ from $\{\hat y_1,\hat y_2\}$ to $\{\check r_2,x_2\}$ is $1$ mod $2$.  Observe that $\hat y_1,\hat y_2,\check r_2$ should be viewed as point constraints and $x_2$ as imposing no point constraints.  There are slits going to the left or down at $y_2$ (labeled $yL$ or $yD$) and slits going to the right or down at $r_2$ (labeled $rR$ or $rD$).  The slits $yL$ and $rR$ cannot be too long: if $rR$ is long, then we are effectively gluing a disk $u_1$ about $z_1$ with no constraint and a disk $u_2$ about $z_2$ with three constraints, which is a contradiction; the situation for $yL$ long is similar.  

On the other hand, if $yL$ and $rR$ are not too long, then we are effectively gluing a disk $u_1$ about $z_1$ with $2$ constraints and a disk $u_2$ about $z_2$ with $1$ constraint. In the next paragraph we sketch a model calculation for $u_1$ similar to Step 4 of Theorem~\ref{thm: count warm-up}, which implies that the count of $u_1$ is $1$ mod $2$.  The gluing of $u_1$ to $u_2$ imposes another constraint on $u_2$ and the count of $u_2$ with $2$ constraints is $1$ mod $2$ by a similar calculation. The claim then follows.

\s\n
{\em Model calculation.}  We use the notation from Step 4 of Theorem~\ref{thm: count warm-up} and consider $\mathcal{M}_J^{w_1}$, where $w_1=(w_{11},w_{12})=(e^{i(\pi-\epsilon)},1)$ with $\epsilon>0$ small. Figure~\ref{fig: modelcalc4} gives a schematic description of $\overline{\mathcal{M}}_J^{w_1}$ analogous to Figure~\ref{fig: modelcalc}.  The evaluation map $ev_J$ from Equation~\eqref{eqn: evaluation map} therefore maps $\mathcal{M}^{w_1}_J$ almost completely once around $S^1_{|z_1|=1}$. As long as $w_{01}$ and $w_{11}$ are not too close on $S^1_{|z_1|=1}$, the degree calculation implies that $\# ev_{J^\Diamond}^{-1} (w_{01})=1$ modulo $2$. 

We also note that $w_{01}$ and $w_{11}$ not being close translates to the points corresponding to $\hat y_1$ and $\check r_2$ not being close.  This implies that we cannot obtain a disk $u_1$ that passes through $\hat y_1$ and $\check r_2$ by adjusting the slit length of $rR$ and that we must use $rD$. Our model calculation corresponds to using the slit $rD$.

\begin{figure}[ht]
	\begin{overpic}[scale=0.7]{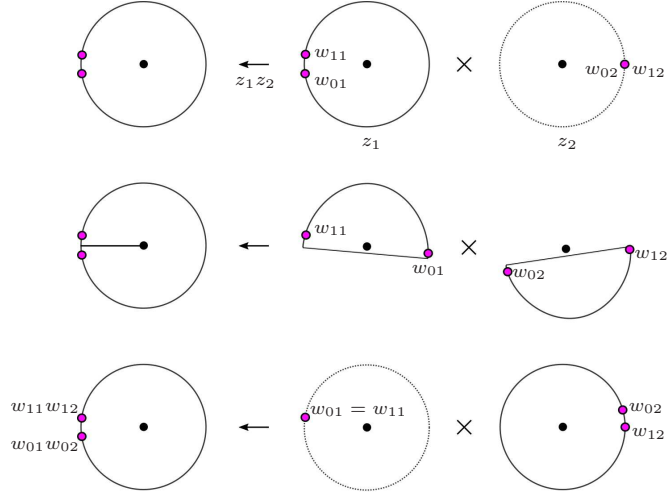}
		\put(-12,14.5){\tiny $w_{11}w_{12}$}
		\put(-12,7.7){\tiny $w_{01}w_{02}$}
		\put(41.7,14){\tiny $w_{01}=w_{11}$}
		\put(99,14.5){\tiny $w_{02}$}
		\put(99.5,10){\tiny $w_{12}$}
		\put(60,39.3){\tiny $w_{01}$}
		\put(42.5,46.5){\tiny $w_{11}$}
		\put(78,38.6){\tiny $w_{02}$}
		\put(100,42){\tiny $w_{12}$}
		
		\put(42.5,73){\tiny $w_{01}$}
		\put(42.5,78){\tiny $w_{11}$}
		\put(91.1,75){\tiny $w_{02}$}
		\put(99.5,75){\tiny $w_{12}$}
		
		\put(28.3,73){\tiny $z_1z_2$}
		\put(51,62){\tiny $z_1$}
		\put(86.1,62){\tiny $z_2$}
	\end{overpic}
	\caption{A schematic description of $\overline{\mathcal{M}}_J^{w_1}$ analogous to Figure~\ref{fig: modelcalc} but with a different $w_1$.}
	\label{fig: modelcalc4}
\end{figure}

\s\n
{\em Case (4).} Consider $D_4$ as in Figure~\ref{fig: D}. In Case (4), $\op{ind}(u)=1$ if and only if $u$ is from $\{\hat y_1,\hat y_2\}$ to $\{\check t_2,x_1\}$.

We claim that the count of $u: \dot F\to \R\times[0,1]\times \widehat W$ from $\{\hat y_1,\hat y_2\}$ to $\{\check t_2,x_1\}$ is $1$ mod $2$.  Refer to Figure~\ref{fig: D} for $D_4$. The Fredholm index calculations are analogous to those of Case (3).  Consider the slits $BR, PR$ of $D_4$ that start at $t_2, y_2$, respectively, and initially go to the right.   If both slits $BR, PR$ are too short, then we are effectively gluing a disk $u_1$ about $z_1$ with no constraint and a disk $u_2$ about $z_2$ with three constraints, which is a contradiction.  If $PR$ is too long, then $u_2$ about $z_2$ will have no constraints, which is also a contradiction. Hence $PR$ is short and $BR$ is long.  Also note that $BR$ must be long but cannot be so long that its other endpoint gets too close to $z_2$, in which case $u_1$ about $z_1$ will also have no constraints. This means that we are effectively gluing $u_1$ about $z_1$ with $1$ constraint corresponding to $\hat y_1$ and $u_2$ about $z_2$ with $2$ constraints corresponding to $\hat y_2$ and $\check t_2$.  Calculations similar to those of Case (3) imply the claim.

\s\n
{\em Second Calculation.} We show that 
$$d \{\check y_1, z_2\} = \hbar (\{\check r_2, x_2\}+ \{\check t_2,x_1\}) \mbox{ mod } 2.$$
As before, the loops in $D$ that are concatenations of an arc each of 
$$\gamma_1,\gamma_2,  \sigma_{BP,3}(\gamma_1), \sigma_{BP,3}(\gamma_2)$$
and that switch at $y_1, z_2$ are given by:
$$y_1\to r_1,r_2,x_1 \to z_2\to t_1,t_2,x_2\to y_1,$$
but the only loops that can possibly bound domains with nonnegative weights are:
\be
\item $y_1\to r_2\to z_2\to x_2\to y_1$,
\item $y_1\to x_1\to z_2\to t_2\to y_1$.
\ee
They both bound quadrilaterals and by Lemma~\ref{lemma: model calc} (1) contributes $\{\check r_2, x_2\}$ and (2) contributes $\{\check t_2,x_1\}$ mod $2$ to the differential. This completes the proof of the second calculation and the proof of Theorem~\ref{thm: BP}.
\end{proof}

\end{document}